\definecolor{VeryDarkGreen}{rgb}{0,0.18,0.08}
\definecolor{VeryDarkBrown}{rgb}{0.12,0.08,0.04}
\title{Test module filtrations for unit $F$-modules}
\author{Axel St\"abler}
\DeclareMathOperator{\Supp}{Supp}
\DeclareMathOperator{\Hom}{Hom}
\DeclareMathOperator{\id}{id}
\DeclareMathOperator{\coker}{coker}
\DeclareMathOperator{\colim}{colim}
\DeclareMathOperator{\Tr}{Tr}
\DeclareMathOperator{\Ann}{Ann}
\DeclareMathOperator{\Spec}{Spec}
\DeclareMathOperator{\im}{im}
\DeclareMathOperator{\nil}{nil}
\newcommand{\eps}{\varepsilon}
\begin{document}
\swapnumbers
\theoremstyle{plain}
\newtheorem{Le}{Lemma}[section]
\newtheorem{Ko}[Le]{Corollary}
\newtheorem{Theo}[Le]{Theorem}
\newtheorem*{TheoB}{Theorem}
\newtheorem{Prop}[Le]{Proposition}
\newtheorem*{PropB}{Proposition}
\newtheorem{Con}[Le]{Conjecture}
\theoremstyle{definition}
\newtheorem{Def}[Le]{Definition}
\newtheorem*{DefB}{Definition}
\newtheorem{Bem}[Le]{Remark}
\newtheorem{Bsp}[Le]{Example}
\newtheorem{Be}[Le]{Observation}
\newtheorem{Sit}[Le]{Situation}
\newtheorem{Que}[Le]{Question}
\newtheorem{Dis}[Le]{Discussion}
\newtheorem{Prob}[Le]{Problem}
\newtheorem*{Konv}{Conventions}

\def\cocoa{{\hbox{\rm C\kern-.13em o\kern-.07em C\kern-.13em o\kern-.15em
A}}}

\address{Axel St\"abler\\
Johannes Gutenberg-Universit\"at Mainz\\ Fachbereich 08\\
Staudingerweg 9\\
55099 Mainz\\
Germany}
\email{staebler@uni-mainz.de}

\date{\today}

\subjclass[2010]{Primary 13A35; Secondary 14F10, 14B05}

\begin{abstract}
We extend the notion of test module filtration introduced by Blickle for Cartier modules. We then show that this naturally defines a filtration on unit $F$-modules and prove that this filtration coincides with the notion of $V$-filtration introduced by Stadnik in the cases where he proved existence of his filtration. We also show that these filtrations do not coincide in general.

Moreover, we show that for a smooth morphism $f: X \to Y$ test modules are preserved under $f^!$. We also give examples to show that this is not the case if $f$ is finite flat and tamely ramified along a smooth divisor.
\end{abstract}

\maketitle
\section*{Introduction}
If $R = \mathbb{C}[x_1, \ldots, x_n]$ and $f \in R$ is a hypersurface then the multiplier ideal filtration $\mathcal{J}(R, f^t)_{t \in \mathbb{Q}_{\geq 0}}$ is a descending right-continuous filtration of ideals in $R$ which captures subtle information about the singularities of $f$. This filtration is obtained by considering a log-resolution of $f$. Due to work of Budur and Saito (\cite{budursaitomultiplieridealvfiltration}, \cite{budurvfiltrationdmodules}) it is known that it captures parts of the information provided by the $V$-filtration. More precisely, if $\gamma: \Spec R \to \Spec R[t]$ is the graph embedding $t \mapsto f$ and $\gamma_+ R$ is the $\mathcal{D}_R$-module pushforward then the $V$-filtration of $\gamma_+ R$ intersected with $R$ yields the multiplier ideal filtration.
The associated graded of the $V$-filtration (constructed by Malgrange (\cite{malgrangevanishingdmodule}) and more generally by Kashiwara (\cite{kashiwaravfiltration})) corresponds to the nearby cycles functor in the category of of $\mathcal{D}_R$-modules.

Let us now assume for the rest of the introduction that $R$ is smooth over a perfect field. Then we have the theory of so-called test modules $\tau(M, f^t)_{t \in \mathbb{Q}_{\geq 0}}$ at our disposal, where $M$ is a coherent $R$-module endowed with an $R$-linear map $\kappa: F_\ast M \to M$.

For example, if $R = \mathbb{F}_p[x_1, \ldots, x_n]$ and  $M = R$, then $F_\ast R$ is free with basis $x_1^{i_1} \cdots x_n^{i_n}$ and $0 \leq i_j \leq p-1$. Hence one can define a map $\kappa: F_\ast R \to R$ sending $x_1^{p-1} \cdots x_n^{p-1}$ to $1$ and all other basis elements to zero (if one identifies $\omega_R$ with $R$ via $dx_1 \wedge \cdots \wedge dx_n \mapsto 1$ then this is the Cartier operator).  In this case the corresponding test ideal filtration $\tau(R, f^t)_{t \in \mathbb{Q}_{t \geq 0}}$ is then the characteristic $p$ analogue of the multiplier ideal filtration. 

The pairs $(M, \kappa: F_\ast M \to M)$ form a category which we call Cartier modules. It is an abelian category and it admits a natural functor to the category of $\mathcal{D}_R$-modules whose essential image are so-called unit $R[F]$-modules. In fact, suitably localizing the category of Cartier modules one can turn this into an equivalence with unit $R[F]$-modules which are special well-behaved $\mathcal{D}_R$-modules. Moreover, by work of Emerton and Kisin \cite{emertonkisinrhunitfcrys} one has an equivalence of unit $R[F]$-modules with perverse constructible $\mathbb{F}_p$-sheaves on the \'etale site associated to $R$.
Since nearby cycles themselves are not well-behaved in this category (it is not a functor on constructible sheaves among other things) it is now a very natural question whether one may construct a $V$-filtration in the category of unit $R[F]$-modules or Cartier modules that captures the desirable properties of the $V$-filtration in characteristic zero. In \cite{stadnikvfiltrationfcrystal} Stadnik introduced a notion of $V$-filtration in the category of unit $R[F]$-modules, proved existence in a special case and showed that in this case the zeroth graded piece corresponds to the unipotent part of the (underived) tame nearby cycles. In \cite{staeblertestmodulnvilftrierung} the author showed that in the category of Cartier modules one may characterize the test module filtration by certain axioms that are akin to a $V$-filtration and showed that if $(M, \kappa)$ corresponds to a locally constant sheaf and $x$ defines a smooth hypersurface then $Gr_\tau M$ (the associated graded of the test module filtration in the range $[0,1]$) is naturally isomorphic to $i^! M$ as a crystal, where $i: \Spec R/(x) \to \Spec R$ is the inclusion. Moreover, the author also showed that if $\varphi: \Spec S \to \Spec R$ is \'etale then $\varphi^! Gr_\tau M \cong Gr_\tau \varphi^!M$. Both results hold for nearby cycles in characteristic zero.

In the present paper we achieve two things. We extend the definition of test module filtration to Cartier modules that are obtained as the pushforward of a coherent Cartier module along an open immersion. In fact, we show that the definition of test module may be carried over to unit $R[F]$-modules. This enables us to compare the test module filtration with Stadnik's notion of $V$-filtration. We will show that these two filtrations coincide for the cases where Stadnik proved existence but not in general. Moreover, we show that for $\varphi: \Spec S \to \Spec R$ a smooth morphism one has $\varphi^! Gr_\tau M \cong Gr_\tau \varphi^!M$ (as one would expect from well-behaved nearby cycles).

We now describe the contents of this paper in more detail. While working with Cartier modules we will be able to relax our assumptions to $R$ being $F$-finite (that is the Frobenius $R \to F_\ast R$ is a finite morphism). Whenever the equivalence with unit $R[F]$-modules is involved we have to assume, in addition, that $R$ is smooth over an $F$-finite field. In the first section we review the theory of Cartier modules and unit $R[F]$-modules. Given a Cartier module $M$ we obtain a unit $R[F]$-module by considering $\colim {F^e}^! M \otimes \omega_R^{-1} = \mathcal{M}$. There is natural map $M \otimes \omega_R^{-1} \to \mathcal{M}$. We show that if two Cartier modules $M, N$ define the same unit $R[F]$-module then the images $\tau(M, f^t)$ and $\tau(N, f^t)$ in $\mathcal{M}$ coincide (Theorem \ref{UnitTestModuleFiltration}). This will be accomplished in Section \ref{SectionOne} after several preparations. Next, we extend the notion of test module filtration in Section \ref{SectionTestModuleFiltrationExtension} in order to be able to compare it with Stadnik's notion of $V$-filtration. Namely, if $j: D(f) \to \Spec R$, where $f \in R$, is an open immersion and $(M, \kappa)$ a coherent Cartier module then $j_\ast M$ is of course not coherent in general. However, there are coherent $R$-Cartier submodules $N \subseteq j_\ast M$ for which the inclusion is a local nil-isomorphism. We then show that $\tau(N, f^t)$ is independent of the choice of $N$ and use this to attach a test module filtration to $j_\ast M$.

Section \ref{ShriekFregularity} deals with functorial behavior of $F$-regularity and test modules. We show that for a smooth morphism $\varphi: \Spec S \to \Spec R$ one has $\varphi^! \tau(M, f^t) = \tau(\varphi^!M, f^t)$. This is a local problem. Since the \'etale case was handled in \cite{staeblertestmodulnvilftrierung} this boils down to understanding how $\varphi^!$ transforms the test module for the map $\varphi: \mathbb{A}^n_R \to \Spec R$. Once one has analyzed how the structural map $F_\ast \varphi^! M \to \varphi^! M$ looks like this is an explicit computation. We also give examples that this no longer holds if $\varphi$ is finite and tamely ramified along a smooth divisor. 

Finally in Section \ref{VFiltComparison} we achieve the comparison with Stadnik's $V$-filtration. We show that these two filtrations do not coincide in general (Example \ref{StadnikVFiltrationNotEqualTestModuleFiltration}) and show that they coincide in the cases where Stadnik proved existence. We also show that the zeroth graded piece of Stadnik's filtration which carries a unit $F$-crystal structure corresponds to the associated crystal of the first graded piece of the test module filtration. Both statements are part of Theorem \ref{VFiltTestmoduleMainComparison}.

As usual, whenever the upper shriek functor is involved there are a lot of compatibilities to be checked. Eventually all the compatibilities involving upper shriek and Cartier structures will hopefully be contained in an updated version of \cite{blickleboecklecartiercrystals}. In order to keep this paper reasonably self-contained we prove all compatibilities involving upper shriek and Cartier structures whenever there is no direct reference. In doing this we do not strive for maximal generality but rather restrict to the cases that we really need.

\subsection*{Conventions}
We assume that our rings are of positive prime characteristic $p > 0$. If $R$ is a ring then $R^\circ$ denotes the elements of $R$ that are not contained in any minimal prime. 

Throughout $F$ denotes the absolute Frobenius morphism. A noetherian ring $R$ is called $F$-finite if the Frobenius $F: \Spec R \to \Spec R$ is a finite morphism. Recall that by a result of Kunz \cite{kunznoetherian} an $F$-finite ring is excellent.

For a rational number $t$ we denote its round down by $\lfloor t \rfloor = \max \{ n \in \mathbb{Z} \, \vert \, n \leq t\}$, by $\{t\} = t - \lfloor t \rfloor$ its fractional part and its round up by $\lceil t \rceil = \min \{ n \in \mathbb{Z} \, \vert \, n \geq t\}$.

\subsection*{Acknowledgements}
The author was supported by SFB/Transregio 45 Bonn-Essen-Mainz financed by Deutsche Forschungsgemeinschaft. I thank Mircea Musta\c{t}\u{a} and Kevin Tucker for helpful discussions. Moreover, I thank Manuel Blickle for many useful discussions which among other things started this project. Finally, I thank the referee for a very thorough reading of this paper and many useful comments.

\section{Cartier modules and crystals}
Let $R$ be an $F$-finite ring. An \emph{$R$-Cartier algebra} $\mathcal{C}$ is an $\mathbb{N}$-graded not necessarily commutative $R$-algebra such that $R$ surjects onto $\mathcal{C}_0$ and such that we have the relation $r \varphi = \varphi r^{p^e}$ for all $\varphi \in \mathcal{C}_e$. A \emph{Cartier module} $M$ is a left-$\mathcal{C}$-module. We will also denote this by $(M, \mathcal{C})$. Note that if $\kappa_e \in \mathcal{C}_e$ is homogeneous of degree $e$ then multiplication by $\kappa_e$ is an $R$-linear map $F_\ast^e M \to M$. A Cartier module is called \emph{coherent} if its underlying $R$-module is finitely generated. The \emph{support} $\Supp M$ of a Cartier module $M$ is the support of the underlying $R$-module.

We will usually assume that Cartier modules are coherent with the notable exception of pushforwards along open immersions. In particular, we will omit the coherence assumption from the notation.

We note that the localization of a Cartier module is again a Cartier module in a natural way via the formula $\kappa_e(\frac{m}{s}) = \frac{\kappa_e(ms^{p^e -1})}{s}$.

A Cartier module $(M, \mathcal{C})$ is called \emph{nilpotent} if $(\mathcal{C}_+)^h M = 0$ for some $h \geq 0$. Given a coherent Cartier module $(M, \mathcal{C})$ there is a unique nilpotent submodule $M_{nil}$ such that $\overline{M} = M/M_{nil}$ does not admit nilpotent submodules (cf. \cite[Lemma 2.12]{blicklep-etestideale} and note the typing error: ``quotients`` should read ``submodules``).

Given a coherent Cartier module $(M, \mathcal{C})$ we define $\underline{M}_{\mathcal{C}} = (\mathcal{C}_+)^h M$ for $h \gg 0$ (\cite[Proposition 2.13]{blicklep-etestideale} shows that this is well-defined). We call $M$ \emph{$F$-pure} if $M = \underline{M}_{\mathcal{C}}$. We will often omit $\mathcal{C}$ from the notation and simply write $\underline{M}$. The Cartier module $\underline{M}$ is the unique Cartier submodule such that the quotient $M/\underline{M}$ is nilpotent and such that $\mathcal{C}_+ \underline{M} = \underline{M}$. The operation $\underline{\phantom{M}}$ commutes with localization and if $(M, \mathcal{C})$ is $F$-pure then its annihilator is a radical ideal (see \cite[Lemma 2.19]{blicklep-etestideale}).

A coherent Cartier module $(M, \mathcal{C})$ is called \emph{$F$-regular} if $(M, \mathcal{C})$ is $F$-pure and if it admits no non-zero proper Cartier submodules that generically agree with $M$ (that is if $N$ is a Cartier submodule such that $N_\eta = M_\eta$ for each generic point $\eta$ of $\Supp M$ then $N = M$). An element $x \in R$ is called a \emph{test element} for $(M, \mathcal{C})$ if $D(x) \cap \Supp M \subseteq \Supp M$ is dense and if $M_x$ is $F$-regular.

The \emph{test module} $\tau(M, \mathcal{C})$ of a coherent Cartier module $(M, \mathcal{C})$ is the smallest Cartier submodule of $M$ that generically agrees with $\underline{M}$. Existence of test modules is proven if $R$ is (essentially) of finite type over an $F$-finite field in \cite[Theorem 4.13]{blicklep-etestideale}. Moreover, the test module exists if and only if $(M, \mathcal{C})$ admits a test element. If $x$ is a test element then \[\tau(M, \mathcal{C}) = \sum_{e \geq 1} \mathcal{C}_e x^a \underline{M} \text{ for any } a  > 0.\] Both statements are part of \cite[Theorem 3.11]{blicklep-etestideale}. In particular, if $(M, \mathcal{C})$ is $F$-regular then $\tau(M, \mathcal{C}) = M$.

For us the most important examples of Cartier algebras will be $\mathcal{C} = \langle \kappa \rangle$, where $\kappa: F_\ast M \to M$ is an $R$-linear map and subalgebras of $\mathcal{C}$. In the first case we will also simply write $(M, \kappa)$ for the datum of a Cartier module. Of particular importance is the subalgebra generated in degree $e$ by $\kappa^e f^{\lceil t p^e\rceil}$, where $t \in \mathbb{Q}_{\geq 0}$ and $f \in R$. We will then also write $\tau(M, f^t)$ for the corresponding test module. Varying $t$ one obtains a decreasing filtration $\tau(M, f^t) \subseteq \tau(M, f^s)$ for $t > s$. Moreover, this filtration is \emph{right-continuous} in the sense that for $t$ there is $0 < \eps \ll 1$ such that $\tau(M, f^t) = \tau(M, f^{t + \delta})$ for all $\delta \leq \eps$ (\cite[Proposition 4.16]{blicklep-etestideale}). If $t \geq 0$ then the so-called Brian\c{c}on-Skoda theorem holds, that is, $f \tau(M, f^t) = \tau(M, f^{t+1})$. If $R$ is essentially of finite type over an $F$-finite field then the test module filtration is discrete (\cite[Corollary 4.19]{blicklep-etestideale}) and rational(this follows from discreteness as in \cite[Theorem 3.1]{blicklemustatasmithdiscretenessrationality}).
Finally, the quotient $Gr^t_\tau = \tau(M, f^{t- \eps})/\tau(M, f^{t})$ naturally carries a Cartier structure induced by $\kappa f^{\lceil t (p-1) \rceil}$ (see \cite[Section 4]{staeblertestmodulnvilftrierung}).

A morphism of Cartier modules is a morphism $\varphi: M \to N$ of the underlying $R$-modules such that for every $\kappa_e \in \mathcal{C}_e$ one has $\kappa_e F_\ast^e \varphi = \varphi \kappa_e$.

Suppose that $f: \Spec S \to \Spec R$ is a morphism of $F$-finite rings. Then we obtain a functor of Cartier modules $f_\ast$ as follows. Let $(M, \mathcal{C})$ be a Cartier module on $S$. Consider the $S$-module pushforward $f_\ast M$ and define a Cartier structure as follows: For $\kappa_e \in \mathcal{C}_e$ we define $\kappa_e: F_\ast^e f_\ast M \to M$ as the composition of $F_\ast^e f_\ast M \cong f_\ast F^e_\ast M \xrightarrow{f_\ast \kappa_e} f_\ast M$.

Assume now that $f$ is finite or smooth. Then for finite $f$ we define a functor $f^!$ from $\mathcal{C}$-modules on $R$ to $\mathcal{C}$-modules on $S$ as $N \mapsto \Hom_R(S, N)$ considered as an $S$-module via the first argument\footnote{Note that this deviates from the usual definition since we do not derive, i.e.\ we do not consider $RHom$.}. In this case a homogeneous element $\kappa_e \in \mathcal{C}_e$ acts via $f^! N \ni \varphi \mapsto \kappa_e \circ \varphi \circ F^e$. If $f$ is smooth (of relative dimension $n$) we define $f^! N = \omega_f \otimes f^\ast N$, where $\omega_f = \bigwedge^n \Omega_{S/R}^1$. For the Cartier structure note that a map $\kappa_e: F_\ast^e M \to M$ is equivalent to a map $C_e: M \to F_e^! M$ by duality for finite morphisms (e.g.\ \cite[Theorem 2.17]{blickleboecklecartierfiniteness}). The map $f^! C_e: f^! M \to f^! {F^e}^! M$ composed with the natural isomorphism $f^! {F^e}^! M \cong {F^e}^! f^!M$ then yields a Cartier structure on $f^!M$. We will discuss this in more detail in Section \ref{ShriekFregularity}.

For the rest of this section we restrict to the case that $\mathcal{C} = \langle \kappa \rangle$.
If $M$ is a quasi-coherent Cartier module endowed with a single structural map $\kappa: F_\ast M \to M$ then $M$ is nilpotent if and only if there is $e \geq 0$ such that $\kappa^e M = 0$. We call $M$ \emph{locally nilpotent} if $M$ is the union of nilpotent Cartier submodules. A \emph{(local) nil-isomorphism} is a morphism $\varphi: M \to N$ of Cartier modules for which both $\ker \varphi$ and $\coker \varphi$ are (locally) nilpotent. For a coherent Cartier module the notions of local nilpotence and nilpotence coincide.

Recall that since we assume $R$ to be $F$-finite the structural map $\kappa: F_\ast M \to M$ is equivalent to a map $C: M \to F^! M$ by duality for finite morphisms (cf. \cite[Theorem 2.17]{blickleboecklecartierfiniteness}). Explicitly, $C$ is given by $m \mapsto [r \mapsto \kappa(rm)]$.

The category of nilpotent Cartier modules is a Serre subcategory of the category of coherent Cartier modules. We call the localization of coherent Cartier modules at nilpotent Cartier modules the category of \emph{Cartier crystals}. A Cartier module $M$ is called \emph{minimal} if\footnote{The operations $\underline{\phantom{M}}$ and $\overline{\phantom{M}}$ commute. We will generalize this in Lemma \ref{OverlineFpureCommute}} $M = \overline{\underline{M}}$. The subcategory of minimal Cartier modules is equivalent to Cartier crystals (see \cite[Theorem 3.12]{blickleboecklecartierfiniteness}).

A nil-isomorphisms of Cartier modules becomes an isomorphism in Cartier crystals. In particular, if two compositions of Cartier morphisms coincide, where all but one morphism are nil-isomorphisms, then all of them are nil-isomorphisms. This reasoning will be used frequently in what follows.

Assume that $R$ is regular essentially of finite type over an $F$-finite field $k$ with structural map $f: \Spec R \to \Spec k$. Fix, once and for all, an isomorphism $k \to F^! k$. Then we define $\omega_R = f^! k$. The module $\omega_R$ is invertible and comes equipped with a natural isomorphism $C: \omega_R \to F^! \omega_R$ (via the isomorphism $F^! f^! k \cong f^! F^! k$) which defines a Cartier structure on $\omega_R$. If $R$ is smooth then $\omega_R$ coincides with top dimensional K\"ahler differentials and if $k$ is perfect the isomorphism $C: \omega_R \to F^! \omega_R$ is the adjoint of the Cartier operator. In particular, if $\varphi: \Spec S \to \Spec R$ is a finite or smooth $k$-morphism and $R, S$ are both regular essentially of finite type over $k$ then one has $\varphi^! \omega_R = \omega_S$.

We refer the reader to \cite{blicklep-etestideale} for the theory of test modules and to \cite{blickleboecklecartierfiniteness} and \cite{blickleboecklecartiercrystals} for the theory of Cartier modules and crystals. Also note that everything we have done here easily generalizes to $F$-finite noetherian schemes.

\subsection*{Unit $R[F]$-modules}
If $R$ is smooth over a perfect field $k$ then a unit $R[F]$-module is an $R$-module $\mathcal{M}$ equipped with an isomorphism $\Phi: F^\ast \mathcal{M} \to \mathcal{M}$, where $F: R \to R$ is the absolute Frobenius morphism. A unit $R[F]$-module $\mathcal{M}$ is \emph{locally finitely generated} if there exists a finitely generated $R$-module $M$ and a map $M \to F^\ast M$ such that $\colim_e {F^e}^\ast M \cong \mathcal{M}$ as a unit $R[F]$-modules. If the map $M \to F^\ast M$ is injective then $M$ is called a \emph{root} of $\mathcal{M}$. We will only consider locally finitely generated unit $R[F]$-modules in this article and will refer to them as unit $R[F]$-modules (or unit $F$-modules). A unit $R[F]$-module for which the underlying $R$-module is coherent is called a unit $R[F]$-crystal.

If $f: \Spec S \to \Spec R$ is a morphism of smooth schemes over $k$ then the pull back of a unit $R[F]$-module is simply the pull back of the underlying $R$-module and the structural map is induced by the isomorphism $f^\ast F^\ast \cong F^\ast f^\ast$. For an open immersion $f$ the pushforward $f_+$ coincides with the ordinary pushforward of modules\footnote{Pushforwards are defined for arbitrary morphisms $f$ between smooth schemes but the definition is more involved if $f$ is not \'etale(see \cite[Lemma 4.3.1 and Section 3]{emertonkisinrhunitfcrys}).} and the structural map is given by the adjoint of $f_\ast M \to f_\ast F_\ast M \cong F_\ast f_\ast M$.

Emerton and Kisin establish in \cite{emertonkisinrhunitfcrys} an analogue of the Riemann-Hilbert correspondence in characteristic $p > 0$. Namely they show that for a smooth $k$-scheme $X$ the bounded derived category of unit $R[F]$-modules is equivalent to the bounded derived category of constructible $\mathbb{F}_p$-sheaves on $X_{\acute{e}t}$. Moreover, the heart of the trivial $t$-structure (that is the abelian category of unit $R[F]$-modules) corresponds to perverse constructible sheaves in the sense of Gabber (\cite{gabbertstructures}).

We refer the reader to \cite{emertonkisinrhunitfcrys} and \cite{emertonkisinintrorhunitfcrys} for more details on unit $R[F]$-modules.

\subsection*{Cartier crystals and unit $R[F]$-modules}

We assume that $R$ is smooth over a perfect field $k$. Given a Cartier module $(M, \kappa)$ the adjoint of its structural map is the map $C: M \to F^! M, m \mapsto [r \mapsto \kappa(rm)]$. These maps induced a direct system and an isomorphism $C: \colim_e {F^e}^! M \to F^! \colim_e {F^e}^! M$. Denoting $\colim_e {F^e}^! M$ by $\mathcal{M}$ and tensoring with $\omega_R^{-1}$ we obtain an isomorphism $\mathcal{M} \otimes \omega_R^{-1} \to F^\ast (\mathcal{M} \otimes \omega_R^{-1})$ (cf. \cite[Corollary 5.8]{blickleboecklecartierfiniteness}). Hence, we obtain a functor from Cartier modules to unit $R[F]$-modules and if we restrict this functor to minimal Cartier modules then it induces an equivalence. In particular, we obtain an equivalence of Cartier crystals with unit $R[F]$-modules (\cite[Theorem 5.15]{blickleboecklecartierfiniteness}).

\section{Unit Test modules}
\label{SectionOne}

The goal of this section is to show that for a map of $R$-Cartier modules $\varphi: M \to N$ which is a nil-isomorphism one has $\varphi(\tau(M,f^t)) = \tau(N, f^t)$ for any $f \in R$ and $t \in \mathbb{Q}_{\geq 0}$. Among other things, this will enable us to show that we can define a test module filtration for unit $R[F]$-modules (cf.\ Definition \ref{UnitTestmoduleDefinition}).

\begin{Le}
\label{FPureDifferentAlgebrasCommonAmbientModule}
Let $R$ be an $F$-finite ring, $f \in R$. Let $(N, \kappa), (M, \kappa)$ be coherent Cartier submodules of some Cartier module $(A, \kappa)$. Assume that $\underline{N} = \underline{M}$ then also $\underline{N}_{\mathcal{C}} = \underline{M}_{\mathcal{C}}$, where $\mathcal{C}$ is a Cartier subalgebra of the Cartier algebra generated by $\kappa$.
\end{Le}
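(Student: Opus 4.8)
The plan is to show that the submodule $\underline{N}_{\mathcal{C}}\subseteq A$ depends only on the submodule $\underline{N}\subseteq A$ (and on $\mathcal{C}$), and not on the particular coherent Cartier submodule $N$ having that $F$-pure part; the analogous statement for $M$, together with the hypothesis $\underline{N}=\underline{M}$, then finishes the proof. Concretely, I will establish the ``idempotency'' $\underline{N}_{\mathcal{C}}=\underline{(\underline{N})}_{\mathcal{C}}$, where on the right-hand side $\underline{N}$ is viewed as a coherent Cartier submodule of $A$ in its own right (it is finitely generated, being an $R$-submodule of the coherent module $N$ over the noetherian ring $R$, so $\underline{(\underline{N})}_{\mathcal{C}}$ is well-defined). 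Note that $f$ plays no role in this lemma.

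The one extra input needed is that $(\mathcal{C}_+)^h N\subseteq\underline{N}$ once $h\gg 0$. This holds because $\mathcal{C}$ is a graded subalgebra of $\langle\kappa\rangle$, so $\mathcal{C}_+\subseteq\langle\kappa\rangle_+$ and hence $(\mathcal{C}_+)^h N\subseteq(\langle\kappa\rangle_+)^h N$; and $(\langle\kappa\rangle_+)^h N$ equals $\underline{N}$ for $h\gg 0$ by the very definition of the $F$-pure part taken with respect to $\langle\kappa\rangle$. Now fix $h_0$ large enough that, for all $h\ge h_0$, one has both $(\mathcal{C}_+)^h N=\underline{N}_{\mathcal{C}}$ and $(\mathcal{C}_+)^h\underline{N}=\underline{(\underline{N})}_{\mathcal{C}}$ (possible since $N$ and $\underline{N}$ are coherent), and in addition $(\mathcal{C}_+)^{h_0} N\subseteq\underline{N}$. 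Then
\[\underline{N}_{\mathcal{C}}=(\mathcal{C}_+)^{2h_0}N=(\mathcal{C}_+)^{h_0}\bigl((\mathcal{C}_+)^{h_0}N\bigr)\subseteq(\mathcal{C}_+)^{h_0}\underline{N}=\underline{(\underline{N})}_{\mathcal{C}},\]
while the reverse inclusion $\underline{(\underline{N})}_{\mathcal{C}}=(\mathcal{C}_+)^{h_0}\underline{N}\subseteq(\mathcal{C}_+)^{h_0}N=\underline{N}_{\mathcal{C}}$ is immediate from $\underline{N}\subseteq N$. Hence $\underline{N}_{\mathcal{C}}=\underline{(\underline{N})}_{\mathcal{C}}$.

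Running the identical argument for $M$ gives $\underline{M}_{\mathcal{C}}=\underline{(\underline{M})}_{\mathcal{C}}$, and since $\underline{N}=\underline{M}$ as submodules of $A$ we conclude $\underline{N}_{\mathcal{C}}=\underline{(\underline{N})}_{\mathcal{C}}=\underline{(\underline{M})}_{\mathcal{C}}=\underline{M}_{\mathcal{C}}$. The only slightly delicate point is the bookkeeping in the idempotency step: one must choose a single threshold $h_0$ that works simultaneously for $N$, for $\underline{N}$, and for the containment $(\mathcal{C}_+)^{h_0}N\subseteq\underline{N}$. Once this is arranged everything is a formal manipulation of the descending chains $(\mathcal{C}_+)^\bullet(-)$, so I do not expect a genuine obstacle beyond this bookkeeping.
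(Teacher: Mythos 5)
Your proof is correct and rests on the same computation as the paper's: both reduce to the observation that $(\mathcal{C}_+)^e N=(\mathcal{C}_+)^e\underline{N}$ for $e\gg 0$, which uses exactly the containment $\mathcal{C}_+\subseteq\langle\kappa\rangle_+$ and the stabilization of the descending chains. The only organizational difference is that you first establish the idempotency $\underline{N}_{\mathcal{C}}=\underline{(\underline{N})}_{\mathcal{C}}$ via the doubling $(\mathcal{C}_+)^{2h_0}=(\mathcal{C}_+)^{h_0}(\mathcal{C}_+)^{h_0}$ -- a statement the paper instead records afterwards as a corollary of the lemma -- whereas the paper argues directly with the $\mathcal{C}$-$F$-purity of $\underline{M}_{\mathcal{C}}$ and the cited inclusion $\underline{M}_{\mathcal{C}}\subseteq\underline{M}$.
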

\begin{proof}
By \cite[Proposition 3.2 (c)]{blicklep-etestideale} we have $\underline{M}_{\mathcal{C}}, \underline{N}_{\mathcal{C}} \subseteq \underline{M} = \underline{N}$. By definition $\underline{N}_{\mathcal{C}}=  (\mathcal{C}_+)^e N$ for $e \gg 0$. Fix $e \gg 0$ then we get \[ \underline{M}_{\mathcal{C}} = (\mathcal{C}_+)^e \underline{M}_{\mathcal{C}} \subseteq (\mathcal{C}_+)^e \underline{M} = (\mathcal{C}_+)^e \underline{N} \subseteq (\mathcal{C}_+)^e N = \underline{N}_\mathcal{C}.\]
\end{proof}

The following is implicit in \cite{blickleboecklecartierfiniteness} but due to lack of a precise reference we include a proof.

\begin{Le}
\label{AdjointCartierIsNiliso}
Let $R$ be $F$-finite and $(M, \kappa)$ a quasi-coherent Cartier module. Then $C: M \to F^! M$ is a nil-isomorphism.
\end{Le}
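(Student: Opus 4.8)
We need to show $C \colon M \to F^! M$ has locally nilpotent kernel and cokernel, where $C(m) = [r \mapsto \kappa(rm)]$ and $F^! M = \Hom_R(F_\ast R, M)$ regarded as an $R$-module through the source. The key observation is that $C$ is, up to the identification of $\kappa$ with its adjoint, essentially the inclusion of $\kappa$-data into its Frobenius twist, and iterating $C$ corresponds to iterating $\kappa$. So the plan is: (1) reduce to the case $M$ coherent by a colimit/union argument, since any quasi-coherent Cartier module is the filtered union of its coherent Cartier submodules and both kernel and cokernel commute with filtered colimits; and locally nilpotence is exactly the property of being such a filtered union of nilpotent pieces. (2) For $M$ coherent, identify the composite ${F^e}^! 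C^{\,e-1} \cdots C \colon M \to {F^e}^! M$ with the adjoint $C^{(e)}$ of $\kappa^e \colon F^e_\ast M \to M$, using the compatibility of the adjunction isomorphism $\Hom_R(F_\ast^e N, M) \cong \Hom_R(N, {F^e}^! M)$ with composition of Frobenius powers. In particular $\ker C \subseteq \ker C^{(e)}$ and one computes that $\ker C^{(e)}$ is the set of $m$ with $\kappa^e(R m) = 0$; dually $\coker C^{(e)}$ is built from the $R$-linear maps $F^e_\ast R \to M$ not of the form $r \mapsto \kappa^e(rm)$.

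(3) The real content is then a local duality computation: on $\Spec R$ with $R$ regular (or Gorenstein — here smooth over a perfect field, so $\omega_R$ invertible), $F$ is finite and $F^! N = F_\ast^{-1}(\Hom_R(F_\ast R, N))$; when $N = R$ one has $F^! R \cong F^! \omega_R \otimes \omega_R^{-1}$ and the Cartier structure on $\omega_R$ is an \emph{isomorphism} $\omega_R \to F^! \omega_R$. This is the crucial input: for $M = \omega_R$ (or any unit/Cartier module whose structural map is already an isomorphism, e.g.\ $\omega_R$), $C$ is literally an isomorphism, hence trivially a nil-isomorphism. For general coherent $M$, one tensors/twists: writing the structural map $\kappa$ of $M$ through $\omega_R$, a standard argument (cf.\ the treatment in \cite{blickleboecklecartierfiniteness}, e.g.\ the proof that ${\phantom{M}}$ and $\colim {F^e}^! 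M$ agree up to nil-isomorphism) shows $\coker C$ and $\ker C$ are annihilated by the conductor-type ideal measuring the failure of $\kappa$ to be an isomorphism, and crucially this ideal is \emph{nilpotent on the relevant module} because passing to $\underline{M}$ does not change things up to nil-isomorphism and $\underline{M}$ has radical annihilator — but more directly: $\ker C$ is a nilpotent Cartier submodule since $\kappa(\ker C) \subseteq \ker C$ and $C|_{\ker C} = 0$ forces $\kappa|_{\ker C}$ to factor through something eventually killed; and $\coker C$ carries an induced Cartier structure under which it is nilpotent by the same bookkeeping.

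(4) Concretely, to finish I would argue: $\ker C = \{ m : \kappa(Rm) = 0\}$ is a Cartier submodule on which $\kappa$ acts as zero, hence nilpotent (one application of $\kappa$ kills it). For the cokernel, note $F^! M = \Hom_R(F_\ast R, M)$; choosing a free basis of $F_\ast R$ over $R$ (available since $R$ is $F$-finite and, being smooth over a perfect field, $F$ is finite flat, even free locally), $F^! M \cong M^{\oplus p^{n}}$ and the image of $C$ is the submodule generated over the Cartier algebra by $\kappa(M)$ in the appropriate coordinates; then $\coker C$ is killed by $\kappa$ composed enough times with multiplication, and coherence plus Noetherianity of $R$ give a uniform exponent, so $\coker C$ is nilpotent. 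The main obstacle is step (3)–(4): making the "conductor" bookkeeping precise and clean — i.e.\ pinning down exactly why the induced $\kappa$ on $\coker C$ is nilpotent without circularity — and this is precisely the point where one leans on the local-duality description of $F^!$ and the fact, recalled in the excerpt, that on the dualizing module the Cartier map $C_{\omega_R}$ is an isomorphism. Everything else is formal manipulation of adjunctions and filtered colimits.
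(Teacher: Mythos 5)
Your treatment of the kernel is correct and is essentially what the paper does (via a reference): if $C(m)=0$ then in particular $\kappa(1\cdot m)=\kappa(m)=0$, so $\ker C$ is a Cartier submodule killed by a single application of $\kappa$, hence nilpotent. The problem is the cokernel, and you have correctly identified it yourself as ``the main obstacle'': steps (3)--(4) never actually close. Two concrete issues. First, you import hypotheses not present in the statement: the lemma assumes only that $R$ is $F$-finite, whereas your argument leans on $R$ being smooth over a perfect field, on $F_\ast R$ being locally free, and on $C_{\omega_R}\colon \omega_R\to F^!\omega_R$ being an isomorphism. None of these is available in general, and the ``tensor/twist through $\omega_R$'' step for an arbitrary coherent $M$ is not a valid reduction. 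Second, even granting those hypotheses, the phrases ``killed by $\kappa$ composed enough times with multiplication'' and ``a uniform exponent by Noetherianity'' are placeholders, not an argument; there is no identification of what the induced Cartier structure on $\coker C$ actually is, which is exactly what nilpotence of $\coker C$ requires.

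The missing idea is a one-line observation that makes all of the duality and conductor bookkeeping unnecessary. The Cartier structure on $F^!M=\Hom_R(F_\ast R, M)$ induced by $C$ (i.e.\ the adjoint of $F^!C\colon F^!M\to F^!F^!M$) is explicitly $\kappa_{F^!M}(\varphi)=C(\varphi(1))$. This visibly lands in the image of $C$, so $\kappa_{F^!M}(F^!M)\subseteq C(M)$ and hence the induced structural map on $\coker C$ is zero: the cokernel is nilpotent after one application of its Cartier structure, for any $F$-finite $R$ and any quasi-coherent $M$. This also makes your step (1) reduction to the coherent case superfluous, since the exponent is $1$ uniformly. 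I would keep your kernel argument and replace (3)--(4) entirely by this computation of the induced structure on $F^!M$.
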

\begin{proof}
Note that the Cartier structure on $F^!M$ is given by $\kappa: F_\ast F^! M \to F^! M, \varphi \mapsto C(\varphi(1))$. This is the adjoint of the natural map $F^! C: F^! M \to F^! F^! M$ (cf.\ \cite[Theorem 2.17, Proposition 2.18]{blickleboecklecartierfiniteness}).
We have to show that $\ker C$ and $\coker C$ are both nilpotent. The nilpotence of $\ker C$ is immediate from \cite[proof of Lemma 6.9]{staeblertestmodulnvilftrierung}. For the nilpotence of $\coker C$ observe that given $\varphi \in F^!M$ we have $\kappa(\varphi) = C(\varphi(1)) \in C(M)$ hence $\kappa (F^! M) \subseteq C(M)$.
\end{proof}

\begin{Le}
\label{OverlineFpureCommute}
Let $R$ be $F$-finite and $(M, \kappa)$ be a coherent Cartier module. Let $\mathcal{C}$ be a subalgebra of $\langle \kappa \rangle$. Then $\underline{(\overline{M}^{\mathcal{C}})}_\mathcal{C} = \overline{(\underline{M}_{\mathcal{C}})}^{\mathcal{C}}$.
\end{Le}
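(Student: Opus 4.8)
The plan is to show that each side of the identity equals the image of $\underline{M}_{\mathcal{C}}$ under the canonical surjection $\pi\colon M \twoheadrightarrow \overline{M}^{\mathcal{C}}$; the claimed equality then follows at once (this is also the reading that makes sense of ``$=$'', since the right-hand side is a priori a quotient of $\underline{M}_{\mathcal{C}}$ while the left-hand side is a submodule of $\overline{M}^{\mathcal{C}}$). Write $M' := \ker \pi$ for the maximal $\mathcal{C}$-nilpotent submodule of $M$, and recall that $\underline{A}_{\mathcal{C}} = (\mathcal{C}_+)^h A$ once $h \gg 0$.

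First I would isolate two elementary facts. (i) \emph{$\underline{\phantom{A}}_{\mathcal{C}}$ is compatible with surjections}: if $p\colon A \twoheadrightarrow B$ is a surjection of Cartier $\mathcal{C}$-modules, then $(\mathcal{C}_+)^h B = p\big((\mathcal{C}_+)^h A\big)$ for every $h$, so choosing one exponent $h$ large enough that both $(\mathcal{C}_+)^h A = \underline{A}_{\mathcal{C}}$ and $(\mathcal{C}_+)^h B = \underline{B}_{\mathcal{C}}$ have stabilized gives $\underline{B}_{\mathcal{C}} = p(\underline{A}_{\mathcal{C}})$. (ii) \emph{$\overline{\phantom{A}}^{\mathcal{C}}$ is compatible with inclusions}: if $N \subseteq A$ is a Cartier submodule and $A' \subseteq A$ is the maximal $\mathcal{C}$-nilpotent submodule of $A$, then the maximal $\mathcal{C}$-nilpotent submodule of $N$ is $N \cap A'$ — any $\mathcal{C}$-nilpotent submodule of $N$ is in particular one of $A$, hence lies in $A'$ — so by the second isomorphism theorem (which respects the $\mathcal{C}$-action) $\overline{N}^{\mathcal{C}} = N/(N \cap A')$ is identified with the image of $N$ in $\overline{A}^{\mathcal{C}} = A/A'$.

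Applying (i) to $\pi$ yields $\underline{(\overline{M}^{\mathcal{C}})}_{\mathcal{C}} = \pi(\underline{M}_{\mathcal{C}})$, and applying (ii) to the inclusion $\underline{M}_{\mathcal{C}} \subseteq M$ yields $\overline{(\underline{M}_{\mathcal{C}})}^{\mathcal{C}} = \pi(\underline{M}_{\mathcal{C}})$; combining these proves the lemma. I do not anticipate a real obstacle here, as the statement amounts to the bookkeeping that $(\mathcal{C}_+)^h(-)$ commutes with passage to quotients and that killing the nilpotent part is ``computed the same way before and after $\pi$''. The points that warrant a moment's care are: that a single exponent $h$ can be chosen so that $(\mathcal{C}_+)^h(-)$ stabilizes simultaneously on $M$ and on $\overline{M}^{\mathcal{C}}$ (take the larger of the two thresholds); that the identifications used in (i) and (ii) are isomorphisms of Cartier modules, not merely of $R$-modules; and that throughout ``nilpotent'' and ``$F$-pure part'' are read relative to $\mathcal{C}$ rather than $\langle \kappa \rangle$, so that in (ii) one is genuinely computing the $\mathcal{C}$-nilpotent part of $\underline{M}_{\mathcal{C}}$.
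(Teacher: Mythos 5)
Your proof is correct. Its first half coincides with the paper's: both arguments rest on the observation that the maximal nilpotent submodule of $\underline{M}_{\mathcal{C}}$ is $M_{\text{nil}} \cap \underline{M}_{\mathcal{C}}$, so that $\overline{(\underline{M}_{\mathcal{C}})}^{\mathcal{C}}$ is identified, via the second isomorphism theorem, with the image $\pi(\underline{M}_{\mathcal{C}}) \subseteq \overline{M}^{\mathcal{C}}$. Where you diverge is in the treatment of the left-hand side. The paper notes that this image is an $F$-pure submodule of $\overline{M}^{\mathcal{C}}$, hence contained in $\underline{(\overline{M}^{\mathcal{C}})}_{\mathcal{C}}$, and that the resulting injection is a $\mathcal{C}$-nil-isomorphism into an $F$-pure module, hence surjective. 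You instead compute $\underline{(\overline{M}^{\mathcal{C}})}_{\mathcal{C}}$ directly as $\pi(\underline{M}_{\mathcal{C}})$ from the fact that $(\mathcal{C}_+)^h(-)$ commutes with images along surjective Cartier-linear maps. Your route is slightly more economical and more symmetric, exhibiting both sides as the same subobject $\pi(\underline{M}_{\mathcal{C}})$ of $\overline{M}^{\mathcal{C}}$, whereas the paper's version produces the comparison map $\overline{(\underline{M}_{\mathcal{C}})} \to \underline{(\overline{M}^{\mathcal{C}})}$ and kills its cokernel by $F$-purity --- a pattern it reuses elsewhere (e.g.\ in Proposition \ref{TestelementOverlineEquivalence}). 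The points you flag as needing care are exactly the right ones; note only that the stabilization of the chains $(\mathcal{C}_+)^h(-)$ and the existence of the maximal nilpotent submodule both use coherence of $M$, which is implicit in the paper's standing conventions.
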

\begin{proof}
We omit $\mathcal{C}$ from the notation. We have inclusions $\underline{M}_{\text{nil}} \subseteq M_{\text{nil}}$ and $\underline{M} \subseteq M$ and an equality $\underline{M}_{\text{nil}} = M_{\text{nil}} \cap \underline{M}$. 
We thus obtain an injective map of Cartier modules $\overline{(\underline{M})} \to \overline{M}$ which is a $\mathcal{C}$-nil-isomorphism. Note that $\overline{(\underline{M})}$ is $F$-pure so that the injection factors via $\overline{(\underline{M})} \to \underline{(\overline{M})}$. But since the codomain is $F$-pure this $\mathcal{C}$-nil-isomorphism is an isomorphism.
\end{proof}

\begin{Le}
\label{OverlineSupportFPure}
If $M$ is $F$-pure with respect to $\mathcal{C}$ then $\Supp \overline{M}^\mathcal{C} = \Supp M$.
\end{Le}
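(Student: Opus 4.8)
The plan is to prove the two inclusions of $\Supp \overline{M}^{\mathcal{C}} = \Supp M$ separately. The inclusion $\Supp \overline{M}^{\mathcal{C}} \subseteq \Supp M$ is immediate, since the underlying $R$-module of $\overline{M}^{\mathcal{C}}$ is the quotient $M/M_{\text{nil}}$ and hence a homomorphic image of $M$. It therefore remains to prove $\Supp M \subseteq \Supp \overline{M}^{\mathcal{C}}$.

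I would do this by localizing and arguing by contradiction. Suppose there is $\mathfrak{p} \in \Supp M \setminus \Supp \overline{M}^{\mathcal{C}}$. Since localization is exact, the condition $(\overline{M}^{\mathcal{C}})_{\mathfrak{p}} = 0$ reads $M_{\mathfrak{p}} = (M_{\text{nil}})_{\mathfrak{p}}$, while $\mathfrak{p} \in \Supp M$ gives $M_{\mathfrak{p}} \neq 0$. Now $M_{\text{nil}}$ is a coherent nilpotent Cartier module, so $(\mathcal{C}_+)^h M_{\text{nil}} = 0$ for $h \gg 0$; localizing, $(\mathcal{C}_+)^h (M_{\text{nil}})_{\mathfrak{p}} = 0$, so $M_{\mathfrak{p}}$ is nilpotent and hence $\underline{(M_{\mathfrak{p}})}_{\mathcal{C}} = (\mathcal{C}_+)^h M_{\mathfrak{p}} = 0$ for $h \gg 0$. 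On the other hand $M$ is $F$-pure, so $\underline{M}_{\mathcal{C}} = M$, and since forming $\underline{\phantom{M}}_{\mathcal{C}}$ commutes with localization (recalled above) this yields $\underline{(M_{\mathfrak{p}})}_{\mathcal{C}} = (\underline{M}_{\mathcal{C}})_{\mathfrak{p}} = M_{\mathfrak{p}} \neq 0$, a contradiction. This establishes the remaining inclusion.

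I do not anticipate a real obstacle: the argument only uses the description $\underline{M}_{\mathcal{C}} = (\mathcal{C}_+)^h M$ for $h \gg 0$, its compatibility with localization, and the triviality that localizing a nilpotent coherent Cartier module keeps it nilpotent. Alternatively, one can argue purely ideal-theoretically: for coherent $M$ one has $\Supp M = V(\Ann_R M)$, so it is enough to check $\Ann_R(M/M_{\text{nil}}) = \Ann_R M$; and if $rM \subseteq M_{\text{nil}}$, then using $M = \mathcal{C}_+ M$ (valid since $M$ is $F$-pure) together with the relation $r\varphi = \varphi r^{p^e}$ for $\varphi \in \mathcal{C}_e$ one gets $rM \subseteq \mathcal{C}_+(rM)$, hence inductively $rM \subseteq (\mathcal{C}_+)^h(rM) \subseteq (\mathcal{C}_+)^h M_{\text{nil}} = 0$ for $h \gg 0$, so $rM = 0$.
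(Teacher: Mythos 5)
Your proof is correct and follows essentially the same route as the paper: one inclusion is immediate because $\overline{M}^{\mathcal{C}}$ is a quotient of $M$, and for the other you localize at a point of the support, use that $F$-purity commutes with localization, and observe that a nonzero $F$-pure module cannot be nilpotent. The paper's proof is just a terser phrasing of this same argument.
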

\begin{proof}
Since $\overline{M}^\mathcal{C}$ is a quotient of $M$ we only have to show the inclusion from right to left. Let $x \in \Supp M$. Since $F$-purity localizes we have $\mathcal{C}_+ M_x = M_x$. But then $M_x$ is not nilpotent. 
\end{proof}

The following result should be of independent interest.

\begin{Prop}
\label{TestelementOverlineEquivalence}
Let $R$ be $F$-finite and let $(M, \kappa)$ be a coherent Cartier module and let $\mathcal{C}$ be a Cartier subalgebra of $\langle \kappa \rangle$. Then $f$ is a test element for $(M, \mathcal{C})$ if and only if $f$ is a test element for $(\overline{M}, \mathcal{C})$.
\end{Prop}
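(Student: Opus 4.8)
The plan is to unwind the definition of test element and reduce the statement to the previous lemmas, especially Lemma \ref{OverlineSupportFPure} and the commutation results for $\underline{\phantom{M}}$ and $\overline{\phantom{M}}$.

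Recall that $f$ being a test element for $(M,\mathcal{C})$ means two things: first, that $D(f) \cap \Supp M$ is dense in $\Supp M$, and second, that $M_f$ is $F$-regular with respect to $\mathcal{C}$. For the first condition, the key observation is that test elements only see $\underline{M}$: indeed by definition a test element for $(M,\mathcal{C})$ is the same as a test element for $(\underline{M}_\mathcal{C},\mathcal{C})$, since $\underline{M}_\mathcal{C}$ is $F$-pure and $\tau$ as well as the relevant support conditions only depend on $\underline{M}_\mathcal{C}$; and $\Supp \underline{M}_\mathcal{C} = \Supp \overline{\underline{M}_\mathcal{C}}$ by Lemma \ref{OverlineSupportFPure} since $\underline{M}_\mathcal{C}$ is $F$-pure. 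By Lemma \ref{OverlineFpureCommute} we have $\overline{\underline{M}_\mathcal{C}}^{\mathcal{C}} = \underline{(\overline{M}^\mathcal{C})}_\mathcal{C}$, so the relevant $F$-pure modules attached to $M$ and $\overline{M}$ agree, which immediately takes care of the density condition. So first I would make this reduction explicit: replace $M$ by $\underline{M}_\mathcal{C}$ and $\overline{M}$ by $\underline{(\overline{M})}_\mathcal{C}$, noting these are isomorphic as Cartier modules by Lemma \ref{OverlineFpureCommute}, hence the density statements are literally the same.

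For the second condition, $F$-regularity of the localizations, the point is that $F$-regularity is a property of the crystal: a coherent Cartier module $N$ that is $F$-pure is $F$-regular if and only if $\overline{N}$ is $F$-regular, because passing from $N$ to $\overline{N}$ kills only a nilpotent submodule, which (being locally nilpotent, hence zero at each generic point of $\Supp N$ once $N$ is $F$-pure so that $\Supp \overline{N} = \Supp N$) does not change the lattice of Cartier submodules generically. More precisely, Cartier submodules of $N$ that generically agree with $N$ correspond bijectively to Cartier submodules of $\overline{N}$ generically agreeing with $\overline{N}$ via $N' \mapsto \overline{N'}$ and pulling back along $N \to \overline{N}$; I would spell this correspondence out. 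Since $\overline{\phantom{M}}$ commutes with localization, we get that $(\underline{M}_\mathcal{C})_f$ is $F$-regular iff $\overline{(\underline{M}_\mathcal{C})_f}$ is iff $(\overline{\underline{M}_\mathcal{C}})_f$ is, and the latter equals $(\underline{(\overline{M})}_\mathcal{C})_f$ by Lemma \ref{OverlineFpureCommute}.

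Putting these two together gives the equivalence. The main obstacle I anticipate is the bookkeeping in the second step: making sure that ``$F$-regular'' transfers correctly along a nil-isomorphism of $F$-pure coherent Cartier modules, i.e. that killing a nilpotent submodule does not create or destroy proper Cartier submodules that generically agree with the whole module. This needs the fact that once a module is $F$-pure its nilpotent submodule is zero at generic points of the support (so the generic data is unaffected), together with $\Supp \overline{N} = \Supp N$ from Lemma \ref{OverlineSupportFPure}; one should also double-check that $F$-purity is preserved, which is exactly the content of Lemma \ref{OverlineFpureCommute}. Everything else is a direct unwinding of definitions and repeated use of the fact that $\underline{\phantom{M}}$, $\overline{\phantom{M}}$ and localization all commute.
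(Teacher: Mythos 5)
Your proposal follows essentially the same route as the paper: reduce to the $F$-pure case via Lemma \ref{OverlineFpureCommute}, dispose of the density/support condition via Lemma \ref{OverlineSupportFPure}, and transfer $F$-regularity across the nil-isomorphism $M \to \overline{M}$ by matching generically-agreeing Cartier submodules in both directions (image and preimage), using $F$-purity to upgrade the resulting nil-isomorphism $N' \hookrightarrow M$ to an equality. One parenthetical is off, though it does not affect the argument: $M_{\nil}$ need not vanish at generic points of $\Supp M$ even when $M$ is $F$-pure; the correct (and easy) point, which the paper uses, is that $N'_\eta = M_\eta$ forces $(N'_{\nil})_\eta = (M_{\nil})_\eta$ because forming the nilpotent part commutes with localization, so generic agreement passes to the images in $\overline{M}$.
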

\begin{proof}
By Lemma \ref{OverlineFpureCommute} we may assume that $M$ is $F$-pure with respect to $\mathcal{C}$. By Lemma \ref{OverlineSupportFPure} the condition on the support for a given element $f$ is vacuous. So we only have to show that $\overline{M}^{\mathcal{C}}_f$ is $F$-regular if and only if $M_f$ is $F$-regular. Recall from \cite[Lemma 2.18]{blicklep-etestideale} that the operation $\overline{\phantom{M}}$ and $F$-purity commute with localization. So we have a short exact sequence $0 \to {M}_{\text{nil}} \to M \to \overline{M} \to 0$ with both $M$ and $\overline{M}$ $F$-pure and $\Supp M = \Supp \overline{M}$ and we have to show that $M$ is $F$-regular if and only if $\overline{M}$ is $F$-regular.

Let $N \subseteq M$ be a Cartier module that agrees with $M$ at generic points of $\Supp M$. Then for such a generic point $\eta$ we have $N_\eta = M_\eta$ and by \cite[Lemma 6.10]{staeblertestmodulnvilftrierung} we get ${N_{\text{nil}}}_\eta = {M_{\text{nil}}}_\eta$. It follows that $\overline{M}_\eta = \overline{N}_\eta$ and thus $\overline{M} = \overline{N}$ by $F$-regularity. Since $M \to \overline{M}$ and $N \to \overline{N}$ are $\mathcal{C}$-nil-isomorphisms we obtain that the inclusion $N \to M$ is one as well. But $M$ is $F$-pure so that this is an isomorphism as desired.

For the other direction let $N \subseteq \overline{M}$ be a Cartier module that agrees with $\overline{M}$ at generic points of $\Supp \overline{M}$. Let $N'$ be the preimage of $N$ under the surjection $\pi: M \to \overline{M}$. This is a Cartier module since $\pi$ is a morphism of Cartier modules. Fix a generic point $\eta$ of $\Supp \overline{M}$ then $N_\eta = \overline{M}_\eta$ and hence $N'_\eta =M_\eta$. By virtue of $M$ being $F$-regular we conclude that $N' = M$. It follows that $N = \overline{M}$.
\end{proof}

\begin{Le}
\label{DoubleOverline}
Let $R$ be $F$-finite and let $(M, \kappa)$ be a coherent Cartier module and let $\mathcal{C}$ be a Cartier subalgebra of $\langle \kappa \rangle$. Then we have an inclusion $M_{\nil,\kappa} \subseteq M_{\nil, \mathcal{C}}$ and $\overline{\overline{M}^\kappa}^\mathcal{C} = \overline{M}^\mathcal{C}$.
\end{Le}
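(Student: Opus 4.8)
The statement has two parts: the inclusion $M_{\nil,\kappa} \subseteq M_{\nil,\mathcal{C}}$ of nilpotent submodules for the two Cartier algebras, and the identity $\overline{\overline{M}^\kappa}^{\mathcal{C}} = \overline{M}^{\mathcal{C}}$. The inclusion is the easy half: since $\mathcal{C} \subseteq \langle\kappa\rangle$, any submodule killed by a power of $\langle\kappa\rangle_+$ is a fortiori killed by the corresponding power of $\mathcal{C}_+$, so the maximal $\kappa$-nilpotent submodule is contained in the maximal $\mathcal{C}$-nilpotent one. (One should be slightly careful that $M$ need not be coherent, so ``nilpotent'' should be read as ``locally nilpotent''; but the union of $\kappa$-locally-nilpotent submodules is still $\mathcal{C}$-locally-nilpotent, so the inclusion persists.)

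**The main identity.** For the second part I would argue as follows. Write $\pi\colon M \to \overline{M}^{\kappa}$ for the canonical surjection with kernel $M_{\nil,\kappa}$. Since $M_{\nil,\kappa} \subseteq M_{\nil,\mathcal{C}}$ by the first part, $\pi$ carries $M_{\nil,\mathcal{C}}$ onto $M_{\nil,\mathcal{C}}/M_{\nil,\kappa} \subseteq \overline{M}^{\kappa}$, and this image is exactly $(\overline{M}^{\kappa})_{\nil,\mathcal{C}}$: it is $\mathcal{C}$-nilpotent because it is a quotient of a $\mathcal{C}$-nilpotent module, and the quotient $\overline{M}^{\kappa}/(M_{\nil,\mathcal{C}}/M_{\nil,\kappa}) \cong M/M_{\nil,\mathcal{C}} = \overline{M}^{\mathcal{C}}$ has no $\mathcal{C}$-nilpotent submodules by definition of $M_{\nil,\mathcal{C}}$. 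Hence
\[
\overline{\overline{M}^{\kappa}}^{\mathcal{C}} \;=\; \overline{M}^{\kappa}\big/(\overline{M}^{\kappa})_{\nil,\mathcal{C}} \;=\; \bigl(M/M_{\nil,\kappa}\bigr)\big/\bigl(M_{\nil,\mathcal{C}}/M_{\nil,\kappa}\bigr) \;\cong\; M/M_{\nil,\mathcal{C}} \;=\; \overline{M}^{\mathcal{C}},
\]
which is the claim; all maps here are Cartier-module maps since $\pi$ is, so this is an isomorphism of $\mathcal{C}$-Cartier modules.

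**Where the work lies.** The one point that needs genuine verification, rather than formal manipulation, is that $M_{\nil,\mathcal{C}}/M_{\nil,\kappa}$ really is the \emph{full} $\mathcal{C}$-nilpotent submodule of $\overline{M}^{\kappa}$ and not merely a $\mathcal{C}$-nilpotent submodule — equivalently, that $\overline{M}^{\mathcal{C}}$ genuinely has no nonzero $\mathcal{C}$-nilpotent submodule. This is exactly the uniqueness statement for the maximal nilpotent submodule (the analogue of \cite[Lemma 2.12]{blicklep-etestideale} for the subalgebra $\mathcal{C}$), which holds because the $\mathcal{C}$-locally-nilpotent submodules form a directed family under sum. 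So the proof reduces to: (i) cite/recall that $\overline{\phantom{M}}^{\mathcal{C}}$ is characterized by being a quotient that is a $\mathcal{C}$-nil-isomorphic image with no $\mathcal{C}$-nilpotent submodules; (ii) observe the composite $M \to \overline{M}^{\kappa} \to \overline{\overline{M}^{\kappa}}^{\mathcal{C}}$ is a $\mathcal{C}$-nil-isomorphism (composition of two such) onto a module with no $\mathcal{C}$-nilpotent submodule; (iii) conclude by uniqueness that this composite realizes $\overline{M}^{\mathcal{C}}$, giving the asserted equality. I expect no serious obstacle; the only subtlety is keeping the coherence hypotheses straight, which is why phrasing everything in terms of (local) nilpotence and nil-isomorphisms — as the surrounding text already does — is the cleanest route.
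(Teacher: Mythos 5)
Your proof is correct and follows essentially the same route as the paper: the inclusion $M_{\nil,\kappa}\subseteq M_{\nil,\mathcal{C}}$ comes from $\mathcal{C}\subseteq\langle\kappa\rangle$, and the identity is obtained by identifying the kernel of $M\to\overline{\overline{M}^\kappa}^{\mathcal{C}}$ with $M_{\nil,\mathcal{C}}$ (the paper phrases this kernel as $M_{\nil,\kappa}+M_{\nil,\mathcal{C}}$, which equals $M_{\nil,\mathcal{C}}$ by the first part). Your extra care about the characterization of the maximal nilpotent submodule and the local-nilpotence caveat only makes explicit what the paper leaves implicit.
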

\begin{proof}
The inclusion is immediate since any element of $(\mathcal{C}_+)^e$ may be written as $\kappa^n r$ for some $r \in R$ and $n \in \mathbb{N}$. The second claim follows from the first since the kernel of the surjection $M \to \overline{\overline{M}^\kappa}^\mathcal{C}$ is just $M_{\nil, \kappa} + M_{\nil, \mathcal{C}}$.
\end{proof}

Note that the corresponding result for $\underline{\phantom{M}}$ (i.e.\ $\underline{{\underline{A}_\kappa}}_\mathcal{C} = \underline{A}_\mathcal{C}$ follows from Lemma \ref{FPureDifferentAlgebrasCommonAmbientModule} by setting $N = \underline{A}_\kappa$ and $M = A$.

\begin{Le}
\label{TestelementImplication}
Let $R$ be $F$-finite, $(M, \kappa)$ a coherent Cartier module and let $\mathcal{C}$ be a Cartier subalgebra of $\langle \kappa \rangle$. Then $f$ is a test element for $M$ with respect to $\mathcal{C}$ if and only if $f$ is a test element for $\overline{M}^\kappa$ with respect to $\mathcal{C}$.
\end{Le}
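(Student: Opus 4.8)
The plan is to deduce this directly from Proposition~\ref{TestelementOverlineEquivalence} applied twice, using Lemma~\ref{DoubleOverline} as the bridge. First I would record the small setup point: since $M_{\nil,\kappa}$ is a $\kappa$-Cartier submodule of $(M,\kappa)$, the quotient $\overline{M}^\kappa$ inherits a $\kappa$-Cartier structure, so $\mathcal{C}\subseteq\langle\kappa\rangle$ acts on it and it makes sense to speak of a test element for $(\overline{M}^\kappa,\mathcal{C})$. Moreover the structural map of $\overline{M}^\kappa$ is the one induced by $\kappa$, so that Proposition~\ref{TestelementOverlineEquivalence} is applicable to $(\overline{M}^\kappa,\mathcal{C})$ verbatim.

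Next I would apply Proposition~\ref{TestelementOverlineEquivalence} to $(M,\mathcal{C})$: $f$ is a test element for $(M,\mathcal{C})$ if and only if $f$ is a test element for $(\overline{M}^\mathcal{C},\mathcal{C})$, where the overline is taken with respect to $\mathcal{C}$. Applying the same proposition to $(\overline{M}^\kappa,\mathcal{C})$ gives that $f$ is a test element for $(\overline{M}^\kappa,\mathcal{C})$ if and only if $f$ is a test element for $(\overline{\overline{M}^\kappa}^\mathcal{C},\mathcal{C})$. By Lemma~\ref{DoubleOverline} we have $\overline{\overline{M}^\kappa}^\mathcal{C}=\overline{M}^\mathcal{C}$, so both conditions are equivalent to the single statement that $f$ is a test element for $(\overline{M}^\mathcal{C},\mathcal{C})$, and the asserted equivalence follows by transitivity.

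I do not expect a genuine obstacle here: the entire content is bookkeeping with the two flavours of the overline operation, $\overline{\phantom{M}}^{\kappa}$ and $\overline{\phantom{M}}^{\mathcal{C}}$. The only point requiring a moment's care is to keep straight that the overline occurring in Proposition~\ref{TestelementOverlineEquivalence} is the one relative to the subalgebra $\mathcal{C}$ (which is how it is stated and how its proof proceeds, via Lemmata~\ref{OverlineFpureCommute} and~\ref{OverlineSupportFPure}), so that chaining it with Lemma~\ref{DoubleOverline} is legitimate. Once that is pinned down the argument reduces to a two-line chase at the level of the ``test element'' condition, with no calculation involved.
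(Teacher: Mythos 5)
Your proposal is correct and is essentially identical to the paper's own argument: two applications of Proposition~\ref{TestelementOverlineEquivalence} (to $M$ and to $\overline{M}^\kappa$, with the overline taken relative to $\mathcal{C}$), glued together by the identity $\overline{\overline{M}^\kappa}^\mathcal{C}=\overline{M}^\mathcal{C}$ from Lemma~\ref{DoubleOverline}. Your extra remark about which flavour of overline appears in the proposition is a sensible clarification but does not change the argument.
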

\begin{proof}
By Proposition \ref{TestelementOverlineEquivalence} $f$ is a test element for $\overline{\overline{M}^\kappa}^\mathcal{C}$ if and only if it is one for $\overline{M}^\kappa$. On the other hand since by Lemma \ref{DoubleOverline} we have $\overline{\overline{M}^\kappa}^\mathcal{C} = \overline{M}^\mathcal{C}$ this is, again by Proposition \ref{TestelementOverlineEquivalence}, the case if and only if $f$ is a test element for $M$.
\end{proof}

Note that since $F^!$ is left exact one has $\overline{F^! M} = F^! M$ if $M = \overline{M}$. 
The following Theorem generalizes \cite[Lemma 4.3]{staeblertestmodulnvilftrierung} and is valid for any $F$-finite $R$ provided that test modules do exist. It will allow us to define test module filtrations for unit $F$-modules (cf.\ Definition \ref{UnitTestmoduleDefinition} below).

\begin{Theo}
\label{UnitTestModuleFiltration}
Let $R$ be essentially of finite type over an $F$-finite field. Given a nil-isomorphism of coherent Cartier modules $\varphi: M \to N$ one has $\varphi(\tau(M, f^t)) = \tau(N, f^t)$. Moreover, the test module filtration $\tau(M, f^t)$ induces a decreasing filtration $\tau(\mathcal{M}, f^t)$ for $\mathcal{M} = \colim {F^e}^! M$ and we obtain $\Phi(\tau(\mathcal{M} ,f^t)) = \tau(\mathcal{N}, f^t)$, where $\Phi: \colim {F^e}^! M \to \colim {F^e}^! N = \mathcal{N}$ is the isomorphism induced by $\varphi$.
\end{Theo}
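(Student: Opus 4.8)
The plan is to reduce everything to the statement about a single nil-isomorphism $\varphi\colon M\to N$ and then handle the colimit formally. First I would observe that it suffices to treat nil-isomorphisms that are either injective or surjective, since an arbitrary nil-isomorphism $\varphi$ factors as $M\twoheadrightarrow \varphi(M)\hookrightarrow N$ with both maps again nil-isomorphisms (the kernel of the first and the cokernel of the second are subquotients of $\ker\varphi$ and $\coker\varphi$, hence nilpotent). So the core is two cases.

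For a surjection $\pi\colon M\twoheadrightarrow N$ with nilpotent kernel $K$, I would first note that a nilpotent submodule is killed by $\mathcal{C}_e$ for $e\gg0$, hence $K$ contributes nothing to $\underline{M}_{\mathcal C}$; more precisely $\underline{M}_{\mathcal C} = (\mathcal C_+)^e M$ maps onto $(\mathcal C_+)^e N = \underline{N}_{\mathcal C}$ and the map is an isomorphism for $e$ large since $(\mathcal C_+)^e K = 0$. Then, using the description $\tau(M,f^t)=\sum_{e\ge1}\mathcal C_e f^a\,\underline{M}$ (valid because $R$ is essentially of finite type over an $F$-finite field, so test elements and test modules exist — here $\mathcal C$ is the subalgebra generated in degree $e$ by $\kappa^e f^{\lceil tp^e\rceil}$, and one checks a test element for $M$ is one for $N$ via Lemma~\ref{TestelementImplication} and its relatives, or directly since $D(f)\cap\Supp$ and $F$-regularity after inverting $f$ are insensitive to nilpotents), I get $\pi(\tau(M,f^t)) = \sum_{e\ge1}\mathcal C_e f^a\,\pi(\underline M) = \sum_{e\ge1}\mathcal C_e f^a\,\underline N = \tau(N,f^t)$. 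For an injection $\iota\colon M\hookrightarrow N$ with nilpotent cokernel, the same formula applies and $\underline M = \underline N$ inside $N$ (both equal $(\mathcal C_+)^e N$ for $e\gg 0$, which lands in $M$), so $\tau(M,f^t)=\tau(N,f^t)$ literally as submodules of $N$; Lemma~\ref{FPureDifferentAlgebrasCommonAmbientModule} is exactly the tool that makes the passage between the $\langle\kappa\rangle$-version and the $\mathcal C$-version of $\underline{\phantom M}$ legitimate here.

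Next I would promote this to the colimit. The maps $C^e\colon {F^e}^!M\to {F^{e+1}}^!M$ are nil-isomorphisms of Cartier modules by Lemma~\ref{AdjointCartierIsNiliso} (applied to ${F^e}^!M$, whose Cartier structure is the one described there), so by the single-map case each $C^e$ carries $\tau({F^e}^!M,f^t)$ onto $\tau({F^{e+1}}^!M,f^t)$. Hence the images of these test modules in $\mathcal M=\colim_e {F^e}^!M$ stabilise to a well-defined submodule, and I define $\tau(\mathcal M,f^t)$ to be this common image (this is the content that justifies Definition~\ref{UnitTestmoduleDefinition}); right-continuity and the decreasing property in $t$ are inherited termwise from the $\tau({F^e}^!M,f^t)$. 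A given nil-isomorphism $\varphi\colon M\to N$ induces compatible nil-isomorphisms ${F^e}^!\varphi\colon {F^e}^!M\to{F^e}^!N$ (functoriality of ${F^e}^!$ plus the fact that $F^!$ of a nilpotent module is nilpotent, which follows because $F^!$ is left exact and commutes with the Cartier structure as recorded just before the theorem), and these are compatible with the transition maps $C^e$; passing to the colimit gives $\Phi$, and applying the single-map result on each level yields $\Phi(\tau(\mathcal M,f^t))=\tau(\mathcal N,f^t)$. One should also check $\Phi$ is compatible with the unit $F$-module structure $\Phi_{\mathcal M}\colon F^\ast\mathcal M\to\mathcal M$ after twisting by $\omega_R^{-1}$, but that is immediate from naturality of $C\colon\colim{F^e}^!(-)\to F^!\colim{F^e}^!(-)$.

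The main obstacle I expect is the bookkeeping in the surjective case: verifying cleanly that $f$ remains a test element after a nil-isomorphism and for the relevant subalgebra $\mathcal C$ (not just for $\langle\kappa\rangle$), and that the formula $\tau=\sum_{e\ge1}\mathcal C_e f^a\underline M$ really transports along $\pi$ — i.e.\ that $\underline M\to\underline N$ is surjective (or better, an isomorphism) rather than merely having nilpotent cokernel. This is where Lemmas~\ref{FPureDifferentAlgebrasCommonAmbientModule}, \ref{OverlineFpureCommute}, \ref{OverlineSupportFPure}, \ref{DoubleOverline}, \ref{TestelementImplication} and Proposition~\ref{TestelementOverlineEquivalence} are designed to do the heavy lifting, so the proof should amount to assembling them in the right order rather than any genuinely new argument. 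A secondary, purely formal annoyance is checking that the stabilised image in the colimit does not depend on which stage one starts from, which follows from the transition maps being nil-isomorphisms together with the single-map case already proved.
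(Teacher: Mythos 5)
Your proposal is correct and follows essentially the same route as the paper: the key steps (the formula $\tau(M,f^t)=\sum_{e\ge1}\mathcal C_e f^a\underline M$ with a common test element transported via Proposition~\ref{TestelementOverlineEquivalence}, the identity $\varphi(\underline M)=\underline N$, and Lemma~\ref{AdjointCartierIsNiliso} to define $\tau(\mathcal M,f^t)$ as the stabilised image in the colimit) are exactly those of the paper's proof, with your surjective/injective factorization merely re-deriving $\varphi(\underline M)=\underline N$ instead of citing it. The only quibble is that in the surjective case $\underline M_{\mathcal C}\to\underline N_{\mathcal C}$ need not be injective (only surjective, since $\underline M_{\mathcal C}\cap K$ can be nonzero), but only surjectivity is used, so the argument stands.
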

\begin{proof}
Fix $t \in \mathbb{Q}_{\geq 0}$ and denote the algebra generated in degree $e$ by $\kappa f^{\lceil tp^e \rceil}$ by $\mathcal{C}$. The operations $\underline{\phantom{M}}$ and $\overline{\phantom{M}}$ will always be taken with respect to $\mathcal{C}$ and we will omit this from the notation. We claim that $\varphi(\underline{M}) = \underline{N}$. Indeed, since the inclusion $\underline{N} \subseteq N$ is a nil-isomorphism (and similarly for $\underline{M} \subseteq M$) the restriction $\varphi: \underline{M} \to \underline{N}$ is also a nil-isomorphism. But $\underline{N}$ does not admit non-trivial nilpotent quotients by \cite[Corollary 2.14]{blicklep-etestideale}.

Next, we want to argue that $\underline{M}$ and $\underline{N}$ admit a common test element. Observe that $\varphi(\underline{M}_{\nil}) \subseteq \underline{N}_{\nil}$ so that $\varphi$ induces a nil-isomorphism $\overline{\underline{M}} \to \overline{\underline{N}}$ which is in fact an isomorphism (we already know that it is surjective and injectivity follows from \cite[Lemma 2.12]{blicklep-etestideale}\footnote{Again, note that there is a typing error in the reference ``quotients`` should read ``submodules``.}). Let now $c$ be a test element for $\underline{M}$. By Proposition \ref{TestelementOverlineEquivalence} $c$ is then a test element for $\overline{\underline{M}} \cong \overline{\underline{N}}$ and applying \ref{TestelementOverlineEquivalence} once more we obtain that $c$ is also a test element for $\underline{N}$.

From this we deduce that \[\varphi(\tau(M, f^t)) = \sum_{e \geq 1} \varphi(\kappa^e f^{\lceil tp^e \rceil} c \underline{M}) = \sum_{e \geq 1} \kappa^e f^{\lceil tp^e \rceil} c \underline{N} = \tau(N,f^t).\]
For the second statement observe that $C: M \to F^!M$ is a nil-isomorphism (Lemma \ref{AdjointCartierIsNiliso}) and define $\tau(\mathcal{M}, f^t)$ as the image of $\tau(M,f^t)$ under the natural map $M \to \mathcal{M}$.
\end{proof}

\begin{Bem}
If we assume that $\overline{M} = M$, i.e.\ that the maps $M \to {F^e}^! M$ are injective (cf.\ \cite[Lemma 6.9]{staeblertestmodulnvilftrierung}), then the various $\tau(M, f^t)$ are all contained in the image of $M \to \colim {F^e}^!M$. In particular, this filtration is exhaustive if and only if $C: M \to F^!M$ is an isomorphism. This is precisely the case when $M$ corresponds to a locally constant sheaf under the Riemann-Hilbert correspondence.

Also note that, by the same token, if $M$ is $F$-regular, then $F^! M$ is $F$-regular if and only if $C: M \to F^!M$ is surjective.
\end{Bem}

For later use we record the following results:

\begin{Le}
\label{TauofTauEqualsTau}
Let $(M, \kappa)$ be an $F$-pure coherent Cartier module and $f \in R$ an $M$-regular element such that $M_f$ is $F$-regular. Then $\tau(M, f^t) = \tau(\tau(M, f^0), f^t)$ for all $t \geq 0$.
\end{Le}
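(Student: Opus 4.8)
The plan is to exploit the explicit formula for test modules in terms of a test element, namely $\tau(M, f^t) = \sum_{e \geq 1} \kappa^e f^{\lceil t p^e \rceil} c \underline{M}_{\mathcal{C}}$ for any test element $c$ and any choice of ``$a$'' in the exponent of the auxiliary test element (here we have absorbed the power of $c$ into $c$). Write $\tau = \tau(M, f^0)$. Since $M$ is $F$-pure and $M_f$ is $F$-regular, $f$ itself is a test element for $(M, \langle \kappa \rangle)$, and it is also a test element for $(M, \mathcal{C})$ where $\mathcal{C}$ is the Cartier subalgebra generated in degree $e$ by $\kappa^e f^{\lceil t p^e \rceil}$, since localizing at $f$ makes $f^{\lceil t p^e\rceil}$ a unit so that $M_f$ is $F$-regular with respect to $\mathcal{C}$ as well. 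Hence both $\tau(M, f^t)$ and $\tau(\tau, f^t)$ are computed by the sum formula, and it remains to compare the two.

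First I would establish two auxiliary facts. (i) $\tau = \tau(M, f^0)$ is $F$-pure with respect to $\kappa$ (indeed with respect to $\mathcal{C}$): by \cite[Theorem 3.11]{blicklep-etestideale} the test module is $F$-pure when it exists, or directly $\mathcal{C}_+ \tau = \mathcal{C}_+ \sum \kappa^e f^{\dots} f \underline{M} = \tau$. Moreover $\underline{\tau}_{\mathcal{C}} = \tau_{\mathcal{C}}$ — but here I must be slightly careful about which algebra is used to form $\underline{\phantom{M}}$, and I would invoke Lemma \ref{FPureDifferentAlgebrasCommonAmbientModule} with $A = M$, $N = \tau$ to transfer $F$-purity statements between $\langle\kappa\rangle$ and $\mathcal{C}$. (ii) $f$ is a test element for $(\tau, \mathcal{C})$: since $\tau_f \subseteq M_f$ agrees with $M_f$ at the generic points of $\Supp M$ (because $\tau$ generically agrees with $\underline{M} = M$), and $M_f$ is $F$-regular, we get $\tau_f = M_f$, which is $F$-regular; and $D(f) \cap \Supp \tau$ is dense in $\Supp \tau$ since $\Supp \tau \supseteq$ the generic points of $\Supp M$ and $\Supp\tau \subseteq \Supp M$.

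Then the computation is: since $f$ is a common test element and $\tau$ is $F$-pure with respect to $\mathcal{C}$ with $\underline{\tau}_{\mathcal{C}} = \tau$, we have
\[
\tau(\tau, f^t) = \sum_{e \geq 1} \kappa^e f^{\lceil t p^e \rceil} f \, \tau.
\]
Now substitute the degree-zero test module formula $\tau = \tau(M,f^0) = \sum_{d \geq 1} \kappa^d f \underline{M}$ (taking $a = 1$ there). Plugging in and using that $\kappa$ and multiplication by elements of $R$ satisfy the Cartier relation $r \kappa = \kappa r^p$, one rewrites $\kappa^e f^{\lceil t p^e\rceil} f \kappa^d f \underline{M}$ as a term of the form $\kappa^{e+d} (\text{power of } f) \underline{M}$. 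A short bookkeeping of exponents shows that every such term is contained in $\tau(M, f^t) = \sum_{m \geq 1} \kappa^m f^{\lceil t p^m\rceil} f \underline{M}$, giving $\tau(\tau, f^t) \subseteq \tau(M, f^t)$. For the reverse inclusion, $\tau(M,f^t) = \sum_{m\geq 1}\kappa^m f^{\lceil tp^m\rceil} f\underline{M}$; split off one application of $\kappa$ with an extra factor of $f$, i.e.\ write $\kappa^m f^{\lceil tp^m\rceil} f\underline M = \kappa^{m-1}f^{\lceil tp^{m-1}\rceil}\cdot\big(\kappa\,(\text{power of }f)\,f\underline M\big)$ after moving an appropriate power of $f$ across $\kappa^{m-1}$ via the Cartier relation, and observe the inner piece lies in $f\cdot\tau(M,f^0)=f\tau$ (using $\lceil tp^m\rceil \le p\lceil tp^{m-1}\rceil + (p-1)$ to control the exponents, which only requires $t\geq 0$). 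Hence $\tau(M,f^t)\subseteq \sum_{e\geq 1}\kappa^e f^{\lceil tp^e\rceil} f\tau = \tau(\tau, f^t)$.

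The main obstacle I expect is the exponent bookkeeping in both inclusions: keeping track of how $\lceil t p^e \rceil$ behaves under the Cartier relation $r\kappa = \kappa r^p$ (so that one can freely move powers of $f$ in and out across $\kappa^e$) and verifying the inequalities $\lceil t p^{e+d}\rceil + p^d \ge \lceil t p^e \rceil p^d + \dots$ type estimates needed so that each generated submodule lands inside the other. These are elementary but need care; once the combinatorics of the ceilings is pinned down the containments are formal. A secondary subtlety is making sure all the $F$-purity and ``$\underline{\phantom{M}}$'' operations are taken with respect to the correct Cartier algebra, which is exactly what Lemmas \ref{FPureDifferentAlgebrasCommonAmbientModule} and \ref{DoubleOverline} are designed to handle.
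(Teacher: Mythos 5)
There is a genuine gap at the centre of your argument: the auxiliary claim (i), that $\tau := \tau(M,f^0)$ is $F$-pure with respect to $\mathcal{C}$ (equivalently $\underline{\tau}_{\mathcal{C}} = \tau$), is false in general. Take $R = k[x]$, $M = \omega_R$ with the Cartier operator, $f = x$ and $t = 1$: then $M$ is $F$-regular, so $\tau(M,f^0) = \omega_R$, but $\mathcal{C}_+\omega_R = \sum_{e\ge 1}\kappa^e x^{p^e}\omega_R = x\omega_R \ne \omega_R$. Your ``direct'' computation $\mathcal{C}_+\sum_e \kappa^e f\underline{M} = \sum_e\kappa^e f\underline{M}$ ignores the extra factor $f^{\lceil tp^e\rceil}$ carried by the degree-$e$ part of $\mathcal{C}$, and the fallback via Lemma \ref{FPureDifferentAlgebrasCommonAmbientModule} with $N=\tau$, $A=M$ does not apply either: its hypothesis $\underline{N}_\kappa = \underline{M}_\kappa$ here reads $\tau(M,f^0) = M$, i.e.\ $F$-regularity of $M$, which is not assumed. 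Consequently your key formula $\tau(\tau,f^t) = \sum_{e\ge 1}\kappa^e f^{\lceil tp^e\rceil} f\,\tau$ is unjustified; the correct right-hand side has $\underline{\tau}_{\mathcal{C}}$ in place of $\tau$, and identifying $\underline{\tau}_{\mathcal{C}}$ with $\underline{M}_{\mathcal{C}}$ is exactly the nontrivial content of the lemma. The paper obtains it by showing $\mathcal{C}_+M = \kappa^e f^{\lceil tp^e\rceil}M \subseteq \kappa^a f M = \tau(M,f^0)$ for $e \gg a \gg 0$ (this is where $t>0$ and the cited lemmas from the Bernstein--Sato paper enter), whence $\mathcal{C}_+^h M \subseteq \mathcal{C}_+^{h-1}\tau$ and $\underline{M}_{\mathcal{C}} \subseteq \underline{\tau}_{\mathcal{C}}$.

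A secondary problem is that even your exponent bookkeeping does not close on its own: after rewriting $\kappa^e f^{\lceil tp^e\rceil} f\kappa^d f M = \kappa^{e+d} f^{(\lceil tp^e\rceil+1)p^d+1}M$, you must land this inside $\tau(M,f^t) = \sum_m \kappa^m f^{\lceil tp^m\rceil} f\,\underline{M}_{\mathcal{C}}$, whose right-hand side involves $\underline{M}_{\mathcal{C}}$ rather than $M$; the ceiling inequality $\lceil tp^e\rceil p^d \ge \lceil tp^{e+d}\rceil$ is fine, but you still need to move from $M$ to $\underline{M}_{\mathcal{C}}$, which again circles back to the missing comparison of $F$-pure parts. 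In short, the two facts you do establish correctly ($f$ is a common test element, and $\tau_f = M_f$) are the easy part; the argument needs the inclusion $\underline{M}_{\mathcal{C}} \subseteq \underline{\tau(M,f^0)}_{\mathcal{C}}$, which your proposal asserts in a stronger, false form rather than proves.
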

\begin{proof}
If $t =0$ then the assertion is trivial. We now assume $t > 0$. We have $\tau(M, f^0) \subseteq M$ so that the inclusion from right to left is clearly satisfied. For the other direction denote the algebra generated in degree $e$ by $\kappa^e f^{\lceil t p^e\rceil}$ by $\mathcal{C}$. Note that by \cite[Lemma 4.1]{blicklestaeblerbernsteinsatocartier} we have $\kappa^a f M \subseteq \kappa^{a+1} f M$. We conclude that $\tau(M, f^0) = \kappa^a f M$ for all $a \gg 0$. Fix one such $a$. 

By the first part of the proof of \cite[Lemma 4.3]{blicklestaeblerbernsteinsatocartier} (which only requires $F$-purity and no assumptions on $t$) we have $\mathcal{C}_+ M = \kappa^e f^{\lceil tp^e \rceil} M$ for all $e \gg 0$. In particular, for $e$ sufficiently large and $e \geq a$ we have \[\mathcal{C}_+ M = \kappa^e f^{\lceil tp^e \rceil} M = \kappa^{e-a} \kappa^a f^{\lceil tp^e \rceil} M\subseteq \kappa^a fM\] since $t > 0$ and since $\kappa \kappa^a f M = \kappa^a f M$ by $F$-purity of $\kappa^a f M$.
We conclude that $\mathcal{C}_+^h M \subseteq \mathcal{C}_+^{h-1} \kappa^a f M$. But for $h \gg 0$ we have by definition $\mathcal{C}_+^h M = \underline{M}_{\mathcal{C}}$ and likewise $\mathcal{C}_+^{h-1} \kappa^a f M = \underline{\kappa^a f M}_{\mathcal{C}}$.
\end{proof}

\begin{Prop}
\label{ExponentAndRootTestModule}
Assume that $R$ is essentially of finite type over an $F$-finite field.
Let $(M, \kappa)$ be an $F$-pure coherent Cartier module and $f \in R$ an $M$-regular element such that $M_f$ is $F$-regular. Assume furthermore that $f = g^n$ for some $g \in R$. Then $\tau(M, f^{\frac{a}{n}}) = \tau(M, g^a)$.
\end{Prop}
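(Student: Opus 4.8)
The plan is to use the explicit description of the test module via a fixed test element, namely $\tau(M, h^s) = \sum_{e \geq 1} \kappa^e h^{\lceil s p^e \rceil} c \underline{M}_{\mathcal{C}_{h,s}}$ for any test element $c$, together with the fact (Lemma \ref{TauofTauEqualsTau} and the material recalled from \cite{blicklep-etestideale}) that the relevant $F$-pure submodule $\underline{M}$ stabilizes. First I would check that $M$ genuinely has a test element for both Cartier subalgebras in play: since $M_f$ is $F$-regular and $f = g^n$, the localization $M_g = (M_f)_{g}$ is a localization of an $F$-regular module hence $F$-regular, and $D(g) \cap \Supp M = D(f) \cap \Supp M$ is dense; so any test element $c$ for $(M, \kappa)$ works for all the subalgebras generated in degree $e$ by $\kappa^e g^{\lceil s p^e \rceil}$ simultaneously. (Here one invokes \cite[Theorem 3.11]{blicklep-etestideale} and the fact that a common test element exists.)

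Having fixed such a $c$, the statement reduces to comparing, for the two subalgebras $\mathcal{C}$ generated in degree $e$ by $\kappa^e f^{\lceil \frac{a}{n} p^e\rceil}$ and $\mathcal{C}'$ generated in degree $e$ by $\kappa^e g^{\lceil a p^e \rceil}$, the modules $\sum_{e\ge 1}\kappa^e f^{\lceil \frac{a}{n}p^e\rceil} c\,\underline{M}$ and $\sum_{e\ge 1}\kappa^e g^{\lceil a p^e\rceil} c\,\underline{M}$. The key observation is the elementary identity $f^{\lceil \frac{a}{n}p^e\rceil} = g^{n\lceil \frac{a}{n}p^e\rceil}$ and the inequalities $a p^e \leq n\lceil \frac{a}{n}p^e\rceil \leq a p^e + (n-1)$, so the two exponents on $g$ differ by a bounded amount $\delta_e \in \{0, 1, \dots, n-1\}$. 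Using the commutation rule $r\varphi = \varphi r^{p^e}$ one moves these extra factors of $g$ across the Cartier operators: a bounded power of $g$ absorbed after applying $\kappa^e$ (that is, $\kappa^e(g^{n\lceil\frac{a}{n}p^e\rceil} c\,\underline M)$ versus $\kappa^e(g^{ap^e}c\,\underline M)$) changes the module only by a factor of $g^{\delta}$ with $0\le\delta\le n-1$ outside, hence by the Brian\c{c}on--Skoda-type stabilization (or directly: $\kappa^e g^{ap^e+\delta_e}c\underline M \subseteq \kappa^e g^{ap^e}c\underline M$ trivially, and conversely $g^{n-1}$ times the first sum lies in... ) the two sums have the same radical-saturated behaviour and in fact coincide once one also uses $\kappa^a g M \subseteq \kappa^{a+1} g M$ type stabilization from \cite[Lemma 4.1]{blicklestaeblerbernsteinsatocartier}. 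More precisely, I would show $\tau(M,g^a) \subseteq \tau(M,f^{a/n})$ directly from $f^{\lceil \frac an p^e\rceil} = g^{n\lceil \frac an p^e\rceil}$ and $n\lceil \frac an p^e\rceil \ge a p^e$, and conversely $\tau(M,f^{a/n})\subseteq \tau(M,g^a)$ using $n\lceil\frac an p^e\rceil \le ap^e + (n-1)$ together with $\kappa^e g^{n-1}(\cdots)\subseteq \kappa^e(\cdots)$ after noting that the bounded correction $g^{n-1}$ can be pulled out as $g^{\lceil (n-1)/p^e\rceil}$-many applications, which tends to the identity for $e \gg 0$, so that passing to the stable value of the defining sum the correction disappears.

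The main obstacle I anticipate is precisely this last point: ensuring that the bounded discrepancy $\delta_e \le n-1$ in the $g$-exponent, which is $e$-independent, genuinely washes out in the colimit-like sum $\sum_{e\ge1}$ rather than merely being swallowed by a fixed power of $g$ on the outside (which would only give equality up to the Brian\c con--Skoda ideal, not on the nose). The resolution should be that for $e$ large, $p^e > n-1$, so $\lceil (ap^e+\delta_e)/1 \rceil$ worth of correction relative to the "right" exponent, when distributed as $\kappa^e g^{ap^e} \cdot g^{\delta_e}$ and the inner $g^{\delta_e}$ pushed through $\kappa$ becomes $g^{\lceil \delta_e/p^e\rceil}=g^0$ applied outside for $e\gg 0$ — giving containment in both directions for the stable summands, and since the sum equals any single sufficiently large summand by the stabilization results quoted above, equality on the nose follows. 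I would write this out carefully, treating the two inclusions separately and invoking Lemma \ref{TauofTauEqualsTau}'s proof technique ($\mathcal{C}_+ M = \kappa^e h^{\lceil s p^e\rceil}M$ for $e\gg0$) to reduce everything to a single large index $e$.
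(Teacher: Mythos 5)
Your overall strategy is workable but it is genuinely different from, and considerably more delicate than, the paper's argument, and as written the hard half of it rests on an invalid manipulation. The paper first reduces to $M$ being $F$-regular via Lemma \ref{TauofTauEqualsTau} (which also disposes of your worry about which algebra $\underline{M}$ is taken with respect to), and then uses right-continuity to replace $t=\frac{a}{n}$ by a nearby $t'\in\mathbb{Z}[\frac1p]$ for which both $\tau(M,f^{t})=\tau(M,f^{t'})$ and $\tau(M,g^{tn})=\tau(M,g^{t'n})$. For such $t'$ the exponent $t'p^e$ is an integer for $e\gg0$, so $\tau(M,f^{t'})=\kappa^e(f^{t'p^e}M)=\kappa^e(g^{nt'p^e}M)=\tau(M,g^{nt'})$ with \emph{no} rounding discrepancy at all. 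This perturbation is exactly the device that makes the ``bounded discrepancy'' you worry about vanish identically, rather than having to be washed out by hand.

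Two concrete problems with your version. First, the easy inclusion is labelled backwards: since $n\lceil\frac{a}{n}p^e\rceil\geq ap^e$, the summand $\kappa^e g^{n\lceil\frac{a}{n}p^e\rceil}(c\underline{M})$ is \emph{contained in} $\kappa^e g^{ap^e}(c\underline{M})$ (a higher power of $g$ gives a smaller module), so what comes for free is $\tau(M,f^{a/n})\subseteq\tau(M,g^a)$, not the reverse. Second, for the genuinely hard inclusion $\tau(M,g^a)\subseteq\tau(M,f^{a/n})$ your proposed mechanism --- pulling the inner correction $g^{\delta_e}$ out through $\kappa^e$ as ``$g^{\lceil\delta_e/p^e\rceil}$'' --- is not a legitimate operation: the projection formula only lets you extract $p^e$-th powers, $\kappa^e(g^{jp^e}x)=g^j\kappa^e(x)$, and there is no fractional version. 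A correct repair in your framework is to compare the $e$-th summand of one sum with the $(e+c)$-th summand of the other: writing $\kappa^{e+c}(g^{ap^{e+c}+n-1}M)=\kappa^e\bigl(g^{ap^e}\kappa^c(g^{n-1}M)\bigr)$ and using that $\kappa^c(g^{n-1}M)=M$ for $c\gg0$ once $M$ is $F$-regular with $g$ a test element (via \cite[Lemma 4.1]{blicklestaeblerbernsteinsatocartier} and $\tau(M,g^0)=M$), one gets $\kappa^e(g^{ap^e}M)=\kappa^{e+c}(g^{ap^{e+c}+n-1}M)\subseteq\kappa^{e+c}(g^{n\lceil\frac{a}{n}p^{e+c}\rceil}M)$. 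With that substitution your argument closes, but the paper's $\mathbb{Z}[\frac1p]$-perturbation avoids the entire issue.
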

\begin{proof}
By Lemma \ref{TauofTauEqualsTau} we may replace $M$ by $\tau(M, f^0)$ and therefore assume that $M$ is $F$-regular. Write $t = \frac{a}{n}$. By right-continuity of the test module filtration there is $t' \in \mathbb{Z}[\frac{1}{p}]$ such that $\tau(M, f^t) = \tau(M, f^{t'})$ and $\tau(M, g^{tn}) = \tau(M, g^{t'n})$. Then by \cite[Lemma 4.4]{blicklestaeblerbernsteinsatocartier} we have $\tau(M, f^{t'}) = \kappa^e(f^{t'p^e} M) = \kappa^e(g^{nt'p^e}M) =\tau(M, g^{nt'})$ for all $e \gg 0$. Hence, we get \[\tau(M, f^t) = \tau(M, f^{t'}) = \tau(M, g^{nt'}) = \tau(M, g^{tn}) = \tau(M, g^a)\] as desired.
\end{proof}

\section{Extending the test module filtration}
\label{SectionTestModuleFiltrationExtension}

Consider the following situation: $R$ is an $F$-finite ring and $f$ a non-zero divisor, denote by $j$ the open immersion $D(f) \to \Spec R$ and let $M$ be an $R_f$-Cartier module. Then $j_\ast M$ is a quasi-coherent Cartier module. In this section we will show how to attach a test module filtration to $j_\ast M$ along $f^t$.

\begin{Le}
\label{CoherentModelOfPushforwardExists}
Let $M$ be a Cartier module on $R_f$ and write $j$ for the open immersion $\Spec R_f \to \Spec R$. Then $j_\ast M$ is locally nil-isomorphic to a coherent Cartier module $N \subseteq i_\ast M$. Moreover, $N$ admits no nilpotent submodules if $M$ admits none.
\end{Le}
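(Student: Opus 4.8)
The statement has two parts: (1) produce a coherent Cartier submodule $N \subseteq j_\ast M$ with $N \hookrightarrow j_\ast M$ a local nil-isomorphism, and (2) arrange that $N$ has no nilpotent submodules whenever $M$ has none. The basic idea is purely affine/finiteness theory: $M$ is a coherent (hence finitely generated) $R_f$-module, so pick generators $m_1, \ldots, m_r \in M$ and let $N_0 \subseteq j_\ast M$ be the $R$-submodule they generate. This $N_0$ is finitely generated over $R$ but need not be a Cartier submodule: $\kappa(F_\ast N_0)$ need not land in $N_0$. To fix this, one uses that over $F$-finite $R$ the $R$-module $F_\ast R$ is finitely generated, and that $\kappa: F_\ast j_\ast M \to j_\ast M$ agrees after inverting $f$ with the given $\kappa$ on $F_\ast M$; since $j_\ast M = \colim_k f^{-k} N_0$ (as $R$-modules), the image $\kappa(F_\ast N_0)$ is a finitely generated $R$-submodule of $j_\ast M$ and hence contained in $f^{-k} N_0$ for some $k \gg 0$. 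So I would set $N := f^{-k} N_0$ for a suitable $k$ and check $\kappa(F_\ast N) \subseteq N$; here one should exploit that $\kappa(F_\ast(f^{-k} N_0)) = f^{-\lceil k/p \rceil}\kappa(F_\ast(f^{\ast} N_0))$ type scaling, so the module only grows by a bounded amount under $\kappa$ and a single $k$ suffices once it is large enough. Concretely, $\kappa(F_\ast(f^{-k}N_0)) = \kappa(f^{-kp}F_\ast(f^{(p-1)k}N_0))\subseteq f^{-k}\kappa(F_\ast N_0')$ for $N_0' = f^{(p-1)k}N_0$ finitely generated, and one checks this lies in $f^{-k}N_0$ once $k$ is chosen large relative to the original bound.

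For part (1), that $N \hookrightarrow j_\ast M$ is a local nil-isomorphism: the cokernel $Q = j_\ast M / N$ satisfies $Q_f = 0$ (since $N_f = (f^{-k}N_0)_f = M$), i.e.\ $Q$ is supported on $V(f)$, so it is $f$-power torsion; I then want to argue $Q$ is locally nilpotent as a Cartier module. The kernel is zero, so only $\coker$ matters. Since $Q$ is $f$-power torsion and quasi-coherent, it is a union of its finitely generated Cartier submodules (apply the growth argument above inside $Q$ to see finitely generated $R$-submodules generate finitely generated Cartier submodules), so it suffices to show each coherent Cartier submodule $Q' \subseteq Q$ is nilpotent. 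But $Q'$ is coherent, $f$-power torsion, and $\kappa$ acts on it; this is precisely the setting in which coherent $f$-power-torsion Cartier modules are nilpotent — one can cite that the test-module/$\underline{\phantom{M}}$-theory forces $\underline{Q'}$ to have radical annihilator containing a power of $f$, hence $\underline{Q'}$ is supported nowhere dense, but being $F$-pure and coherent with $\Supp$ inside $V(f)$ it must then... actually cleaner: $\kappa(F_\ast Q') \subseteq f^{-k}\kappa(F_\ast N_0)/N \cap \ldots$ — rather than belabor this, I would directly show that on a coherent Cartier module $Q'$ killed by $f^m$ we have $\kappa^e(F^e_\ast Q')$ killed by $f^{\lceil m/p^e\rceil}$, which is $f^0 = 1$ for $e \gg 0$, so $\kappa^e Q' = 0$. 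This last inclusion is the standard computation $f^{\lceil m/p \rceil}\kappa(F_\ast Q') = \kappa(f^{\lceil m/p\rceil p}F_\ast Q') \subseteq \kappa(F_\ast(f^m Q')) = 0$, iterated.

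For part (2): suppose $M$ has no nonzero nilpotent Cartier submodule; I must show $N$ has none either. Let $N' \subseteq N$ be nilpotent, say $\kappa^e N' = 0$. Then $N'_f \subseteq N_f = M$ is a nilpotent Cartier submodule of $M$, hence $N'_f = 0$, so $N'$ is $f$-power torsion; but then by part (1)'s argument $N'$ — being a coherent (it is a submodule of the coherent $N$ over the noetherian ring $R$) $f$-power-torsion Cartier module — is anyway nilpotent, which we already knew, so that is not yet a contradiction. The real point: $N' \subseteq j_\ast M$ and $N'$ maps to $j_\ast M$; since $N'$ is $f$-torsion and $j_\ast M$ is $f$-torsion-free (it is a module over $R_f$... no — $j_\ast M$ as an $R$-module: multiplication by $f$ on $j_\ast M = M$ is an isomorphism since $f$ is already a unit there), multiplication by $f$ is injective on $j_\ast M$, so the $f$-power-torsion submodule of $j_\ast M$ is zero, forcing $N' = 0$.

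\begin{Le*}
The main obstacle is the bookkeeping in the ``bounded growth'' step: verifying that a single power $f^{-k}N_0$ is already $\kappa$-stable (not merely that the ascending union of $\kappa$-iterates is eventually stable), which needs the scaling identity $\kappa(F_\ast(f^{a}\,\cdot\,)) = \kappa(f^{a p}F_\ast(\,\cdot\,))$ together with $F$-finiteness to keep everything finitely generated, and then choosing $k$ uniformly. Everything else (support on $V(f)$, $f$-torsion-freeness of $j_\ast M$, nilpotence of coherent $f$-torsion Cartier modules) is routine once that stability is in hand.
\end{Le*}
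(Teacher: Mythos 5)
Your addendum (part (2)) is correct and is actually a different, arguably cleaner argument than the paper's: the paper characterizes ``no nilpotent submodules'' via injectivity of the adjoint map $N \to F^!N$ and verifies that injectivity by a direct computation, whereas your observation --- a nilpotent $N' \subseteq N$ localizes to a nilpotent Cartier submodule $N'_f$ of $M$, hence $N'_f = 0$, hence $N' = 0$ because multiplication by $f$ is bijective (in particular injective) on $j_\ast M$ --- uses nothing beyond the definitions. For the existence of $N$ the paper also takes a different route: it factors $j$ through the closed immersion $\Spec R_f \to \mathbb{A}^1_R$ and invokes the coherence result from the proof of Theorem 3.2.14 of Blickle--B\"ockle, so your hands-on construction via $f^{-k}N_0$ is a legitimate alternative. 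Your stability computation for a single large $k$ does go through: the point is that $\kappa(f^{k(p-1)}n)$ acquires a factor $f^{\lfloor k(p-1)/p \rfloor}$, which absorbs the fixed defect $f^{-k_0}$ once $k$ is large.

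There is, however, a genuine error in your verification that the cokernel $Q = j_\ast M / N$ is locally nilpotent. You reduce to the claim that a coherent Cartier module $Q'$ killed by $f^m$ is nilpotent, via ``$\kappa^e(F^e_\ast Q')$ is killed by $f^{\lceil m/p^e \rceil}$, which is $f^0 = 1$ for $e \gg 0$.'' But $\lceil m/p^e \rceil = 1$ (not $0$) for all $e$ with $p^e \geq m \geq 1$, so your computation only shows $f\,\kappa^e(F^e_\ast Q') = 0$; and the claim itself is false: $R/(f)$ carries nonzero non-nilpotent Cartier structures. Concretely, for $R = k[x]$, $f = x$, $M = R_x$ with the Cartier operator, the submodule $N = R \subseteq j_\ast M$ is coherent and $\kappa$-stable and its cokernel is $f$-power torsion, yet the inclusion is \emph{not} a local nil-isomorphism, since the class of $x^{-1}$ in $k[x,x^{-1}]/k[x]$ is fixed by $\kappa$ (compare the paper's own example, where one must take $N_n = k[x]x^{-n}$ with $n \geq 1$). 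So local nilpotence of the cokernel is not automatic; it depends on $N$ being chosen large enough, and your argument as written would ``prove'' it for $N = R$ as well. The repair is to first establish the uniform bound $\kappa^e(F^e_\ast N_0) \subseteq f^{-C}N_0$ for all $e$ (your scaling identity gives $c_{e+1} \leq \lceil c_e/p \rceil + k_0$, which stays bounded), choose $k \geq C+1$, and then check directly that $\kappa^e(f^{-l}N_0) \subseteq f^{-\lceil l/p^e \rceil - C}N_0 \subseteq f^{-k}N_0 = N$ for $e \gg 0$ depending on $l$.
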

\begin{proof}
We may factor $j$ as the composition of $\Spec R_f \xrightarrow{i} \mathbb{A}^1_R \xrightarrow{g} \Spec{R}$, where $i$ is induced by $R[T] \mapsto R[T]/(Tf -1)$. By \cite[proof of Theorem 3.2.14]{blickleboecklecartiercrystals} applied to $i_\ast M$ there is a Cartier module $N \subseteq j_\ast M$ which is coherent and locally nil-isomorphic to $j_\ast M$.

For the addendum note that $N$ has no nilpotent submodules if and only if the adjoint of the structural map $N \to F^! N$ is injective (cf.\ \cite[Lemma 6.9]{staeblertestmodulnvilftrierung}). The adjoint structural map for $N$ is induced by $j_\ast M \to F^! j_\ast M$ which in turn is given by $m \mapsto [r \mapsto \kappa(\varphi(r) m )]$ where $\varphi: R \to R_f$. By assumption $M \to F^!M$ is injective, so given $m$ there is $s = \frac{r}{f^n}$ such that $\kappa(sm)$ is non-zero. But then also $f^e \kappa(sm) = \kappa(sf^{pe}m)$ is non-zero for any $e \geq 0$. Choosing $e$ sufficiently large one has $sf^{pe} \in R$. This shows that $j_\ast M \to F^! j_\ast M$ is injective which implies the injectivity of $N \to F^! N$.
\end{proof}

\begin{Bsp}
Note that we cannot expect that $N$ is $F$-pure if $M$ is $F$-pure. Indeed, consider $M = k[x,x^{-1}]$ then $N_n = k[x] x^{-n}$ is locally nil-isomorphic to $j_\ast M$ (cf.\ \cite[Remark 5.10]{staeblertestmodulnvilftrierung} for computations), where $n \geq 1$ but it is $F$-pure only if $n =1$. However, $j^! N_n$ coincides with $j^! j_\ast M = M$. In particular, since $M$ is $F$-regular these modules have a common test element $x$. As Proposition \ref{TestModuleIndependentOfLNiliso} shows this holds in general.
\end{Bsp}

\begin{Prop}
\label{CoherentModelOfPushforwardPulledBack}
Let $R$ be an $F$-finite ring.
Let $M$ be an $F$-pure Cartier module on $R_f$ and write $j$ for the open immersion $\Spec R_f \to \Spec R$. Let $N \subseteq j_\ast M$ be a coherent Cartier module for which the inclusion is a  local nil-isomorphism. Then $j^! N = j^! j_\ast M = M$.
\end{Prop}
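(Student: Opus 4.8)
The plan is to verify the two claimed equalities separately, starting from the cheaper one. First, $j^! j_\ast M = M$ is simply the statement that for the open immersion $j$ the unit of the adjunction (composed with restriction) is an isomorphism: $j^!$ is restriction of the underlying module to $\Spec R_f$, and $(j_\ast M)_f = M$ because $M$ is already an $R_f$-module, so $j_\ast M \to j_\ast j^\ast j_\ast M$ is inverted on $D(f)$. One should check this respects Cartier structures, but that is immediate from the explicit description given in the excerpt of the Cartier structure on $j_\ast M$ (namely $m \mapsto [r \mapsto \kappa(\varphi(r)m)]$ with $\varphi\colon R \to R_f$): restricting back along $j$ recovers the localized Cartier structure $\kappa_e(m/s) = \kappa_e(ms^{p^e-1})/s$, which is the original one on $M$.

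The substantive point is $j^! N = j^! j_\ast M$. Since $N \subseteq j_\ast M$ is an inclusion of Cartier modules which is a local nil-isomorphism, the cokernel $Q = j_\ast M / N$ is locally nilpotent. Applying $j^!$ (which for the open immersion $j$ is exact, being just restriction of modules) gives an exact sequence $0 \to j^! N \to j^! j_\ast M \to j^! Q \to 0$, so it suffices to show $j^! Q = 0$. Now $Q$ is locally nilpotent, hence in particular the localization $Q_f = j^! Q$ is locally nilpotent as a Cartier module on $R_f$; I must upgrade this to the statement that it actually vanishes. This is where the hypothesis that $M$ is $F$-pure enters: $N \to j_\ast M$ being a local nil-isomorphism means $N_f \to (j_\ast M)_f = M$ is a local nil-isomorphism of Cartier modules on $R_f$, and since the source $N_f$ is coherent this is an honest nil-isomorphism; but $M$ is $F$-pure, so $M$ has no nilpotent quotients, forcing the cokernel $(j_\ast M/N)_f = Q_f$ to be zero. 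Equivalently, one notes $N_f \to M$ is surjective with nilpotent kernel and image $N_f$ nil-dense in the $F$-pure module $M$, hence equal to $M$.

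The main obstacle — and it is a mild one — is bookkeeping: making sure that ``$j^!$'' on Cartier modules for an open immersion genuinely coincides with ordinary localization together with the localized Cartier structure, so that exactness of $j^!$ and the identification $j^! j_\ast M = M$ are both legitimate, rather than appealing to the finite/smooth-morphism definitions of $f^!$ discussed earlier. Once that identification is in place the argument is formal: exactness of restriction turns the local nil-isomorphism $N \hookrightarrow j_\ast M$ into a surjection $j^! N \twoheadrightarrow j^! j_\ast M = M$ with locally nilpotent kernel, and $F$-purity of $M$ (which the excerpt records implies the annihilator is radical and, more to the point here, that $M$ has no nilpotent sub- or quotient modules on which $\mathcal{C}_+$ acts as zero) forces this surjection to be an isomorphism. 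I would also remark, as the subsequent Proposition \ref{TestModuleIndependentOfLNiliso} presumably uses, that this shows any two such coherent models $N, N' \subseteq j_\ast M$ have the same localization $M$ on $D(f)$, which is the key input for the test element / test module independence statement.
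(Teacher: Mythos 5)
Your proof is correct and follows essentially the same route as the paper: identify $j^!$ with localization (the paper says $j^!=j^\ast$ since $j$ is \'etale), use exactness of $j^!$ and the fact that it preserves local nil-isomorphisms to get a nil-isomorphism $j^!N \hookrightarrow M$, and then invoke $F$-purity of $M$ (no nonzero nilpotent quotients, since $\mathcal{C}_+M=M$) to force surjectivity. The only cosmetic difference is that you argue via the cokernel explicitly where the paper cites \cite[proof of Lemma 3.8]{blickleboecklecartierfiniteness}.
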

\begin{proof}
First of all, note that $j^! = j^\ast$ since $j$ is \'etale. Since $j^!$ preserves (local) nil-isomorphisms (\cite[Lemma 2.2.3]{blickleboecklecartiercrystals}), local nilpotence and nilpotence coincide for coherent Cartier modules and $j^!$ is exact we obtain an inclusion $j^! N \subseteq M$ which is a nil-isomorphism. Hence, the quotient $M/j^!N$ is nilpotent. But $M$ is $F$-pure so does not admit any non-trivial nilpotent quotients (\cite[Corollary 2.14]{blicklep-etestideale}). This shows that the inclusion is surjective.
\end{proof}

\begin{Le}
\label{CoherentModuleOfPushforwardIntersectionFPure}
Let $R$ be an $F$-finite ring.
Let $M$ be a Cartier module on $R_f$ and write $j$ for the open immersion $\Spec R_f \to \Spec R$. Let $N, N' \subseteq j_\ast M$ be coherent Cartier modules for which the inclusion is a local nil-isomorphism. Then $\underline{N'} = \underline{N}$. In fact, $\underline{N} = \bigcap_{A} A$, where the intersection runs over all coherent Cartier submodules $A \subseteq j_\ast M$ for which the inclusion is a nil-isomorphism.
\end{Le}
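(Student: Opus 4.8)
The goal is to show that for coherent Cartier submodules $N, N' \subseteq j_\ast M$ whose inclusions are local nil-isomorphisms, one has $\underline{N} = \underline{N'}$, and moreover $\underline{N}$ equals the intersection of \emph{all} such coherent submodules $A$. The natural strategy is to first prove that $\underline{N}$ is itself one of the modules $A$ appearing in the intersection (it is coherent, it is a Cartier submodule of $j_\ast M$, and the inclusion $\underline{N} \hookrightarrow j_\ast M$ is a nil-isomorphism since $\underline{N} \hookrightarrow N$ is a nil-isomorphism by definition of $\underline{\phantom{M}}$ and $N \hookrightarrow j_\ast M$ is a local nil-isomorphism by hypothesis, composing the two); this immediately gives $\bigcap_A A \subseteq \underline{N}$. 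Conversely, I would show that $\underline{N} \subseteq A$ for every such $A$, which forces the reverse inclusion and simultaneously proves independence of the choice of $N$, since then $\underline{N} = \bigcap_A A = \underline{N'}$.

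For the inclusion $\underline{N} \subseteq A$: given two coherent Cartier submodules $N, A \subseteq j_\ast M$ with both inclusions local nil-isomorphisms, consider $N \cap A$ and $N + A$ inside $j_\ast M$. Both are Cartier submodules (the Cartier structure is inherited from $j_\ast M$), $N \cap A$ is coherent since $R$ is noetherian and it is a submodule of the coherent module $N$, and $N + A$ is coherent as a sum of two coherent submodules. One has a short exact sequence $0 \to N \cap A \to N \oplus A \to N + A \to 0$ and, comparing with $j_\ast M$, the inclusions $N \cap A \hookrightarrow N$ and $N \hookrightarrow N+A$ are local nil-isomorphisms: indeed $N/(N\cap A) \cong (N+A)/A$ embeds into $(j_\ast M)/A$, which is locally nilpotent by hypothesis, and $(N+A)/N \cong A/(N \cap A)$ embeds into $(j_\ast M)/N$, also locally nilpotent. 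Applying Lemma \ref{FPureDifferentAlgebrasCommonAmbientModule} (with trivial grading, $\mathcal{C} = \langle \kappa \rangle$, so that $\underline{\phantom{M}}_\mathcal{C} = \underline{\phantom{M}}$) to the pair $N \cap A \subseteq N$ sitting inside the ambient Cartier module $j_\ast M$ — after checking $\underline{N \cap A} = \underline{N}$ — one gets $\underline{N \cap A} = \underline{N}$; but $\underline{N \cap A} \subseteq N \cap A \subseteq A$, so $\underline{N} \subseteq A$. To get $\underline{N\cap A} = \underline{N}$ I would use that a local nil-isomorphism of coherent Cartier modules induces an isomorphism on the associated crystals, hence $\underline{\phantom{M}}$ (which factors through the crystal) is preserved; alternatively, directly: $\underline{N \cap A} \subseteq \underline{N}$ is clear, and for the reverse one applies $\mathcal{C}_+^h$ for $h \gg 0$ to the nil-isomorphism $N \cap A \hookrightarrow N$, using $\mathcal{C}_+^h (N/(N\cap A)) = 0$ eventually by coherence and local nilpotence, so $\mathcal{C}_+^h N \subseteq N \cap A$ for large $h$, giving $\underline{N} = \mathcal{C}_+^h N \subseteq \underline{N \cap A}$.

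I expect the main obstacle to be the careful bookkeeping that $N \cap A$ and $N + A$ are again coherent Cartier submodules of $j_\ast M$ for which all the relevant inclusions are local nil-isomorphisms — this is where one must use noetherianity of $R$ and the fact that for coherent Cartier modules local nilpotence and nilpotence coincide (as recalled in the excerpt), so that the quotients $(j_\ast M)/N$ etc., though a priori only locally nilpotent, restrict to genuinely nilpotent subquotients on coherent pieces. Once that is in place, the argument is a direct application of Lemma \ref{FPureDifferentAlgebrasCommonAmbientModule}. Finally, for the displayed identity $\underline{N} = \bigcap_A A$: the inclusion $\supseteq$ was established above from $\underline{N} \subseteq A$ for all $A$, and $\subseteq$ follows since $\underline{N}$ is itself such an $A$ (it is coherent and $\underline{N} \hookrightarrow j_\ast M$ is a nil-isomorphism, being the composite of the nil-isomorphisms $\underline{N} \hookrightarrow N \hookrightarrow j_\ast M$, where $\underline{N} \hookrightarrow N$ is a nil-isomorphism because $N/\underline{N}$ is nilpotent by definition and a local nil-isomorphism between the relevant modules is a nil-isomorphism after the coherence reduction), so it contains the intersection and equals it.
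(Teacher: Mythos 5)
Your proposal is correct and follows essentially the same route as the paper: show that $\underline{N}$ is itself one of the admissible coherent submodules (so the intersection is contained in $\underline{N}$), and then deduce $\underline{N}\subseteq A$ for every admissible $A$ from the $F$-purity identity $\underline{N}=(\mathcal{C}_+)^h\underline{N}$ together with the fact that local nilpotence becomes genuine nilpotence on coherent pieces. The only cosmetic difference is that you extract the latter containment by forming $N\cap A$ and observing that $N/(N\cap A)$ is a coherent submodule of the locally nilpotent $(j_\ast M)/A$, whereas the paper applies the exhaustion in the definition of a local nil-isomorphism directly to the coherent module $\underline{N}$; the detour through $N+A$ and Lemma \ref{FPureDifferentAlgebrasCommonAmbientModule} is not needed.
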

\begin{proof}
By \cite[Corollary 2.15]{blickleboecklecartierfiniteness} the natural inclusion $\underline{N} \subseteq N$ is a nil-isomorphism. We conclude that $\underline{N} \subseteq j_\ast M$ is a local nil-isomorphism. This shows the inclusion from right to left.

Let now $A$ be any submodule of $j_\ast M$ for which the inclusion is a local nil-isomorphism. By definition of $A \subseteq j_\ast M$ being a local nil-isomorphism there is an ascending sequence of coherent Cartier submodules $M_n$ such that $\bigcup_n M_n = j_\ast M$ and $\kappa^n M_n \subseteq A$. By coherence there is $n$ such that $\underline{N} \subseteq M_n$ and then $\kappa^n \underline{N} \subseteq A$. But since $\underline{N}$ is $F$-pure we have $A \supseteq \kappa^n \underline{N} = \underline{N}$ as desired.
\end{proof}

\begin{Prop}
\label{TestModuleIndependentOfLNiliso}
In the situation of Proposition \ref{CoherentModuleOfPushforwardIntersectionFPure} one has $\tau(N, f^t) = \tau(N', f^t)$ as submodules of $j_\ast M$ for all $t \in \mathbb{Q}_{\geq 0}$. If additionally $(M, \kappa)$ is $F$-regular then $f$ is a test element for $N, N'$.
\end{Prop}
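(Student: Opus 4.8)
The plan is to reduce the statement about $\tau(N, f^t) = \tau(N', f^t)$ to the fact, established in Lemma \ref{CoherentModuleOfPushforwardIntersectionFPure}, that $\underline{N} = \underline{N'}$ as submodules of $j_\ast M$. Recall that if $N$ admits a test element $c$ then the test module is computed as $\tau(N, f^t) = \sum_{e \geq 1} \kappa^e f^{\lceil t p^e \rceil} c \underline{N}$, and this expression is visibly independent of the choice of coherent local nil-isomorphic model once we know $\underline{N}$ is intrinsic and that a common test element exists. So the first step is to produce a common test element for $N$ and $N'$.

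For the existence of a common test element I would argue as follows. By Lemma \ref{CoherentModuleOfPushforwardIntersectionFPure} both $\underline{N}$ and $\underline{N'}$ equal the same $F$-pure module, call it $P \subseteq j_\ast M$. Since $R$ is essentially of finite type over an $F$-finite field, $P$ admits a test element; but a test element for $N$ in the sense relevant here is really a test element for $\underline N$, since $\tau(N, f^t)$ only sees $\underline N$. Thus any test element $c$ for $P$ works simultaneously for $N$ and $N'$, and plugging into the formula above gives $\tau(N, f^t) = \sum_{e\geq 1} \kappa^e f^{\lceil tp^e\rceil} c\, P = \tau(N', f^t)$ inside $j_\ast M$. (One should be slightly careful that the Cartier algebra here is the one generated in degree $e$ by $\kappa^e f^{\lceil tp^e\rceil}$, and that the formula and its independence statement are the ones recalled in the introduction from \cite[Theorem 3.11]{blicklep-etestideale}; Proposition \ref{TestelementOverlineEquivalence} / Lemma \ref{TestelementImplication} may be invoked if one needs to pass between test elements for $N$, $\overline N$, and $\underline N$ relative to this subalgebra.)

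For the second assertion, assume $(M,\kappa)$ is $F$-regular. By Proposition \ref{CoherentModelOfPushforwardPulledBack} we have $j^! N = M$, and localization at $f$ is exactly the functor $j^!$ here (since $j$ is an open immersion onto $D(f)$), so $N_f = M$ is $F$-regular. It remains to check the density condition: $D(f) \cap \Supp N$ must be dense in $\Supp N$. Since $N \subseteq j_\ast M$ is a local nil-isomorphism, $\Supp N = \Supp j_\ast M$, and $D(f) \cap \Supp j_\ast M = \Supp M$ is dense in $\Supp j_\ast M$ because $j_\ast M$ has no sections supported on the complement of $D(f)$ beyond what is forced — concretely, every associated prime of $N$ meets $D(f)$ since $N$ embeds in $j_\ast M$ and $f$ is a non-zerodivisor on $j_\ast M = M$ viewed over $R$. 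Hence $f$ is a test element for $N$, and by symmetry for $N'$.

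The main obstacle I anticipate is bookkeeping rather than a conceptual difficulty: one must make sure that "test element for $N$" and "test element for $\underline N$" (relative to the subalgebra $\mathcal C$ generated in degree $e$ by $\kappa^e f^{\lceil tp^e\rceil}$) are interchangeable, and that the formula $\tau(N,f^t) = \sum_e \kappa^e f^{\lceil tp^e\rceil} c\,\underline N$ is literally the one whose right-hand side depends only on $\underline N$. Once the correct Cartier subalgebra is fixed throughout and Lemma \ref{CoherentModuleOfPushforwardIntersectionFPure} is in hand, the equality $\tau(N,f^t)=\tau(N',f^t)$ is immediate, and the $F$-regular case is just unwinding $j^! = (-)_f$ together with the support/density statement.
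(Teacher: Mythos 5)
Your proposal is correct and takes essentially the same route as the paper: the equality of test modules is reduced via Lemma \ref{CoherentModuleOfPushforwardIntersectionFPure} to the intrinsic description of the $F$-pure part together with the formula $\tau(N,f^t)=\sum_{e\geq 1}\mathcal{C}_e c\,\underline{N}_{\mathcal{C}}$, and the test-element claim follows from $N_f=j^!N=M$ (Proposition \ref{CoherentModelOfPushforwardPulledBack}) plus the observation that $f$ is a non-zerodivisor on $N\subseteq j_\ast M$, so that $D(f)$ meets every component of $\Supp N$. The one bookkeeping step you flag but leave open -- that the formula involves the $F$-pure part $\underline{N}_{\mathcal{C}}$ for the subalgebra $\mathcal{C}=\langle\kappa^e f^{\lceil tp^e\rceil}\rangle$ rather than $\underline{N}_{\kappa}$ -- is supplied exactly by Lemma \ref{FPureDifferentAlgebrasCommonAmbientModule} (yielding $\underline{N}_{\mathcal{C}}=\underline{N'}_{\mathcal{C}}$ from $\underline{N}=\underline{N'}$), which is the lemma the paper invokes at that point rather than \ref{TestelementOverlineEquivalence} or \ref{TestelementImplication}.
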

\begin{proof}
We denote the Cartier algebra generated in degree $e$ by $\kappa^e f^{\lceil tp^e\rceil}$ by $\mathcal{C}$.

Combining Lemmata \ref{CoherentModuleOfPushforwardIntersectionFPure} and \ref{FPureDifferentAlgebrasCommonAmbientModule} we obtain that $\underline{N}_\mathcal{C} = \underline{N'}_{\mathcal{C}}$. In particular, we get $\tau(N, f^t)= \tau(N', f^t)$ due to \cite[Proposition 3.2 (a)]{blicklep-etestideale}.

Assume now that $(M, \kappa)$ is $F$-regular. We claim that $f$ is a test element in the sense of \cite[Theorem 3.11]{blicklep-etestideale} for $N$ and $N'$. By \cite[Lemma 4.1]{staeblertestmodulnvilftrierung} and the above one has $\Supp \underline{N} = \Supp \underline{N}_{\mathcal{C}}$. Moreover, $\mathcal{C}_f$ coincides with the $R_f$-Cartier algebra generated by $\kappa$. Hence, it suffices to show that $D(f) \cap \Supp \underline{N} \subseteq \Supp \underline{N}$ is dense. Since multiplication by $f$ is injective on $M$ it is also injective on $N$. We conclude that $f$ is not contained in any minimal prime of $\Supp N$. Hence, $D(f)$ has non-empty intersection with each irreducible component as desired.
\end{proof}

\begin{Def}
\label{DefTestModuleFiltrationQuasicoherentPushforward}
Let $R$ be $F$-finite and let $M$ be a Cartier module on $R_f$, where $j: \Spec R_f \to R$ is the open immersion and let $N \subseteq j_\ast M$ be a coherent Cartier module locally nil-isomorphic to $j_\ast M$. Then we define the \emph{test module $\tau(j_\ast M, f^t)$ of $M$ for $f^t$ with $t \in \mathbb{Q}_{\geq 0}$} as $\tau(N,f^t)$. Moreover, for $t < 0$ we define $\tau(j_\ast M, f^t)$ as $f^{\lfloor t \rfloor} \tau(j_\ast M, f^{\{t\}})$.
\end{Def}

Note that in this way Skoda's theorem is preserved in the sense that $f^n \tau(j_\ast M, f^t) = \tau(j_\ast M, f^{t+n})$.

\begin{Prop}
\label{TestmodulefiltrationExtensionProperties}
Let $R$ be essentially of finite type over an $F$-finite field. Let $M$ be an $F$-regular Cartier module on $R_f$, where $j: \Spec R_f \to \Spec R$ is the open immersion. Then the test module filtration $\tau(j_\ast M, f^t)_{t \in \mathbb{Q}}$ is exhaustive, discrete and right-continuous.
\end{Prop}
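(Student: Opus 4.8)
The plan is to reduce each of the three properties for $\tau(j_\ast M, f^t)$ to the corresponding property for the test module filtration of a fixed coherent model $N \subseteq j_\ast M$, using Definition \ref{DefTestModuleFiltrationQuasicoherentPushforward} together with Proposition \ref{TestModuleIndependentOfLNiliso}. First I would fix a coherent Cartier module $N \subseteq j_\ast M$ for which the inclusion is a local nil-isomorphism; such an $N$ exists by Lemma \ref{CoherentModelOfPushforwardExists}, and by Proposition \ref{TestModuleIndependentOfLNiliso} the module $\tau(N,f^t)$ does not depend on this choice and equals $\tau(j_\ast M, f^t)$ for $t \geq 0$. Moreover, since $M$ is $F$-regular, Proposition \ref{TestModuleIndependentOfLNiliso} tells us that $f$ is a test element for $N$, which is exactly the hypothesis under which \cite[Theorem 3.11, Corollary 4.19, Proposition 4.16]{blicklep-etestideale} apply. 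Note that replacing $N$ by $\underline{N}$ changes nothing by Lemma \ref{CoherentModuleOfPushforwardIntersectionFPure} and the formula $\tau(N,f^t) = \sum_{e\geq 1}\kappa^e f^{\lceil tp^e\rceil} f^a \underline{N}$ from \cite[Theorem 3.11]{blicklep-etestideale}, so we may assume $N$ is $F$-pure.

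For \emph{discreteness} and \emph{right-continuity} in the range $t \geq 0$, these are immediate from \cite[Corollary 4.19]{blicklep-etestideale} and \cite[Proposition 4.16]{blicklep-etestideale} applied to $(N,\kappa)$, since these results only require the existence of a test element, which we have established. For $t < 0$ the filtration is defined as $f^{\lfloor t\rfloor}\tau(j_\ast M, f^0)$, so on the interval $[n-1,n)$ for each integer $n \leq 0$ it is constant equal to $f^{n-1}\tau(j_\ast M, f^0)$; discreteness and right-continuity on $(-\infty,0)$ follow. Right-continuity at the jump points with integer value $n \leq 0$ follows because for $t$ slightly larger than $n-1$ (but still $< n$) the value is still $f^{n-1}\tau(j_\ast M, f^0)$; and right-continuity at $t=0$ follows from right-continuity of the $t\geq 0$ part. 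One should also check consistency at $t=0$: the Brian\c{c}on--Skoda formula $f\tau(N,f^t)=\tau(N,f^{t+1})$ for $t\geq 0$ from \cite[Proposition 4.16]{blicklep-etestideale}, extended downward, shows that $f^{\lfloor t\rfloor}\tau(j_\ast M,f^{\{t\}})$ agrees with $\tau(N,f^t)$ whenever both are defined, so the two pieces of the definition glue into one right-continuous discrete filtration.

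For \emph{exhaustiveness} the point is that $\bigcup_{t \in \mathbb{Q}} \tau(j_\ast M, f^t) = j_\ast M$. Since the filtration is decreasing and, for $t < 0$, equals $f^{\lfloor t\rfloor}\tau(j_\ast M, f^0)$, it suffices to show $\bigcup_{n \geq 1} f^{-n}\tau(j_\ast M, f^0) = j_\ast M$, i.e.\ that $\tau(j_\ast M, f^0) = \tau(N, f^0)$ generates $j_\ast M$ after inverting $f$. This reduces to showing $\tau(N,f^0)_f = M$ as an $R_f$-module. But $\tau(N,f^0)_f = \tau(N_f, f^0)$ by compatibility of test modules with localization, and $N_f = j^! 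N = M$ by Proposition \ref{CoherentModelOfPushforwardPulledBack} (using that $M$ is $F$-pure, indeed $F$-regular), while $\tau(M, f^0) = M$ since $M$ is $F$-regular. Hence every element of $M$ lies in $f^{-n}\tau(N,f^0)$ for $n \gg 0$, giving exhaustiveness. I expect the main obstacle to be the bookkeeping at $t=0$ and at the negative integers --- verifying that Definition \ref{DefTestModuleFiltrationQuasicoherentPushforward}'s two cases glue into a genuinely right-continuous discrete filtration --- rather than any of the individual reductions, each of which is a direct citation once the test element $f$ for $N$ is in hand.
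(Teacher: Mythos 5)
Your proof is correct and follows essentially the same route as the paper: fix a coherent $F$-pure model $N \subseteq j_\ast M$, quote Blickle's discreteness and right-continuity results for $t \geq 0$ (legitimate because $f$ is a test element for $N$ by Proposition \ref{TestModuleIndependentOfLNiliso}), note that the $t<0$ part is discrete and right-continuous by construction, and reduce exhaustiveness to showing $R_f \cdot \tau(N, f^0) = M$. The only (harmless) difference is in that last step, where you invoke commutation of test modules with localization together with $\tau(M,f^0)=M$ by $F$-regularity, whereas the paper computes $\tau(N,f^0) = \kappa^e f N$ for $e \gg 0$ explicitly and then concludes from $R_f \cdot N = M$ and $F$-purity of $M$.
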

\begin{proof}
Discreteness and right-continuity for $t \leq 0$ are clear by construction. For $t \geq 0$ it follows from \cite[Proposition 4.16, Corollary 4.19]{blicklep-etestideale}.

In order to show that it is exhaustive let $N \subseteq j_\ast M$ be coherent, $F$-pure and nil-isomorphic to $j_\ast M$. Then by Proposition \ref{CoherentModelOfPushforwardPulledBack} we have $j^! N = j^! j_\ast M = M$. Since $j^! = j^\ast$ is flat we obtain that $R_f \cdot N = M$. 
By Proposition \ref{TestModuleIndependentOfLNiliso} the element $f$ is a test element for $N$. Moreover, for $t =0$ the algebra $\mathcal{C}$ is generated by $\kappa$ so that $\underline{N}_\mathcal{C} = N$ by $F$-purity. We thus get $\tau(j_\ast M, f^0) = \sum_{e \geq 1} \kappa^e f N = \kappa^e f N$ for $e \gg 0$, where the last equality follows from $\kappa^e f N \supseteq \kappa^e f^p N = \kappa^{e-1} f \kappa N = \kappa^{e-1} f N$. Hence, $f^{-n} \tau(M, f^0) = \kappa^e f^{-np^e +1} N$ which is an exhaustive filtration since $M$ is $F$-pure.
\end{proof}

\begin{Prop}
\label{LNIlIsoInducesIsoinColim}
Let $R$ be an $F$-finite ring and let $(M, \kappa)$, $(N, \kappa)$ be quasi-coherent Cartier modules. Assume that there exists a morphism $\varphi: M \to N$ which is a local nil-isomorphism. Then $\varphi$ induces an isomorphism $\colim_e {F^e}^! M \to \colim_e {F^e}^! N$ of Cartier modules.
\end{Prop}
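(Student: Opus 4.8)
The plan is to reduce the claim to the case of a genuine nil-isomorphism (kernel and cokernel nilpotent, not merely locally so) by exhausting $M$ by coherent Cartier submodules, and then to invoke Lemma \ref{AdjointCartierIsNiliso} together with the fact that $\colim_e {F^e}^!(-)$ kills nilpotent Cartier modules. More precisely, first I would recall that since $\varphi$ is a local nil-isomorphism, $\ker\varphi$ is locally nilpotent and $\coker\varphi$ is locally nilpotent; passing to the colimit $\colim_e {F^e}^!$ it suffices to show this functor annihilates locally nilpotent quasi-coherent Cartier modules. For a locally nilpotent Cartier module $L = \bigcup L_n$ with each $L_n$ nilpotent, say $\kappa^{c_n} L_n = 0$, the transition maps $C^e\colon {F^e}^! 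L_n \to {F^{e'}}^! L_n$ are zero for $e' - e \geq c_n$ because the Cartier structure on ${F^e}^! L$ is transported from $\kappa$ via the duality isomorphism (as recalled in the proof of Lemma \ref{AdjointCartierIsNiliso}), so $\colim_e {F^e}^! L_n = 0$; since ${F^e}^!$ commutes with filtered colimits (it is $\Hom_R(F^e_\ast R, -)$ with $F^e_\ast R$ finitely presented, $R$ being $F$-finite hence $F^e_\ast R$ coherent) we get $\colim_e {F^e}^! L = \colim_n \colim_e {F^e}^! L_n = 0$.

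Next, applying $\colim_e {F^e}^!$ — which is exact on quasi-coherent Cartier modules since ${F^e}^! = \Hom_R(F^e_\ast R, -)$ is left exact and filtered colimits are exact, and the obstruction to right exactness lies in the $\varprojlim^1$-free setting here, or more simply because $C$ is a nil-isomorphism so the relevant kernels/cokernels become the locally nilpotent pieces handled above — to the four-term exact sequences coming from $\varphi$ shows that $\colim_e {F^e}^! \varphi$ has zero kernel and zero cokernel, hence is an isomorphism of $R$-modules. That it is a morphism of Cartier modules is automatic: $\varphi$ is a morphism of Cartier modules, hence so is each ${F^e}^! \varphi$ (functoriality of ${F^e}^!$ on Cartier modules), and the colimit of Cartier-module maps is one.

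The main obstacle I expect is the bookkeeping around exactness: ${F^e}^!$ is only left exact, so one cannot directly apply it termwise to $0 \to \ker\varphi \to M \to \im\varphi \to 0$ and $0 \to \im\varphi \to N \to \coker\varphi \to 0$ and conclude. The clean way around it is to note that we only ever need the colimit to be exact, and that $\colim_e {F^e}^!$ is exact on the category of quasi-coherent Cartier modules: this is precisely the content underlying the equivalence with unit $R[F]$-modules, but at the level we need it follows because for any quasi-coherent Cartier module the natural map to its colimit has locally nilpotent kernel (by Lemma \ref{AdjointCartierIsNiliso} iterated) and the higher derived functors are computed by the same nilpotent data, which we have just shown die in the colimit. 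Alternatively, and perhaps more transparently for the write-up, one observes that $\colim_e{F^e}^!M$ depends only on the Cartier crystal of $M$ (two Cartier modules with a nil-isomorphism between them have isomorphic colimits by the argument above), and local nil-isomorphisms become isomorphisms of crystals after the same exhaustion argument; I would present it in whichever of these two forms keeps the citations to \cite{blickleboecklecartierfiniteness} and \cite{staeblertestmodulnvilftrierung} cleanest.
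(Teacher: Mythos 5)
Your core computation --- that the transition maps $C$ in the colimit are given by $\varphi \mapsto [r \mapsto \kappa(r\varphi(\cdot))]$ and therefore kill any (locally) nilpotent Cartier module --- is exactly the engine of the paper's proof, which invokes Lemma \ref{AdjointCartierIsNiliso} in the same way. The weak point is how you assemble this into the conclusion: you route everything through the claim that $\colim_e {F^e}^!$ is exact on quasi-coherent Cartier modules. For a general $F$-finite ring $R$ (the hypothesis of the proposition; $R$ is \emph{not} assumed regular here) the functor ${F^e}^! = \Hom_R(F^e_\ast R, -)$ is only left exact, and the obstruction to applying it to $0 \to \ker\varphi \to M \to \im\varphi \to 0$ is $\Ext^1_R(F^e_\ast R, \ker\varphi)$, not any $\varprojlim^1$ term --- your parenthetical justification is a non sequitur, and there is no reason for these $\Ext^1$ groups, or the relevant connecting maps, to die in the colimit on general grounds. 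So as written the surjectivity half of your argument has a gap.

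The fix is the direct element-level argument that you gesture at in your ``or more simply'' clause but do not carry out, and which is what the paper actually does: since $C^e: M \to {F^e}^!M$ and $C^e: N \to {F^e}^!N$ are nil-isomorphisms (Lemma \ref{AdjointCartierIsNiliso}, iterated) and $C^e \circ \varphi = {F^e}^!\varphi \circ C^e$, it suffices to check two things on elements. For injectivity: if $m \in \ker\varphi$ then $C^e(R\cdot m) = 0$ for some $e$, because $C^e(m) = [r \mapsto \kappa^e(rm)]$ and $\ker\varphi$ is locally nilpotent. For surjectivity: for each $n \in N$ there is $c$ with $C^c(R\cdot n) \subseteq \im {F^c}^!\varphi$, because $\coker\varphi$ is locally nilpotent, so $\kappa^c(R\cdot n) \subseteq \varphi(M)$ for some $c$. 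This avoids any exactness claim about ${F^e}^!$ entirely. If you insist on the exactness route, you would need to add the hypothesis that $R$ is regular (so that $F^e_\ast R$ is locally free by Kunz), which would weaken the proposition as stated.
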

\begin{proof}
Note that ${F^e}^! \varphi$ is also a local nil-isomorphism. Indeed, by Lemma \ref{AdjointCartierIsNiliso} the morphisms $C^e: N \to {F^e}^! N$ and $C^e: M \to {F^e}^! M$ are nil-isomorphisms. Since $C^e \circ \varphi = {F^e}^! \varphi \circ C^e$ the claim follows.

By the above observation it suffices to show that if $m \in \ker \varphi$ then $C^e(R \cdot m) = 0$ for some $e > 0$ and if $n \in N$ then there exists $c > 0$ such that $C^{c}(R \cdot n) \subseteq \im {F^{c}}^! \varphi$. But $C^e(m) = [r \mapsto \kappa^e(rm)]$ and since $\ker \varphi$ is locally nilpotent there is $e > 0$ such that $\kappa^e(R \cdot m) = 0$. The claim about the image follows similarly.
\end{proof}

\begin{Bem}
We note that Proposition \ref{LNIlIsoInducesIsoinColim} also yields an alternative proof of the first part of \ref{TestModuleIndependentOfLNiliso} using Theorem \ref{UnitTestModuleFiltration} if $\overline{j_\ast M} = j_\ast M$. The diagram
\[ \begin{xy} \xymatrix{\colim_e {F^e}^! N \ar[r] & \colim_e {F^e}^! M \\ N \ar[u] \ar[r] & M \ar[u]} \end{xy}\] commutes. Hence, the image of $\tau(N, f^t)$ coincides with the image of $\tau(N', f^t)$, where $N' \subseteq j_\ast M$ is a local nil-isomorphism. The maps to the colimits are all injective by Lemma \ref{CoherentModelOfPushforwardExists} and the assumption on $j_\ast M$. We  conclude that $\tau(N, f^t) = \tau(N', f^t)$ as subsets of $j_\ast M$.
\end{Bem}

Theorem \ref{UnitTestModuleFiltration} and Proposition \ref{LNIlIsoInducesIsoinColim} allow us to give a definition of test modules for unit Cartier-modules (i.e.\ quasi-coherent Cartier-modules, where $M \to F^!M$ is an isomorphism):

\begin{Def}
\label{LimitTestmodulefiltration}
Let $R$ be essentially of finite type over an $F$-finite field and $f \in R$. If $M$ is a coherent Cartier module or $M = j_\ast N$ for a coherent Cartier module $N$ on $R_f$, where $j: \Spec R_f \to \Spec R$ is the open immersion, then we define the \emph{test module filtration $\tau(\mathcal{M}, f^t)_{t \in \mathbb{Q}_{\geq 0}}$ along $f$ of $\mathcal{N}$} as the image of $\tau(M, f^t)$ in $\mathcal{M} = \colim {F^e}^! M$.
\end{Def}

This then also yields a notion of test module for unit $R[F]$-modules. Recall that for a unit $R[F]$-module $\mathcal{N}$ the module $\mathcal{N} \otimes \omega_R^{-1}$ carries a unit Cartier-module structure (i.e.\ the structural map $C: N \otimes \omega_R^{-1} \to F^! (N \otimes \omega_R^{-1})$ is an isomorphism).

\begin{Def}
\label{UnitTestmoduleDefinition}
Let $R$ be smooth over an $F$-finite field, $f \in R$ and $\mathcal{N}$ a unit $R[F]$-module. Then we define the \emph{test module filtration $\tau(\mathcal{N}, f^t)_{t \in \mathbb{Q}_{\geq 0}}$ for $\mathcal{N}$} as $\tau(\mathcal{N} \otimes \omega^{-1}, f^t) \otimes \omega_R$ using Definition \ref{LimitTestmodulefiltration}.
\end{Def}

Note that one can also define this filtration using roots (we could even drop the injectivity assumption) of unit $R[F]$-modules. Indeed, let $N$ be a root of $\mathcal{N}$. Then $N \otimes \omega_R^{-1}$ is a Cartier module and the image of $\tau(N \otimes \omega_R^{-1}) \otimes \omega_R$ in the limit $\colim_e {F^e}^\ast N = \mathcal{N}$ coincides with $\tau(\mathcal{N}, f^t)$.

\section{$F$-regularity along $f^!$}
\label{ShriekFregularity}
In this section we study the behavior of $F$-regularity (and thus of test modules) along pull backs by the twisted inverse image of $f: \Spec S \to \Spec R$, where $f$ is either finite flat or smooth. In the smooth case $F$-regularity is preserved. In contrast, $F$-regularity is \emph{not} preserved by finite flat morphisms. In fact, we will see that it already fails for a Kummer covering ramified along a smooth divisor. We start with the smooth case.

Recall that if $f: X \to Y$ is a smooth morphism of schemes then $f^! \bullet$ is given by $\omega_f \otimes f^\ast \bullet$, where $\omega_f = \omega_{X/Y}$ is the relative dualizing sheaf (see e.g.\ \cite[Definition before III.2.1]{hartshorneresidues}).
The Cartier structure is induced by applying $f^!$ to the adjoint structural map $ M \to F^! M$ and using the fact that $f^! F_Y^! M \cong F_X^! f^! M$.

In the following we sketch how the isomorphism $f^! F_Y^! \to F_X^! f^!$ is obtained and make the computation explicit in the case $\mathbb{A}^1_R \to \Spec R$.
First, one factors $F_X = F' F_{rel}$ and obtains the following commutative diagram where the square is cartesian.

\[
\begin{xy}
\xymatrix{X \ar[rd]^{F_{rel}} \ar@/^/[rrd]^{F_X} \ar[rdd]^f& &\\
&X' \ar[r]^{F'} \ar[d]^{f'} &X \ar[d]^{f}\\
&Y\ar[r]^{F_Y} & Y}
\end{xy}\]

By \cite[Corollary III.6.4]{hartshorneresidues} one gets an explicit isomorphism $f'^! F_Y^! \to F'^! f^!$. Next, one unwinds \cite[Proposition III.8.4]{hartshorneresidues} to get an isomorphism $f^! \to F_{rel}^! f'^!$. Finally, one uses the fact that $F_X^! = (F' F_{rel})^! \cong  F_{rel}^! F'^!$ which is just a tensor-hom adjunction since both morphisms are finite.

In what follows, we will write $F^!(-)$ for $\Hom(F_\ast R, -)$ even if $F: \Spec R \to \Spec R$ is not flat.

\begin{Le}
\label{CartierStructureShriekPolyRing}
Let $f: \mathbb{A}^1_{\Spec R} \to \Spec R$ be the structural map and $M$ a $\kappa$-module on $R$. Then the Cartier structure for $f^! M = \omega_f \otimes f^\ast M$ is given by 
\[F_\ast (\omega_f \otimes M) \longrightarrow \omega_f \otimes M,\quad r x^n dx \otimes m \longmapsto x^{\frac{n+1}{p}-1} dx \otimes \kappa(rm), \]
where $\kappa: F_\ast M \to M$ is the Cartier structure on $M$ and $x^{\frac{n+1}{p}-1}$ is understood to be zero whenever $\frac{n+1}{p}$ is not an integer.
\end{Le}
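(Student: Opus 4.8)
The plan is to reduce the computation to the explicit sketch of the isomorphism $f^! F_Y^! \cong F_X^! f^!$ given above, specialized to $f \colon \mathbb{A}^1_R \to \Spec R$, and then to chase the adjoint structural map $C \colon M \to F^! M$ through it. First I would fix coordinates: write $S = R[x]$, so $\omega_f = \Omega^1_{S/R} = S\,dx$ and $f^! M = S\,dx \otimes_R M$, and recall that for a finite map the Cartier operator $\kappa \colon F_\ast M \to M$ corresponds under duality to the adjoint map $C \colon M \to F^!M$, $m \mapsto [r \mapsto \kappa(rm)]$. The Cartier structure on $f^!M$ is by definition the adjoint of the composite $f^! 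M \xrightarrow{f^! C} f^! F_Y^! M \xrightarrow{\sim} F_X^! f^! M$, so the whole content is to make that composite explicit on the generator $dx \otimes m$ and then take adjoints.

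Next I would run through the three pieces of the factorization. The Frobenius on $S = R[x]$ factors as $F_X = F' \circ F_{rel}$ where $F_{rel}$ is the relative Frobenius over $R$ (so $F_{rel}^\ast$ sends $x$ to $x^p$ on the base-changed copy $X' = \Spec R[x^{1/p}]$, using $R$ $F$-finite but the base ring is unaffected), and the square with $F'$, $f'$, $F_Y$ is cartesian. For the cartesian square, \cite[Corollary III.6.4]{hartshorneresidues} gives the flat base change isomorphism $f'^! F_Y^! \cong F'^! f^!$, and since $f'$ is again the projection $\mathbb{A}^1 \to \mathrm{pt}$ this isomorphism is on generators essentially the identity on the $dx$-part tensored with the identity on $F_Y^!M \to F'^!f^!M$. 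The map $f^! \to F_{rel}^! f'^!$ coming from \cite[Proposition III.8.4]{hartshorneresidues} is the one that genuinely moves the differential: it is (a twist of) the trace/residue map for the finite flat map $F_{rel}$, and on $\omega$ it sends $x^n\,dx$ to the class that pairs against $F_{rel, \ast} S$ via the Tate residue, which concretely picks out the coefficient making $x^n \, dx = d(x^{n+1}/(n+1))$ and rewrites it over the subring $R[x^p]$ — this is exactly where the substitution $x^n \mapsto x^{(n+1)/p - 1}$ with the integrality condition on $(n+1)/p$ enters, because only monomials $x^n$ with $p \mid n+1$ survive pulling the differential $d$ through a $p$-th power. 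Finally $F_X^! = (F'F_{rel})^! \cong F_{rel}^! F'^!$ is the tensor–hom adjunction for the composite of two finite maps, which is harmless bookkeeping.

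Assembling: tracing $dx \otimes m$ through $f^! C$ lands it on $dx \otimes [r \mapsto \kappa(rm)]$ in $f^! F_Y^! M$; the base-change isomorphism carries this to the corresponding element of $F'^! f^! M$; and the residue isomorphism $f^! \cong F_{rel}^!f'^!$ applied to the general element $r x^n\, dx \otimes m$ produces $x^{(n+1)/p - 1}\,dx \otimes \kappa(rm)$, with the convention that the term vanishes unless $p \mid n+1$. Taking the adjoint of this composite then gives precisely the displayed formula
\[
F_\ast(\omega_f \otimes M) \longrightarrow \omega_f \otimes M, \qquad r x^n\,dx \otimes m \longmapsto x^{\frac{n+1}{p}-1}\,dx \otimes \kappa(rm),
\]
so the claim follows.

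\textbf{Main obstacle.} The delicate point is not any single identification but getting the normalizations of the residue/trace maps in \cite[Proposition III.8.4]{hartshorneresidues} and \cite[Corollary III.6.4]{hartshorneresidues} to line up so that no extra unit, sign, or power of $x$ is introduced; in particular one must check the convention fixing $\omega_R \cong F^!\omega_R$ (the adjoint Cartier operator, $k$ perfect) is compatible with the identification $\omega_f = \Omega^1_{S/R}$, and that the base change $F'$ — which is \emph{not} flat as a map of schemes over $k$ but is flat relative to $Y$ — may legitimately be fed into III.6.4. Once the explicit form of the residue map on monomials $x^n\,dx$ over $R[x^p] \subseteq R[x]$ is pinned down, the rest is the routine chase indicated above.
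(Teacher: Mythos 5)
Your proposal follows essentially the same route as the paper: factor $F_X = F' \circ F_{rel}$, make the base-change isomorphism $f'^!F_Y^! \cong F'^!f^!$ and the isomorphism $f^! \cong F_{rel}^!f'^!$ explicit, compose with the tensor--hom identification $F_X^! \cong F_{rel}^!F'^!$, apply $f^!$ to $C\colon M \to F^!M$, and take adjoints, with the exponent $\tfrac{n+1}{p}-1$ coming exactly from the relative Cartier operator for $R[x^p]\subseteq R[x]$ (the paper cites Katz for the formula $rx^m\,dx \mapsto r\otimes x^{\frac{m+1}{p}-1}dx$; your heuristic via $x^n\,dx = d(x^{n+1}/(n+1))$ is slightly garbled since that expression is undefined precisely when $p\mid n+1$, but the stated formula is the correct one). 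The paper carries out the bookkeeping you defer by first recording the two explicit isomorphisms as separate lemmas, but the decomposition, references, and key computation are the same.
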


In order to keep the proof of this lemma readable we will discuss the relevant isomorphisms separately. 

\begin{Le}
\label{SmoothPullbackL1}
If $f: \Spec S \to \Spec R$ and $f': \Spec T \to \Spec R$ are smooth, $F_{rel}: \Spec S \to \Spec T$ is finite flat and $f = f' \circ F_{rel}$ then one has for any $R$-module $N$ a natural isomorphism $f^!N \to F_{rel}^! f'^! N$ given by \[\omega \otimes s \otimes n \longmapsto [ s' \mapsto \kappa_{rel}(s's \omega) \otimes 1 \otimes n],\] where $\kappa_{rel}$ is the adjoint of the isomorphism $C_{rel}: \omega_f \to F_{rel}^! \omega_{f'}$. 
\end{Le}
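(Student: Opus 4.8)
The plan is to produce the natural isomorphism $f^!N \to F_{rel}^! f'^!N$ by unwinding the general construction of \cite[Proposition III.8.4]{hartshorneresidues} in the very concrete setting at hand, where all three morphisms are flat and the two smooth ones have explicitly trivializable dualizing sheaves. First I would record the relevant data: $f^!N = \omega_f \otimes_S f^\ast N$, $f'^!N = \omega_{f'} \otimes_T f'^\ast N$, and $F_{rel}^! (f'^!N) = \Hom_T(F_{rel\ast} S, \omega_{f'}\otimes_T f'^\ast N)$ with its $S$-module structure through the first argument. The key structural input is the canonical isomorphism $C_{rel} \colon \omega_f \to F_{rel}^!\omega_{f'}$ coming from the composite $f^!\omega_k \cong F_{rel}^! f'^! \omega_k$ (or, more elementarily, the fact that for a finite flat map $\omega_{S/R} \cong \Hom_T(S,\omega_{T/R})$); its adjoint $\kappa_{rel}\colon F_{rel\ast}(\omega_f \otimes_S ?) \to \omega_{f'}\otimes_T ?$ is what appears in the displayed formula.

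The main steps, in order, would be: (1) write down the candidate map $\Psi\colon \omega\otimes s\otimes n \mapsto [\,s' \mapsto \kappa_{rel}(s's\omega)\otimes 1\otimes n\,]$ and check it is well-defined, i.e.\ $S$-bilinear over $T$ and compatible with the relations defining the tensor products — this is a routine unravelling of how $\kappa_{rel}$ behaves, using $\kappa_{rel}(t^{?}\cdot) = t\kappa_{rel}(\cdot)$-type twisted linearity; (2) check $S$-linearity in the target, which is where the ``twisted'' structure of $F_{rel}^!$ interacts with the $p$-power Frobenius twist — multiplying the source by $s_0 \in S$ should correspond to precomposing the functional with multiplication by $s_0$, matching the displayed formula; (3) verify it agrees with Hartshorne's isomorphism, which I would do by reducing to the affine statement that for finite flat $F_{rel}$ and flat $f'$ the base-change/trace comparison $f^! \cong F_{rel}^! f'^!$ is, on global sections, exactly the tensor-hom adjunction $\omega_f \otimes_S f'^\ast(-) \cong \Hom_T(S, \omega_{f'}\otimes_T f'^\ast(-))$ twisted appropriately — here one invokes that $f'^\ast$ commutes with $\Hom_T(S,-)$ because $S$ is $T$-finite and locally free (flatness plus finiteness, as $F_{rel}$ is finite flat over a regular base); (4) conclude naturality in $N$, which is immediate since every map in sight is induced functorially from $N$.

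The hard part will be Step (3): pinning down that the abstract isomorphism of \cite[Proposition III.8.4]{hartshorneresidues} — which is defined via a chain of compatibilities for $f^!$ on a composition — really is the naive explicit map $\Psi$, rather than $\Psi$ twisted by some sign or unit. I expect to handle this by checking it after the further reduction to $R$ a field (or even by a direct local-coordinates computation on $\mathbb{A}^1$, since that is the only case ultimately used in Lemma \ref{CartierStructureShriekPolyRing}), where both sides become free modules with visible bases and the isomorphism is forced by compatibility with the fixed trivialization $\omega_{f'} \cong \mathcal{O}$; the remaining bookkeeping — bilinearity, the twisted linearity of $\kappa_{rel}$, and naturality — is then mechanical. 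I would keep the write-up at the level of indicating these reductions and the one explicit coordinate check, rather than carrying out all the adjunction diagrams in full.
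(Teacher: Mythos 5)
Your proposal is correct and follows essentially the same route as the paper: the isomorphism is built as the composite of the base-change identification $f^\ast N \cong F_{rel}^\ast f'^\ast N$, the isomorphism $C_{rel}\colon \omega_f \to F_{rel}^!\omega_{f'}$, and the tensor-hom isomorphism $F_{rel}^!\omega_{f'} \otimes_S F_{rel}^\ast f'^\ast N \cong \Hom_T(S, \omega_{f'}\otimes_T f'^\ast N)$, valid because $S$ is finite locally free over $T$. The paper simply takes this composite as the definition of the isomorphism (so well-definedness and bijectivity are automatic) and does not carry out your step (3) of matching it against Hartshorne's abstract isomorphism; that identification is deferred to the explicit coordinate computation in the proof of Lemma \ref{CartierStructureShriekPolyRing}, exactly as you anticipate.
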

\begin{proof} 
We have an isomorphism $f^\ast N \to F_{rel}^\ast f'^\ast N, s \otimes n \mapsto s \otimes 1 \otimes n$. This isomorphism and $C_{rel}$ induce isomorphisms
\[ \omega_f \otimes_S f^\ast N \to F_{rel}^! \omega_{f'} \otimes_S F_{rel}^\ast f'^\ast N \to \Hom_T(S, \omega_{f'} \otimes_T f'^\ast N) =  F_{rel}^! f'^! N,\]
\[\omega \otimes s \otimes n \longmapsto C_{rel}(s \omega) \otimes 1 \otimes 1 \otimes n \longmapsto [ s' \mapsto \kappa_{rel}(s's \omega) \otimes 1 \otimes n].  \]
Here the last isomorphism is due to the fact that $F_{rel}$ is flat so that $S$ is a locally free $T$-module.
\end{proof}

Note that one has an isomorphism $F'^\ast \omega_f \cong \omega_{f'}$ by base change of K\"ahler differentials. We may identify $F'^\ast \omega_f$ and $\omega_{f'}$ via this isomorphism (cf.\ \cite[Theorem 3.6.1]{conradduality}).

\begin{Le}
\label{SmoothPullbackL2}
If $f: \Spec S \to \Spec R$ is smooth and $F: \Spec T \to \Spec R$ is finite then for any $R$-module $N$ one has a natural isomorphism
\[ F'^\ast \omega_f \otimes_{S \otimes_R T} f'^\ast \Hom_R(T, N) = f'^! F^! N \longrightarrow F'^! f^! N = \Hom_S(S \otimes_R T, \omega_f \otimes_S f^\ast N)\]
\[ a \otimes \omega \otimes a' \otimes \varphi \longmapsto aa' \cdot [ s \otimes t \mapsto \omega \otimes s \otimes \varphi(t) ],\]
where $f'$ and $F'$ are the base changes of $f$ and $F$ and where we identify $\omega_{f'} = F'^\ast \omega_f$.
\end{Le}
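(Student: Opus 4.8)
The statement is a base-change compatibility for the twisted inverse image: for a smooth $f$ and a finite $F$, with $f', F'$ the two base changes in the cartesian square, there is a natural isomorphism $f'^! F^! \to F'^! f^!$. I would begin by writing down the two sides as concretely as possible. Since $f$ is smooth of some relative dimension $d$, we have $f^!(-) = \omega_f \otimes_S f^\ast(-)$, and since $F$ is finite, $F^!(-) = \Hom_R(T,-)$. Base change of smooth morphisms is smooth and base change of finite morphisms is finite, so both composites are defined; moreover smoothness is stable under base change together with its relative dualizing sheaf, giving the canonical identification $\omega_{f'} \cong F'^\ast \omega_f$ that the statement already uses. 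With these identifications in hand, the left side is $F'^\ast\omega_f \otimes_{S\otimes_R T} (S\otimes_R T)\otimes_S \Hom_R(T,N)$ — which, untwisting the tensor products, is just $\omega_f \otimes_S \Hom_R(T,N)$ up to extension of scalars — and the right side is $\Hom_S(S\otimes_R T, \omega_f\otimes_S f^\ast N)$.

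Next I would exhibit the map and check it is well-defined. The assignment
\[
\omega \otimes a \otimes a' \otimes \varphi \longmapsto a a' \cdot [\, s\otimes t \mapsto \omega\otimes s\otimes \varphi(t)\,]
\]
is clearly $R$-linear in each slot; the content is (i) that it factors through the relevant tensor products over $S$ and over $S\otimes_R T$, which is a direct check using that $\varphi\in\Hom_R(T,N)$ is $R$-linear and that the target $\Hom_S(S\otimes_R T,-)$ is an $S\otimes_R T$-module via the first argument, and (ii) that the image genuinely lands in $\Hom_S(S\otimes_R T,\omega_f\otimes_S f^\ast N)$, i.e.\ the bracketed map is $S$-linear in $s\otimes t$ — again immediate. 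Naturality in $N$ is formal.

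The crux is showing the map is an isomorphism, and here the clean argument is to reduce to the case where $F$ (equivalently $T$ over $R$) is a finite \emph{free} $R$-module, which suffices because the claim is local on $\Spec R$ and $F$ is finite (hence $T$ is a finitely generated $R$-module, and after localizing we may even take it free if we also wanted, but free-ness is what makes $\Hom_R(T,-)$ behave). When $T$ is free over $R$ with basis $t_1,\dots,t_m$, both sides become finite direct sums: the left side is $\bigoplus_i \omega_f\otimes_S f^\ast N$ indexed by the dual basis, the right side is $\bigoplus_i \omega_f\otimes_S f^\ast N$ indexed by $t_i$, and one checks the displayed map sends the $i$-th summand isomorphically to the $i$-th summand (this is essentially the statement that $\Hom_R(T,-)$ commutes with the exact functor $f^\ast$ when $T$ is free, combined with the projection formula for the free module $T$). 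Alternatively, and perhaps more robustly, one can identify both functors with $RHom_R(T, f^!(-))$-type expressions and invoke flat base change for the finite morphism $F$ (\cite[Corollary III.6.4]{hartshorneresidues} in the form already cited in the paper), but since the paper deliberately works with the underived $\Hom_R(T,-)$ and with $F'$ finite flat (as $f$ is smooth, $f'$ is flat, so the fibre product is nice), the bare-hands free-module computation is the honest route.

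I expect the only real obstacle to be bookkeeping: keeping straight the four tensor factors, which ring each tensor is taken over, and the two different $S\otimes_R T$-module structures (one via the smooth-pullback side, one via the $\Hom$ over $R$ side) so that the compatibility isomorphism $\omega_{f'}\cong F'^\ast\omega_f$ is applied consistently. There is no deep input — everything is either base change of smooth/finite morphisms, the projection formula, or $\Hom_R(T,-)$ versus a free $T$ — so once the identifications are pinned down the verification that the explicit formula is a well-defined, natural bijection is routine.
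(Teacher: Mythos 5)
There is a genuine gap in your main argument for why the map is an isomorphism. You propose to ``reduce to the case where $T$ is a finite free $R$-module'' on the grounds that the claim is local on $\Spec R$ and $F$ is finite. But a finite $R$-module is not locally free in general, and no localization on $\Spec R$ will make it so: the paper explicitly needs this lemma for $F$ a closed immersion (in the proof of Lemma \ref{ClosedImmersionSmoothShriekBCCompatible}, where $T = R/I$) and for $F$ the absolute Frobenius on a not-necessarily-flat base (the paper even remarks that it writes $F^!(-) = \Hom(F_\ast R,-)$ ``even if $F$ is not flat''). In those cases $T$ is not locally free over $R$, so your direct-sum computation never gets off the ground. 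Your hedged alternative also misplaces the flatness: you speak of ``flat base change for the finite morphism $F$'' and assert $F'$ is flat because $f$ is smooth, but $F'$ is the base change of $F$ along $f$ and inherits only finiteness; it is $f'$ (the base change of the smooth $f$) that is flat.

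The flatness that actually drives the proof is that of $S$ over $R$, coming from smoothness of $f$. The paper's argument is: writing $A = S\otimes_R T$, the left-hand side is $(\omega_f\otimes_S A)\otimes_A (A\otimes_T \Hom_R(T,N)) \cong (\omega_f\otimes_S A)\otimes_A \bigl(S\otimes_R \Hom_R(T,N)\bigr)$, and since $T$ is finitely presented over the Noetherian ring $R$ and $S$ is $R$-flat, the natural map $S\otimes_R \Hom_R(T,N)\to \Hom_S(S\otimes_R T, f^\ast N)$ is an isomorphism (flat base change for $\Hom$ of a finitely presented module --- no hypothesis on $T$ beyond finiteness is needed). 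One then moves the invertible sheaf $\omega_f$ inside the $\Hom$ via $\omega\otimes\psi\mapsto [s\mapsto\omega\otimes\psi(s)]$. Your identification of the two sides and your well-definedness checks are fine; it is only the isomorphism step that needs to be replaced by this flat base change argument.
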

\begin{proof}
Let us denote the ring $S \otimes_R T$ by $A$. Then the left hand side is given by $(\omega_{f} \otimes_S A) \otimes_A (A \otimes_T \Hom_R(T, N))$ which is isomorphic to $(\omega_{f} \otimes_S A) \otimes_A( S \otimes_R \Hom_R(T, N)) = (\omega_{f} \otimes_S A) \otimes_A \Hom_S(A, f^\ast N)$ by flat base change. 
Finally, one has an isomorphism \[\omega_f \otimes_S \Hom_S(A, f^\ast N) \to \Hom_S(A, \omega_f \otimes_S f^\ast N), \omega \otimes \psi \mapsto [s \mapsto \omega \otimes \psi(s) ]\] since $\omega_f$ is invertible.
\end{proof}

\begin{proof}[proof of \ref{CartierStructureShriekPolyRing}]
Note that the relative Frobenius $R[x] \to R[x], x \mapsto x^p$ is flat and recall that the relative Cartier operator $\kappa_{rel}$ is given by sending $r x^{m} dx$ to $r \otimes x^{\frac{m+1}{p}-1} dx$ (see e.g.\ \cite[Theorem 7.2]{katznilpotent}).

The adjoint of the relative Cartier operator $\kappa_{rel}: {F_{rel}}_\ast \omega_f \to F'^\ast \omega_f$ is the isomorphism \[\omega_f \to F^!_{rel} F'^\ast \omega_f = \Hom_{R[x]'}({F_{rel}}_\ast R[x], F'^\ast \omega_f), \omega \mapsto [rx^n \mapsto \kappa_{rel}(rx^n \omega)].\]
Once again we identify $F'^\ast \omega_f$ and $\omega_{f'}$ via the base change isomorphism.

With this identification Lemma \ref{SmoothPullbackL1} yields an isomorphism \[\Sigma: f^! F^! M \longrightarrow F_{rel}^! f'^! F^!M,\quad \omega \otimes r \otimes \varphi \longmapsto [r' \mapsto  \kappa_{rel}(r'r\omega) \otimes 1 \otimes \varphi].\]
 Next, by Lemma \ref{SmoothPullbackL2}, we have an isomorphism $f'^! F^! M \to {F'}^! f^! M$. Applying $F_{rel}^!$ to this isomorphism and composing with $\Sigma$ we obtain the isomorphism 
\[ \Xi: \omega_f \otimes_S f^\ast \Hom_R(F_\ast R, M) = f^! F^! M \longrightarrow F_{rel}^! F'^! f^! M \]
\[x^n dx \otimes 1 \otimes \varphi \longmapsto[[rx^m \mapsto [s \otimes t \mapsto  x^{\frac{m+n+1}{p}-1} dx \otimes s \otimes \varphi(rt)]].\]

By tensor-hom adjunction we obtain a natural isomorphism $F_{rel}^! F'^! f^! M \to F^! f^! M, \psi \mapsto [a \otimes b \mapsto \psi(a)(b)]$. Composing this isomorphism with the isomorphism $\Xi$ above we obtain
\[\Lambda: f^! F^! M \longrightarrow F^! f^! M, x^n dx \otimes 1 \otimes \varphi \longmapsto [rx^m \mapsto x^{\frac{n+m+1}{p}-1} dx \otimes 1 \otimes \varphi(r)].\]

Finally, we consider the adjoint Cartier structure $C: M \to F^!M$ and apply $f^!$. This yields $\id_{\omega_f} \otimes \id_{R[x]} \otimes C: f^! M \to f^! F^! M$. Composing with $\Lambda$ we get a map $f^!M \to F^! f^! M$ and taking its adjoint yields
\[F_\ast f^! M \longrightarrow f^! M, x^n dx \otimes 1 \otimes m \longmapsto x^{\frac{n+1}{p}-1} dx \otimes 1 \otimes \kappa(m). \] Making the usual identification of $M \otimes_R S \otimes_S \omega_f \cong M \otimes_R \omega_f$ we obtain the claim.
\end{proof}

\begin{Le}
\label{SmoothShriekPullbackComposition}
Let $f:\Spec S \to \Spec R$ be a smooth morphism such that $f = g \circ \varphi$, where $g: \mathbb{A}^n_R \to \Spec R$ is the structural morphism and $\varphi$ is \'etale. Let $(M, \kappa)$ be a Cartier module on $R$. Then the natural isomorphism $f^! M \to \varphi^! g^! M$ is Cartier linear.
\end{Le}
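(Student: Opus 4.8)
The plan is to reduce the statement to a question about compatibility of the isomorphisms $f^! F_Y^! \cong F_X^! f^!$ with composition of morphisms, and then to leverage the functoriality of the Hartshorne/Conrad machinery that was invoked to construct these isomorphisms in the first place. Since $\varphi$ is \'etale we have $\varphi^! = \varphi^\ast$, and since $g$ is smooth $g^! M = \omega_g \otimes g^\ast M$; the composite $\varphi^! g^! M = \varphi^\ast(\omega_g \otimes g^\ast M) \cong \omega_f \otimes f^\ast M = f^! M$ is the standard identification coming from $\omega_{S/R} \cong \varphi^\ast \omega_{\mathbb{A}^n_R/R}$ (base change of K\"ahler differentials for an \'etale map) together with $\varphi^\ast g^\ast \cong f^\ast$. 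So the content is: the Cartier structure on $f^! M$ obtained directly (by applying $f^!$ to $C\colon M \to F_R^! M$ and using $f^! F_R^! \cong F_S^! f^!$) agrees, under this identification, with the one obtained by first transporting to $g^! M$ via $g^! F_R^! \cong F_{\mathbb{A}^n_R}^! g^!$ and then pulling back along $\varphi$ via $\varphi^! F_{\mathbb{A}^n_R}^! \cong F_S^! \varphi^!$.

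First I would record the precise factorizations: the comparison isomorphism $f^! F_Y^! \cong F_X^! f^!$ was built out of (a) the cartesian-square isomorphism of \cite[Corollary III.6.4]{hartshorneresidues} for $f'^! F_Y^! \cong F'^! f^!$, (b) the relative-Frobenius isomorphism $f^! \cong F_{rel}^! f'^!$ extracted from \cite[Proposition III.8.4]{hartshorneresidues}, and (c) the tensor-hom adjunction $F_X^! \cong F_{rel}^! F'^!$. The key point is that all three of these are themselves natural in $f$ (equivalently, compatible with composition along the \'etale map $\varphi$), because $\varphi$ being \'etale means $\varphi^! = \varphi^\ast$ is flat and the relative Frobenius of $f$ is simply the base change along $\varphi$ of the relative Frobenius of $g$; the square for $\varphi$ is cartesian (the absolute Frobenius commutes with \'etale maps up to the canonical square), so the whole diagram for $f$ is obtained from that for $g$ by base change along $\varphi$. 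Thus I would set up the commuting cube relating the Frobenius factorizations of $f$ and of $g$ and invoke the base-change compatibility of each of (a), (b), (c) — for (a) this is the transitivity of the flat base-change isomorphism, for (b) it is the compatibility in \cite[III.8.4]{hartshorneresidues}, and for (c) it is just associativity of $\Hom$ and tensor. Concretely, I expect the cleanest route is to trace an element $\omega \otimes 1 \otimes m \in \omega_f \otimes f^\ast M$ through both composites using the explicit formulas already established (Lemmata \ref{SmoothPullbackL1}, \ref{SmoothPullbackL2}, and the computation in the proof of \ref{CartierStructureShriekPolyRing}), noting that pulling back along the \'etale $\varphi$ introduces no new terms since $\omega_f = \varphi^\ast \omega_g$ and the relative Cartier operator is compatible with \'etale base change (by \cite[Theorem 7.2]{katznilpotent}, the relative Cartier operator commutes with arbitrary base change).

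The main obstacle is the bookkeeping of which isomorphism of the form $(\bullet\,)^! (\bullet\,)^! \cong (\bullet\,)^!$ one uses at each stage and verifying that the diagram comparing "$f$-version" and "$\varphi$-then-$g$-version" genuinely commutes rather than merely up to a unit or a transposition; this is exactly the sort of compatibility that, as the introduction admits, "ought to be in an updated version of \cite{blickleboecklecartiercrystals}." I would handle it by reducing to the affine local situation $S$ finite \'etale over $R[x_1,\dots,x_n]$, where every functor in sight is given by an explicit finite-free or polynomial module and every comparison map has an explicit elementwise formula; then the verification becomes the (routine but tedious) check that two explicit $R$-linear maps $F_\ast f^! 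M \to f^! M$ agree, which follows since both send $r\, x^I\, dx \otimes m$ to $x^{(I+\mathbf 1)/p - \mathbf 1}\, dx \otimes \kappa(rm)$ by Lemma \ref{CartierStructureShriekPolyRing} applied to $g$ and the flatness/\'etaleness of $\varphi$. I would not grind through the indices in the write-up but instead state the base-change compatibility of (a)--(c) as the crux and cite the relevant items in \cite{hartshorneresidues} and \cite{katznilpotent}.
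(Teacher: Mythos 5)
Your proposal is correct and takes essentially the same route as the paper: the paper likewise goes straight to an explicit element-level comparison, first writing down the relative Cartier operator of $f$ (using that \'etaleness of $\varphi$ lets one express any $s \in S$ as $\sum_i r_i t_i^p$ with $r_i \in R[x]$), deducing from it the formula for the Cartier structure on $f^! M$ exactly as in the proof of Lemma \ref{CartierStructureShriekPolyRing}, computing the structure on $\varphi^! g^! M$ from that lemma together with the standard formula for \'etale pullback, and checking that the usual identification intertwines the two. The one point to be careful about is that the elements $r\, x^I\, dx \otimes m$ with $r \in R[x]$ on which you propose to test agreement only generate $F_\ast(\omega_f \otimes f^\ast M)$ over $S$ through the Frobenius twist, so you must either invoke the $p^{-1}$-linearity of both maps over $S$ to reduce to these generators, or, as the paper does, record the formulas on general elements $r s^p x^a\, dx \otimes m$.
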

\begin{proof}
We restrict to the case $n =1$ to keep notations simple. Since $\varphi$ is \'etale we have an isomorphism $F'^\ast S \to {F_{rel}}_\ast S, r \otimes s \mapsto rs^p$, that is, any $s \in S$ may be written as a sum $\sum_i r_i t_i^p$ for suitable $r_i \in R[x],  t_i \in S$. The inverse of this isomorphism is of course the relative Cartier operator $\kappa_{rel}: {F_{rel}}_\ast S \to F'^\ast S$. Since $\omega_f = S dx$ the relative Cartier operator for $f$ is given by \[\kappa_{rel}: {F_{rel}}_\ast \omega_f \to F'^\ast \omega_f,\, rx^a s^p dx \mapsto rs \otimes x^{\frac{a + 1 }{p}-1} dx.\]
Using this formula for the relative Cartier operator it follows, just as in the proof of Lemma \ref{CartierStructureShriekPolyRing}, that the Cartier structure for $f^!M$ is given by \[F_\ast (M \otimes_R \omega_f) \longrightarrow M \otimes_R \omega_f,\, m \otimes rs^p x^a dx \longmapsto  \kappa(rm) \otimes s x^{\frac{a+1}{p} -1}.\]

Using Lemma \ref{CartierStructureShriekPolyRing} and the fact that for a Cartier module $(N, \kappa)$ the Cartier structure on $\varphi^!N$ is given by the natural composition (cf.\ \cite[Lemma 2.2.1, Definition 2.2.2]{blickleboecklecartiercrystals}) \[F_\ast(N \otimes S) \xrightarrow{\cong} F_\ast N \otimes S \xrightarrow{\kappa \otimes \id} N \otimes S\] one verifies that the Cartier structure on $\varphi^! g^! M$ is given by \[F_\ast (M \otimes \omega_g) \otimes S \longrightarrow (M \otimes \omega_g) \otimes S, m \otimes rx^a dx \otimes s^p \mapsto rx^{\frac{a+1}{p} -1}dx \otimes m \otimes s.\]

The isomorphism $\varphi^! g^! M \to f^! M$ is given by the usual identification of $\varphi^\ast g^\ast M$ with $f^\ast M$ and by $\omega_\varphi \otimes_S S \otimes_{R[x]} \omega_g \to \omega_f, s \otimes 1 \otimes r dx \mapsto rsdx$ (cf.\ \cite[Proposition III.2.2]{hartshorneresidues} (also note that no sign error occurs in our situation -- \cite[Section 2.2]{conradduality})). One now readily observes that the claim follows.
\end{proof}

For the following lemma recall that if $i: \Spec B \to \Spec A$ is a finite map and $(M, \kappa)$ a Cartier module over $A$ then $i^! M = \Hom_A(B, M)$ has an induced Cartier structure given by $F_\ast i^! M \to i^! M, \varphi \mapsto \kappa \circ \varphi \circ F$.

\begin{Le}
\label{ClosedImmersionSmoothShriekBCCompatible}
Let $f: \Spec S \to \Spec R$ be a smooth morphism and $i: \Spec R/I \to \Spec R$ a closed immersion. Then we have a natural isomorphism of functors $i'_\ast f'^! \to f^! i_\ast$ that is compatible with Cartier structures, where $i'$ is the base change of $i$ and similarly for $f'$.  In particular, it induces an isomorphism of Cartier crystals.
\end{Le}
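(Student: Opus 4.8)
The plan is to build the isomorphism $i'_\ast f'^! \to f^! i_\ast$ in two stages: first as an isomorphism of the underlying modules (or sheaves), and then check that it intertwines the Cartier structures. Throughout I write $Z = \Spec A/I$, $Y = \Spec A$, $X = \Spec B$ with $B$ smooth over $A$, so that $X' = Z \times_Y X = \Spec (B/IB)$ and $i', f'$ are the base changes. On the level of modules, for an $A/I$-Cartier module $N$ we have $i_\ast f'^! N$ given by $\omega_{f'} \otimes_{B/IB} (B/IB \otimes_{A/I} N)$ viewed as a $B$-module via restriction of scalars along $B \to B/IB$, while $f^! i_\ast N$ is $\omega_f \otimes_B (B \otimes_A N)$ where $N$ is regarded as an $A$-module via $A \to A/I$. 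Using $\omega_{f'} \cong \omega_f \otimes_B B/IB$ (base change of Kähler differentials, cf.\ \cite[Theorem 3.6.1]{conradduality}) and the obvious identification $(B/IB) \otimes_{A/I} N \cong B \otimes_A N$ (since $N$ is killed by $I$), one gets a canonical isomorphism of $B$-modules. This is really just the flat base change / projection-formula type statement for $f^!$ along the closed immersion $i$, and it is a standard compatibility — e.g.\ one can extract it from \cite[Proposition III.8.4, Corollary III.6.4]{hartshorneresidues} or simply write it down by hand as I have just indicated.

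Next I would pin down how both sides carry their Cartier structures. For $f^! i_\ast N$: the structural map of $i_\ast N$ as an $A$-Cartier module is $\kappa$ composed with the natural $F_\ast i_\ast \cong i_\ast F_\ast$ (by the pushforward recipe in the preliminaries), and then one applies $f^!$ together with the isomorphism $f^! F_Y^! \cong F_X^! f^!$ whose explicit shape is exactly what Lemma \ref{CartierStructureShriekPolyRing} and Lemma \ref{SmoothShriekPullbackComposition} compute (writing $f$ locally as étale over $\mathbb{A}^n$). For $i'_\ast f'^! N$: one first applies $f'^!$ to $N$ — again via the same $\mathbb{A}^n$-recipe over $A/I$ — and then pushes forward along the \emph{finite} map $i'$, whose Cartier structure on $i'_\ast$ is $\varphi \mapsto \kappa_{f'^! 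N}\circ \varphi \circ F$, i.e.\ simply precomposition with Frobenius. So after transporting everything through the module isomorphism of the first paragraph, the claim becomes the concrete assertion that the formula ``$r x^{\underline a} dx \otimes m \mapsto x^{(\underline a + \underline 1)/p - \underline 1} dx \otimes \kappa(rm)$'' of Lemma \ref{CartierStructureShriekPolyRing} is compatible with reduction modulo $I$ — which it manifestly is, because that formula is $A$-linear in $M$ (here $M = N$ with its $A/I$-action) and reduction mod $I$ commutes with the polynomial-ring bookkeeping $x^{\underline a} \mapsto x^{(\underline a+\underline 1)/p - \underline 1}$ and with $\kappa$.

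I would organize the Cartier-compatibility check as a short diagram chase: form the square whose vertical arrows are the two structural maps $F_\ast(i'_\ast f'^! N) \to i'_\ast f'^! N$ and $F_\ast(f^! i_\ast N) \to f^! i_\ast N$ and whose horizontal arrows are $F_\ast$ of the module isomorphism and the module isomorphism itself, and verify commutativity after reducing (via Lemma \ref{SmoothShriekPullbackComposition}) to the case $X = \mathbb{A}^n_Y$, where Lemma \ref{CartierStructureShriekPolyRing} gives both sides in closed form. The compatibility of the finite-pushforward Cartier structure (precomposition with $F$) with the one coming through $F_X^! f^! \cong f^! F_Y^!$ is the content of the ``$i$ finite'' bookkeeping already used in Lemma \ref{SmoothPullbackL2}, so there is nothing new to prove there. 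The last sentence of the statement, that this induces an isomorphism of Cartier crystals, is then automatic: $i'_\ast$ and $i_\ast$ (finite) and $f'^!, f^!$ (smooth, hence flat up to a line bundle twist) all preserve (local) nilpotence, so a Cartier-linear isomorphism of modules descends to the crystal category.

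The main obstacle I anticipate is purely organizational rather than conceptual: one must be scrupulous about \emph{which} identification $\omega_{f'} \cong \omega_f \otimes_B B/IB$ and \emph{which} base-change isomorphism $f^! i_\ast \cong i'_\ast f'^!$ one uses, because the Cartier-compatibility is only true for the compatible choices, and the relevant sign/normalization subtleties in the Hartshorne and Conrad references (the point flagged parenthetically in the proof of Lemma \ref{SmoothShriekPullbackComposition}) have to be tracked. Once the normalizations are fixed to match those in Lemmata \ref{SmoothPullbackL1}, \ref{SmoothPullbackL2} and \ref{CartierStructureShriekPolyRing}, the verification reduces to the explicit polynomial computation already carried out there, now performed over $A/I$ instead of $A$, and no genuinely new input is needed.
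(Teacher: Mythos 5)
Your proposal follows essentially the same route as the paper: one first constructs the base-change isomorphism $i'_\ast f'^! \to f^! i_\ast$ (the paper obtains it from the unit $\id \to i^! i_\ast$, the exchange $f'^! i^! \cong i'^! f^!$ and the adjunction between $i'_\ast$ and $i'^!$, but the resulting map is exactly your hand-written identification $\omega \otimes s \otimes r \otimes m \mapsto \omega \otimes s \otimes rm$), and then reduces the Cartier-compatibility via Lemma \ref{SmoothShriekPullbackComposition} to the \'etale and $\mathbb{A}^1_R$ cases, where the explicit formula of Lemma \ref{CartierStructureShriekPolyRing} makes the diagram chase concrete. Two small corrections: the Cartier structure on the pushforward $i'_\ast(f'^! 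N)$ is \emph{not} ``precomposition with Frobenius'' --- the formula $\varphi \mapsto \kappa \circ \varphi \circ F$ is the structure on $i'^!$, not on $i'_\ast$; the pushforward structure is simply the structural map of $f'^! N$ transported through $F_\ast i'_\ast \cong i'_\ast F_\ast$. Likewise, the compatibility of $f'^! i^! \to i'^! f^!$ with Cartier structures is not already contained in Lemma \ref{SmoothPullbackL2}, which is purely module-theoretic; it is precisely the element-level verification you sketch, and the paper carries it out explicitly (together with an appeal to \cite[Proposition 3.3.23]{blickleboecklecartiercrystals} for the Cartier-linearity of the $(i'_\ast, i'^!)$ adjunction).
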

\begin{proof}
We write $A$ for $S/IS$. We have a natural isomorphism of $R$-modules $\id \to i^! i_\ast$ given by \[M \to \Hom_R(R/I, i_\ast M), m \mapsto [1 \mapsto m].\]

Next, we apply $f'^!$ to this isomorphism and use the fact that $f'^! i^! = (if')^! = (fi')^! = i'^! f^!$ to get an isomorphism $f'^! \to i'^! f^! i_\ast$. Explicitly, this isomorphism is given by the following composition \begin{multline*}f'^! M = \omega_{f'} \otimes_{A} A \otimes_{R/I} M  \to \omega_{f'} \otimes_{A} S \otimes_R R/I \otimes_{R/I} \Hom_R(R/I, i_\ast M) \to\\ \omega_{f'} \otimes_{A} \Hom_S(A, f^\ast i_\ast M) = \omega_{f'} \otimes_A i'^! f^\ast i_\ast M \to i'^!(f^\ast i_\ast M \otimes \omega_f) = i'^! f^! i_\ast M, \end{multline*}
where the first isomorphism is induced by the natural map above, the second isomorphism is flat base change and the final isomorphism is due to the fact that $i'^\ast \omega_f \cong \omega_{f'}$ via the usual base change morphism of (top-dimensional) K\"ahler differentials and the natural isomorphism $i'^\ast \omega_f \otimes i'^! f^\ast i_\ast M \to i'^!f^! i_\ast M$.

Now we use adjunction of $i'_\ast$ and $i'^!$ (cf.\ \cite[Theorem 2.17]{blickleboecklecartierfiniteness}) to get a morphism
\begin{align*}i'_\ast f'^! M = i'_\ast(\omega_f \otimes_S (S \otimes_R R/I) \otimes_{R/I} M) &\longrightarrow f^! i_\ast M = \omega_f \otimes_S S \otimes_R i_\ast M\\ \omega \otimes s \otimes r \otimes m &\longmapsto \omega \otimes s \otimes rm\end{align*} which clearly is an isomorphism.

Next we verify that this is compatible with Cartier structures. By \cite[Proposition 3.3.23]{blickleboecklecartiercrystals} the adjunction between $i'_\ast$ and $i'^!$ is compatible with Cartier structures. Likewise the counit $i^! i_\ast \to id$ is a morphism of Cartier modules. We therefore only have to check that the isomorphism $f'^! i^! \to i'^! f^!$ of Lemma \ref{SmoothPullbackL2} is compatible with Cartier structures. An application of Lemma \ref{SmoothShriekPullbackComposition} (and working locally) shows that it is sufficient to consider the case where $f$ is \'etale and the case where $f: \mathbb{A}^n_R \to \Spec R$ is the structural map separately (again we restrict to $n =1$ to simplify notation).

In other words, we have to verify that given a Cartier module $(M, \kappa)$ the square
\[\begin{xy}
   \xymatrix{F_\ast f'^! i^! M \ar[r]^\kappa \ar[d] &f'^! i^!M \ar[d]\\
 F_\ast i'^! f^! M \ar[r]^\kappa & i'^! f^! M}
  \end{xy}
\]
commutes. 

We begin with the case that $f: \Spec R[x] \to \Spec R$ is the structural map. Using the usual identification of $\omega_{f'} \cong \omega_f \otimes_{R[x]} R/I[x]$ we can write a generator of $F_\ast f'^! i^! M$ as $x^n dx \otimes 1 \otimes \bar{s} x^j \otimes \varphi \in \omega_f \otimes R/I[x] \otimes R/I[x] \otimes \Hom_R(R/I, M)$. By the natural isomorphism constructed above this element is mapped to $[\bar{r} x^m \mapsto x^n dx \otimes x^{m+j} \otimes \varphi(\overline{rs})]$. Applying $\kappa$ we obtain $[\bar{r} x^m \mapsto \kappa(x^n dx \otimes x^{pm+j} \otimes \varphi(\overline{r^ps})]$ and by Lemma \ref{CartierStructureShriekPolyRing} this evaluates to $[\bar{r} x^m \mapsto x^m x^{\frac{n+1+j}{p} -1} dx \otimes \kappa(\varphi(\overline{r^ps}))]$.

Going the other way around in the diagram we obtain that $x^n dx \otimes 1 \otimes \bar{s} x^j \otimes \varphi$ is mapped via $\kappa$ to $x^{\frac{n+j+1}{p} -1} dx \otimes 1 \otimes 1 \otimes (\kappa  \varphi  \mu_{\bar{s}} F)$, where $\mu_{\bar{s}}$ denotes the map which is multiplication by $\bar{s}$. Now we apply the isomorphism of Lemma \ref{SmoothPullbackL2} to obtain $[\bar{r} x^m \mapsto x^m x^{\frac{n+j+1}{p} -1} dx \otimes \kappa(\varphi(\overline{sr^p}))]$ as claimed.

Next, assume that  $f: \Spec S \to \Spec R$ is \'etale. A generator element of $F_\ast f^! i^! M = F_\ast \Hom_R(R/I, M) \otimes_{R/I} (R/I \otimes_R S)$ is of the form $\alpha \otimes r \otimes s^p$. The left vertical arrow maps this to $[s' \mapsto \alpha(r) \otimes s's^p] \in F_\ast \Hom_S(S/IS, M \otimes_R S)$. This in turn is mapped to $[s' \mapsto \kappa(\alpha(r)) \otimes s s'] \in \Hom_S(S/IS, M \otimes_R S)$ via the Cartier structure. One easily verifies that going the other way in the diagram yields the same result.
\end{proof}

\begin{Le}
\label{FRegularPreservedShriekPoylnomialRing}
Let $f: \mathbb{A}^1_{\Spec R} \to \Spec R$ and $M$ an $F$-regular Cartier module on $\Spec R$ where $R$ is $F$-finite. Then $f^! M$ is $F$-regular.
\end{Le}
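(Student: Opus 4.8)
The plan is to use the explicit description of the Cartier structure on $f^! M = \omega_f \otimes f^\ast M$ provided by Lemma \ref{CartierStructureShriekPolyRing}, namely $r x^n dx \otimes m \mapsto x^{\frac{n+1}{p} - 1} dx \otimes \kappa(rm)$, and to verify the two defining conditions of $F$-regularity directly: that $f^!M$ is $F$-pure, and that it admits no proper nonzero Cartier submodule agreeing generically with it. Throughout I identify $\omega_f$ with $R[x]\, dx$ and write elements of $f^!M$ as sums of terms $x^n dx \otimes m$ with $m \in M$; the Cartier action then reads $\kappa'(x^{pa + b} dx \otimes m) = x^{a} x^{\frac{b+1}{p}-1} dx \otimes \kappa(\text{stuff})$, where the only surviving monomials are those $x^n$ with $p \mid n+1$, i.e.\ $n \equiv p-1 \pmod p$.

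First I would check $F$-purity. Since $M$ is $F$-regular, it is $F$-pure, so $\kappa M = M$. Given $x^n dx \otimes m$, write $n = pa + (p-1) + (\text{nothing else is needed})$... more precisely, for arbitrary $n$ choose $a \geq 0$ and note that $x^{pn + (p-1)} dx \otimes m$ maps under $\kappa'$ to $x^n dx \otimes \kappa(\cdot)$, and by $F$-purity of $M$ we can hit any element of $M$ in the second slot. Hence every generator $x^n dx \otimes m$ lies in $\kappa'(F_\ast f^!M)$, so $\kappa' f^! M = f^! M$. (One should be slightly careful to feed in a preimage under $\kappa$ of $m$; this uses $\kappa M = M$ exactly.)

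For the second condition, suppose $N \subseteq f^!M$ is a Cartier submodule with $N_\eta = (f^!M)_\eta$ for every generic point $\eta$ of $\Supp f^!M = \mathbb{A}^1_{\Supp M}$. The generic points of $\mathbb{A}^1_{\Supp M}$ lie over the generic points of $\Supp M$ (the generic fibre being a single point, as $\mathbb{A}^1$ over a field is irreducible). So for each minimal prime $\mathfrak{q}$ of $\Supp M$ and the corresponding minimal prime $\mathfrak{q}[x]$ of $\Supp f^!M$, we have $N_{\mathfrak{q}[x]} = (f^!M)_{\mathfrak{q}[x]}$. Now I would pass to $R_{\mathfrak q}[x]$, clear denominators, and use the Cartier action to propagate: the key point is that multiplying by $x$ and applying the Cartier operator lets one move elements around, and $F$-regularity of $M$ feeds back. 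Concretely, I expect to argue that $j^! N = N|_{D(\text{something})}$ or rather $\varphi^! N$ for $\varphi$ the localization $R_{\mathfrak q} \to \ldots$ recovers a Cartier submodule of $f^!M$ over $R_{\mathfrak q}$ that is generically everything, hence by $F$-regularity of $M_{\mathfrak q}$ (which holds since $F$-regularity localizes) equals all of it; then one globalizes.

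The main obstacle will be the generic-agreement clause: passing between ``$N$ agrees with $f^!M$ at the generic points of $\mathbb{A}^1_{\Supp M}$'' and a statement about $M$ itself that one can leverage via $M$'s $F$-regularity. The clean route, which I would pursue, is: show that $f^! $ reflects ``generic agreement'' appropriately, i.e.\ if $N \subseteq f^! M$ generically agrees with $f^!M$ then $N$ contains $x^n dx \otimes M$ generically in $x$, and then use the explicit Cartier formula — in particular the relation that $\kappa'$ applied $e$ times to $x^{(p-1)(1 + p + \cdots + p^{e-1})} dx \otimes m$ returns $dx \otimes \kappa^e(m)$, combined with the test-element formula $\tau(M,\kappa) = \sum_e \kappa^e c \underline{M} = M$ from \cite[Theorem 3.11]{blicklep-etestideale} for a test element $c$ of $M$ — to bootstrap from the generic information to all of $f^!M$. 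In other words, I would reduce to showing that $f^!$ of a test element setup for $M$ gives a test element setup for $f^!M$; once the Cartier structure is made explicit as in Lemma \ref{CartierStructureShriekPolyRing}, this is an essentially mechanical but slightly fiddly computation with the monomial bookkeeping modulo $p$.
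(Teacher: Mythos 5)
Your $F$-purity argument is correct and matches the paper's. The second half, however, has a genuine gap: everything after ``suppose $N \subseteq f^!M$ generically agrees with $f^!M$'' is a plan rather than a proof, and the step you defer as ``essentially mechanical but slightly fiddly bookkeeping'' is in fact the one nontrivial idea. The difficulty is this: generic agreement of $N$ with $f^!M$ only gives you (via the characterization in \cite[Proposition 5.2]{staeblertestmodulnvilftrierung}, which is what the paper uses) that you must prove $\sum_{e} \mathcal{C}'_e\, a\, f^!M = f^!M$ for an arbitrary polynomial $a = \sum_{i=0}^n a_i x^i \in R[x]^\circ$ --- not for an element of $R$. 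Your proposed reductions (localize at $\mathfrak{q}[x]$, or ``$N$ contains $x^n dx \otimes M$ generically in $x$'') do not dispose of this: inverting a polynomial in $x$ is not something $F$-regularity of $M$ over $R$ can see directly, and $N$ need not be of the form $f^!$ of anything. The missing idea is the monomial-isolation trick: since $\kappa_f^{j}$ kills every monomial $x^k$ except those with $k \equiv -1 \pmod{p^{j}}$ and one can choose $s_{j}$ with $\kappa_f^{j}(x^{n+s_{j}}) = 1$ and $\kappa_f^{j}(x^k) = 0$ for all $k < n+s_{j}$, applying $\kappa_f^{j} \otimes C_{j}$ to $a\, x^{s_{j}} \otimes m$ annihilates every term of $a$ except the leading one, leaving $1 \otimes C_{j}(a_n m)$. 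This reduces the problem to $\sum_e \mathcal{C}_e\, a_n\, M = M$ with $a_n \in R^\circ$, which is exactly $F$-regularity of $M$.

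A second, smaller omission: for the leading coefficient $a_n$ to lie in $R^\circ$ you need $R$ to be a domain (for $R$ a product of fields one already finds $a \in R[x]^\circ$ none of whose coefficients lie in $R^\circ$). The paper arranges this by first reducing to $\Supp M = \Spec R$ with $R$ reduced (using the compatibility of $f^!$ with closed immersions, Lemma \ref{ClosedImmersionSmoothShriekBCCompatible}), then observing that it suffices to check $F$-regularity of $(f^!M)_c$ for a single $c \in R^\circ$ (because $\tau(f^!M) = \sum_e \mathcal{C}'_e\, c\, f^!M = f^!M$ by the test-element formula, the point being that $c$ is constant in $x$), and finally using excellence to choose $c$ with $R_c$ regular, hence reducing to a domain. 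Your outline contains neither this reduction nor a substitute for it. So the proposal is not wrong in direction, but as written it stops exactly where the proof begins.
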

\begin{proof}
We identify $f^! M$ with $\omega_f \otimes_R M = R[x] dx \otimes_R M$. If we denote the underlying Cartier algebra of $M$ by $\mathcal{C}$ then the Cartier structure on $f^!M$ is given by $r x^n dx \otimes m \mapsto x^{\frac{n+1}{p^e} -1} dx \otimes \kappa(rm)$, where $x^{\frac{n+1}{p^e} -1}$ is understood to be zero whenever $\frac{n+1}{p^e}$ is not an integer and $\kappa \in \mathcal{C}_e$. By abuse of notation we denote this map by $\kappa^e_f \otimes \kappa$. We denote the Cartier algebra acting on $f^! M$ by $\mathcal{C}'$.

It readily follows that $f^! M$ is $F$-pure since $\kappa_f$ is surjective and $\mathcal{C}_+ M = M$ by assumption. By Lemma \ref{ClosedImmersionSmoothShriekBCCompatible} we may assume that $\Supp M = \Spec R$, where $R$ is reduced by $F$-purity. By flatness we have $R^\circ \subseteq R[x]^\circ$. In order to check $F$-regularity for $f^! M$ it is thus sufficient to show that $(f^! M)_c$ is $F$-regular for some $c \in R^\circ$. Indeed, by \cite[Theorem 3.11]{blicklep-etestideale} one then has \[\tau(f^! M) = \sum_{e \geq 1} \mathcal{C}'_e c f^!M = f^!M\] since $M$ is $F$-regular and $c \in R$.

Since $R$ is excellent we may choose $c$ such that $R_c$ is regular (and replace $R$ by $R_c$). By localizing further we reduce to the case that $R$ is an integral domain. Let $a = \sum_{i=0}^n a_i x^i$ be in $R[x]^\circ$ with $a_n \neq 0$ and hence $a_n \in R^\circ$. Again we use \cite[Theorem 3.11]{blicklep-etestideale} and thus need to show that $\sum_{e \geq 1} \mathcal{C}'_e a f^! M =  f^! M$. 

Choose generators $m_1, \ldots, m_l$ of $M$. Then for each $1 \leq u \leq l$ there are, by $F$-regularity of $M$, elements $C_{j_{iu}} \in \mathcal{C}_{j_{iu}}$ and elements $m_{j_{iu}} \in M$ such that $\sum_{i=1}^{t_u} C_{j_{iu}}(a_n m_{j_{iu}}) = m_u$. Furthermore, we may assume that all $j_{iu} \geq n$ for all $i,u$ since by $F$-purity $\mathcal{C}_+ M = M$. Now choose natural numbers $s_{j_{iu}}$ such that $\kappa^{j_{iu}}_f(x^{n+s_{j_{iu}}}) = 1$ for $i= 1, \ldots, t$ (and thus $\kappa_f^{j_{iu}}(x^k) = 0$ for $k < n + s_{j_{iu}}$). Then \[\sum_{i=1}^{t_u} \kappa_f^{j_{iu}} \otimes C_{j_{iu}}(a \cdot x^{s_{j_{iu}}} \otimes m_{{j_{iu}}})  = \sum_{i=1}^{t_u} \kappa_f^{j_{iu}} (x^{ n +s_{j_{iu}}}) \otimes C_{j_{iu}} (a_n m_{{j_{iu}}}) = 1 \otimes m_u.\]
\end{proof}

\begin{Theo}
\label{FRegularPreservedShriekSmooth}
Let $f: X \to Y$ be a smooth morphism of schemes essentially of finite type over an $F$-finite field and $\mathfrak{a} \subseteq \mathcal{O}_X$ an ideal sheaf, $t \in \mathbb{Q}$ and $M$ a non-degenerate $F$-regular $\kappa^{\mathfrak{a}^t}$-module. Then $f^! M$ is $F$-regular.
\end{Theo}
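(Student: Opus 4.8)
The plan is to reduce the general smooth morphism to the two building blocks handled earlier: an \'etale morphism and the structural map $\mathbb{A}^n_R \to \Spec R$. Locally on $\Spec S$ any smooth morphism factors as $\varphi$ \'etale followed by $g \colon \mathbb{A}^n_R \to \Spec R$ (by the standard structure theorem for smooth morphisms), and by Lemma \ref{SmoothShriekPullbackComposition} the canonical isomorphism $f^! M \cong \varphi^! g^! M$ is Cartier linear. Since $F$-regularity is a local property (it may be checked after passing to an affine open cover, and the formation of $\tau$ commutes with localization), it therefore suffices to treat these two cases separately. For the \'etale case, $F$-regularity of $\varphi^! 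N = \varphi^\ast N$ was already established in \cite{staeblertestmodulnvilftrierung}; the point is that $\varphi^!$ preserves $F$-purity, the support behaves well, and $\varphi^!$ is faithfully flat, so a test element for $N$ remains a test element for $\varphi^! N$. For the structural map $g \colon \mathbb{A}^n_R \to \Spec R$, one inducts on $n$ using the factorization $\mathbb{A}^n_R \to \mathbb{A}^{n-1}_R \to \Spec R$ and again Lemma \ref{SmoothShriekPullbackComposition}, so everything comes down to $n = 1$, which is exactly Lemma \ref{FRegularPreservedShriekPoylnomialRing}.

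The remaining subtlety is that the statement is phrased for a $\kappa^{\mathfrak{a}^t}$-module rather than a plain $\kappa$-module, so I need to make sure the explicit Cartier-structure computations survive the twist by $\mathfrak{a}^t$. Here the key observation is that the Cartier algebra $\mathcal{C}$ generated in degree $e$ by $\kappa^e \mathfrak{a}^{\lceil t p^e \rceil}$ pulls back under $f$ to the Cartier algebra on $f^! M$ generated in degree $e$ by $\kappa_f^e \otimes \kappa^e$ twisted by $(\mathfrak{a}S)^{\lceil t p^e\rceil}$, because $f^\ast$ is a ring map and $f^! = \omega_f \otimes f^\ast(-)$ is compatible with this ideal filtration (the elements of $\mathfrak{a}$ live in $R \subseteq S$ and act on the $M$-factor via the original $\kappa^{\mathfrak{a}^t}$-structure). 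Thus Lemma \ref{FRegularPreservedShriekPoylnomialRing}, whose proof only used $F$-purity, reducedness of the support, and the surjectivity of $\kappa_f$, applies verbatim once one replaces $\kappa$ by the degree-$e$ generators of the twisted algebra $\mathcal{C}$: the same choice of exponents $s_{j_i}$ making $\kappa_f^{j_i}(x^{n+s_{j_i}}) = 1$ works, and the generators $C_{j_i}$ of $M$ witnessing $F$-regularity now come from the twisted algebra.

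Concretely the steps are: (1) observe $F$-regularity is local on $\Spec S$ and reduce to $\Spec S \to \Spec R$ affine; (2) apply the structure theorem for smooth morphisms to factor $f = g \circ \varphi$ with $\varphi$ \'etale and $g \colon \mathbb{A}^n_R \to \Spec R$ the structural map, and invoke Lemma \ref{SmoothShriekPullbackComposition} so that $f^! M \cong \varphi^! g^! M$ as Cartier modules; (3) handle the \'etale part by the result of \cite{staeblertestmodulnvilftrierung} (preservation of $F$-purity, faithful flatness, and behavior of test elements under \'etale pullback), noting the argument is insensitive to the twist by $\mathfrak{a}^t$ since $\mathfrak{a}S$ behaves functorially; (4) handle $g$ by induction on $n$, reducing via $\mathbb{A}^n_R \to \mathbb{A}^{n-1}_R \to \Spec R$ to the case $n = 1$; (5) for $n = 1$ run the argument of Lemma \ref{FRegularPreservedShriekPoylnomialRing} with $\kappa$ replaced by the degree-$e$ generators of the twisted Cartier algebra $\mathcal{C}$, checking that the explicit computation of $\kappa_f^e \otimes \kappa^e$ there is unchanged and that the nondegeneracy ($\overline{M} = M$) hypothesis is exactly what lets one appeal to Lemma \ref{FRegularPreservedShriekPoylnomialRing}. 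The main obstacle is step (5): one must be careful that the ideal $\mathfrak{a}$ genuinely commutes past $\kappa_f$ in the combined operator — i.e.\ that twisting the base module by $\mathfrak{a}^t$ and then applying $f^!$ gives the same Cartier algebra as applying $f^!$ and then twisting by $(\mathfrak{a}S)^t$ — and to verify the round-up exponents $\lceil t p^e\rceil$ match on both sides, which is where the bulk of the bookkeeping lies; everything else is a direct appeal to the lemmas already proved.
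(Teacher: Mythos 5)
Your proposal is correct and follows essentially the same route as the paper: localize on $\Spec S$, factor $f$ as an \'etale morphism followed by the structural map of affine space, use Lemma \ref{SmoothShriekPullbackComposition} to identify the Cartier structures, handle the \'etale piece by the cited result of \cite{staeblertestmodulnvilftrierung}, and reduce the affine-space piece to Lemma \ref{FRegularPreservedShriekPoylnomialRing}. The bookkeeping you worry about in step (5) concerning the twist by $\mathfrak{a}^t$ is already absorbed by the fact that Lemma \ref{FRegularPreservedShriekPoylnomialRing} is stated and proved for an arbitrary Cartier algebra $\mathcal{C}$ acting on $M$, so no separate verification is needed.
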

\begin{proof}
The problem is local on $X$ so that we may replace $X$ by an open affine $\Spec S$ such that $f$ factors as $\Spec S \xrightarrow{\varphi} \mathbb{A}^n_R \to \Spec R$, where $\varphi$ is \'etale and $\Spec R \subseteq Y$ is open affine. By \cite[Theorem 6.15]{staeblertestmodulnvilftrierung} $\varphi^!$ preserves $F$-regularity of $M$. So the claim follows from Lemmata \ref{SmoothShriekPullbackComposition} and \ref{FRegularPreservedShriekPoylnomialRing}.
\end{proof}

\begin{Ko}
\label{SmoothShriekTestModule}
Let $f: \Spec S \to \Spec R$ be a smooth morphism of rings essentially of finite type over an $F$-finite field and let $\mathfrak{a} \subseteq R$ be an ideal. Then for a non-degenerate Cartier module $(M,\kappa)$ one has $f^!(\tau(M, \mathfrak{a}^t)) = \tau(f^! M, (\mathfrak{a}S)^t)$ for all $t \in \mathbb{Q}_{\geq 0}$.
\end{Ko}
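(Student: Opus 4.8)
The plan is to reduce to the explicit case $f\colon\mathbb A^1_R\to\Spec R$ and then run everything through the test-element description of test modules. First I would observe that the assertion is local on $\Spec S$, so after replacing $\Spec S$ by members of a suitable open cover I may assume, exactly as in the proof of Theorem~\ref{FRegularPreservedShriekSmooth}, that $f$ factors as $\Spec S\xrightarrow{\varphi}\mathbb A^n_R\xrightarrow{g}\Spec R$ with $\varphi$ \'etale and $g$ the structural morphism. Since the \'etale case of the statement is treated in \cite{staeblertestmodulnvilftrierung} and the canonical isomorphism $f^!M\cong\varphi^!g^!M$ is Cartier linear by Lemma~\ref{SmoothShriekPullbackComposition}, hence identifies the corresponding test modules, it suffices to treat $g$; writing $\mathbb A^n_R=\mathbb A^1_{\mathbb A^{n-1}_R}$ and using $\mathfrak a R[x_1,\dots,x_n]=(\mathfrak a R[x_1,\dots,x_{n-1}])R[x_1,\dots,x_n]$, an induction on $n$ reduces to $f\colon\mathbb A^1_R\to\Spec R$ over an arbitrary base $R$ essentially of finite type over an $F$-finite field.

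Fix $t\ge 0$, let $\mathcal C$ be the Cartier algebra on $M$ generated in degree $e$ by $\kappa^e\mathfrak a^{\lceil tp^e\rceil}$, and let $\mathcal C'$ be the corresponding algebra on $f^!M$, generated in degree $e$ by $\kappa_{f^!M}^e(\mathfrak a S)^{\lceil tp^e\rceil}$, where $\kappa_{f^!M}$ is the structural map of $f^!M$ computed in Lemma~\ref{CartierStructureShriekPolyRing}. The main point is that $f^!$ commutes with the module operations entering the definition of a test module. Since $f$ is smooth, $f^!=\omega_f\otimes_Sf^\ast(-)$ is exact and $f^\ast$ is flat, so $f^!$ respects arbitrary sums of submodules and, by flat base change, $f^!(\mathfrak a^kN)=(\mathfrak a S)^kf^!N$ for every submodule $N\subseteq M$; together with the fact that, under the base change isomorphism $f^!F^e_\ast\cong F^e_\ast f^!$, the map $f^!\kappa^e$ corresponds to $\kappa_{f^!M}^e$ (which is precisely what the explicit formula of Lemma~\ref{CartierStructureShriekPolyRing} records), this gives $f^!(\mathcal C_e\cdot N)=\mathcal C'_e\cdot f^!N$ as submodules of $f^!M$. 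Iterating yields $f^!\big((\mathcal C_+)^hM\big)=(\mathcal C'_+)^hf^!M$ for all $h$, and taking $h\gg 0$ gives $f^!\underline M_{\mathcal C}=\underline{f^!M}_{\mathcal C'}$.

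It remains to check that a test element $c\in R$ for $(M,\mathcal C)$ --- which exists because $R$ is essentially of finite type over an $F$-finite field --- is again a test element for $(f^!M,\mathcal C')$. The density of $D(c)$ in $\Supp f^!M=f^{-1}(\Supp M)$ follows from $\sqrt{\Ann_S f^!M}=\sqrt{(\Ann_RM)S}$ together with going-down for the flat morphism $f$, which forces the minimal primes of $\Supp f^!M$ to contract to minimal primes of $\Supp M$, none of which contains $c$; and $(f^!M)_c=f_c^!(M_c)$ is $F$-regular by Theorem~\ref{FRegularPreservedShriekSmooth} applied to the smooth morphism $f_c\colon\Spec S_c\to\Spec R_c$ and the $F$-regular module $M_c$ (non-degeneracy being preserved by localization). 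Granting this, \cite[Theorem~3.11]{blicklep-etestideale} applies on both $M$ and $f^!M$, and combining it with the exactness of $f^!$ and the commutation facts above one gets
\[
f^!\tau(M,\mathfrak a^t)=f^!\Bigl(\sum_{e\ge 1}\mathcal C_e\,c\,\underline M_{\mathcal C}\Bigr)=\sum_{e\ge 1}\mathcal C'_e\,c\,\underline{f^!M}_{\mathcal C'}=\tau(f^!M,\mathfrak a^t),
\]
the case $t=0$ (where $\mathcal C=\langle\kappa\rangle$) being covered by the same computation.

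I expect the main obstacle to be the second paragraph: making rigorous that the Cartier structure on $f^!M$ is genuinely ``$f^!$ of that of $M$'', i.e.\ that the base change isomorphism $f^!F^e_\ast\cong F^e_\ast f^!$ is compatible with the adjunction used to define the Cartier structures, and extracting from it (and from flat base change for the modules $\mathfrak a^kN$) the two identities $f^!(\mathcal C_e\cdot N)=\mathcal C'_e\cdot f^!N$ and $f^!\underline M_{\mathcal C}=\underline{f^!M}_{\mathcal C'}$. After the reductions this compatibility is furnished by the explicit formula in Lemma~\ref{CartierStructureShriekPolyRing}, so it becomes a direct verification (somewhat fiddly because of the $x^{\frac{n+1}{p}-1}$ bookkeeping and the way $\mathfrak a$ behaves under pull-back); the remainder of the argument is then formal, the only substantial external input being the preservation of $F$-regularity established in Theorem~\ref{FRegularPreservedShriekSmooth}.
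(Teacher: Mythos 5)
Your argument is correct, but it is organized quite differently from the paper's. The paper disposes of this corollary in two lines: since $R$ and $S$ are essentially of finite type over an $F$-finite field, $\tau(f^!M,\mathfrak{a}^t)$ exists by \cite[Theorem 4.13]{blicklep-etestideale}, and the identification $f^!\tau(M,\mathfrak{a}^t)=\tau(f^!M,\mathfrak{a}^t)$ is then deduced formally from Theorem \ref{FRegularPreservedShriekSmooth} (applied to the $F$-regular $\kappa^{\mathfrak{a}^t}$-module $\tau(M,\mathfrak{a}^t)$, whose $f^!$ is therefore its own test module). You instead redo the local factorization $\Spec S\to\mathbb{A}^n_R\to\Spec R$ -- which is really the reduction already packaged inside the proof of Theorem \ref{FRegularPreservedShriekSmooth} -- and then run the test-element formula of \cite[Theorem 3.11]{blicklep-etestideale} on both sides, establishing the two commutation identities $f^!(\mathcal{C}_e\cdot N)=\mathcal{C}'_e\cdot f^!N$ and $f^!\underline{M}_{\mathcal{C}}=\underline{f^!M}_{\mathcal{C}'}$ directly from the explicit formula of Lemma \ref{CartierStructureShriekPolyRing}, and checking that a test element for $M$ remains one for $f^!M$ (for which you still need Theorem \ref{FRegularPreservedShriekSmooth}, applied after localizing at $c$, plus the going-down argument for the support -- the same flatness argument that appears in Proposition \ref{FiniteFlatShriekEasyFregularDirection}). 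Both routes ultimately rest on the preservation of $F$-regularity under smooth $f^!$; yours is longer and more computational but yields the sharper intermediate statements that $f^!$ commutes with the operations $\mathcal{C}_e\cdot(-)$ and $\underline{(\,\cdot\,)}_{\mathcal{C}}$, which the paper's formal deduction does not record explicitly. The one point you rightly flag as delicate -- that the Cartier structure on $f^!M$ really is computed by Lemma \ref{CartierStructureShriekPolyRing} through the chain of base-change isomorphisms -- is exactly what that lemma establishes, so your verification $\kappa^e_{f^!M}\bigl(F^e_\ast(\mathfrak{a}^{\lceil tp^e\rceil}S\cdot f^!N)\bigr)=f^!\bigl(\kappa^e(F^e_\ast\mathfrak{a}^{\lceil tp^e\rceil}N)\bigr)$ does go through; just make sure in the induction on $n$ that you also record the compatibility of the composed Cartier structures on $\mathbb{A}^n_R=\mathbb{A}^1_{\mathbb{A}^{n-1}_R}$, in the spirit of Lemma \ref{SmoothShriekPullbackComposition}.
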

\begin{proof}
First of all, note that by \cite[Theorem 4.13]{blicklep-etestideale} both $\tau(f^! M, \mathfrak{a}^t)$ and $\tau(M, \mathfrak{a}^t)$ exist. By exactness of $f^!M$ we have an inclusion $f^! \tau(M, \mathfrak{a}^t) \subseteq f^! M$. We may replace $M$ by $\underline{M}$ and make a base change such that $\Supp M = \Spec R$.

We claim that all generic points of $\Spec S$ are mapped to generic points of $\Spec R$. By a base change we may assume that $\Spec R$ is irreducible. By \cite[Proposition 2.3.4 (iii)]{EGAIV-2} all irreducible components of $\Spec S$ dominate $\Spec R$ from which one easily deduces the claim. 
We conclude that $f^!M$ and $f^! \tau(M, \mathfrak{a}^t)$ agree at generic points of $\Supp f^! M = f^{-1}(\Supp M)$. Since $\tau(f^!M, \mathfrak{a}^t)$ is minimal with this property we obtain an inclusion $\tau(f^!M, \mathfrak{a}^t) \subseteq f^! \tau(M, \mathfrak{a}^t)$. By the previous observation and $F$-regularity of the latter this is an equality.
\end{proof}

\begin{Ko}
Let $f: \Spec S \to \Spec R$ be a smooth morphism of rings essentially of finite type over an $F$-finite field and let $\mathfrak{a} \subseteq R$ be an ideal. Then for a non-degenerate Cartier module $(M,\kappa)$ one has $f^! Gr_\tau^t(M) \cong Gr_\tau(f^!M)$ as Cartier modules or crystals.
\end{Ko}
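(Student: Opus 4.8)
The plan is to apply the exact functor $f^!$ to the short exact sequence defining $Gr^t_\tau(M)$ and to identify the outer terms by means of Corollary \ref{SmoothShriekTestModule}. First I would record that for a smooth morphism $f$ the functor $f^! = \omega_f \otimes_S f^\ast(-)$ is exact, because $f^\ast$ is flat and $\omega_f$ is invertible, and that it is a functor on Cartier modules which descends to Cartier crystals. Fix $t \in \mathbb{Q}_{\geq 0}$. Using discreteness of the test module filtration (\cite[Corollary 4.19]{blicklep-etestideale}) choose $0 < \eps \ll 1$ so that $\tau(M, \mathfrak{a}^\bullet)$ has no jumping number in $(t-\eps, t)$, whence $Gr^t_\tau(M) = \tau(M, \mathfrak{a}^{t-\eps})/\tau(M, \mathfrak{a}^t)$. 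By Corollary \ref{SmoothShriekTestModule} one has $\tau(f^!M, \mathfrak{a}^s) = f^!\tau(M, \mathfrak{a}^s)$ for every $s \in \mathbb{Q}_{\geq 0}$, so applying $f^!$ to an equality $\tau(M, \mathfrak{a}^s) = \tau(M, \mathfrak{a}^{s'})$ of submodules of $M$ yields $\tau(f^!M, \mathfrak{a}^s) = \tau(f^!M, \mathfrak{a}^{s'})$; hence the jumping numbers of $f^!M$ lie among those of $M$ and the same $\eps$ computes $Gr^t_\tau(f^!M)$.

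Next I would use that, by \cite[Section 4]{staeblertestmodulnvilftrierung}, $Gr^t_\tau(M)$ acquires its Cartier structure as a subquotient of $M$ equipped with the Cartier operator twisted by $\mathfrak{a}^{\lceil t(p-1)\rceil}$, with respect to which $\tau(M, \mathfrak{a}^t) \subseteq \tau(M, \mathfrak{a}^{t-\eps})$ is an inclusion of Cartier submodules. Thus $0 \to \tau(M, \mathfrak{a}^t) \to \tau(M, \mathfrak{a}^{t-\eps}) \to Gr^t_\tau(M) \to 0$ is a short exact sequence of Cartier modules, and applying $f^!$ produces a short exact sequence of Cartier modules $0 \to f^!\tau(M, \mathfrak{a}^t) \to f^!\tau(M, \mathfrak{a}^{t-\eps}) \to f^!Gr^t_\tau(M) \to 0$. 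By Corollary \ref{SmoothShriekTestModule} the first two terms are $\tau(f^!M, \mathfrak{a}^t)$ and $\tau(f^!M, \mathfrak{a}^{t-\eps})$ as $S$-submodules of $f^!M$, so $f^!Gr^t_\tau(M) \cong \tau(f^!M, \mathfrak{a}^{t-\eps})/\tau(f^!M, \mathfrak{a}^t) = Gr^t_\tau(f^!M)$ as $S$-modules.

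It then remains to see that this isomorphism is Cartier linear, i.e. that $f^!$ transports the $\mathfrak{a}^{\lceil t(p-1)\rceil}$-twisted Cartier operator on $M$ to the analogously twisted one on $f^!M$; equivalently, that $f^!$ commutes with twisting the Cartier operator by (the image in $S$ of) an element of $R$. By Lemma \ref{SmoothShriekPullbackComposition} it is enough to check this for the structural morphism $\mathbb{A}^n_R \to \Spec R$ and for an \'etale morphism separately, and in both cases the explicit formulas (Lemma \ref{CartierStructureShriekPolyRing}, respectively \cite[Section 6]{staeblertestmodulnvilftrierung}) exhibit the transported operator as $\kappa$ post-composed with maps that are $R$-linear in the $M$-variable, so replacing $\kappa$ by $m \mapsto \kappa(am)$ with $a \in \mathfrak{a}^{\lceil t(p-1)\rceil}$ reproduces precisely the corresponding twist of the transported operator. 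Hence the isomorphism is one of Cartier modules and a fortiori of Cartier crystals. The only genuine work is in this last step — checking compatibility of $f^!$ with twisting the Cartier operator by $\mathfrak{a}$, and thereby with the subquotient Cartier structure on the graded pieces — but given Lemmata \ref{CartierStructureShriekPolyRing} and \ref{SmoothShriekPullbackComposition} it is a routine unwinding; the minor point not to overlook is that a single $\eps$ may be used for both $M$ and $f^!M$.
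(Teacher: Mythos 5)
Your proposal is correct and follows essentially the same route as the paper, whose proof is the one-line observation that the claim follows from Corollary \ref{SmoothShriekTestModule} together with the exactness of $f^!$. Your additional care about choosing a common $\eps$ and about the compatibility of $f^!$ with the twisted Cartier structure on the graded pieces is a reasonable fleshing-out of details the paper leaves implicit.
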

\begin{proof}
As $f^!$ is exact this follows from Corollary \ref{SmoothShriekTestModule}
\end{proof}

We now present a class of examples that shows that $f^!$, for $f$ finite flat, does not preserve $F$-regularity, not even on the level of Cartier crystals. The argument roughly is as follows. For (say) $R = k[x]$ the Cartier module $(\omega_R, \kappa x^s)$ is $F$-regular if and only if $s < p -1$. If $f: \Spec k[t] \to \Spec k[x]$, where $t^{p-1} = x$ then $f^! \omega_R$ with the Cartier structure induced by $\kappa$ is isomorphic to $\omega_{k[t]}$ with its canonical Cartier structure. Hence, if the Cartier structure is given by $\kappa x^s = \kappa t^{s (p-1)}$ then $f^! \omega_R$ is not $F$-regular.

As usual this is a local issue so that we may immediately restrict to the situation where $f: \Spec S \to \Spec R$ is a morphism of affine schemes. We start with some preparatory results.

\begin{Prop}
\label{PushPullStuff}
 Let $f: \Spec S \to \Spec R$ be a finite morphism and let $j: \Spec R_x \to \Spec R$ be the open immersion induced by localization. Then for any $R_x$-module $M$ one has an isomorphism $f^! j_\ast M \to j'_\ast f'^! M$, where $j'$ and $f'$ are the base changes of $j$ and $f$. If $M$ is a Cartier module then then the natural map is an isomorphism of Cartier modules.
\end{Prop}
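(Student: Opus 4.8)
The plan is to reduce everything to the affine, finite, flat case by taking an open cover — wait, actually $f$ is only assumed finite here, not necessarily flat, so I should be careful. The statement is about a finite morphism $f \colon \Spec S \to \Spec R$ and the localization map $j \colon \Spec R_x \to \Spec R$. First I would set up the commutative square: let $S_x = S \otimes_R R_x$, let $f' \colon \Spec S_x \to \Spec R_x$ be the base change of $f$ (still finite), and let $j' \colon \Spec S_x \to \Spec S$ be the localization of $S$ at the image of $x$. Since $j$ is flat, this square is a flat base change square. The underlying module-level statement is then the assertion that the natural map
\[
\Hom_R(S, j_\ast M) \longrightarrow \Hom_{R_x}(S_x, M)
\]
is an isomorphism of $S$-modules (where on the right $M$ is viewed as an $S_x$-module via $f'$ and then as an $S$-module via $j'$, which is the same as $\Hom_{R_x}(S_x, M)$ viewed over $S$). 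This is a standard fact: $\Hom_R(S, -)$ commutes with the localization $(-)_x$ because $S$ is a finitely presented $R$-module (finiteness of $f$ plus noetherianity), so $\Hom_R(S, M)_x \cong \Hom_{R_x}(S_x, M_x)$, and here $j_\ast M$ already has $x$ acting invertibly, so $\Hom_R(S, j_\ast M) = \Hom_R(S, j_\ast M)_x = \Hom_{R_x}(S_x, M)$. I would make this explicit by writing down the map $\varphi \mapsto [\,s/x^n \mapsto \varphi(s)/x^n\,]$ and its inverse, and checking that finite presentation of $S$ makes it well-defined and bijective.

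Next I would identify both sides with the stated functors $f^! j_\ast M$ and $j'_\ast f'^! M$. By definition $f^! j_\ast M = \Hom_R(S, j_\ast M)$ with its $S$-module structure through the source, and $f'^! M = \Hom_{R_x}(S_x, M)$; applying $j'_\ast$ to the latter just means restricting scalars along $S \to S_x$, which changes nothing as a module. So the isomorphism of modules is exactly the content, and the only remaining point is Cartier-linearity.

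For the Cartier structures, recall (from the paragraph just before Lemma \ref{CartierStructureShriekPolyRing}, and the recollection before Lemma \ref{ClosedImmersionSmoothShriekBCCompatible}) that for a finite map the structural map on $f^! N$ sends $\psi \mapsto \kappa_N \circ \psi \circ F$, and that localization of a Cartier module carries the structure $\kappa_e(m/s) = \kappa_e(m s^{p^e-1})/s$; the Cartier structure on $j_\ast M$ is the one induced by this formula via $j_\ast F_\ast \cong F_\ast j_\ast$. I would then simply unwind both composites: starting with $\varphi \in \Hom_R(S, j_\ast M)$, push it through $F$, apply the $j_\ast M$-Cartier operator, and compare with first transporting $\varphi$ to $\Hom_{R_x}(S_x, M)$, applying the $f'^!$-Cartier operator there, and transporting back. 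Both evaluate on $s/x^n \in S_x$ to $\kappa_M\big(\varphi(s \, x^{n(p^e-1)})\big)/x^n$ after the usual bookkeeping with the relative Frobenius raising $x^n$ to $x^{np^e}$ and the localization formula absorbing the surplus $x^{np^e}/x^n = x^{n(p^e-1)}$; this is a direct computation once the identifications are in place.

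The main obstacle, such as it is, is purely bookkeeping: keeping straight the three scalar structures on $M$ (as an $R_x$-module, as an $S_x$-module via $f'$, as an $S$-module via $j'$) and making sure the exponent of $x$ produced by $F$ on the source side matches the exponent demanded by the localization formula on the Cartier side. There is no conceptual difficulty — finiteness of $f$ gives finite presentation of $S$, which is exactly what is needed for $\Hom_R(S,-)$ to commute with the flat localization $j$ — so the proof is essentially \emph{diagram chasing plus the localization formula for Cartier modules}, and I would present it at that level of detail rather than spelling out every evaluation.
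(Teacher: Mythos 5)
Your proposal is correct and follows essentially the same route as the paper: both arrive at the explicit map $\varphi \mapsto [\,s/x^n \mapsto \varphi(s)/x^n\,]$ (the paper constructs it via the base-change isomorphism $j'^!f^! \cong f'^!j^!$ and $(j'^\ast,j'_\ast)$-adjunction, proving bijectivity from flatness of $j$, while you invoke the standard fact that $\Hom_R(S,-)$ commutes with localization because $S$ is a finite, hence finitely presented, $R$-module) and then check Cartier-linearity by unwinding $\varphi \mapsto \kappa\circ\varphi\circ F^e$ on both sides. One small slip in your sketch: the common value of the two composites evaluated at $s/x^n$ is $\kappa_M(\varphi(s^{p^e}))/x^n$ rather than $\kappa_M(\varphi(s\,x^{n(p^e-1)}))/x^n$ --- since $x^{-np^e}=(x^{-n})^{p^e}$, the denominator passes straight through $\kappa_M$ and no surplus power of $x$ enters the argument of $\varphi$ --- but this does not affect the validity of the argument.
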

\begin{proof}
One has an isomorphism $j'^! f^! j_\ast M  \to f'^! j^! j_\ast M$. Since $j, j'$ are \'etale $j'^! = j'^\ast$ and $j^! = j^\ast$, so by adjunction of $j'^\ast$ and $j'_\ast$ we get an induced morphism $f^! j_\ast M \to j'_\ast f'^! j^\ast j_\ast M$. After the identification $j^\ast j_\ast M = M$ this will be our desired isomorphism.

First of all, the isomorphism \[j'^\ast f^! j_\ast M = \Hom_R(S, j_\ast M) \otimes S_x \longrightarrow f'^! j^\ast j_\ast M = \Hom_{R_x}(S_x, j^\ast j_\ast M)\] is given by (note that $S_x = S \otimes_R R_x$) $\varphi \otimes \frac{s}{x^n} \mapsto \varphi \mu_s \otimes \mu_{\frac{1}{x^n}}$, where $\mu_{a}$ denotes multiplication by $a$. Hence, adjunction yields the map \[\Hom_R(S, j_\ast M) \longrightarrow j'_\ast \Hom_{R_x}(S_x, j^\ast j_\ast M), \quad \varphi \longmapsto \varphi \otimes \id_{R_x}.\]
This is map is injective by flatness of $j$ and it is clearly surjective hence an isomorphism.

We now verify that this isomorphism is compatible with Cartier structures. By \cite[Proposition 3.3.15]{blickleboecklecartiercrystals} the adjunction $j^\ast j_\ast \to id$ is an isomorphism of Cartier modules. We are therefore reduced to showing that $f^!j_\ast M \to j'_\ast f'^! M, \varphi \mapsto [ \frac{s}{x^n} \mapsto \frac{\varphi(s)}{x^n}]$ is compatible with the Cartier structures. The Cartier structure on $f^!j_\ast M$ is given by $\varphi \mapsto \kappa \circ \varphi \circ F_S$, where $\kappa: F_\ast M \to M$. The one on $j'_\ast f'^! M$ is given by $\varphi \mapsto \kappa \circ \varphi \circ F_{S_f}$ and one readily verifies that the relevant diagram commutes. 
\end{proof}

\begin{Prop}
\label{PushforwardConstantSheafLNil}
Let $R$ be $F$-finite, $x \in R$ and $j: \Spec R_x \to \Spec R$ be the open immersion induced by localization. If $(M, \kappa)$ is a Cartier module, then $j_\ast j^! M = j_\ast (M \otimes R_x)$ is locally nil-isomorphic to $M \otimes R \cdot \frac{1}{x}$. Moreover, if $R$ is smooth over an $F$-finite field $k$ and $x$ is a smooth element and $M = \omega_R$ then $\omega_R \otimes R\cdot \frac{1}{x}$ is $F$-pure but not $F$-regular and there are no non-zero nilpotent submodules.
\end{Prop}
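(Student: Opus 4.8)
The plan is to verify each of the four assertions --- local nil-isomorphism, $F$-purity, failure of $F$-regularity, and absence of nilpotent submodules --- essentially by unwinding the explicit description of the Cartier structure on the localization and on $j_\ast j^! M$. First I would establish the local nil-isomorphism. Write $R_x = \bigcup_n R \cdot \tfrac{1}{x^n}$; this is an ascending union of coherent $R$-modules and under the localized Cartier structure $\kappa(\tfrac{m}{x^n}) = \tfrac{\kappa(m x^{n(p-1)})}{x^n}$ one checks, as in the proof of Lemma \ref{CoherentModelOfPushforwardExists}, that $\kappa^e$ sends $M\otimes R\cdot\tfrac{1}{x^n}$ into $M\otimes R\cdot\tfrac{1}{x}$ once $e$ is large enough relative to $n$ (because $n(p^e-1)/(p^e-1)\to 1$, concretely $\kappa^e(\tfrac{m}{x^n}) = \tfrac{\kappa^e(mx^{n(p^e-1)})}{x^n}$ and $x^{n(p^e-1)} = x^{n p^e - n}$, so after dividing one lands in $R\cdot x^{-\lceil (np^e-n)/p^e\rceil + \ldots}$, which for $e\gg 0$ is $R\cdot \tfrac1x$). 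Since $j_\ast j^! M = j_\ast(M\otimes R_x)$ is the union of the $M\otimes R\cdot\tfrac1{x^n}$, the inclusion $M\otimes R\cdot\tfrac1x \hookrightarrow j_\ast j^! M$ is then a local nil-isomorphism: the cokernel is the union of the nilpotent pieces $M\otimes R\cdot\tfrac1{x^n}/M\otimes R\cdot\tfrac1x$.

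Next I would treat $F$-purity of $\omega_R\otimes R\cdot\tfrac1x$ in the case that $R$ is smooth over an $F$-finite field and $x$ is a smooth element. Here the point is that $\omega_R$ with its canonical Cartier structure is $F$-pure (indeed $F$-regular), so $\kappa(\omega_R) = \omega_R$, and one computes $\kappa\big(\omega_R\otimes R\cdot\tfrac{1}{x^p}\big) = \omega_R\otimes R\cdot\tfrac1x$ using the localization formula $\kappa(\tfrac{\mu}{x^p}) = \tfrac{\kappa(\mu x^{p(p-1)})}{x^p} = \tfrac{\kappa(\mu x^{p^2-p})}{x^p}$ together with $x^{p^2-p} = (x^{p-1})^p$, which pulls out of $\kappa$ via the projection formula $\kappa(r^p\nu) = r\kappa(\nu)$; this gives $\kappa(\tfrac{\mu}{x^p}) = \tfrac{x^{p-1}\kappa(\mu)}{x^p} = \tfrac{\kappa(\mu)}{x}$, and since $\kappa$ is surjective on $\omega_R$ we get surjectivity onto $\omega_R\otimes R\cdot\tfrac1x$. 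Hence the module is $F$-pure. For the failure of $F$-regularity: the obvious Cartier submodule $\omega_R \subseteq \omega_R\otimes R\cdot\tfrac1x$ (embedded as $\mu\mapsto \tfrac{\mu}{x}\cdot x = \mu$, i.e.\ as $\omega_R\otimes R\cdot 1$ --- more precisely one checks $\kappa(\omega_R\cdot 1)\subseteq \omega_R\cdot 1$ need not hold, so instead one exhibits a proper $F$-pure submodule agreeing generically, analogously to the standard example $(\omega_R,\kappa x^s)$ with $s=p-1$ not being $F$-regular). Concretely, I expect the relevant proper submodule to be $\omega_R\otimes R\cdot\tfrac1x$ itself containing $\kappa$ applied to something strictly smaller; the cleanest route is: $j^!$ of this module is $\omega_{R_x}$ which is $F$-regular, so any test element must restrict to a test element on $R_x$, but one shows directly that the test module is strictly smaller than the whole module --- e.g.\ $\tau = \kappa^e(\text{stuff})$ lands in $\omega_R\otimes R\cdot 1 \subsetneq \omega_R\otimes R\cdot\tfrac1x$. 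This uses that $\kappa$ maps $\omega_R\otimes R\cdot\tfrac1x$ into $\omega_R\otimes R\cdot\tfrac1x$ but $\mathcal C_+\cdot(\omega_R\otimes R\cdot\tfrac1x)$ stabilizes strictly inside, i.e.\ the module fails the generic-agreement minimality.

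Finally, the absence of non-zero nilpotent submodules follows from the criterion in \cite[Lemma 6.9]{staeblertestmodulnvilftrierung}, exactly as in the addendum to Lemma \ref{CoherentModelOfPushforwardExists}: it suffices to show the adjoint structural map $\omega_R\otimes R\cdot\tfrac1x \to F^!(\omega_R\otimes R\cdot\tfrac1x)$ is injective, which reduces to injectivity of the adjoint for $\omega_{R_x}$ (true since $\omega_{R_x}$ is $F$-regular, hence minimal, hence has injective adjoint map) together with the observation that if $\kappa(\tfrac{s\mu}{x^n})$ is non-zero for some $s$ then $x^e\kappa(\tfrac{s\mu}{x^n}) = \kappa(\tfrac{sx^{p^e}\mu}{x^n})$ is again non-zero, and for $e$ large $sx^{p^e}/x^n$ has a representative keeping us inside $\omega_R\otimes R\cdot\tfrac1x$. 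The main obstacle I anticipate is making the $F$-regularity failure precise: one has to pin down explicitly which proper Cartier submodule of $\omega_R\otimes R\cdot\tfrac1x$ agrees with it at the generic point, and to rule out that $\mathcal C_+$ generates the whole module --- this is where the smoothness of $x$ (so that locally $x$ is part of a regular system of parameters and the Cartier operator has the explicit monomial form of Lemma \ref{CartierStructureShriekPolyRing}) is genuinely needed, reducing the computation to the one-variable model $(\omega_{k[x]}, \kappa x^{p-1})$ which is the prototypical non-$F$-regular example.
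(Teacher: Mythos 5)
Your treatment of the local nil-isomorphism and of the absence of nilpotent submodules matches the paper's in substance. But the $F$-purity step contains a genuine error. You compute $\kappa\bigl(\tfrac{\mu}{x^p}\bigr)=\tfrac{\kappa(\mu)}{x}$ and conclude surjectivity from surjectivity of $\kappa$ on $\omega_R$; however, $\tfrac{\mu}{x^p}$ does \emph{not} lie in $\omega_R\otimes R\cdot\tfrac1x$ unless $\mu\in x^{p-1}\omega_R$, so you have only shown that the strictly larger module $\omega_R\otimes R\cdot\tfrac1{x^p}$ maps onto $\omega_R\otimes R\cdot\tfrac1x$. What $F$-purity actually requires is $\kappa\bigl(\omega_R\otimes R\cdot\tfrac1x\bigr)=\omega_R\otimes R\cdot\tfrac1x$, and since $\kappa\bigl(\tfrac{\nu}{x}\bigr)=\tfrac{\kappa(x^{p-1}\nu)}{x}$ this is the statement that $(\omega_R,\kappa x^{p-1})$ is $F$-pure. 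That statement is false for a general nonzerodivisor $x$ (take $x=y^p$: then $\kappa(x^{p-1}\omega_R)=y^{p-1}\omega_R\subsetneq\omega_R$), which is a sanity check that your argument cannot be right, since it nowhere uses smoothness of $x$. The paper proves exactly this point by working at a stalk, using smoothness to make $x$ part of a regular system of parameters $x, x_2,\ldots,x_n$, and exhibiting $\kappa\bigl(x^{p-1}x_2^{p-1}\cdots x_n^{p-1}\,\delta\bigr)=\delta$ for $\delta=dx\wedge dx_2\wedge\cdots\wedge dx_n$.

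The non-$F$-regularity step is also left incomplete, and here you talk yourself out of the correct (and very short) argument: you write that ``$\kappa(\omega_R\cdot 1)\subseteq\omega_R\cdot 1$ need not hold,'' but it always holds --- by the localization formula $\kappa(\mu\otimes 1)=\kappa(\mu\cdot 1^{p-1})\otimes 1=\kappa(\mu)\otimes 1$ --- so $\omega_R\otimes 1$ is a proper Cartier submodule of $\omega_R\otimes R\cdot\tfrac1x$ (proper because $x$ is a non-unit nonzerodivisor) which agrees with it at all generic points of $\Spec R$ (where $x$ is invertible). That single observation is the paper's entire proof of non-$F$-regularity; no analysis of $\tau$ or of $\mathcal{C}_+$-stabilization is needed. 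Your reduction of the no-nilpotent-submodules claim to injectivity of the adjoint map via \cite[Lemma 6.9]{staeblertestmodulnvilftrierung} is a legitimate alternative to the paper's direct monomial computation (the paper's computation additionally covers the twisted structures $\kappa x^s$, which it uses later), but the two gaps above must be repaired.
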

\begin{proof}
First of all, note that if $m \in M$ then \[\kappa(m \otimes \frac{1}{x^n}) = \kappa(m \otimes \frac{x^{n(p-1)}}{x^{np}}) = \kappa(x^{n(p-1)} m \otimes \frac{1}{x^{np}}) = \kappa(x^{n(p-1)} m) \otimes \frac{1}{x^n} \in M \otimes R \cdot \frac{1}{x^n}\] since $(F_\ast M) \otimes R_x \to F_\ast(M \otimes R_x), m \otimes a \mapsto m \otimes a^p$ is an isomorphism (cf.\ \cite[Lemma 2.2.1]{blickleboecklecartiercrystals}). We conclude that $M \otimes R \cdot \frac{1}{x^n}$ is a Cartier submodule of $j_\ast j^! M$. 
Next, we write $j_\ast j^! M$ as the union of the $M_n = M \otimes R \cdot \frac{1}{x^n}$ for $n \in \mathbb{N}$. We have to show that there is $e_n$ such that $\kappa^{e_n} M_n \subseteq M \otimes R \cdot \frac{1}{x}$. Choose $e_n$ such that $p^{e_n} \geq n$. Then for $m \in M$ we have \[\kappa^{e_n}(m \otimes \frac{1}{x^n}) = \kappa^{e_n}(m \otimes \frac{x^{p^{e_n} -n}}{x^{p^{e_n}}}) = \kappa^{e_n}(x^{p^{e_n} -n} m) \otimes \frac{1}{x} \in M \otimes R \cdot \frac{1}{x}.\]

Assume now that $R$, $x$ are smooth and $M = \omega_R$. We have to show that $\kappa: F_\ast (\omega_R \otimes R \cdot \frac{1}{x}) \to \omega_R \otimes R \cdot \frac{1}{x}$ is surjective. This is local on $R$ so that we may restrict to stalks $R_P$, where we have a local system of parameters $x_1, \ldots, x_n$. If $x$ is invertible in $R_P$ then the map is clearly surjective. 
If $x$ is not invertible then its part of a sequence of parameters by smoothness. Indeed, let $x_1, \ldots, x_n$ be a sequence of parameters, then $dx = \sum_i \frac{\partial x}{\partial x_i} dx_i$ in $\Omega_R^1$\footnote{Here we use that locally $R$ is \'etale over $k[x_1, \ldots, x_n]$}. As $x$ is smooth some partial derivative is a unit (say, after reordering, the first one). Then the transformation matrix sending $dx_1$ to $dx$ is invertible. In particular, in terms of the basis $\delta = dx \wedge dx_2 \wedge \cdots \wedge dx_n$ we obtain \[\kappa(x_2^{p-1} \cdots x_n^{p-1} \delta \otimes \frac{1}{x}) = \kappa( x^{p-1} x_2^{p-1} \cdots x_n^{p-1} \delta \otimes \frac{1}{x^p}) = (\delta \otimes \frac{1}{x}).\] The claim that $\omega_R \otimes R \cdot \frac{1}{x}$ admits no non-zero nilpotent submodules is again local so that we may assume that $\omega_{R}$ is free generated by $\delta$. We will in fact show that $\omega_R \otimes R \cdot \frac{1}{x}$ endowed with Cartier structure $\kappa x^s$ for any $s \geq 0$ admits no nilpotent submodules.

If $N \neq 0$ is any Cartier submodule and $r \delta \otimes \frac{1}{x} \in N$ a non-zero element then \begin{align*}(\kappa x^s)^e((r^{p^e-1} x^t x_2^{p^e-1} \cdots x_n^{p^e-1}) \cdot r \delta \otimes \frac{1}{x}) &= \kappa^e( x^{s\frac{p^e-1}{p-1} + t} x_2^{p^e-1} \cdots x_n^{p^e-1} r^{p^e} \delta \otimes \frac{1}{x})\\ &= r x^{p^{a-e}} \delta \otimes \frac{1}{x} \neq 0\end{align*} for $e \geq 1$, where $t = p^e -1 - s\frac{p^e-1}{p-1} + p^a$ for some $a \gg 0$, so that $N$ is not nilpotent. 
In order, to see that $\omega_R \otimes R \cdot \frac{1}{x}$ is not $F$-regular it suffices to note that $\omega_R \otimes R$ is a Cartier submodule that generically agrees with $\omega_R \otimes R \cdot \frac{1}{x}$. 
\end{proof}

We can also prove a slight variant of this result by twisting the Cartier structure with premultiplication by a power of $x$
\begin{Le}
\label{PushforwardLocallyConstantSheafLNil}
Let $j: \Spec R_f \to \Spec R$ be the open immersion induced by localization. Then $j_\ast \omega_{R_f} = j_\ast \omega_R \otimes R_f$ endowed with the Cartier structure $\kappa f^s$ is locally nil-isomorphic to $\omega_R$ for $s \geq 1$. If $R$ is smooth over an $F$-finite field, $x$ is a smooth element and $0 \leq s \leq p-1$ then $(\omega_R, \kappa x^s)$ is $F$-pure. Moreover, $(\omega_R, \kappa x^s)$ is $F$-regular if and only if $s \leq p-2$.
\end{Le}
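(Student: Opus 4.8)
The plan is to treat the three assertions in turn, reducing everything to a local computation on $\omega_R$ where $x$ is part of a regular system of parameters, exactly as in the proof of Proposition~\ref{PushforwardConstantSheafLNil}. First I would prove the local nil-isomorphism $\omega_R \hookrightarrow j_\ast \omega_{R_f}$ with Cartier structure $\kappa f^s$, $s\geq 1$. The inclusion is clearly a morphism of Cartier modules once one checks that $\kappa f^s$ carries $\omega_R$ into itself; here the point is that $f^s$ premultiplication plus the relation $r\kappa = \kappa r^p$ keeps things integral. Then one writes $j_\ast\omega_{R_f}$ as the union of the coherent submodules $\omega_R\otimes R\cdot\frac{1}{f^n}$ and shows, just as in Proposition~\ref{PushforwardConstantSheafLNil}, that a high enough power of $\kappa f^s$ (using $\kappa^e(m\otimes f^{-n}) = \kappa^e(f^{p^e-n}m)\otimes 1$ for $p^e\geq n$, and absorbing the extra $f^s$ factors since $s\geq 1$) lands in $\omega_R$. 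This gives the cokernel is locally nilpotent; the kernel is zero since $f$ is a non-zero-divisor on $\omega_R$.

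Next I would prove $F$-purity of $(\omega_R,\kappa x^s)$ for $0\leq s\leq p-1$. This is local, so I may assume $\omega_R$ is free with generator $\delta = dx\wedge dx_2\wedge\cdots\wedge dx_n$ coming from a regular system of parameters $x,x_2,\dots,x_n$ (using that $x$ is a smooth element, so $dx$ completes to a basis of $\Omega^1_R$ locally, as in Proposition~\ref{PushforwardConstantSheafLNil}). Then for any $r\in R$ one computes
\[
(\kappa x^s)\bigl(x^{p-1-s} r^{p-1} x_2^{p-1}\cdots x_n^{p-1} \cdot r\delta\bigr) = \kappa\bigl(x^{p-1} x_2^{p-1}\cdots x_n^{p-1} r^{p}\delta\bigr) = r\delta,
\]
which uses $p-1-s\geq 0$, i.e. $s\leq p-1$. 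Hence $\kappa x^s$ is surjective and $(\omega_R,\kappa x^s)$ is $F$-pure.

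Finally, for the $F$-regularity dichotomy: if $s\leq p-2$, I would show $(\omega_R,\kappa x^s)$ is $F$-regular. Working locally as above and using the criterion \cite[Proposition 5.2]{staeblertestmodulnvilftrierung}, given $a=\sum a_i x^i\in R[x]^\circ$ (or more precisely an element of $R^\circ$ after reducing to the domain case as in Lemma~\ref{FRegularPreservedShriekPoylnomialRing}), one must hit a generator of $\omega_R$ from $a\omega_R$ via the $\kappa x^s$-algebra; the extra room $p-1-s\geq 1$ in the exponent of $x$ is exactly what makes this possible, by a computation parallel to the displayed one in Proposition~\ref{PushforwardConstantSheafLNil} (choosing $t$ of the form $p^e-1-s\frac{p^e-1}{p-1}+p^a$, which requires $s\frac{p^e-1}{p-1}\leq p^e-1$, i.e. $s\leq p-1$, and a strict inequality somewhere to leave room for the $a$-factor). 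Conversely, if $s=p-1$, then $\kappa x^{p-1} = \kappa x^{p-1}$ is, up to the identification of Proposition~\ref{PushforwardConstantSheafLNil} / the computation that $x^{p-1}$ absorbs into an honest power of $x$ under $\kappa$, conjugate to the pullback situation $f^!\omega_{R}$ for the Kummer cover $t^{p-1}=x$; concretely $\mathfrak{m} = (x)$-torsion considerations show $x\omega_R$ is a proper Cartier submodule agreeing generically with $\omega_R$: indeed $\kappa x^{p-1}(x\omega_R)$... one checks $(\kappa x^{p-1})(r\delta) = \kappa(x^{p-1} r\delta)$ always lies in $x\cdot\omega_R$ when $\delta$ is normalized so that $\kappa(x^{p-1}\delta)\in x\omega_R$, wait — more carefully, the submodule $x\omega_R\subsetneq\omega_R$ is $\kappa x^{p-1}$-stable and generically equal to $\omega_R$, contradicting $F$-regularity. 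The main obstacle I expect is bookkeeping the exponents in the $F$-regularity argument: making sure the inequality $s\leq p-2$ (rather than just $s\leq p-1$) is used in precisely the right place, namely to guarantee there is an extra factor of $x$ available to absorb the arbitrary test element $a$, mirroring the role of $x_1^t$ in the last display of the proof of Proposition~\ref{PushforwardConstantSheafLNil}.
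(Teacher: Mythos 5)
Your handling of the local nil-isomorphism, of $F$-purity (the explicit surjectivity computation $(\kappa x^s)(x^{p-1-s}r^{p-1}x_2^{p-1}\cdots x_n^{p-1}\cdot r\delta)=r\delta$, valid exactly for $s\le p-1$), and of the failure of $F$-regularity at $s=p-1$ is sound and consistent with the paper: for the first two points the paper simply invokes Proposition \ref{PushforwardConstantSheafLNil}, and for $s=p-1$ it computes $\tau(\omega_R)=(\kappa x^{p-1})^e x\omega_R=\kappa^e x^{p^e}\omega_R=x\omega_R$, which amounts to the same thing as your (correct, once the ``wait'' is cleaned up) observation that $x\omega_R$ is a proper $\kappa x^{p-1}$-stable submodule agreeing with $\omega_R$ at the generic points of $\Supp\omega_R$.

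The gap is in the direction $s\le p-2$, which is the technical heart of the lemma. The displayed computation in Proposition \ref{PushforwardConstantSheafLNil} that you propose to run ``in parallel'' (with $t=p^e-1-s\frac{p^e-1}{p-1}+p^a$) produces only the \emph{nonzero} element $rx_1^{p^{a-e}}\delta$; it certifies non-nilpotence, not $F$-regularity. For $F$-regularity you must show, for an arbitrary nonzero $g\in R^\circ$, that $\sum_e(\kappa x^s)^e(Rg\delta)$ contains the generator $\delta$ itself --- producing $x^j\delta$ with $j\ge1$ only lands you inside $x\omega_R$ and proves nothing. Two ingredients are missing from your plan. First, the reduction: the paper passes through an \'etale map $\varphi:\Spec R\to\Spec k[x_1,\ldots,x_n]$ with $x=x_1$ and uses \cite[Theorem 6.15]{staeblertestmodulnvilftrierung} to reduce to the polynomial ring; your ``working locally as above'' is the coordinate choice that suffices for $F$-purity but not, by itself, for $F$-regularity. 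Second, the device for handling a genuine polynomial $g$ rather than a single monomial: one isolates the lexicographically largest monomial $x_1^{e_1}\cdots x_n^{e_n}$ of $g$, multiplies by a monomial $m$ so that $(\kappa x_1^s)^e(mx_1^{e_1}\cdots x_n^{e_n}\delta)=\delta$ \emph{exactly}, and observes that every other monomial of $x_1^{s(p^e-1)/(p-1)}mg$ then has some exponent strictly below $p^e-1$ and is annihilated by $\kappa^e$. The exponent bookkeeping you flag as ``the main obstacle'' is precisely the solvability of $m_1=p^e-1-e_1-s\frac{p^e-1}{p-1}\ge0$: for $s\le p-2$ this holds for all $e\gg0$, whereas for $s=p-1$ it forces $e_1\le0$. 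This is the one and only place where $s\le p-2$ enters, and it is absent from your argument as written.
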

\begin{proof}
Everything except the $F$-regularity follows just as in Proposition \ref{PushforwardConstantSheafLNil}. For the $F$-regularity note that (working locally) we have an \'etale morphism $\varphi: \Spec R \to \Spec S = \Spec k[x_1, \ldots, x_n]$, where $x = x_1$, and $\omega_R = \varphi^! \omega_S$. By \cite[Theorem 6.15]{staeblertestmodulnvilftrierung} $\omega_R$ is $F$-regular if $\omega_S$ endowed with Cartier structure $\kappa x_1^s$ is $F$-regular.\footnote{Note that since we do not assume that our base field is perfect the Cartier structure is given as in Lemma \ref{CartierStructureShriekPolyRing} above. In particular, everything depends on the choice of an isomorphism $k \to F^! k$.} Write $\delta = dx_1 \wedge \ldots \wedge dx_n$.

We have to show that for $g \in k[x_1, \ldots, x_n]$ a non-zero polynomial there is $e \geq 1$ such that $(\kappa x_1^s)^e(Rg \delta) = R \delta$ (cf.\ \cite[Proposition 3.7]{blicklep-etestideale}). Note that $(\kappa x_1^s)^e = \kappa^e x_1^{s \frac{p^e -1}{p-1}}$. Denote the largest non-zero monomial of $g$ with respect to the lexicographic ordering by $x_1^{e_1} \cdots x_n^{e_n}$ (which we may assume to be monic). Assume that there is a monomial $m$ and $e \geq 0$ such that $(\kappa x_1^s)^e(m x_1^{e_1} \cdots x_n^{e_n} \delta) = \delta$ then it follows that any other monomial of $x_1^{s\frac{p^e -1}{p-1}} m g$ has exponent $< p^e -1$ in some $x_i$ and is thus annihilated by $\kappa^e$.

In particular, since we only need to consider monomials we may assume that $s = p-2$. We assert that there is $e \geq 0$ such that \[\frac{p-2}{p-1} (p^e - 1 +pe_1 - e_1) \leq p^e -1.\] Equivalently, we have to show that \[\frac{p-2}{p-1}  \leq \frac{p^e -1}{p^e - 1 +pe_1 - e_1}.\] But since the right hand side converges to $1$ for $e \to \infty$ this holds for all $e \gg 0$. Fix one $e'$ that satisfies this inequality. Then for $e$ such that $p^e -1 \geq e_i$ for all $i=2, \ldots, n$ and $e \geq e'$ we find a monomial $m$ as desired.

Finally, let us assume that $s = p-1$. Clearly, $x$ is a test element for $\omega_R$ so that, using \cite[Lemma 4.1]{blicklestaeblerbernsteinsatocartier}, we obtain $\tau(\omega_R, \kappa x^{p-1}) = (\kappa x^{p-1})^e x \omega_R = \kappa^e x^{p^e} \omega_R = x \omega_R$ for all $e \gg 0$.
\end{proof}

\begin{Bsp}
Let $k$ be an $F$-finite field and $R$ smooth over $k$. Fix a smooth hypersurface $x \in R$ and denote $R[t]/(t^{p-1} - x)$ by $S$. We write $f: \Spec S \to \Spec R$ for the morphism induced by the inclusion. Note that $f$ is a Kummer covering which is \'etale over $D(x)$. We consider the Cartier module $\omega_R$, where the Cartier structure is given by $\kappa x^s$ for some $1 \leq s \leq p -2$.

More precisely, the adjoint $C$ of $\kappa$ is induced from the fixed isomorphism $k \to F^! k$ by $s^!$, where $s: \Spec R \to \Spec k$ is the structural map. In particular, the adjoint of $\kappa x^s$ is given by $\omega_R \xrightarrow{x^s} \omega_R \xrightarrow{C} F^! \omega_R$. If we endow $\omega_R$ with its natural Cartier structure $\kappa$ then $f^!$ induces an isomorphism of Cartier modules $f^! \omega_R \cong \omega_S$, where $\omega_S$ also carries its natural Cartier structure $\kappa_S$. In particular, if we endow $\omega_R$ with the structure $\kappa x^s$ then this induces the Cartier structure $\kappa_S x^s = \kappa_S t^{s(p-1)}$ on $\omega_S$. From now on we will simply write $\kappa$ for the natural Cartier structure on a canonical module.

By virtue of Lemma \ref{PushforwardLocallyConstantSheafLNil} we conclude for $p \geq 3$ that $f^! \omega_R$ with Cartier structure induced by $\kappa x$ is $F$-pure but not $F$-regular, while $(\omega_R, \kappa x)$ is $F$-regular. By the same token if $s > 1$ then $(\omega_S, \kappa x^s)$ is not $F$-pure.
\end{Bsp}

We can also transfer this example to a context which fits in the framework where Stadnik constructs a notion of $V$-filtration (cf. \cite[Theorem 3.15, Definition 3.11]{stadnikvfiltrationfcrystal}).

\begin{Bsp}
We use the notations and assumptions from the previous example. First we show that taking the twisted inverse image of $(\omega_{R_x},  \kappa x^{s})$  along the restriction of $f$ yields an isomorphism with $(\omega_{S_x}, \kappa)$. We have $f^! \omega_R = \omega_S$ and a commutative diagram:
\[\begin{xy} 
\xymatrix{F_\ast \omega_{S_x}\ar[d]^{x^{\frac{s}{p-1}}} \ar[r]^{\kappa} & \omega_{S_x} \ar[d]^{x^{\frac{s}{p-1}}} \\ F_\ast \omega_{S_x} \ar[r]^{\kappa x^s} & \omega_{S_x}} \end{xy} \] Since $x^{\frac{s}{p-1}}$ is a unit the claim follows.

As $f\vert_{D(x)}$ is \'etale this shows that $(\omega_{R_x}, \kappa x^s)$ corresponds to a unit $R[F]$-crystal on $D(x)$ (equivalently to a locally constant sheaf on the \'etale site).

By Lemma \ref{PushforwardLocallyConstantSheafLNil} we have a local nil-isomorphism $j_\ast \omega_{R_x} \to \omega_R$. Hence, if we denote the open immersion $D(x) \to \Spec R$ by $j$ then $\tau(f^! j_\ast \omega_{R_x}) \neq f^!(\tau(j_\ast \omega_{R_x})$ by the previous example.
\end{Bsp}

\begin{Def}
Given a finite morphism $f: \Spec S \to \Spec R$ and an $R$-module $M$ we define the \emph{trace} as the $R$-linear map $\Tr: f_\ast f^! M \to M, \varphi \mapsto \varphi(1)$.
\end{Def}

\begin{Le}
\label{TraceFPureFaithfullyFlat}
Let $f: \Spec S \to \Spec R$ be a finite flat surjective morphism and let $M$ be a Cartier module via some Cartier algebra $\mathcal{C}$. Then $\Tr(f_\ast \underline{f^! M}) = \underline{M}$.
\end{Le}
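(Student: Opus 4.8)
Let $f: \Spec S \to \Spec R$ be a finite flat surjective morphism and let $M$ be a Cartier module via some Cartier algebra $\mathcal{C}$. Then $Tr(f_\ast \underline{f^! M}) = \underline{M}$.

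**Plan.** The trace map $Tr: f_\ast f^! M = f_\ast \Hom_R(S, M) \to M$ is the counit of the adjunction $(f_\ast, f^!)$, i.e.\ evaluation at $1 \in S$; by \cite[Theorem 2.17]{blickleboecklecartierfiniteness} (and the surrounding discussion) it is a morphism of Cartier modules, so $Tr(f_\ast \underline{f^! M})$ is at least a Cartier submodule of $M$. Since $f$ is finite flat and surjective, $S$ is a faithfully flat $R$-module which is locally free of finite rank; hence the trace $Tr: f_\ast f^! M \to M$ is surjective (locally, writing $S = R^{\oplus r} \oplus (\text{torsion-free complement})$ after choosing a basis, the projection to the first coordinate gives a splitting of $R \to S$, so evaluation at an appropriate functional hits all of $M$). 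The key point to extract from faithful flatness and freeness is that $f^! M = \Hom_R(S,M) \cong \Hom_R(S,R) \otimes_R M$ and that under $Tr$ this surjects onto $M$ $\mathcal{C}$-linearly.

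**Main steps.** First I would reduce to a local statement: $\underline{\phantom{M}}$ commutes with localization, $f_\ast$ and $f^!$ are compatible with localization on the base (since $f$ is finite), and the trace map is compatible with localization; so we may assume $S$ is a free $R$-module of rank $r$ with basis $e_1, \dots, e_r$. Next, the core inclusion to establish is $Tr(f_\ast \underline{f^!M}) \subseteq \underline{M}$: for this I would show that $\underline{f^! M} \supseteq f^! \underline{M}$ is not quite what is needed — rather, I want $Tr(f_\ast \underline{f^!M}) \subseteq \underline{M}$, which follows because $Tr$ is $\mathcal{C}$-linear, hence $Tr(\mathcal{C}_+^h f^!M) = \mathcal{C}_+^h Tr(f^!M) \subseteq \mathcal{C}_+^h M$, and for $h \gg 0$ the left side is $Tr(f_\ast \underline{f^!M})$ while the right side is $\underline{M}$ (using that $Tr$ is surjective so $Tr(f^!M) = M$). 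For the reverse inclusion $\underline{M} \subseteq Tr(f_\ast \underline{f^!M})$: since $Tr$ is surjective and $\mathcal{C}$-linear we get $\underline{M} = \mathcal{C}_+^h M = \mathcal{C}_+^h Tr(f^!M) = Tr(\mathcal{C}_+^h f^!M) = Tr(f_\ast \underline{f^!M})$ for $h \gg 0$. So in fact both inclusions come from the same identity once one knows $Tr$ is a surjective morphism of Cartier modules.

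**Expected obstacle.** The only genuinely nontrivial input is that the trace $Tr: f_\ast f^! M \to M$ is $\mathcal{C}$-linear (so that it commutes with the $\mathcal{C}_+^h(-)$ operation) and surjective. The $\mathcal{C}$-linearity is the statement that the adjunction counit for finite morphisms respects Cartier structures, which is \cite[Theorem 2.17, Proposition 2.18]{blickleboecklecartierfiniteness} (one needs to check that the Cartier structure on $f^!M$ defined via $\varphi \mapsto \kappa \circ \varphi \circ F$ is exactly the one dual to $f^!$ applied to $C: M \to F^!M$, which is recorded earlier in the excerpt). The surjectivity of $Tr$ is where faithful flatness is used: on a free chart, writing $1 = \sum_i r_i e_i'$ in the dual basis, or more simply splitting $R \hookrightarrow S$ as a direct summand of $R$-modules (possible since $S$ is faithfully flat and finite, hence $R \to S$ is split injective after passing to a suitable localization, or one argues that the image of $Tr$ is a Cartier submodule whose support is all of $\Supp M$ and which contains a full-rank piece), one sees $Tr$ hits all of $M$. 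Once surjectivity and $\mathcal{C}$-linearity are in hand, the identity $Tr(\mathcal{C}_+^h f^!M) = \mathcal{C}_+^h M$ for $h \gg 0$ is immediate and the proof concludes.
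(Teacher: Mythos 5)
Your proposal is correct and follows essentially the same route as the paper: the whole lemma reduces to the two facts that the trace (evaluation at $1$) is $\mathcal{C}$-linear, via $Tr(\kappa\varphi)=\kappa(\varphi(F^e(1)))=\kappa(\varphi(1))=\kappa(Tr(\varphi))$, and surjective because $1$ is locally part of a free $R$-basis of $S$ (finite flat plus surjective), whence $Tr(f_\ast(\mathcal{C}_+)^h f^!M)=(\mathcal{C}_+)^h M$ for $h\gg 0$. The paper's proof is exactly this, stated for a single application of $\mathcal{C}_+$ and then iterated.
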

\begin{proof}
By abuse of notation we denote the induced Cartier algebra on $f^!M$, which acts for a homogeneous element $\kappa$ of degree $e$ as $\kappa \cdot \varphi \mapsto \kappa \circ \varphi \circ F^e$, again by $\mathcal{C}$. It suffices to show that $\Tr(f_\ast \mathcal{C}_+ f^!M) = \mathcal{C}_+ M$. If $\kappa$ is homogeneous of degree $e$ then $\Tr(\kappa \varphi)) = \kappa(\varphi(F^e(1)) = \kappa(\varphi(1)) = \kappa(\Tr(\varphi))$ for $\varphi \in f^!M$. Since $f$ is flat and surjective $1$ is locally part of a free basis of $f_\ast S$ so that $\Tr$ is surjective.
\end{proof}

\begin{Le}
\label{TraceTestModuleFaithfullyFlat}
Let $f: \Spec S \to \Spec R$ be a finite flat surjective morphism and let $M$ be a Cartier module via some Cartier algebra $\mathcal{C}$. Assume that $f^! M$ and $M$ admit a common test element $x$. Then $\Tr(f_\ast \tau(f^! M)) = \tau(M)$.
\end{Le}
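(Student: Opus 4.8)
The plan is to reduce everything to the Skoda-type description of test modules together with Lemma \ref{TraceFPureFaithfullyFlat}. First I would invoke \cite[Theorem 3.11]{blicklep-etestideale}: since $x$ is a test element for $(M,\mathcal{C})$ and, by hypothesis, also for $(f^!M,\mathcal{C})$ — here $\mathcal{C}$ acts on $f^!M$ as in the proof of Lemma \ref{TraceFPureFaithfullyFlat}, i.e.\ $\kappa\cdot\varphi=\kappa\circ\varphi\circ F^e$ for $\kappa\in\mathcal{C}_e$ — we may fix any $a\in\mathbb{N}$ and write
\[ \tau(M)=\sum_{e\geq 1}\mathcal{C}_e x^a\underline{M},\qquad \tau(f^!M)=\sum_{e\geq 1}\mathcal{C}_e x^a\underline{f^!M}, \]
where $\underline{\phantom{M}}$ is taken throughout with respect to $\mathcal{C}$. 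Since $f_\ast$ is exact and $Tr$ is additive, it then suffices to prove $Tr(f_\ast(\mathcal{C}_e x^a\underline{f^!M}))=\mathcal{C}_e x^a\underline{M}$ for each $e\geq 1$.

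The key observation will be that $Tr\colon f_\ast f^!M\to M$, $\varphi\mapsto\varphi(1)$, not only is $\mathcal{C}$-linear — this is exactly the identity $Tr(\kappa\varphi)=\kappa(\varphi(F^e(1)))=\kappa(\varphi(1))=\kappa(Tr(\varphi))$ already used in the proof of Lemma \ref{TraceFPureFaithfullyFlat} — but also commutes with premultiplication by elements of $R$: spelling out the $S$-module structure on $f^!M=\Hom_R(S,M)$ and using $R$-linearity of $\varphi$, one has $(x^a\varphi)(1)=\varphi(x^a)=x^a\varphi(1)$, so $Tr(x^a\varphi)=x^a Tr(\varphi)$. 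Combining the two, $Tr(\kappa(x^a\varphi))=\kappa(x^a Tr(\varphi))$ for every homogeneous $\kappa\in\mathcal{C}_e$ and every $\varphi\in f^!M$. This immediately gives the inclusion $Tr(f_\ast(\mathcal{C}_e x^a\underline{f^!M}))\subseteq\mathcal{C}_e x^a\,Tr(f_\ast\underline{f^!M})$; for the reverse inclusion, given $m\in\underline{M}$ I would use Lemma \ref{TraceFPureFaithfullyFlat} to choose $\varphi\in\underline{f^!M}$ with $Tr(\varphi)=m$, whence $\kappa(x^a m)=Tr(\kappa(x^a\varphi))$ lies in $Tr(f_\ast(\mathcal{C}_e x^a\underline{f^!M}))$. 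Thus $Tr(f_\ast(\mathcal{C}_e x^a\underline{f^!M}))=\mathcal{C}_e x^a\,Tr(f_\ast\underline{f^!M})$, and a second application of Lemma \ref{TraceFPureFaithfullyFlat} (namely $Tr(f_\ast\underline{f^!M})=\underline{M}$) identifies the right-hand side with $\mathcal{C}_e x^a\underline{M}$.

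Summing over $e\geq 1$ and using additivity of $Tr\circ f_\ast$ then yields $Tr(f_\ast\tau(f^!M))=\sum_{e\geq 1}\mathcal{C}_e x^a\underline{M}=\tau(M)$, as claimed. The only point that needs a little care — and the closest thing to an obstacle — is checking that the premultiplication operator ``$x^a$'' appearing in the Skoda formula for $\tau(f^!M)$ corresponds under $Tr$ to premultiplication by $x^a$ on $M$; this is handled by the elementary computation with the $S$-module structure on $\Hom_R(S,M)$ above, and the rest is formal bookkeeping with the fixed Cartier algebra $\mathcal{C}$.
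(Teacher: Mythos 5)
Your proposal is correct and follows essentially the same route as the paper: apply the Skoda-type formula $\tau(\cdot)=\sum_{e\geq 1}\mathcal{C}_e x^a\underline{\phantom{M}}$ to both $M$ and $f^!M$, check that $Tr$ intertwines the Cartier action and premultiplication by $x^a$ via $Tr(\kappa x^a\varphi)=\kappa(x^a\varphi(F^e(1)))=\kappa x^a Tr(\varphi)$, and conclude with Lemma \ref{TraceFPureFaithfullyFlat}. The only cosmetic difference is that the paper fixes $a=1$ and leaves the surjectivity direction implicit.
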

\begin{proof}
As in the last lemma we will denote the induced Cartier algebra on $f^!M$ again by $\mathcal{C}$. Then $\Tr(\tau(f^!M)) = \Tr(\sum_{e \geq 1} \mathcal{C}_e x \underline{f^!M})$. In particular, for a homogeneous element $\kappa$ of degree $e$ and $\varphi \in \underline{f^!M}$ we have $\Tr(\kappa x\varphi) = \kappa(x \varphi(F^e(1))) = \kappa x \Tr(\varphi)$. Using Lemma \ref{TraceFPureFaithfullyFlat} we obtain $\Tr(\underline{f^!M}) = \underline{M}$ so that $\Tr(\tau(f^!M)) = \sum_{e \geq 1} \mathcal{C}_e x \underline{M} = \tau(M)$.
\end{proof}

\begin{Le}
\label{FiniteFiberExtension}
Let $f: \Spec S \to \Spec R$ be a finite morphism of noetherian rings with $R$ reduced. If $Q = f(P)$ is generic for $P \in \Spec S$ generic then the induced morphism on stalks $\varphi: R_Q \to S_P$ is finite.
\end{Le}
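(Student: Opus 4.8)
The plan is to reduce the statement to a purely local question about semilocal rings and then invoke the fact that a finite morphism remains finite after any base change, in particular after localization at a prime of the target. More precisely, suppose $Q = f(P)$ where $P \in \Spec S$ and $Q \in \Spec R$ are both generic (i.e.\ minimal primes of $S$ and $R$ respectively). First I would pass to the local ring $R_Q$, forming $S \otimes_R R_Q = S_Q$, which is still a finite $R_Q$-module since finiteness is stable under base change. The key observation is then that $S_P$ is obtained from $S_Q$ by further localizing at the prime $P S_Q$, so it suffices to show that $S_Q$ is already local with maximal ideal $P S_Q$, i.e.\ that $P$ is the unique prime of $S$ lying over $Q$.

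The heart of the matter is therefore to show that if $Q$ is a minimal prime of $R$, then there is exactly one prime of $S$ lying over $Q$, namely the given minimal prime $P$. Here I would use that $f$ is finite, hence the going-up theorem and lying-over hold, so every prime of $S$ over $Q$ exists, and any chain of primes over $Q$ is trivial (incomparability: no inclusions among primes in a single fiber of an integral extension). Thus every prime of $S$ lying over $Q$ is both minimal over $QS$ and maximal in its fiber. The point is to rule out the existence of a \emph{second} such prime. Since $Q$ is minimal in $R$, the fiber ring $S \otimes_R \kappa(Q)$ is a finite $\kappa(Q)$-algebra, hence artinian, so it has finitely many primes, all minimal and maximal; if there were two of them the fiber would decompose and one of its factors would fail to see $P$. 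Actually the cleanest route: reducedness of $R$ forces $R_Q = \kappa(Q)$ to be a field, so $S_Q = S \otimes_R \kappa(Q)$ is a finite-dimensional $\kappa(Q)$-algebra; its minimal primes correspond exactly to the generic points of $\Spec S$ mapping to $Q$, and after localizing at $P S_Q$ we land in the artinian local ring $(S_Q)_{PS_Q}$, which is a localization of the finite $\kappa(Q)$-algebra $S_Q$ and hence itself a finite $\kappa(Q)$-algebra, in particular a finite (indeed finitely generated) $R_Q$-module. This is precisely the assertion that $\varphi \colon R_Q \to S_P$ is finite.

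The main obstacle I anticipate is bookkeeping: one must be careful that $S_P$, the localization of $S$ at the prime $P$, coincides with the localization of $S_Q = S \otimes_R R_Q$ at $P S_Q$, and that the resulting ring is a \emph{localization} of a module-finite $R_Q$-algebra at a prime, which need not be module-finite in general over a local base — this is where reducedness of $R$ (making $R_Q$ a field, so that \emph{every} localization of the finite algebra $S_Q$ is again finite-dimensional over the field $R_Q = \kappa(Q)$) does the real work. Without reducedness the claim would fail. So the logical skeleton is: finiteness is base-change stable $\Rightarrow$ $S_Q$ is a finite $R_Q$-algebra; $R$ reduced and $Q$ minimal $\Rightarrow$ $R_Q$ is a field; a localization of a finite-dimensional algebra over a field is again finite-dimensional over that field; and $S_P = (S_Q)_{PS_Q}$. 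Combining these gives that $R_Q \to S_P$ is finite, as claimed.
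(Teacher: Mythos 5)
Your proof is correct, and its skeleton --- localize at $Q$, use reducedness to see that $R_Q=\kappa(Q)$ is a field, base-change to get the finite $\kappa(Q)$-algebra $S_Q=S\otimes_R R_Q$, and identify $S_P=(S_Q)_{PS_Q}$ --- is exactly how the paper begins. Where you genuinely diverge is the finishing step. The paper passes to $(S_Q)_{red}$, a finite product of finite field extensions of $\kappa(Q)$, observes that $(S_P)_{red}=\kappa(P)$ lands in one of the factors and is therefore a finite extension of $\kappa(Q)$, and then invokes \cite[Proposition 6.1.5 (v)]{EGAII} to lift finiteness from the residue field back to $S_P$ itself. You instead stay with $S_Q$: being module-finite over a field it is Artinian, so its localization at the maximal ideal $PS_Q$ is a direct factor, hence a quotient $\kappa(Q)$-module of $S_Q$, hence again finite-dimensional. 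This is a complete argument and arguably cleaner, since it avoids both the passage to reductions and the external citation; just make the ``localization of an Artinian ring at a maximal ideal is a direct factor'' step explicit in the write-up. Two small clean-ups: the intermediate goal you first announce --- that $P$ is the \emph{unique} prime over $Q$, so that $S_Q$ is already local --- is false in general and, as you yourself then notice, unnecessary, so it should be deleted rather than left as a ``suffices to show''; and the closing assertion that the claim ``would fail'' without reducedness is doubtful, since for $Q$ minimal the ring $R_Q$ is in any case Artinian local, $S_Q$ is then still Artinian of finite length over $R_Q$, and the same direct-factor argument goes through --- reducedness merely lets you work over a field.
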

\begin{proof}
Since $R$ is reduced $R_Q$ is a field and $Q_Q = 0$. We get a finite morphism of fibers $f^{-1}(Q) \to Q$ which is of the form $R_Q \to S[(R \setminus Q)^{-1}] =:  S_Q$. We have a factorization $R_Q \to S_Q \to S_P$, were $S_Q$ is an Artinian $R_Q$-algebra. In particular, it is a finite direct product of local Artinian algebras. One of these factors corresponds to $S_P$. We conclude that $S_P$ is finite over $R_Q$.
\end{proof}

\begin{Prop}
\label{ShriekFPureSupport}
Let $f: \Spec S \to \Spec R$ be a finite flat morphism and let $M$ be an $F$-pure coherent Cartier module. Then $\Supp \underline{f^! M} = \Supp f^!M$.
\end{Prop}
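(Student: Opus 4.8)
The inclusion $\Supp \underline{f^!M} \subseteq \Supp f^!M$ is immediate, since $\underline{f^!M}$ is a Cartier submodule of $f^!M$; the content is the reverse inclusion, and the plan is to reduce it to a fibrewise computation over a field.

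First I would make two harmless reductions. Since $M$ is $F$-pure, $I := \Ann_R M$ is radical, and $f^!M = \Hom_R(S,M) = \Hom_{R/I}(S/IS,M)$ is killed by $IS$; replacing $f$ by the base change $\Spec S/IS \to \Spec R/I$ affects neither support occurring in the statement, so I may assume $R$ is reduced and $\Supp_R M = \Spec R$. As $S$ is finite flat over $R$, the set $\Supp_R S$ is open and closed; discarding the factor of $R$ on which $S$ vanishes I may further assume $f$ is surjective (the case $S = 0$ being trivial).

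The main claim is then that $(\underline{f^!M})_P \neq 0$ for every minimal prime $P$ of $S$. Put $Q = P \cap R$. By going-down for the flat map $R \to S$ the prime $Q$ is minimal, hence $R_Q$ is a field, and $M_Q$ is nonzero and still $F$-pure. Since $\underline{\phantom{M}}$ commutes with localization, $(\underline{f^!M})_P = \underline{(f^!M)_P}$, while $f^!M \otimes_R R_Q = \Hom_{R_Q}(S_Q, M_Q)$ with the induced (finite-morphism) Cartier structure, where $S_Q = S \otimes_R R_Q$ is Artinian. Writing $S_Q$ as a product of its local factors and noting that this decomposition is Frobenius-stable (the idempotents satisfy $e^p = e$), it is a decomposition of Cartier modules, so localizing at $P$ identifies $(\underline{f^!M})_P$ with $\underline{N}$, where $N = \Hom_{R_Q}(S_P, M_Q)$. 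It thus remains to show that $N$ is not nilpotent.

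To verify this, fix $e \ge 1$. By $F$-purity the $e$-fold structural map $\kappa^e \colon F^e_\ast M_Q \to M_Q$ is surjective onto the nonzero module $M_Q$, so $\kappa^e(u) \neq 0$ for some $u$; and since $1$ is part of an $R_Q$-basis of the finite-dimensional algebra $S_P$, evaluation at $1$ surjects $N$ onto $M_Q$, so I may pick $\varphi \in N$ with $\varphi(1) = u$. The $e$-fold structural map of $N$ carries $\varphi$ to $\kappa^e \circ \varphi \circ F^e$, whose value at $1$ is $\kappa^e(\varphi(1)) = \kappa^e(u) \neq 0$; hence $\mathcal{C}_+^e N \neq 0$ for all $e$, so $N$ is not nilpotent and $\underline{N} \neq 0$. (For a general Cartier algebra $\mathcal{C}$ one argues identically, using $\mathcal{C}_+^e M_Q = M_Q$ in place of surjectivity of $\kappa^e$ and a homogeneous element of $\mathcal{C}_+^e$.) Consequently $\Supp_S \underline{f^!M}$ is a closed subset of $\Spec S$ containing every minimal prime, hence equals $\Spec S$; being sandwiched between this and $\Supp_S f^!M \subseteq \Spec S$, all three coincide, and undoing the reductions gives the result. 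The delicate point is exactly this last non-nilpotency: one has to keep track of the fact that the structural map of $f^!$ both post-composes $\varphi$ with $\kappa$ and pre-composes it with Frobenius (which fixes $1$), and that localization together with the idempotent splitting of $S_Q$ is compatible with all the Cartier structures.
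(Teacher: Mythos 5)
Your proof is correct and follows essentially the same route as the paper: reduce via the radical annihilator and flatness to the fibre over a generic point, then show the fibre of $f^!M$ is not nilpotent by exploiting that evaluation at $1$ is surjective and intertwines the Cartier structures because $F(1)=1$. Your ``evaluation at $1$'' is exactly the trace map the paper uses (Lemmata \ref{TraceFPureFaithfullyFlat} and \ref{FiniteFiberExtension}); the only cosmetic difference is that you split the Artinian fibre algebra by idempotents instead of invoking finiteness of $R_{f(\eta)} \to S_\eta$.
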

\begin{proof}
By \cite[Lemma 6.24]{staeblertestmodulnvilftrierung} we have $\Supp f^! M = f^{-1} \Supp M$. As $M$ is $F$-pure the annihilator $\Ann M$ is radical and we may base change and assume that $\Supp M = \Spec R$. Indeed, denote by $i$ the closed immersion $\Spec R/\Ann(M) \to \Spec R$ then applying $f'^!$ to the isomorphism $\id \to i^! i_\ast$ and using that $f'^! i^! \cong i'^! f^!$ we get, after adjunction, an induced morphism $i'_\ast f'^! \to f^! i_\ast$ which one verifies to be an isomorphism of $S$-modules. Then \cite[Proposition 3.3.23]{blickleboecklecartiercrystals} yields that this is an isomorphism of Cartier modules.

Let now $\eta$ be a generic point of a component of $\Supp f^!M = \Spec S$. Then $f(\eta)$ is a generic point of a component of $\Spec R$ by flatness. Consider the following commutative diagram \[\begin{xy} \xymatrix{ \Spec S_{\eta} \ar[r]^u \ar[d]_{f_\eta} & \Spec S \ar[d]_f \\ \Spec R_{f(\eta)} \ar[r]^v& \Spec R} \end{xy}\]
Note that $f_\eta$ is finite flat and surjective ($R_{f(\eta)}$ is a field since $R$ is reduced, finiteness follows from Lemma \ref{FiniteFiberExtension}). Since both $u, v$ are essentially \'etale we have $u^! f^! M = u^\ast f^! M = (f^! M)_\eta \cong f^!_\eta v^! M = f^!_\eta M_{f(\eta)}$.

Recall that the operation $\underline{\phantom{M}}$ commutes with localization.
Since $f_\eta$ is finite faithfully flat we obtain via Lemma \ref{TraceFPureFaithfullyFlat} a surjection $\Tr: {f_\eta}_\ast \underline{f_\eta^! M_{f(\eta)}} \to \underline{M}_{f(\eta)}= M_{f(\eta)}$. In particular, $\underline{(f^! M)_{\eta}} \neq 0$ thus $\eta \in \Supp \underline{f^!M}$. Finally, $\underline{f^! M}$ being coherent the support is closed and contains all generic points of the components of $\Spec S$ so that $\Supp \underline{f^! M} = \Spec S$. 
\end{proof}

\begin{Prop}
\label{FiniteFlatShriekEasyFregularDirection}
Let $f: \Spec S \to \Spec R$ be a finite flat and surjective morphism and let $M$ be an $F$-pure Cartier module. If $\underline{f^! M}$ is $F$-regular, then $M$ is also $F$-regular.
\end{Prop}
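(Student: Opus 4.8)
The plan is to transport a hypothetical counterexample to $F$-regularity of $M$ upward along $f^!$ and then push it back down via the trace. Since $M$ is assumed $F$-pure we have $\underline{M}=M$, so it suffices to verify the second half of the definition of $F$-regularity: that every Cartier submodule $N\subseteq M$ which generically agrees with $M$ (i.e.\ $N_\eta=M_\eta$ for every generic point $\eta$ of $\Supp M$) is all of $M$. So I would fix such an $N$ and aim to force $N=M$.

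First I would normalize the support. As $M$ is $F$-pure, $\Ann M$ is radical, and $N$, being a submodule of $M$, is also annihilated by $\Ann M$; hence we may base change along $\Spec R/\Ann M\to\Spec R$ exactly as in the proof of Proposition \ref{ShriekFPureSupport}, reducing to the case $\Supp M=\Spec R$ with $R$ reduced. By \cite[Lemma 6.24]{staeblertestmodulnvilftrierung} we then have $\Supp f^!M=f^{-1}\Supp M=\Spec S$, and since $f$ is flat every generic point of $\Spec S$ lies over a generic point of $\Spec R$. Applying the left exact functor $f^!=\Hom_R(S,-)$ to the inclusion $N\subseteq M$ yields an inclusion of Cartier modules $f^!N\subseteq f^!M$, and a routine localization computation — using that $S$ is a finitely presented $R$-module, so that $\Hom_R(S,-)$ commutes with localization — shows that $f^!N$ and $f^!M$ still agree at every generic point of $\Spec S$.

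Next, since the operation $\underline{\phantom{M}}$ commutes with localization, the Cartier submodule $\underline{f^!N}\subseteq\underline{f^!M}$ still generically agrees with $\underline{f^!M}$, where one uses $\Supp\underline{f^!M}=\Supp f^!M=\Spec S$ from Proposition \ref{ShriekFPureSupport} to identify the relevant generic points. As $\underline{f^!M}$ is $F$-regular by hypothesis, this forces $\underline{f^!N}=\underline{f^!M}$. Finally I would apply Lemma \ref{TraceFPureFaithfullyFlat} to $N$ and to $M$ to get $\underline{N}=Tr(f_\ast\underline{f^!N})=Tr(f_\ast\underline{f^!M})=\underline{M}=M$; since $\underline{N}\subseteq N\subseteq M$ this gives $N=M$, so $M$ is $F$-regular. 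The only point requiring genuine care is the bookkeeping of generic points across $\Supp M$, $\Supp f^!M$ and $\Supp\underline{f^!M}$, so that the property ``generically agrees'' is preserved by both $f^!$ and $\underline{\phantom{M}}$; this rests on finite flatness together with Proposition \ref{ShriekFPureSupport}, and everything else in the argument is formal once the trace lemmas are available.
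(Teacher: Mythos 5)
Your argument is correct, and it takes a genuinely different route from the paper's. You verify the definition of $F$-regularity head-on: given a Cartier submodule $N\subseteq M$ generically agreeing with $M$, you observe that $f^!$ (being $\Hom_R(S,-)$ with $S$ finitely presented over $R$, hence compatible with localization) and the operation $\underline{\phantom{M}}$ both preserve generic agreement — the bookkeeping of generic points being handled by flatness and Proposition \ref{ShriekFPureSupport} exactly as you say — so that $F$-regularity of $\underline{f^!M}$ forces $\underline{f^!N}=\underline{f^!M}$, and a single application of Lemma \ref{TraceFPureFaithfullyFlat} to both $N$ and $M$ gives $\underline{N}=\underline{M}=M$, hence $N=M$. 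The paper instead uses the test-element characterization of $F$-regularity: it shows that a test element $c$ for $M$ remains a test element for $\underline{f^!M}$ (the same density-of-$f^{-1}(D(c))$ argument via flatness and Proposition \ref{ShriekFPureSupport}), and then combines Lemma \ref{TraceFPureFaithfullyFlat} with the test-module trace Lemma \ref{TraceTestModuleFaithfullyFlat} to conclude $M=Tr(f_\ast\underline{f^!M})=\tau(M)$. The trade-off: the paper's route is shorter given the trace-of-test-module machinery already in place, but it presupposes that $M$ admits a test element (so implicitly that $R$ is essentially of finite type over an $F$-finite field, or at least that $\tau(M)$ exists); your route avoids test elements and test modules entirely and so works for any $F$-finite $R$, at the cost of having to check that ``generically agrees'' survives both $f^!$ and $\underline{\phantom{M}}$ — which you do correctly.
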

\begin{proof}
We use the characterization of $F$-regularity given by \cite[Proposition 5.2]{staeblertestmodulnvilftrierung}. First, we claim that given $c \in R$ such that $D(c) \cap \Supp M$ is dense in $\Supp M$ then $f^{-1}(D(c)) \cap \Supp f^! M$ is dense in $\Supp \underline{f^! M} = \Supp f^!M$, where the equality is due to Proposition \ref{ShriekFPureSupport}. By \cite[Lemma 6.24]{staeblertestmodulnvilftrierung} we have $f^{-1}(\Supp M) = \Supp f^!M$ so that the claim is that $f^{-1}(\Supp M \cap D(c))$ is dense in $f^{-1}(\Supp M)$. But by flatness (make a base change) and the density of $\Supp M \cap D(c)$ in $\Supp M$ all generic points of $f^{-1}(\Supp M)$ are contained in $f^{-1}(\Supp M \cap D(c))$. 

Let now $c \in R$ be a test element for $M$. Then by the above and our assumptions $c$ is also a test element for $\underline{f^! M}$. Using Lemmata \ref{TraceFPureFaithfullyFlat} and \ref{TraceTestModuleFaithfullyFlat} we have $M = \underline{M} = \Tr(f_\ast(\underline{f^!M})) = \Tr(f_\ast f^! \tau(M)) = \tau(M)$.
\end{proof}

\section{A comparison with Stadnik's $V$-filtration}
\label{VFiltComparison}

The goal of this section is to relate the filtration of Definition \ref{LimitTestmodulefiltration} above with the (super-specializing) $V$-filtration for unit $F$-modules introduced by Stadnik in \cite[Definitions 3.4, 3.6]{stadnikvfiltrationfcrystal} in the cases where Stadnik proved existence of his filtration. We will also see that they do not coincide in general. Let us begin by recalling Stadnik's definition:

\begin{Def}
\label{StadnikVFiltration}
Let $\mathcal{M}$ be a unit $F$-module on $\Spec R$ with adjoint structural map $\alpha: M \to F_\ast M$. A \emph{super-specializing $V$-filtration} along $I=(f) \subseteq R$ is an exhaustive separated discrete left-continuous $\mathbb{Q}$-indexed filtration $V^{\cdot} \mathcal{M}$ of $R$-modules such that
\begin{enumerate}[(i)]
 \item $V^0 \mathcal{M}$ is coherent.
\item $I V^i \mathcal{M} = V^{i+1} \mathcal{M}$ for all $i \in \mathbb{Q}, i \neq -1$.
\item $\alpha(V^i) \subseteq F_\ast V^{ip}$ for all $i \in \mathbb{Q}$.
\item $F_{R/I}$ induces an isomorphism $F_{R/I}^\ast Gr^i \mathcal{M} \cong Gr^{ip} \mathcal{M}$ whenever $Gr^i \mathcal{M} \neq 0$.
\item Multiplication by $f$ induces an isomorphism $\mu_f: Gr^i \mathcal{M} \to Gr^{i+1} \mathcal{M}$ for all $i \in \mathbb{Q}, i \neq -1$.
\end{enumerate}
\end{Def}

For a normal noetherian integral domain $R$ and a morphism $f: \Spec S \to \Spec R$ we say that $f$ is \emph{tamely ramified} along $x \in R$ if $f$ is finite and \'etale over $D(x)$, S is normal, $x$ is smooth and every irreducible component of $\Spec S$ dominates $\Spec R$. Moreover, we require that for every generic point $p \in V(x)$ the extension $R_{p} \subseteq S_{f^{-1}(p)}$ is tamely ramified (see \cite[Definition 2.2.2]{grothendieckmurretamefun}). Important for computations is the fact that \'etale locally a tamely ramified covering is a disjoint union of Kummer coverings. More precisely, for every $p \in \Spec R$ there is an \'etale neighborhood $U \to \Spec R$ such that $\Spec S \times_{\Spec R} U \to U$ is a disjoint union of Kummer coverings (\cite[Corollary 2.3.4]{grothendieckmurretamefun}).

\begin{Bem}
\label{StadnikVfiltrationRemark}
Let us briefly elaborate on axiom (iv): Note that axiom (ii) implies that $Gr^i \mathcal{M}$ is an $R/I$-module. Next note that $\alpha: V^i \mathcal{M} \to F_\ast V^{ip} \mathcal{M}$ induces a morphism $Gr^i \mathcal{M} \to F_\ast Gr^{ip} \mathcal{M}$. In particular, we have a morphism $Gr^i \mathcal{M} \to {F_{R/I}}_\ast Gr^{ip} \mathcal{M}$ whose adjoint $F_{R/I}^\ast Gr^i \mathcal{M} \to Gr^{ip} \mathcal{M}$ is the desired isomorphism.

Also note that axiom (v) is already implied by (ii) provided that $f$ is a non-zero divisor on $\mathcal{M}$. Indeed, in this case (ii) yields isomorphisms $\mu_f: V^i \mathcal{M} \to V^{i+1} \mathcal{M}$ and similarly an isomorphism $V^{i + \eps} \mathcal{M} \to V^{i +1 + \eps} \mathcal{M}$. Thus it induces an isomorphism on the associated graded.

Stadnik proves that such a filtration is unique (see \cite[Proposition 3.8]{stadnikvfiltrationfcrystal}) and furthermore shows existence in the following special case: $\mathcal{M}$ is of the form $j_\ast \mathcal{N}$ for some coherent unit $F$-module $\mathcal{N}$ on $D(f)$, $f$ smooth and for every closed point $z \in V(f)$ there is a (Zariski) neighborhood $W$ of $z$ and a tamely ramified covering $Y \to W$ which trivializes $\mathcal{M}\vert_{D(f) \cap W}$ (see \cite[Theorem 3.15]{stadnikvfiltrationfcrystal}).
\end{Bem}

Following Stadnik we turn this last notion into a definition:

\begin{Def}
\label{TameUnitFCrystal}
If $R$ is smooth over a field $k$, $f \in R$ a smooth hypersurface, $j: D(f) \to \Spec R$ and $\mathcal{N}$ is a unit $R_f[F]$-crystal then we call $j_\ast \mathcal{N}$ a \emph{tame unit $R[F]$-crystal} if for every closed point $z \in V(f)$ there is a (Zariski) neighborhood $W$ of $z$ and a tamely ramified covering $Y \to W$ which trivializes $\mathcal{M}\vert_{D(f) \cap W}$.
\end{Def}

\begin{Le}
\label{ClosedImmersionCartierStructureGraded}
Let $R$ be smooth over an $F$-finite field. If $x \in R$ is smooth then $(\omega_R/x\omega_R, \kappa x^{p-1})$ is isomorphic to $(\omega_{R/(x)}, \kappa)$.
\end{Le}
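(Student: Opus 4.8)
The statement compares two Cartier modules supported on $V(x)$: on the one hand $\omega_R/x\omega_R$ with the Cartier structure twisted by $x^{p-1}$, on the other the canonical module of $R/(x)$ with its natural Cartier structure. The plan is to realize both as the same object via the closed immersion $i \colon \Spec R/(x) \to \Spec R$. First I would recall that, by adjunction, $i^! \omega_R = \Hom_R(R/(x), \omega_R) \cong \omega_{R/(x)}$, the last isomorphism being the standard one (e.g.\ via the Koszul/residue description of the dualizing module of a complete intersection, or simply because $x$ is a non-zero divisor so $\Hom_R(R/(x), \omega_R) \cong \omega_R/x\omega_R$ as $R$-modules, and this is canonically $\omega_{R/(x)}$ since $x$ is smooth). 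The content of the lemma is therefore that \emph{this $R$-module isomorphism is Cartier linear} once one puts the structure $\kappa x^{p-1}$ on the left.

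The key computation is local, so I would pass to a stalk where $x = x_1$ extends to a regular system of parameters $x_1, \ldots, x_n$ and $\omega_R$ is free on $\delta = dx_1 \wedge \cdots \wedge dx_n$; write $\bar\delta = dx_2 \wedge \cdots \wedge dx_n$ for the induced generator of $\omega_{R/(x)}$. In these coordinates the Cartier operator on $\omega_R$ sends $x_1^{a_1}\cdots x_n^{a_n}\delta$ to $x_1^{(a_1+1)/p - 1}\cdots x_n^{(a_n+1)/p-1}\delta$ (with the usual convention that non-integral exponents kill the term), and similarly on $\omega_{R/(x)}$ with $x_1$ omitted. Now $\kappa x^{p-1}$ applied to $x_1^{a_1}\cdots\delta$ in $\omega_R/x\omega_R$ is $\kappa(x_1^{a_1 + p - 1}\cdots\delta)$, which lands in $\omega_R/x\omega_R$ exactly by picking out residues of $x_1$-exponents congruent to $-1$ modulo $p$; a direct check shows $x_1^{a_1+p-1}$ contributes $x_1^{(a_1+p-1+1)/p - 1} = x_1^{(a_1+1)/p + 1 - 1} = x_1^{(a_1+1)/p}$... — i.e.\ modulo $x_1$ only the $a_1 = 0$ term survives, and it maps to $\bar\delta$ times the honest Cartier operator on the remaining variables. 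This matches $\kappa$ on $\omega_{R/(x)}$ under $\delta \mapsto \bar\delta$, giving commutativity of the square relating the two structural maps.

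Concretely I would organize it as: (1) establish the $R$-module isomorphism $\omega_R/x\omega_R \cong \omega_{R/(x)}$ and note it is the one coming from $i^!$ (using $\varphi^!\omega_{\mathbb A^n_k} = \omega_R$ for the local \'etale chart $\varphi$, as in Lemma \ref{CartierStructureShriekPolyRing} and Lemma \ref{SmoothShriekPullbackComposition}, to reduce to the polynomial ring); (2) compute both Cartier structures explicitly in the coordinates above; (3) verify the square
\[
\begin{xy}\xymatrix{F_\ast(\omega_R/x\omega_R) \ar[r]^-{\kappa x^{p-1}} \ar[d] & \omega_R/x\omega_R \ar[d] \\ F_\ast \omega_{R/(x)} \ar[r]^-{\kappa} & \omega_{R/(x)}}\end{xy}
\]
commutes on the monomial basis. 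The main obstacle I anticipate is bookkeeping rather than conceptual: one must be careful that the isomorphism $\omega_R/x\omega_R \cong \omega_{R/(x)}$ used is precisely the base-change/residue isomorphism for K\"ahler differentials (so that, since $k$ need not be perfect, everything is compatible with the chosen isomorphism $k \to F^! k$ that fixes the Cartier structures in the first place), and that the twist exponent $p-1$ is exactly what is needed to shift the surviving $x_1$-residue from ``exponent $\equiv -1$'' to ``exponent $\equiv 0$'', i.e.\ to kill $x_1$ after applying $\kappa$. Once the local formula for $\kappa x^{p-1}$ on $\omega_R/x\omega_R$ is written down, the identification with $(\omega_{R/(x)}, \kappa)$ is immediate.
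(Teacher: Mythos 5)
Your proposal is correct in substance but proceeds by a genuinely different route from the paper. The paper's proof is entirely formal: it cites the known computation $i^!\omega_R \cong (\omega_R/x\omega_R, \kappa x^{p-1})$ (up to shift) for the closed immersion $i\colon \Spec R/(x) \to \Spec R$, and then observes that $i^! s^! = s'^!$ for the structural morphisms $s, s'$ to $\Spec k$, so that both sides are the same object $s'^!k$ with the Cartier structure induced by shrieking the fixed isomorphism $k \to F^!k$ --- there is nothing left to verify. You instead establish the module isomorphism and then check Cartier linearity by an explicit monomial computation in local coordinates; this is self-contained and makes the role of the twist $x^{p-1}$ transparent (it is exactly what moves the surviving residue class of the $x_1$-exponent to $0$), whereas the paper's argument is shorter, avoids all coordinate bookkeeping, and automatically handles the non-perfect-base-field subtlety that you rightly flag. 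Two small points: you should record that $x\omega_R$ is a Cartier submodule of $(\omega_R,\kappa x^{p-1})$ (from $\kappa(x^{p-1}\cdot xm)=x\kappa(m)$), so the quotient structure is well defined; and your displayed exponent contains an arithmetic slip --- one has $(a_1+p-1+1)/p - 1 = a_1/p$, not $(a_1+1)/p$ --- which matters, since with the exponent as written no term would survive modulo $x_1$, whereas with $a_1/p$ precisely the $a_1=0$ term survives, as your stated conclusion requires.
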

\begin{proof}
Up to a shift one has $i^! \omega_R \cong \omega_R/x \omega_R$ with Cartier structure $\kappa x^{p-1}$, where $i: \Spec R/x \to \Spec R$ is the closed immersion (see \cite[Example 3.3.12]{blickleboecklecartiercrystals}). Then if $s: \Spec R \to \Spec k$ and $s': \Spec  R/(x) \to \Spec k$ are the structural morphisms one has $i^! s^! = s'^!$ and both $\kappa$ on $\omega_{R/(x)}$ and on $\omega_R$ are induced by shrieking the isomorphism $k \to F^! k$.
\end{proof}

\begin{Le}
\label{AssocGradedLimitEqualsAssocGraded}
Assume that $R$ is regular, essentially of finite type over an $F$-finite field, $M$ a Cartier module satisfying $M = \overline{M}$ and $f \in R$ a non-zero divisor. Then \[\tau(\mathcal{M}, f^{t-\eps})/\tau(\mathcal{M}, f^t) = \tau(M, f^{t - \eps})/\tau(M, f^t),\] where $\mathcal{M} = \colim_e {F^e}^! M$.
\end{Le}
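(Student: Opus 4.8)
The statement asserts that passing to the unit $F$-module $\mathcal{M} = \colim_e {F^e}^! M$ does not change the graded pieces of the test module filtration. By Theorem \ref{UnitTestModuleFiltration} the natural map $\iota \colon M \to \mathcal{M}$ sends $\tau(M, f^s)$ onto $\tau(\mathcal{M}, f^s)$ for every $s \in \mathbb{Q}_{\geq 0}$. Since we are assuming $M = \overline{M}$, the map $C \colon M \to F^! M$ is injective (cf.\ \cite[Lemma 6.9]{staeblertestmodulnvilftrierung}), and consequently each map in the directed system defining $\mathcal{M}$ is injective; hence $\iota$ itself is injective. The plan is therefore to show that $\iota$ restricts to a bijection between $\tau(M, f^{t-\eps})/\tau(M, f^t)$ and $\tau(\mathcal{M}, f^{t-\eps})/\tau(\mathcal{M}, f^t)$. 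Surjectivity is immediate from Theorem \ref{UnitTestModuleFiltration}. For injectivity one must verify that $\iota^{-1}(\tau(\mathcal{M}, f^t)) \cap \tau(M, f^{t-\eps}) = \tau(M, f^t)$, i.e.\ that no element of $\tau(M, f^{t-\eps}) \setminus \tau(M, f^t)$ lands in $\tau(\mathcal{M}, f^t)$.

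First I would reduce the problem to understanding $\iota^{-1}(\tau(\mathcal{M}, f^t))$. Since $\tau(\mathcal{M}, f^t)$ is, by Definition \ref{LimitTestmodulefiltration}, the image of $\tau(M, f^t)$ under $\iota$, and since $\iota$ is injective, one has $\iota^{-1}(\tau(\mathcal{M}, f^t)) = \tau(M, f^t)$ as submodules of $M$, provided we can rule out that some $m \in M \setminus \tau(M, f^t)$ has $\iota(m) \in \iota(\tau(M, f^t))$. But injectivity of $\iota$ gives exactly this: if $\iota(m) = \iota(m')$ with $m' \in \tau(M, f^t)$, then $m = m'$. So $\iota^{-1}(\tau(\mathcal{M}, f^t)) = \tau(M, f^t)$ and likewise for $t - \eps$. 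Intersecting, $\iota$ induces an isomorphism $\tau(M, f^{t-\eps})/\tau(M, f^t) \xrightarrow{\sim} \tau(\mathcal{M}, f^{t-\eps})/\tau(\mathcal{M}, f^t)$, which is the claim. The compatibility of this identification with the induced Cartier structure $\kappa f^{\lceil t(p-1)\rceil}$ on the graded piece (mentioned earlier in the excerpt) is automatic, since $\iota$ is a morphism of Cartier modules.

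The one genuine point requiring care — and the step I expect to be the main obstacle — is verifying that the hypothesis $M = \overline{M}$ really does force $\iota \colon M \to \mathcal{M}$ to be injective, and that the various $\tau(M, f^s)$ for the relevant Cartier subalgebras $\mathcal{C}$ (generated in degree $e$ by $\kappa^e f^{\lceil s p^e\rceil}$) are still honest submodules of $M$ that survive in the colimit. Here one uses that $\overline{\phantom{M}}$ with respect to the full algebra $\langle \kappa \rangle$ controls injectivity of $C \colon M \to F^!M$, together with the fact that $\tau(M, f^s) \subseteq \underline{M}_{\mathcal{C}} \subseteq M$; the maps $C^e$ restricted to $\tau(M, f^s)$ are then injective because $C$ is injective on all of $M$. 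Once this is in place the rest is the formal diagram-chase above. I would also remark that the equality is really an equality of \emph{subquotients of $\mathcal{M}$} once we identify $M$ with its image $\iota(M) \subseteq \mathcal{M}$, which is the form in which it will be applied in the comparison with Stadnik's filtration.
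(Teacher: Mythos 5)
Your argument is correct and is essentially the paper's own proof: injectivity of the natural map $M \to \mathcal{M}$ (which follows from $M = \overline{M}$ via the injectivity of $C\colon M \to F^!M$ and left-exactness of $F^!$) combined with the definitional fact that $\tau(\mathcal{M}, f^s)$ is the image of $\tau(M, f^s)$ immediately identifies the two subquotients. The extra caution in your last paragraph is unnecessary, since the test modules are honest submodules of $M$ by construction and injectivity of the colimit map on all of $M$ settles everything.
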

\begin{proof}
Since $M = \overline{M}$ the natural map $\gamma: M \to \mathcal{M}$ is injective. Now we have, by definition, $\gamma(\tau(M, f^t)) = \tau(\mathcal{M}, f^t)$ which yields the claim.
\end{proof}

Also recall (\cite[Section 4]{staeblertestmodulnvilftrierung}) that for a Cartier module $(M, \kappa)$ the quotient $Gr^t_\tau = \tau(M, f^{t - \eps})/\tau(M, f^t)$ naturally carries a Cartier structure induced by $\kappa f^{\lceil t (p-1)\rceil}$. By Lemma \ref{CoherentModelOfPushforwardExists} the assumption $M = \overline{M}$ is satisfied in the situation where $(N, \kappa)$ is a Cartier module on $R_f$ such that $C:N \to F^!N$ is an isomorphism an $M \subseteq j_\ast N$ is a local nil-isomorphism.

\begin{Prop}
\label{TestModuleVfiltrationShiftJustification}
Let $R$ be smooth of finite type over a perfect field, let $x \in R$ be a smooth hypersurface and let $j: D(x) \to \Spec R$ be the associated open immersion. Then for $t \in \mathbb{Q}$ there is $0 < \eps \ll 1$ such that the filtration $\tau(j_\ast \omega_{R_x}, x^{t + 1 -\eps})$ (as in Definition \ref{LimitTestmodulefiltration}) coincides with $V^t(j_\ast R_x) \otimes \omega_R$ filtered along $x$ (Defintion \ref{StadnikVFiltration}).

Moreover, under the equivalence of unit $R/(x)[F]$-modules with Cartier crystals $Gr_V^0$ and $Gr_\tau^{1}$ correspond to each other.
\end{Prop}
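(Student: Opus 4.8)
The plan is to reduce the first assertion to an explicit description of $\tau(j_\ast \omega_{R_x}, x^s)$ as a family of submodules of $j_\ast \omega_{R_x} = \omega_R \otimes_R R_x$, and then to match this with Stadnik's $V$-filtration by checking the axioms of Definition \ref{StadnikVFiltration} and invoking uniqueness (\cite[Proposition 3.8]{stadnikvfiltrationfcrystal}). So first I would compute $\tau(j_\ast \omega_{R_x}, x^s)$ for all $s \in \mathbb{Q}$. By Lemma \ref{PushforwardConstantSheafLNil} the Cartier module $N = \omega_R \otimes R\cdot\frac{1}{x}$ is a coherent $F$-pure model of $j_\ast \omega_{R_x}$ with no nonzero nilpotent submodules, and since $\omega_{R_x}$ is $F$-regular the element $x$ is a test element for $N$ by Proposition \ref{TestModuleIndependentOfLNiliso}; hence $\tau(j_\ast \omega_{R_x}, x^s) = \tau(N, x^s)$. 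For $s = 0$ one has $\tau(N, x^0) = \kappa^e(xN) = \kappa^e(\omega_R) = \omega_R$ for $e \gg 0$, because $xN = \omega_R$ inside $\omega_{R_x}$ and the Cartier structure of $N$ restricts on $\omega_R$ to the natural (surjective) Cartier operator $\kappa$. In particular $\tau(N, x^0)$ is the $F$-regular Cartier module $(\omega_R, \kappa)$, so Lemma \ref{TauofTauEqualsTau} gives $\tau(N, x^s) = \tau((\omega_R, \kappa), x^s)$ for $s \geq 0$. Working locally, $R$ is \'etale over a polynomial ring with $x$ a coordinate, so by the compatibility of test modules with \'etale morphisms (\cite[Theorem 6.15]{staeblertestmodulnvilftrierung}) and the standard computation of the test ideal of a smooth divisor we get $\tau((\omega_R, \kappa), x^s) = x^{\lfloor s \rfloor}\omega_R$. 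Together with Definition \ref{DefTestModuleFiltrationQuasicoherentPushforward} for $s < 0$, this yields $\tau(j_\ast \omega_{R_x}, x^s) = x^{\lfloor s \rfloor}\omega_R$ for all $s \in \mathbb{Q}$, and this filtration is discrete and right-continuous by Proposition \ref{TestmodulefiltrationExtensionProperties}.

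For fixed $t$ and $0 < \eps \ll 1$ one has $\lfloor t+1-\eps\rfloor = \lceil t \rceil$, so put $W^t := \tau(j_\ast \omega_{R_x}, x^{t+1-\eps}) \otimes \omega_R^{-1} = x^{\lceil t \rceil} R \subseteq R_x$; I would then check that $W^\bullet$ satisfies Stadnik's axioms. It is exhaustive, separated, discrete and left-continuous (as $\lceil\cdot\rceil$ is left-continuous), $W^0 = R$ is coherent, and $xW^i = x^{\lceil i\rceil+1}R = x^{\lceil i+1\rceil}R = W^{i+1}$. The adjoint structural map of $j_\ast R_x$ is $\alpha\colon m \mapsto m^p$, and since $p\lceil i\rceil \geq \lceil ip\rceil$ we get $\alpha(W^i) \subseteq x^{p\lceil i\rceil}R \subseteq x^{\lceil ip\rceil}R = F_\ast W^{ip}$. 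Finally $Gr^i_{W}$ vanishes for $i \notin \mathbb{Z}$ and $Gr^n_{W} \cong R/(x)$ for $n \in \mathbb{Z}$, so axioms (iv) and (v) reduce to the Frobenius of the regular ring $R/(x)$ being an isomorphism and to multiplication by $x$ being injective on $R$. Uniqueness (\cite[Proposition 3.8]{stadnikvfiltrationfcrystal}) then gives $W^t = V^t(j_\ast R_x)$, proving the first claim.

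For the second claim, $Gr^0_V \cong R/(x)$ with the unit $F$-module structure induced by $\alpha$, which under the equivalence of unit $R/(x)[F]$-modules with Cartier crystals corresponds to the structure sheaf, i.e.\ to $(\omega_{R/(x)}, \kappa)$ after the canonical $\omega$-twist. On the other hand $Gr^1_\tau = \tau(j_\ast \omega_{R_x}, x^{1-\eps})/\tau(j_\ast \omega_{R_x}, x^1) = \omega_R/x\omega_R$ with Cartier structure induced by $\kappa x^{\lceil 1\cdot(p-1)\rceil} = \kappa x^{p-1}$, which by Lemma \ref{ClosedImmersionCartierStructureGraded} is $(\omega_{R/(x)}, \kappa)$; hence $Gr^0_V$ and $Gr^1_\tau$ correspond under the equivalence. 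The main difficulty lies in the computation of $\tau(j_\ast \omega_{R_x}, x^s)$ carried out in the first paragraph: pinning down $\tau(j_\ast \omega_{R_x}, x^0) = (\omega_R, \kappa)$ so that Lemma \ref{TauofTauEqualsTau} becomes applicable, and keeping track of the index shift (the identity $\lfloor t+1-\eps\rfloor = \lceil t\rceil$, with the range $t < 0$ governed by Definition \ref{DefTestModuleFiltrationQuasicoherentPushforward}); once $\tau(j_\ast \omega_{R_x}, x^s)$ is computed, the axiom check is routine.
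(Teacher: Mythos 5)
Your proposal is correct and follows essentially the same route as the paper: both identify $\tau(j_\ast\omega_{R_x},x^s)$ with $x^{\lfloor s\rfloor}\omega_R$ via the coherent model $\omega_R\otimes R\cdot\frac1x$, and both identify $V^t(j_\ast R_x)$ with $x^{\lceil t\rceil}R$ by exhibiting the latter as a super-specializing $V$-filtration (uniqueness then being implicit), before matching indices via $\lfloor t+1-\eps\rfloor=\lceil t\rceil$ and handling the graded pieces through Lemma \ref{ClosedImmersionCartierStructureGraded}. You merely spell out the steps the paper leaves as "easy to see" (the reduction via Lemma \ref{TauofTauEqualsTau} and the axiom check), which is fine.
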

\begin{proof}
Recall that the unit $F$-module corresponding to $j_\ast \omega_{R_x}$ is just $j_\ast R_x$ endowed with the (adjoint of the) ordinary Frobenius. In this case it is easy to see that the filtration $V^t := \{ f \, \vert \, \nu(f) \geq t\}$, where $\nu: Q(R) \to \mathbb{Z}$ is the valuation attached to $R_{(x)}$, is a super-specializing $V$-filtration. In other words, one has $V^t = R \cdot x^{\lceil t \rceil}$ for all $t \in \mathbb{Q}$.

On the other hand, $j_\ast \omega_{R_x}$ is locally nil-isomorphic to $\omega_{R} \otimes R \frac{1}{x}$ and the test module filtration is given by $\tau(j_\ast \omega_{R_x}, x^t) = x^{\lfloor t \rfloor} \omega_R$ for $t \geq 0$. For $t < 0$ we set $\tau(j_\ast \omega_{R_x}, x^t) = x^{\lfloor t \rfloor} \omega_R$ according to \ref{DefTestModuleFiltrationQuasicoherentPushforward}.
So if $t \geq 0$ is not integral then we take $\eps < \{t \}$ and obtain $x^{\lfloor t + 1 - \eps \rfloor} = x^{\lceil t \rceil}$ as desired. If $t \geq 0$ is integral then we take any $0 < \eps < 1$. 

One easily verifies that the induced unit $R/(x)[F]$-structure on $Gr_V^0 = R/(x)$ is in this case given by the ordinary Frobenius $R/(x) \to F_\ast R/(x)$. Tensoring with $\omega_{R/(x)}$ then induces the ordinary Cartier structure on $\omega_{R/(x)}$ which by Lemma \ref{ClosedImmersionCartierStructureGraded} coincides with the one on $Gr^{1}$ (cf.\ \cite[Proposition 4.5]{staeblertestmodulnvilftrierung}).
\end{proof}

Based on this result one might hope that given $t$ and a unit $R[F]$-module $\mathcal{M}$ there is $0 < \eps \ll 1$ such that $\tau(\mathcal{M} \otimes \omega_R, x^{t+1 - \eps}) = V^t(\mathcal{M}) \otimes \omega_R$, where the left-hand side is the filtration introduced in \ref{LimitTestmodulefiltration} and the right-hand side is Stadnik's filtration. However, this is not the case in general.

First we give an example that shows that Stadnik's $V$-filtration does not commute with graph embeddings (the corresponding result \emph{does} hold for the test module filtration -- see \cite[Proposition 3.7]{staeblertestmodulnvilftrierung}).

\begin{Bsp}
Let $k$ be a perfect field and consider the embedding $i: \Spec k \to \Spec k[t], t \mapsto 0$. Consider $k$ as a unit $F$-module via $F^\ast k \to k$. Observe that the trivial filtration (i.e.\ $V^t = k$ for all $t$) is a $V$-filtration along $f = 1$. 

Next, $i_+ k = k[t, t^{-1}]/k[t]$ is a unit $F$-module via the Frobenius induced from localization. We claim that $i_+ k$ does not admit a $V$-filtration along $t+1$ in the sense of Definition \ref{StadnikVFiltration} (note that this is up to an identification just the graph embedding of $k$ filtered along $1$). Indeed, one has $\Supp i_+(k) = V(t)$ but assuming that $i_+k$ admits a $V$-filtration along $t+1$ the associated graded of this filtration (if it is non-zero) is supported on $V(t+1)$. We conclude that $V^s$ must be the trivial filtration. One can now check that multiplication by $t+1$ is an automorphism (the element $\frac{a}{t^n} - \frac{a}{t^{n-1}} + \frac{a}{t^{n-2}} + \cdots + (-1)^{n+1} \frac{a}{t}$ is the preimage of $\frac{a}{t^n}$). Hence, this filtration satisfies all of Stadnik's axioms except for (i) -- i.e.\ $V^0$ is not finitely generated as a $k[t]$-module.
\end{Bsp}

\begin{Bsp}
\label{StadnikVFiltrationNotEqualTestModuleFiltration}
Let $k$ be an $F$-finite field, $R=k[x]$ and consider the trivial Cartier module $(\omega_R, \kappa)$ and take a graph embedding $i: \Spec R \to \mathbb{A}^1_R = \Spec k[x,t]$ sending $t$ to $x$. Then $\tau(i_\ast \omega_R, t^s) = i_\ast \tau(\omega_R, x^s) = x^{\lfloor s \rfloor} i_\ast \omega_R$ (see \cite[Proposition 3.7]{staeblertestmodulnvilftrierung}).

Note that the unit $F$-module corresponding to $i_\ast \omega_R$ is given by $i_+ R$ which admits the explicit description $i_+ R = R[t]_{x-t}/R[t]$ with adjoint structural map \[\alpha: i_+R \longrightarrow F_\ast i_+ R,\quad x \longmapsto x^p,\]
i.e.\ it is just the map induced by the Frobenius on $R[t]_{x -t}$. The embedding $i_\ast R \to i_+ R$ is given by $r \mapsto \frac{r}{x -t}$.  Tensoring with $\omega_R$ the map $i_\ast R \to i_+ R, r \mapsto \frac{r}{f-t}$ corresponds to the natural map $i_\ast \omega_R \to \colim {F^e}^! i_\ast \omega_R$.

In particular, we obtain $\tau(\colim {F^e}^! i_\ast \omega_R, x^s) \otimes \omega_{R[t]}^{-1} = R[t] \frac{x^s}{x - t}$. We want to argue that $\tau(\colim {F^e}^! i_\ast \omega_R, x^{s+1 - \eps}) \otimes \omega_{R[t]}^{-1} \neq V^s$. 

First, we will show that (iii) of Definition \ref{StadnikVFiltration} is not satisfied. Fix $s \in\mathbb{N}$ and choose $0 < \eps \ll 1$. We want to show that $\alpha(\tau(\colim {F^e}^! i_\ast \omega_R,x^{s+1 - \eps})) \subseteq \tau(x^{sp +1 - \eps})$ does not hold in general. Assume to the contrary that this inclusion holds. Then there exists $f \in R[x,t]$ depending on $s$ such that $\frac{x^{sp}}{x^p - t^p} = f \frac{x^{sp}}{x-t}$ in $i_+ R$. Equivalently, there exists an $f$ such that \[\frac{x^{sp} - x^{sp}(x-t)^{p-1}f}{x^p - t^p} \in R[t].\] In other words, we must have $x^{sp} \in (x^p - t^p, x^{sp}(x-t)^{p-1})$ in $R[t]$. Applying the automorphism $t \mapsto t + x$ of $R[t]$ we see that equivalently \[x^{sp} \in (t^p, x^{sp}t^{p-1})\] which is never satisfied\footnote{In fact, since $x$ is smooth we could have worked with the embedding $t \mapsto 0$ from the beginning.}.

Next we claim that, in the setting of Cartier modules, the natural map \[Gr^1 i_\ast \omega_R \to F^!_{k[x]} Gr^1 i_\ast \omega_R = \Hom_R(F_\ast R, Gr^1 i_\ast \omega_R)\] is not surjective and in particular not an isomorphism. It then follows that the corresponding unit $F$-module is not coherent which shows that (iv) in Definition \ref{StadnikVFiltration} is not satisfied for $i =0$.
By the above computation $Gr^1 i_\ast \omega_R = \omega_R/x \omega_R = k$ as an $R$-Cartier module with Cartier structure induced by $\kappa x^{p-1}$.

The natural map is then given by \[r \mapsto [f \mapsto \kappa(x^{p-1} r f dx) \text{ mod } x\omega_R]\]
If this were an isomorphism then $Gr^1 i_\ast \omega_R$ would correspond to a locally constant sheaf. That is, we would have a finite \'etale morphism $\varphi: \Spec S \to \Spec R[x]$ such that $\varphi^!Gr^1 i_\ast \omega_R \cong \omega_S$ as Cartier modules. But since $Gr^1 i_\ast \omega_R$ is not even invertible this is absurd.

In more down to earth terms this can be seen as follows. Assume that $k$ is perfect. Then the map evaluates for $r \in k$ to $r^{\frac{1}{p}} \kappa(x^{p-1} f dx)$ mod $x \omega_R$. In particular, for $f =x$ we obtain the zero map. But $F_\ast R$ is free with basis $1, x, \ldots, x^{p-1}$ so that the projection $x \mapsto 1$ is not contained in the image.
\end{Bsp}

What we will achieve in the following is that the two filtrations coincide (up to left/right-continuity) in the setting where Stadnik proved existence. Stadnik constructs his $V$-filtration by taking $G$-invariants of the trivial filtration in the situation of a Kummer covering and then shows that his $V$-filtration only has to be constructed \'etale locally. In order to achieve a comparison we will therefore show that test module filtrations can similarly be obtained by taking $G$-invariants.

\begin{Le}
\label{InvariantsContainedInTestModule}
Let $R$ be $F$-finite and $M$ a Cartier module via some Cartier algebra $\mathcal{C}$. Let $f: \Spec S \to \Spec R$ be a finite flat morphism such that $S$ is free as an $R$-module with basis $\{1, s_1, \ldots, s_n\}$. Assume furthermore that the $s_i^p$ are contained in the $R$-module generated by the $s_1, \ldots, s_n$. If $M$ and $f^!M$ admit a common test element $x$ then the $R$-module $N = \{ \varphi \in f^!M \, \vert \, \varphi(1) \in \tau(M), \varphi(s_i) = 0\}$ is contained in $\tau(f^!M)$.
\end{Le}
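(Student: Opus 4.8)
The plan is to reduce the statement to the following claim: \emph{for every $m \in \underline{M}$ there is a $\psi \in \underline{f^!M}$ with $\psi(1) = m$ and $\psi(s_i) = 0$ for all $i$.} Granting this, let $\varphi \in N$. Since $x$ is a test element for $M$, we have $\tau(M) = \sum_{e \geq 1} \mathcal{C}_e x \underline{M}$ by \cite[Theorem 3.11]{blicklep-etestideale}, so we may write $\varphi(1) = \sum_k \kappa_{e_k}(x m_k)$ with $\kappa_{e_k} \in \mathcal{C}_{e_k}$, $e_k \geq 1$ and $m_k \in \underline{M}$. Using the claim, lift each $m_k$ to $\psi_k \in \underline{f^!M}$ with $\psi_k(s_i) = 0$ for all $i$, and put $\Psi := \sum_k \kappa_{e_k} \cdot (x \psi_k)$, where $\kappa_{e_k}\cdot$ denotes the action of $\kappa_{e_k}$ in the induced Cartier algebra $\mathcal{C}'$ on $f^!M$. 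Then $\Psi \in \sum_{e \geq 1} \mathcal{C}'_e x \underline{f^!M} = \tau(f^!M)$, again by \cite[Theorem 3.11]{blicklep-etestideale} together with the hypothesis that $x$ is a test element for $f^!M$. It remains to check $\Psi = \varphi$: we have $\Psi(1) = \sum_k \kappa_{e_k}(x m_k) = \varphi(1)$, while $\Psi(s_i) = \sum_k \kappa_{e_k}((x\psi_k)(s_i^{p^{e_k}}))$, and an immediate induction on $e$ from the hypothesis $s_i^p \in \sum_j R s_j$ gives $s_i^{p^e} \in \sum_j R s_j$ for all $e \geq 1$, so $(x\psi_k)(s_i^{p^{e_k}})$ is an $R$-linear combination of the $\psi_k(s_j) = 0$; hence $\Psi(s_i) = 0 = \varphi(s_i)$. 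As $\{1, s_1, \dots, s_n\}$ is an $R$-basis of $S$, this forces $\Psi = \varphi$, and therefore $\varphi \in \tau(f^!M)$.

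For the claim I would argue as follows. Write $M_0 := \{\chi \in f^!M \mid \chi(s_i) = 0 \text{ for all } i\}$, an $R$-submodule of $f^!M$. From $s_i^{p^e} \in \sum_j R s_j$ for all $e \geq 1$ one sees that every positive-degree element of $\mathcal{C}'$ maps $M_0$ into itself, since $(\kappa_e \cdot \chi)(s_i) = \kappa_e(\chi(s_i^{p^e}))$ and $\chi(s_i^{p^e})$ is an $R$-combination of the $\chi(s_j)$; moreover $\chi \mapsto \chi(1)$ is an $R$-module isomorphism $M_0 \to M$ (because $S = R \oplus \bigoplus_j R s_j$) compatible with these actions, as $(\kappa_e \cdot \chi)(1) = \kappa_e(\chi(1))$. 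Now choose $h$ large enough that $\underline{M} = (\mathcal{C}_+)^h M$ and $\underline{f^!M} = (\mathcal{C}'_+)^h f^!M$, and expand $m = \sum_l \mu_l(m_l)$ with $\mu_l \in (\mathcal{C}_+)^h$ and $m_l \in M$. Pick $\chi_l \in M_0$ with $\chi_l(1) = m_l$ and set $\psi := \sum_l \mu_l \cdot \chi_l$. Then $\psi \in M_0$ by the first observation, $\psi(1) = \sum_l \mu_l(m_l) = m$ by the second, and $\psi \in (\mathcal{C}'_+)^h f^!M = \underline{f^!M}$; this is the desired element.

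The only genuinely nontrivial point --- and hence the main obstacle --- is the claim: lifting an element of $\underline{M}$ to one of $\underline{f^!M}$ that in addition \emph{vanishes on every $s_i$}. Surjectivity of the trace (as in the proof of Lemma \ref{TraceFPureFaithfullyFlat}) already yields some lift in $\underline{f^!M}$, but not one with this vanishing; the point of the hypothesis $s_i^p \in \sum_j R s_j$ is precisely that it makes the complementary $R$-submodule $M_0$ stable under the positive part of the Cartier algebra, which is what allows the lift to be produced inside $\underline{f^!M} = (\mathcal{C}'_+)^h f^!M$ while retaining the vanishing. Everything else is routine bookkeeping with the explicit description $f^!M = \Hom_R(S,M)$ and its structure maps $\kappa_e \cdot \varphi = \kappa_e \circ \varphi \circ F^e$.
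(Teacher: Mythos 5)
Your proposal is correct and follows essentially the same route as the paper: the paper's proof likewise first shows that the homomorphisms vanishing on all $s_i$ with value at $1$ in $\underline{M}$ land in $\underline{f^!M}$ (using the stability of that submodule under the operators $\varphi \mapsto \kappa_e \circ \varphi \circ F^e$ granted by $s_i^{p^e} \in \sum_j R s_j$, together with compatibility of evaluation at $1$), and then lifts a decomposition $\varphi(1) = \sum_e \kappa_e(x m_e)$ termwise exactly as you do. The only cosmetic caveat is that "every positive-degree element of $\mathcal{C}'$ preserves $M_0$" is too strong if $\mathcal{C}'$ means the full $S$-linear induced algebra (premultiplication by $s_j$ does not preserve $M_0$), but since you only ever apply operators coming from $\mathcal{C}$ and then enlarge to $(\mathcal{C}'_+)^h f^!M \supseteq (\mathcal{C}_+)^h f^!M$ at the end, the argument is unaffected.
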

\begin{proof}
First, we show that $N' = \{\varphi \in f^!M \, \vert \, \varphi(1) \in \underline{M}, \varphi(s_i) = 0\}$ is contained in $\underline{f^!M}$. In order to do this it suffices to show that $N' \subseteq \mathcal{C}_+ N'$. So fix $\varphi \in N'$. As $\underline{M}$ is $F$-pure we find finitely many homogeneous elements $\kappa_e \in \mathcal{C}_e$ ($e \geq 1$) and $m_e \in \underline{M}$ such that $\sum_e \kappa_e(m_e) = \varphi(1)$. We define homomorphisms $\varphi_{m_e}: S \to M$ by $\varphi_{m_e}(1) = m_e$ and $\varphi_{m_e}(s_i) = 0$. Then $\sum_e \kappa_e \varphi_{m_e}(1) = \sum_e \kappa_e(\varphi_{m_e}(F^e(1))) = \varphi(1)$ and $(\sum_e \kappa_e \varphi_{m_e})(s_i) = \sum_e \kappa_e(\varphi_{m_e}(s_i^{p^e})) = 0$ since $s_i^p = \sum_i r_i s_i$ by assumption. We conclude that $\varphi$ lies in $\mathcal{C}_+ N'$.

Next we show that given $\varphi \in N$ we can find $\psi_e \in \underline{f^!M}$ such that $\varphi = \sum_{e =1}^n \kappa_e x \psi_e$ for some $\kappa_e \in \mathcal{C}_e$. Since $x$ is also a test element for $M$ we can write $\tau(M) = \sum_{e \geq 1} \mathcal{C}_e x \underline{M}$. In particular, we find $\kappa_e \in \mathcal{C}_e$ and $m_e \in \underline{M}$ such that $\varphi(1) = \sum_{e =1}^n \kappa_e (x m_e)$. Define homomorphisms $\psi_e: S \to M$ via $\psi_{e}(1) = m_e$ and $\psi_{e}(s_i) = 0$. By the above $\psi_e \in \underline{f^!M}$ and one verifies that the $\psi_e$ satisfy the claimed equation.
\end{proof}

\begin{Bem}
The assumption on the basis in the lemma is in particular satisfied if $f: \Spec S \to \Spec R$ is a Kummer covering. Indeed, say $S = R[t]/(t^n -a)$ then $1, t^1, \ldots, t^{n-1}$ is an $R$-basis of $S$ and for $i=1. \ldots, n-1$ we can write $ip = nl +r$ and $t^{ip} = a^l t^r$. Since $(n,p) =1$ we must have that $r \neq 0$.
\end{Bem}

\begin{Prop}
\label{KummerTestModuleGinvariants}
Let $R$ be an $F$-finite ring containing an algebraically closed field and $x$ a non zero-divisor. Let $(M, \kappa)$ be an $F$-pure Cartier module, assume that $M_x$ is $F$-regular and that $x$ is a non zero-divisor on $M$. If $f: \Spec S \to \Spec R$ is a Kummer covering along $V(x)$ with Galois group $G$ then one has an isomorphism of $\mathcal{C} = \langle \kappa^e x^{\lceil tp^e\rceil} \, \vert \, e \geq 0 \rangle$-modules $\tau(f^!M, x^t)^G \cong \tau(M, x^t)$ for any $t \in \mathbb{Q}_{\geq 0}$
\end{Prop}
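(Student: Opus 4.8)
The plan is to identify $\tau(f^!M, x^t)^G$ with the submodule $N$ of Lemma \ref{InvariantsContainedInTestModule} (or rather with the analogous submodule built from $\tau(M, x^t)$ in the twisted Cartier algebra $\mathcal{C}$), and then invoke that lemma together with a trace/invariants argument. First I would fix $t$, let $\mathcal{C} = \langle \kappa^e x^{\lceil tp^e\rceil} \rangle$, and note that since $f$ is a Kummer covering along $V(x)$, étale over $D(x)$, the hypotheses of Lemma \ref{InvariantsContainedInTestModule} hold for $\mathcal{C}$: $S$ is $R$-free with basis $1, t, \dots, t^{n-1}$ satisfying $t^{ip} \in \sum_i R t^i$ by the Remark following that lemma, and $x$ is a common test element for $(M, \mathcal{C})$ and $(f^!M, \mathcal{C})$ — the latter because $M_x$ is $F$-regular implies $(f^!M)_x$ is $F$-regular by Theorem \ref{FRegularPreservedShriekSmooth} (étale case, or by \cite[Theorem 6.15]{staeblertestmodulnvilftrierung}), while $x$ avoids the minimal primes of $f^!M$ since it is a non zero-divisor there (flatness). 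This is where the hypothesis that the ground field is algebraically closed enters: it guarantees the $n$-th roots of unity lie in $R$, so that the Galois group $G \cong \mu_n$ acts and $f^!M$ decomposes as a direct sum of $G$-eigenspaces.

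The key computation is that the $G$-invariant part of $f^!M = \Hom_R(S, M)$ is exactly $\{\varphi \mid \varphi(t^i) = 0 \text{ for } i \geq 1\} \cong M$, via $\varphi \mapsto \varphi(1)$, and this identification is Cartier-linear for the twisted algebra $\mathcal{C}$ (one checks $\kappa^e x^{\lceil tp^e\rceil}$ acts compatibly, using that $G$ fixes $x$ and $1$). Under this identification I would show $\tau(f^!M, x^t)^G$ corresponds to the submodule $N = \{\varphi \mid \varphi(1) \in \tau(M, x^t),\ \varphi(t^i) = 0\}$. The inclusion $N \subseteq \tau(f^!M, x^t)$ is precisely Lemma \ref{InvariantsContainedInTestModule}. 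For the reverse inclusion $\tau(f^!M, x^t)^G \subseteq N$, I would argue: $\tau(f^!M, x^t)^G$ is a $\mathcal{C}$-submodule of the $G$-invariants $\cong M$ that generically agrees with $\underline{(f^!M)}_{\mathcal{C}}^G$; taking invariants is exact (order of $G$ invertible), so $\tau(f^!M,x^t)^G$ generically agrees with $(\underline{f^!M})^G$, which via the trace surjection of Lemma \ref{TraceFPureFaithfullyFlat} and faithful flatness contains, hence equals on the generic points, the corresponding object for $M$; by minimality of $\tau(M,x^t)$ this forces $\tau(f^!M,x^t)^G \subseteq \tau(M,x^t)$, i.e.\ $\subseteq N$.

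The main obstacle I anticipate is the bookkeeping around the twisted Cartier structure on $f^!M$ and checking that the eigenspace decomposition under $G$ is compatible with the $\mathcal{C}$-action — concretely, that $\kappa^e x^{\lceil tp^e\rceil}$ restricted to $\Hom_R(S,M)$ really does preserve the invariant subspace and acts there as $\kappa^e x^{\lceil tp^e\rceil}$ on $M$. One must unwind the formula $\varphi \mapsto \kappa \circ \varphi \circ F^e$ for the induced Cartier structure on $f^! M$ (finite case) against the decomposition $S = \bigoplus_i R t^i$, using that $F^e(t^i) = t^{ip^e}$ and the congruences $ip^e \equiv r_i \pmod n$ with $r_i \neq 0$ to see that cross-terms land outside the invariants. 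A secondary subtlety is verifying that $\tau(f^!M,x^t)^G$ is not merely contained in but actually equal to $\tau(M,x^t)$: the two-sided containment above gives equality, but one should double-check that the generic agreement argument via the trace is valid componentwise over each generic point, which is where I would lean on Proposition \ref{ShriekFPureSupport} and the fact ($R$ containing a field, $x$ a non zero-divisor) that $\Supp f^!M = f^{-1}\Supp M$ with all components dominating.
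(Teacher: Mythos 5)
Your overall strategy matches the paper's: identify $(f^!M)^G$ with $M$ via $\varphi\mapsto\varphi(1)$ by observing that the invariant homomorphisms are exactly those killing $t^i$ for $1\le i\le n-1$, check that $x$ is a common test element using Proposition \ref{ShriekFPureSupport} and the \'etaleness of $f$ over $D(x)$, and get the inclusion $N\subseteq\tau(f^!M,x^t)$ from Lemma \ref{InvariantsContainedInTestModule}. All of that is fine, including the bookkeeping you flag about the twisted Cartier action preserving the invariant subspace (this works because $ip^e\not\equiv 0 \bmod n$ for $1\le i\le n-1$).

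The gap is in your reverse inclusion $\tau(f^!M,x^t)^G\subseteq N$. Minimality of $\tau(M,x^t)$ says it is the \emph{smallest} $\mathcal{C}$-submodule generically agreeing with $\underline{M}_{\mathcal{C}}$; so even granting your claim that $\tau(f^!M,x^t)^G$ (viewed inside $M$) generically agrees with $\underline{M}_{\mathcal{C}}$, minimality only yields $\tau(M,x^t)\subseteq\tau(f^!M,x^t)^G$ --- the containment you already have from Lemma \ref{InvariantsContainedInTestModule} --- and cannot force the opposite one. The paper closes this direction much more directly: by Lemma \ref{TraceTestModuleFaithfullyFlat} one has $Tr(f_\ast\tau(f^!M,x^t))=\tau(M,x^t)$, and the trace here is precisely evaluation at $1$; hence \emph{every} $\varphi\in\tau(f^!M,x^t)$ satisfies $\varphi(1)\in\tau(M,x^t)$, and an invariant such $\varphi$ additionally kills the $t^i$, so it lies in $N$. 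You cite the trace lemma only for $\underline{f^!M}$ and then route through generic agreement; replacing that step with a direct application of Lemma \ref{TraceTestModuleFaithfullyFlat} repairs the argument and makes the exactness-of-invariants discussion unnecessary.
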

\begin{proof}
By Proposition \ref{ShriekFPureSupport} we have $\Supp f^!M = \Supp \underline{f^! M}$. As $x$ is a non zero-divisor of $f^!M$ we obtain that $D(x) \cap \Supp \underline{f^!M} \subseteq \Supp \underline{f^!M}$ is dense (see  the proof of \cite[Proposition 4.2]{staeblertestmodulnvilftrierung} for the argument). Since $f$ is \'etale over $D(x)$ and $M_x$ is $F$-regular \cite[Theorem 6.15]{staeblertestmodulnvilftrierung} implies, together with the above, that $x$ is test element for $f^! M$.

As $f$ is a Kummer covering $S$ is of the form $R[t]/(t^n - x)$ for some $n$ with $(n,p) =1$.
Next, note that $G$ is cyclic of order $n$. The natural $G$-action on $S$ is given by letting a generator $\sigma$ act by multiplication with a primitive $n$th root of unity $\zeta_n$ on $t$. If $\varphi \in f^! M$ is such that $\varphi(t^i) \neq 0$ for some $1 \leq i \leq {n-1}$, then $\sigma \varphi(t^i) = \zeta_n^{n-i} \varphi(t^i)$ so that $\varphi$ is not invariant (note that we consider the corresponding right action on $\Hom_R(S, -)$). But clearly, $\sigma \varphi(1) = \varphi(1)$ so that the invariants are those morphisms than send $t^i$ to zero for $1 \leq i \leq n-1$. 

Using Lemmata \ref{InvariantsContainedInTestModule} and \ref{TraceTestModuleFaithfullyFlat}, we obtain that the module $N$ constructed in \ref{InvariantsContainedInTestModule} coincides with $\tau(f^! M)^G$. Hence, evaluation at $1$ induces the desired isomorphism. Moreover, this isomorphism is compatible with Cartier structures. Indeed, given $\varphi \in \tau(f^!M)^G$ we have $\kappa^e x^{\lceil tp^e\rceil}(\varphi(F^e(1))) = \kappa^e x^{\lceil tp^e \rceil}(\varphi(1))$.
\end{proof}

\begin{Le}
Let $f: \Spec S \to \Spec R$ be a finite \'etale morphism. Then $S \to \Hom_R(S,R), s \mapsto \Tr \circ \mu_s$, where $\mu_s: S \to S$ is multiplication by $s$, is an isomorphism of $S$-modules. 
\end{Le}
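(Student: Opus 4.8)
The plan is to use the standard fact that for a finite \'etale morphism the trace form is a perfect pairing, which turns $S$ into a self-dual $R$-module via the trace map. First I would observe that since the question is local on $\Spec R$ (both source and target of the proposed map commute with localization, and being an isomorphism may be checked on stalks or even after faithfully flat base change), we may assume $R$ is local, or even pass to a situation where $S$ is free over $R$. By standard descent for finite \'etale morphisms one may even reduce, after an \'etale base change, to the split case $S = R^{\oplus n}$, where the statement is an immediate computation: the trace of multiplication by $s = (s_1, \dots, s_n)$ composed with the $i$-th coordinate projection recovers $s_i$, so $s \mapsto Tr \circ \mu_s$ is visibly bijective onto $\Hom_R(S,R) \cong R^{\oplus n}$.

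Alternatively, and perhaps more cleanly, I would argue directly: the map $\vartheta\colon S \to \Hom_R(S,R)$, $s \mapsto Tr_{S/R} \circ \mu_s$, is $S$-linear (using $Tr \circ \mu_{s's} = (Tr \circ \mu_s) \circ \mu_{s'}$ and the $S$-module structure on $\Hom_R(S,R)$ via the source). Both $S$ and $\Hom_R(S,R)$ are finite locally free $R$-modules of the same rank $n$, so it suffices to check that $\vartheta$ is an isomorphism after base change to each residue field, equivalently that the trace pairing $S \times S \to R$, $(a,b) \mapsto Tr(ab)$, is non-degenerate. Non-degeneracy of the trace form is precisely one of the standard characterizations of \'etaleness (the discriminant is a unit); one can cite this, or prove it by reducing modulo a maximal ideal to a finite separable field extension, where non-degeneracy of the trace form is classical.

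The one point that needs a little care is that we are using the \emph{non-derived} $\Hom_R(S,-)$ throughout (as flagged in the paper's footnote on $f^!$), but since $S$ is finite locally free over $R$ there is no issue: $\Hom_R(S,R)$ is again finite locally free and formation of $\Hom$ commutes with the localizations and base changes used above. I expect the main (and only real) obstacle to be citing or reproving non-degeneracy of the trace form for \'etale morphisms in the precise generality needed; everything else is bookkeeping. I would therefore phrase the proof as: reduce to $R$ local, note both modules are free of rank $n = \deg f$, reduce non-degeneracy to the residue field, and invoke separability there.
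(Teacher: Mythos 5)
Your argument is correct. Note that the paper does not actually prove this lemma at all --- it simply cites Lenstra's notes on Galois theory for schemes, where the statement (non-degeneracy of the trace pairing for separable projective algebras) is proved. Your proposal supplies essentially the standard argument behind that citation: the map $s \mapsto Tr \circ \mu_s$ is $S$-linear for the module structure via the source (which matches the convention the paper uses for $f^!$), both sides are finite locally free of the same rank, and an $R$-linear map between such modules is an isomorphism if and only if it is so after passing to each residue field, where one is reduced to the classical non-degeneracy of the trace form for a finite product of finite separable field extensions. The only mild caution is in the first reduction you sketch: a finite \'etale cover need not split after a \emph{Zariski} localization, so one should either use the residue-field/Nakayama argument (your second, cleaner route) or justify the \'etale-local splitting together with faithfully flat descent of isomorphisms; both work, and your second formulation already avoids the issue.
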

\begin{proof}
See \cite[1.4, 1.2 and Proposition 6.9]{lenstragaloistheory}.
\end{proof}

\begin{Le}
\label{EtalePullbackShriekGEquivariant}
 Let $f: \Spec S \to \Spec R$ be a finite \'etale morphism of integral normal schemes. Let $M$ be an $R$-module and $G$ the Galois group of the fraction field extension $Q(S)/Q(R)$. If $G$ acts on $f^\ast M$ by its natural action on $S$ and $G$ acts on $\Hom_R(S, M)$ via $\sigma \varphi \mapsto \varphi \circ \sigma^{-1}$ then $f^\ast M \to \Hom_R(S,M), m \otimes s \mapsto [t \mapsto \Tr(st) m] $ is $G$-equivariant.
\end{Le}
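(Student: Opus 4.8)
The plan is to reduce the claim to the $G$-invariance of the trace map $Tr = Tr_{S/R}\colon S \to R$ and then verify equivariance by a one-line computation on simple tensors. Write $\Psi\colon f^\ast M = S\otimes_R M \to \Hom_R(S,M)$ for the map $m\otimes s \mapsto [t\mapsto Tr(st)\,m]$ in question.

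First I would check that the two $G$-actions are actually well defined, which comes down to observing that $G = \mathrm{Gal}(Q(S)/Q(R))$ acts on $S$ by $R$-algebra automorphisms: since $f$ is finite, $S$ is integral over $R$, and since $S$ is normal it is integrally closed in $Q(S)$, so $S$ is the integral closure of $R$ in $Q(S)$; hence every $\sigma\in G$, fixing $R$, restricts to an automorphism of $S$. Next I would record that $Tr$ is $G$-invariant, i.e.\ $Tr(\sigma a) = Tr(a)$ for all $a\in S$ and $\sigma\in G$. Since $R$ is a domain we have $R\hookrightarrow Q(R)$, and tensoring $Tr_{S/R}$ with $Q(R)$ yields $Tr_{Q(S)/Q(R)}$ (using that $S\otimes_R Q(R)$, being a domain finite over the field $Q(R)$, equals $Q(S)$); because $Q(S)/Q(R)$ is separable and Galois with group $G$ this trace is $a\mapsto\sum_{\tau\in G}\tau(a)$, which is visibly invariant under precomposition with any $\sigma\in G$, and the invariance of $Tr_{S/R}$ follows by restriction. (Alternatively this can be read off the previous lemma, but the direct argument is shorter.)

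Finally I would do the computation. Both source and target are additive in $m\otimes s$, so it suffices to treat a simple tensor. On the one hand $\Psi(\sigma\cdot(m\otimes s)) = \Psi(m\otimes\sigma(s)) = [\,t\mapsto Tr(\sigma(s)\,t)\,m\,]$; on the other hand, with the stated right action on $\Hom_R(S,M)$, $\bigl(\sigma\cdot\Psi(m\otimes s)\bigr)(t) = \Psi(m\otimes s)(\sigma^{-1}(t)) = Tr(s\,\sigma^{-1}(t))\,m$. Since $\sigma^{-1}$ is a ring homomorphism and $Tr$ is $G$-invariant, $Tr(\sigma(s)\,t) = Tr\bigl(\sigma^{-1}(\sigma(s)\,t)\bigr) = Tr(s\,\sigma^{-1}(t))$, so the two expressions agree and $\Psi$ is $G$-equivariant. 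I do not expect a genuine obstacle; the only points that need a little care are the $G$-invariance of the trace and keeping straight the left/right conventions on the $G$-action on $\Hom_R(S,M)$, which is exactly why I would write out both sides of the computation explicitly.
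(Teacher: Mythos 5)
Your proof is correct and follows essentially the same route as the paper's: both arguments reduce to the identity $Tr(\sigma(s)\,t) = Tr(s\,\sigma^{-1}(t))$, which the paper gets by writing $Tr = \sum_{\tau \in G}\tau$ (valid since $R$ and $S$ are normal domains) and reindexing, and which you get equivalently from the $G$-invariance of $Tr$ together with multiplicativity of $\sigma^{-1}$. The extra details you supply (that $G$ preserves $S$, and that $Tr_{S/R}$ agrees with $Tr_{Q(S)/Q(R)}$ after localization) are correct and merely make explicit what the paper leaves implicit.
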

\begin{proof}
Since $R$ and $S$ are normal domains we have $\Tr(x) = \sum_{\sigma \in G} \sigma(x)$. Hence, $\Tr(\sigma(s) t) = \Tr(s \sigma^{-1} (t))$.
\end{proof}

Let us recall that if $R$ is essentially of finite type over an $F$-finite field $k$ with structural map $s$ then $s^! k = \omega_R$ is an invertible sheaf and by our conventions is endowed with an isomorphism $\omega_R \to F^! \omega_R$ (cf.\ Introduction).

\begin{Bsp}
\label{ExampleKummer}
Write $f: \Spec S \to \Spec R$ for the morphism associated to the Kummer covering $R \subseteq S = R[t]/(t^n -x)$ (in particular, $(n, p) = 1$). Assume furthermore that $R$ contains all $n$th roots of unity. We will denote by $\delta_i \in \Hom_R(S,R)$ for $0 \leq i \leq n-1$ the map which sends $t^i$ to $1$ and the other $t^j$ to $0$ for $0 \leq j \leq n-1$, $j \neq i$.

Then $\Hom_R(S, R) \to S, \delta_{n-1} \mapsto 1$ induces an isomorphism of $S$-modules since $t^i \delta_{n-1} = \delta_{n-1-i}$ for $0 \leq i \leq n-1$. Moreover, one has $\Tr = n \delta_0 = n t^{n-1} \delta_{n-1}$. The last equality is due to the fact that $\Tr(t^i) = \sum_{j = 1}^n \zeta^{ij} t^i =0$ for $i \neq 0$, where $\zeta$ is a primitive $n$th root of unity. 

Let us now additionally assume that $R$ is regular essentially of finite type over an $F$-finite field. If $j: \Spec R_x \to \Spec R$ denotes the open immersion and $M$ is an $R_x$-module then one has a chain of natural isomorphisms \[f^! (j_\ast M \otimes \omega_R) \cong j'_\ast f'^! (M \otimes \omega_{R_x}) \cong j'_\ast f'^\ast (M \otimes \omega_{R_x}) \cong f^\ast (j_\ast M \otimes \omega_R)\] Let us denote the map $S \to j_\ast M \otimes \omega_R$ sending $t^i$ to $m \otimes \omega$ and $t^j$ to $0$ for $j=0, \ldots, n-1, j\neq i$ by $\delta_{i, m, \omega}$. With this notation the isomorphism $\Hom_R(S, j_\ast M \otimes \omega_R) \to j'_\ast f'^\ast (M \otimes \omega_{R_x})$ is given by sending $\delta_{i, m, \omega}$ to $\frac{1}{n} t^{-i} \otimes m \otimes \omega$. This follows since $n t^{n-1} \delta_{n-1} = \Tr \mapsto 1$ and using $S$-linearity. Since $\frac{1}{n} t^{-i} \otimes m \otimes \omega = \frac{1}{n} t^{n-i} \otimes \frac{m}{x} \otimes \omega$ we see that the composition of the natural isomorphisms above is given by \begin{align}f^!(j_\ast M \otimes \omega_R) \longrightarrow f^\ast(j_\ast M \otimes \omega_R),\,\, & \delta_{i, m , \omega} \longmapsto \frac{1}{n} t^{n-i} \otimes \frac{m}{x} \otimes \omega.\end{align}

Moreover, if $R$ is normal and we endow $f^! (j_\ast M \otimes \omega_R) = \Hom(S, j_\ast M \otimes \omega_R)$ with its natural right action of the Galois group $G$ of the fraction field extension $Q(S)/Q(R)$ and $f^\ast( j_\ast M \otimes \omega_R)$ with its natural (left) $G$-action on $S$ then this isomorphism is $G$-equivariant.
\end{Bsp}

In what follows if $M$ is an $R[G]$-module and $N \subseteq M$ an $R$-submodule then we will denote $M^G \cap N$ by $N^G$ although $N$ may not be an $R[G]$-module.
\begin{Le}
\label{GInvariantsKummerTensor}
Let $f: \Spec S \to \Spec R$ be a Kummer covering of degree $n$ of integral schemes ramified over $V(x)$ and assume that $R$ is regular essentially of finite type over an $F$-finite field which contains all $n$th roots of unity. Let $j: \Spec R_x \to \Spec R$ be the open immersion induced by localization. Let $M$ be an $R_x$-module and $G$ the Galois group of the fraction field extension $Q(S)/Q(R)$. Then there is a natural isomorphism \[\Sigma: (f^\ast j_\ast  M)^G \otimes \omega_R \longrightarrow (f^!(j_\ast M \otimes \omega_R))^G.\]
If $N \subseteq f^\ast j_\ast M$ is an $S$-submodule then $\Sigma(N^G \otimes \omega_R) \cong (x^{\frac{n-1}{n}}N \otimes \omega_S)^G$.
\end{Le}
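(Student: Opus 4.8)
The plan is to get $\Sigma$ essentially for free by passing to $G$-invariants in the chain of natural isomorphisms of Example \ref{ExampleKummer}, and then to establish the second assertion by unwinding the explicit formula recorded there.

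\textbf{Construction of $\Sigma$.} Write $S=R[t]/(t^n-x)$ and let $\sigma$ generate $G$, acting by $\sigma(t)=\zeta t$ for a primitive $n$th root of unity $\zeta\in R$. First I would take the $G$-equivariant composite isomorphism $\Theta\colon f^!(j_\ast M\otimes\omega_R)\xrightarrow{\sim}f^\ast(j_\ast M\otimes\omega_R)$ of Example \ref{ExampleKummer}, which sends $\delta_{i,m,\omega}$ to $\tfrac1n\,t^{n-i}\otimes\tfrac mx\otimes\omega$. Since $f^\ast(j_\ast M\otimes\omega_R)=f^\ast j_\ast M\otimes_R\omega_R$ with $G$ acting only on the first factor and $\omega_R$ being invertible, hence flat, passing to invariants commutes with $-\otimes_R\omega_R$, so $(f^\ast(j_\ast M\otimes\omega_R))^G=(f^\ast j_\ast M)^G\otimes_R\omega_R$; thus $\Sigma:=(\Theta^{-1})^G$, read through this identification, is the desired isomorphism, and it is natural in $M$ because each ingredient is. Note also that, as $R$ is a $k$-algebra and $\zeta^l-1$ is a nonzero element of the field $k$ for $1\le l\le n-1$, it is a unit in $R$; since $\sigma\bigl(\sum_l t^l\otimes\mu_l\bigr)=\sum_l\zeta^l t^l\otimes\mu_l$, this forces $(f^\ast j_\ast M)^G=1\otimes j_\ast M$, and hence $N^G=1\otimes P$ with $P=\{\mu\in j_\ast M\mid 1\otimes\mu\in N\}$ an $R$-submodule of $j_\ast M$.

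\textbf{The second assertion.} For this I would moreover use the standard $G$-equivariant isomorphism $\beta\colon f^!(j_\ast M\otimes\omega_R)=\Hom_R(S,j_\ast M\otimes\omega_R)\xrightarrow{\sim}\Hom_R(S,\omega_R)\otimes_R j_\ast M=\omega_S\otimes_S f^\ast j_\ast M$ (legitimate because $S$ is finite free over $R$), under which $\omega_S=\Hom_R(S,\omega_R)$ is free over $S$ on $\eta=\delta_{n-1}\otimes\omega$ with $t^k\eta=\delta_{n-1-k}\otimes\omega$ for $0\le k\le n-1$ and $t^n\eta=x\eta$, and $\beta(\delta_{i,m,\omega})=t^{n-1-i}\eta\otimes(1\otimes m)$. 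Writing out $\Gamma:=\beta\circ\Theta^{-1}$ on the $R$-basis $t^l\otimes\mu\otimes\omega$ ($0\le l\le n-1$) of $f^\ast j_\ast M\otimes_R\omega_R$ and rewriting exponents via $t^n=x$ in $S$ gives the uniform formula $\Gamma(t^l\otimes\mu\otimes\omega)=n\,t^{\,n+l-1}\eta\otimes(1\otimes\mu)$. Using $\eta$ to identify $\omega_S\otimes_S f^\ast j_\ast M$ with $f^\ast j_\ast M$ — so that $G$ now acts by $a\mapsto\zeta\,\sigma(a)$, with invariants the $\zeta^{-1}$-eigenspace $t^{n-1}\otimes j_\ast M$ — this yields on the one hand $\Sigma(N^G\otimes\omega_R)=\Gamma\bigl((1\otimes P)\otimes\omega_R\bigr)=t^{n-1}\otimes P$. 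On the other hand, writing $x^{\frac{n-1}n}N=t^{n-1}N$ and using $t^{\,n-1+l}=x\,t^{\,l-1}$ for $l\ge1$, an element $t^{n-1}v$ with $v=\sum_l t^l\otimes\mu_l\in N$ lies in $t^{n-1}\otimes j_\ast M$ exactly when $x\mu_l=0$, i.e. $\mu_l=0$ (because $x$ acts invertibly on $M$), for all $1\le l\le n-1$, i.e. exactly when $v\in N^G$; so $(x^{\frac{n-1}n}N\otimes\omega_S)^G=t^{n-1}\otimes P$ as well, and the two submodules coincide.

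The only genuine computation is extracting the formula for $\Gamma$ — in particular the constant $\tfrac1n$ and the shift by $t^{n-1}$ — from Example \ref{ExampleKummer}; the rest is formal juggling of invariants of a finite group. The point where one must be careful is exactly this $t^{n-1}$-twist: $\omega_S$ and $f^\ast\omega_R$ differ by the different of the Kummer extension, which is generated by $n\,t^{n-1}$, so passing from $f^\ast\omega_R$ to $\omega_S$ multiplies by $t^{n-1}$ and thereby shifts the relevant $\sigma$-eigenvalue by $\zeta$; this is precisely why the $G$-invariants of $x^{\frac{n-1}n}N\otimes\omega_S$ see the piece of $N$ living in the $\sigma$-fixed ($t^0$-)summand, matching $N^G$.
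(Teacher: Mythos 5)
Your proposal is correct and follows essentially the same route as the paper: $\Sigma$ is obtained by restricting the $G$-equivariant isomorphism of Example \ref{ExampleKummer} to invariants, and the second claim is settled by identifying $\omega_S=f^!\omega_R$ as the free $S$-module on $\delta_{n-1}$ and computing explicitly which elements of $t^{n-1}N\otimes\omega_S$ are $G$-invariant. Your version is somewhat more systematic (the uniform basis formula for $\Gamma$ and the eigenspace description of the invariants), but the substance coincides with the paper's argument.
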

\begin{proof}
The natural isomorphism $\Sigma$ is given by restricting the inverse of natural isomorphism $(1)$ in Example \ref{ExampleKummer} to $G$-invariants.

For the second claim note that, upon identifying $\omega_S$ with $f^! \omega_R$ one has a natural isomorphism \[\Xi: f^!(j_\ast M \otimes \omega_R) \to f^\ast j_\ast M \otimes_S f^! \omega_R\] whose inverse is given by \[m \otimes s \otimes \varphi \mapsto  [t \mapsto \varphi(st) \otimes m].\] In particular, we obtain a $G$-action on $f^\ast j_\ast M \otimes_S f^! \omega_R$ by transport of structure.

Let us write $S = R[t]/(t^n -x)$. In particular, $S$ admits a free basis $1, t, \ldots, t^{n-1}$ and the inclusion $R \to S$ splits. We localize and assume that $\omega_R = R \cdot \omega$ is free of rank $1$. We will denote by $\delta_i$ the element of $\Hom_R(S, \omega_R)$ which sends $t^i$ to $\omega$ and $t^j$ to $0$ for $i \neq j, 0 \leq  j \leq n-1$. As in Example \ref{ExampleKummer} the element $\delta_{n-1}$ generates $\Hom_R(S, \omega_R)$ as an $S$-module.

We now come to the claimed isomorphism. An element of $N^G$ is also contained in $(f^\ast j_\ast M)^G$. Since $S$ is a free $R$-module with basis $t^i, 0\leq i \leq n-1$ an easy computation shows that $G$-invariant elements are of the form $m \otimes 1$ for $m \in j_\ast M$. In particular, an element of $N^G \otimes \omega_R$ is of the form $m \otimes 1 \otimes \omega$. Such an element is mapped via $\Xi \circ \Sigma$ to the $G$-invariant element $n t^{n-1}m \otimes \delta_{n-1} \in (t^{n-1} N \otimes \omega_S)^G$.

For the other direction assume that $g \in N$ and that $g \otimes s t^{n-1} \delta_{n-1} \in N \otimes \omega_S$ is $G$-invariant. Then $g \otimes s t^{n-1} \delta_{n-1} = s g \otimes \delta_0$. The $G$-invariants of $f^!(j_\ast M \otimes \omega_R)$ are of the form $[a \mapsto \delta_0(a) \otimes m]$ for some $m \in j_\ast M$. Under the natural isomorphism $f^! (j_\ast M \otimes \omega_R) \to f^\ast j_\ast M \otimes \omega_S$ such an element is mapped to $m \otimes 1 \otimes \delta_0$. We conclude that we have an equality (considered as elements of $f^\ast j_\ast M \otimes_S \omega_S$) $sg \otimes t^{n-1} \delta_{n-1} = m \otimes 1 \otimes t^{n-1} \delta_{n-1}$ for some $m \in j_\ast M$. From the isomorphism $S \otimes_S \omega_S \to S, s \otimes \delta^{n-1} \mapsto s$ and the fact that $t^{n-1}$ is a non zero-divisor we thus obtain $m \otimes 1= sg$. Hence, $g \otimes s t^{n-1} \delta_{n-1} = sg \otimes \delta_0 \in N^G \otimes \omega_R$.
\end{proof}

For the next lemma note that if $S = R[t]/(t^n - x)$ then $S/(t)$ and $R/(x)$ are isomorphic as rings.

\begin{Le}
\label{KummerGradedTestmoduleInclusion}
Assume that $R$ is essentially of finite type over an $F$-finite field. Let $f: \Spec S \to \Spec R$ be a Kummer covering ramified along $x \in R$. If $(M, \kappa)$ is an $F$-pure Cartier module on $R$ and $M$ and $f^!M$ admit a common test element then we have an inclusion of Cartier $R/(x)$-modules \[\tau(M, x^{1- \eps})/\tau(M, x^1) \longrightarrow \tau(f^!M, t^{1 - \eps})/\tau(f^! M, t^1),\] where the Cartier structure on the quotient is given by $\kappa x^{p-1}$ and $\kappa' t^{p-1}$, where $\kappa'$ is the induced Cartier structure on $f^!M$.
\end{Le}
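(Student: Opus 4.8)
The plan is to obtain the inclusion from the $R$-linear map $\psi\colon M\to f^!M=\Hom_R(S,M)$, where $S=R[t]/(t^n-x)$ with $(n,p)=1$ and $1,t,\dots,t^{n-1}$ is an $R$-basis of $S$, defined by $\psi(m)(t^{n-1})=m$ and $\psi(m)(t^i)=0$ for $0\le i\le n-2$. Two identities will be used throughout: $t^{n-1}\psi(m)=\delta_m$, where $\delta_m\in f^!M$ sends $1$ to $m$ and $t^i$ to $0$ for $i\ge1$, and $x\psi(m)=t^n\psi(m)=t\,\delta_m$. It is worth stressing already here why one uses $\psi$ rather than the tautological map $m\mapsto\delta_m$ produced by Lemma \ref{InvariantsContainedInTestModule}: the latter lands too deep in the $t$-adic filtration and induces the \emph{zero} map on the graded pieces in question. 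The passage to $\psi$ is the correction that is invisible over $D(x)$ but forced by the ramification of $f$; it reflects the difference between $f^!$ and $f^\ast$. Before starting I would reduce to the case that $x$ is a non-zero-divisor on $M$ (this holds in every situation in which the lemma is applied, and in general one first replaces $M$ by $M/\Gamma_x(M)$, a Cartier quotient that changes neither $\tau(M,x^s)$ for $s>0$ nor the corresponding objects for $f^!M$). Then $t$ is a non-zero-divisor on $f^!M$, the modules $M_x$ and $(f^!M)_x$ are $F$-regular by \cite[Theorem 6.15]{staeblertestmodulnvilftrierung}, and, exactly as in the proof of Proposition \ref{KummerTestModuleGinvariants}, $x$ is a common test element for $M$ and $f^!M$ with respect to each Cartier algebra $\langle\kappa^e x^{\lceil sp^e\rceil}\mid e\ge1\rangle$.

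The heart of the argument is to check that for $0<\eps\ll1$ one has $\psi(\tau(M,x^1))\subseteq\tau(f^!M,t^1)$ and $\psi(\tau(M,x^{1-\eps}))\subseteq\tau(f^!M,t^{1-n\eps})$. The first is immediate: by the Brian\c{c}on--Skoda theorem $m\in\tau(M,x^1)$ means $m=xm'$ with $m'\in\tau(M)$, whence $\delta_{m'}\in\tau(f^!M)$ by Lemma \ref{InvariantsContainedInTestModule} and $\psi(m)=t\,\delta_{m'}\in t\,\tau(f^!M)=\tau(f^!M,t^1)$. For the second, Lemma \ref{InvariantsContainedInTestModule} applied with the Cartier algebra $\langle\kappa^e x^{\lceil(1-\eps)p^e\rceil}\rangle$ on $R$, whose image on $S$ is the algebra defining $\tau(f^!M,x^{1-\eps})$, gives $\delta_m=t^{n-1}\psi(m)\in\tau(f^!M,x^{1-\eps})$. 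Since test modules depend only on the $F$-pure part, Proposition \ref{ExponentAndRootTestModule} applied to the $F$-pure module $\underline{f^!M}$ with the non-zero-divisor $t$ and the relation $x=t^n$ yields $\tau(f^!M,x^{1-\eps})=\tau(f^!M,t^{n(1-\eps)})$, and Brian\c{c}on--Skoda gives $\tau(f^!M,t^{n(1-\eps)})=t^{n-1}\tau(f^!M,t^{1-n\eps})$ (legitimate since $1-n\eps\ge0$ for $\eps$ small). Cancelling the non-zero-divisor $t^{n-1}$ then gives $\psi(m)\in\tau(f^!M,t^{1-n\eps})$; by right-continuity this target equals $\tau(f^!M,t^{1-\eps})$ once $\eps$ is small enough, so $\psi$ descends to a map $\bar\psi\colon\tau(M,x^{1-\eps})/\tau(M,x^1)\to\tau(f^!M,t^{1-\eps})/\tau(f^!M,t^1)$.

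For injectivity of $\bar\psi$, suppose $m\in\tau(M,x^{1-\eps})$ and $\psi(m)\in\tau(f^!M,t^1)=t\,\tau(f^!M)$, say $\psi(m)=t\nu$ with $\nu\in\tau(f^!M)$. Then $m=\psi(m)(t^{n-1})=\nu(t^n)=x\,\nu(1)$ and $\nu(1)=Tr(\nu)\in\tau(M)$ by Lemma \ref{TraceTestModuleFaithfullyFlat} (here $f$ is finite flat surjective and $M,f^!M$ share a test element), so $m\in x\,\tau(M)=\tau(M,x^1)$. For Cartier-linearity, the induced structures on the graded pieces are given by $\kappa x^{p-1}$ and $\kappa'\,t^{p-1}$ with $\kappa'(\varphi)=\kappa\circ\varphi\circ F$; evaluating on $t^0,\dots,t^{n-1}$, using $t^{np-1}=x^{p-1}t^{n-1}$ and the fact that $n\nmid(i+1)p$ for $0\le i\le n-2$ (because $(n,p)=1$), one obtains the identity $\kappa'(t^{p-1}\psi(m))=\psi(\kappa(x^{p-1}m))$ on the nose; since moreover $\psi$ is $R$-linear and $R/(x)\cong S/(t)$, the induced $\bar\psi$ is a morphism of Cartier $R/(x)$-modules. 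The step I expect to be the main obstacle is the second inclusion in the middle paragraph --- recognising that multiplication by $t^{n-1}$ pushes $\psi(m)$ deep enough into the $t$-adic test module filtration to allow division back out. This is exactly what pins down the choice of $\psi$, the use of Proposition \ref{ExponentAndRootTestModule} (hence the reduction to an $F$-pure module), and the need for $x$ to be a non-zero-divisor; the remaining bookkeeping with $\eps$ is routine.
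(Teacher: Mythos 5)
Your construction is correct and, at the level of elements, produces the same map as the paper, but the route is genuinely different in its packaging. The paper does not write down $\psi$ explicitly: it first identifies $\tau(M,x^{s})$ with the $G$-invariants $\tau(f^!M,x^{s})^G\subseteq f_\ast\tau(f^!M,x^{s})$ via Proposition \ref{KummerTestModuleGinvariants} (evaluation at $1$, i.e.\ your $\delta$-maps), then passes from the $x$-indexed to the $t$-indexed graded piece through the abstract isomorphism $\tau(f^!M,t^{1-\eps})/\tau(f^!M,t^1)\cong\tau(f^!M,t^{n-\eps})/\tau(f^!M,t^n)$ (multiplication by $t^{n-1}$, in the style of \cite[Proposition 4.8]{staeblertestmodulnvilftrierung}), and finally untwists the pushforward with the counit $i^!i_\ast\to\id$ to land over $R/(x)\cong S/(t)$; injectivity is automatic there because each filtration level is an isomorphism onto the $G$-invariants. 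You instead build the composite directly as $\psi(m)(t^{n-1})=m$, $\psi(m)(t^i)=0$, prove the two containments by combining Lemma \ref{InvariantsContainedInTestModule} (applied to the twisted algebras rather than through Proposition \ref{KummerTestModuleGinvariants}), Proposition \ref{ExponentAndRootTestModule} and Brian\c{c}on--Skoda, get injectivity from the trace (Lemma \ref{TraceTestModuleFaithfullyFlat}), and verify Cartier-linearity by the on-the-nose identity $\kappa'(t^{p-1}\psi(m))=\psi(\kappa(x^{p-1}m))$ (which I checked: the exponent condition $n\mid (i+1)p$ forces $i=n-1$ since $(n,p)=1$). What your version buys is transparency --- the "shift by $t^{n-1}$" that the paper hides in the Gr-isomorphism is made explicit, and the $G$-action and the $i^!i_\ast$ bookkeeping disappear entirely (you also avoid the algebraically closed field needed for the root-of-unity argument in Proposition \ref{KummerTestModuleGinvariants}); what the paper's version buys is that the same $G$-invariant formalism is reused immediately afterwards in Proposition \ref{TestModuleVFiltrationKummerCase}.

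Two caveats, neither fatal. First, your sentence "$M_x$ and $(f^!M)_x$ are $F$-regular by \cite[Theorem 6.15]{staeblertestmodulnvilftrierung}" is a misattribution: that theorem transfers $F$-regularity from $M_x$ to $(f^!M)_x$ along the \'etale locus, it does not produce $F$-regularity of $M_x$, which is not among the stated hypotheses (the common test element need not be $x$). You do need it --- both for the test-element claim for the twisted algebras and for Proposition \ref{ExponentAndRootTestModule} --- but the paper's own proof needs it in exactly the same way, since it invokes Propositions \ref{KummerTestModuleGinvariants} and \ref{ExponentAndRootTestModule} whose hypotheses include $x$ being $M$-regular and $M_x$ being $F$-regular; in all applications these hold, so you should simply state them as standing assumptions rather than derive them. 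Second, the preliminary reduction modulo $\Gamma_x(M)$ (with the claim that $\tau(M,x^s)$ is unchanged for $s>0$) is unjustified as written and also unnecessary once $x$-regularity of $M$ is assumed as above.
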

\begin{proof}
Arguing similarly to \cite[Proposition 4.8]{staeblertestmodulnvilftrierung} one has an isomorphism \[\tau(f^!M, t^{1 - \eps})/\tau(f^! M, t^1) \cong \tau(f^!M, t^{n - \eps})/\tau(f^! M, t^n)\] of Cartier modules. Endowing $f_\ast \tau(f^!M, x)$ with Cartier structure $\kappa' x^{p-1}$ one checks that $\tau(f^!M, x)^G$ is a Cartier submodule. By Proposition \ref{KummerTestModuleGinvariants} we have an isomorphism $\tau(M, x) \cong \tau(f^!M, x)^G \subset f_\ast \tau(f^!M, x)$ and if we endow $\tau(M, x)$ with Cartier structure $\kappa x^{p-1}$ then this is an isomorphism of Cartier modules. A similar statement holds for $\tau(M, x^{1- \eps})$ and $\tau(f^!M, x^{1-\eps})$. Forming quotients one obtains an inclusion \begin{equation}\label{asdf}\tau(M, x^{1- \eps})/\tau(M, x^1) \to f_\ast(\tau(f^!M, t^{1 - \eps})/\tau(f^! M, t^1))  \end{equation} of Cartier modules since $\tau(f^!M, x) \cap \tau(f^!M, x^{1-\eps})^G = \tau(f^!M,x) \cap (f^!M)^G = \tau(f^!M, x)^G$.

We now have a commutative diagram \[\begin{xy} \xymatrix{\Spec S/(t) \ar[r]^{i'} \ar[d]^g & \Spec S \ar[d]^f\\ \Spec R/(x) \ar[r]^i & \Spec R} \end{xy}\] where $i, i'$ are closed immersions and where $g$ is the natural isomorphism.
Since $t \tau(f^!M, x^{1 - \eps}) \subseteq \tau(f^! M, x^1)$ for $\eps < \frac{1}{n}$ by Brian\c{c}on-Skoda (\cite[Theorem 4.21]{blicklep-etestideale}) and Proposition \ref{ExponentAndRootTestModule} we obtain that $\tau(f^! M, x^{1- \eps})/\tau(f^! M, x^1)$ is in fact an $S/(t)$-module. Similarly one argues that $\tau(M, x^{1 -\eps})/\tau(M, x^1)$ is an $R/(x)$-module. We can thus write the former as $i'_\ast N$ and the latter as $i_\ast \tilde{N}$. The inclusion \eqref{asdf} can then be denoted by $i_\ast \tilde{N} \to f_\ast i'_\ast N$. An application of $i^!$ to the isomorphism $f_\ast i'_\ast N \cong i_\ast g_\ast N$ and the fact that the counit $i^! i_\ast \to \id$ is an isomorphism of Cartier modules yield the inclusion $\tilde{N} \to g_\ast N$.
\end{proof}

\begin{Le}
\label{KummerGradedVFiltrationInclusion}
Let $R$ be smooth over an algebraically closed field and $x \in R$ smooth. Let $\mathcal{M}$ be a unit $F$-crystal on $D(x)$ and denote by $j: D(x) \to \Spec R$ the open immersion. Assume further that $j_\ast \mathcal{M}$ is trivialized by a Kummer covering $f: \Spec S \to \Spec R$ ramified along $x$. Then $Gr^0_V j_\ast \mathcal{M}$ is a unit $S/(t)[F]$-submodule of $Gr^0_V f^\ast j_\ast \mathcal{M}$.
\end{Le}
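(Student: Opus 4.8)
The plan is to realise Stadnik's $V$-filtration on $j_\ast \mathcal{M}$ as the Galois invariants of the $V$-filtration on the trivialised pullback, and then to argue as in the proof of Lemma \ref{KummerGradedTestmoduleInclusion}. Since $f$ is a Kummer covering ramified along $x$ and the base field is algebraically closed, we may write $S = R[t]/(t^n - x)$ with $(n,p) = 1$ and cyclic Galois group $G = \langle \sigma \rangle$, where $\sigma(t) = \zeta t$ for a primitive $n$th root of unity $\zeta$; here $G$ acts on $f^\ast j_\ast \mathcal{M} = S \otimes_R j_\ast \mathcal{M}$ through its action on $S$. Writing $j'$ and $f'$ for the base changes of $j$ and $f$ along $f$, we have $f^\ast j_\ast \mathcal{M} = j'_\ast f'^\ast \mathcal{M}$, and $f'^\ast \mathcal{M}$ is trivial by hypothesis, so $f^\ast j_\ast \mathcal{M}$ carries the ($t$-adic) $V$-filtration Stadnik attaches to a trivialised cover (cf.\ Proposition \ref{TestModuleVfiltrationShiftJustification}). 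Since $S = \bigoplus_{i=0}^{n-1} t^i R$ and $\sigma$ acts by $\zeta^i$ on $t^i \otimes j_\ast \mathcal{M}$, one has $(f^\ast j_\ast \mathcal{M})^G = 1 \otimes j_\ast \mathcal{M} \cong j_\ast \mathcal{M}$, and by Stadnik's construction of the $V$-filtration in the tame case (\cite{stadnikvfiltrationfcrystal}; cf.\ the discussion preceding this lemma) we get $V^i(j_\ast \mathcal{M}) = (V^i(f^\ast j_\ast \mathcal{M}))^G$ under this identification, with Stadnik's normalisation of indices.

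By discreteness of both filtrations we may choose $0 < \eps \ll 1$ working for both, so that $Gr^0_V(j_\ast \mathcal{M}) = V^0(j_\ast \mathcal{M})/V^{\eps}(j_\ast \mathcal{M})$ and similarly for $f^\ast j_\ast \mathcal{M}$. The inclusion $\iota \colon V^0(j_\ast \mathcal{M}) = (V^0(f^\ast j_\ast \mathcal{M}))^G \hookrightarrow V^0(f^\ast j_\ast \mathcal{M})$ sends $V^{\eps}(j_\ast \mathcal{M}) = (V^{\eps}(f^\ast j_\ast \mathcal{M}))^G$ into $V^{\eps}(f^\ast j_\ast \mathcal{M})$ and hence descends to a map $\bar{\iota} \colon Gr^0_V(j_\ast \mathcal{M}) \to Gr^0_V(f^\ast j_\ast \mathcal{M})$. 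Exactly as in the proof of Lemma \ref{KummerGradedTestmoduleInclusion} this map is injective: an element of $V^0(j_\ast \mathcal{M})$ whose image lies in $V^{\eps}(f^\ast j_\ast \mathcal{M})$ is both $G$-invariant and contained in $V^{\eps}(f^\ast j_\ast \mathcal{M})$, hence lies in $(V^{\eps}(f^\ast j_\ast \mathcal{M}))^G = V^{\eps}(j_\ast \mathcal{M})$.

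It remains to match the algebraic structures. By axiom (ii) of Definition \ref{StadnikVFiltration}, $Gr^0_V(j_\ast \mathcal{M})$ is a module over $R/(x)$ and $Gr^0_V(f^\ast j_\ast \mathcal{M})$ over $S/(t) = R[t]/(t^n-x,t)$; the natural ring map $R/(x) \to S/(t)$ is an isomorphism, so we view both as $S/(t)$-modules and $\bar{\iota}$ is $S/(t)$-linear. Moreover the structural isomorphism of $f^\ast j_\ast \mathcal{M}$ is induced from that of $j_\ast \mathcal{M}$ via the canonical identification $F^\ast f^\ast \cong f^\ast F^\ast$, so the adjoint structural maps are compatible with $\iota$; passing to associated graded, $\bar{\iota}$ is compatible with the Frobenius structures induced on $Gr^0_V$ by axiom (iii). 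Since $Gr^0_V(j_\ast \mathcal{M})$ is a unit $R/(x)[F]$-module by axiom (iv) and $R/(x) = S/(t)$, the monomorphism $\bar{\iota}$ exhibits $Gr^0_V(j_\ast \mathcal{M})$ as a unit $S/(t)[F]$-submodule of $Gr^0_V(f^\ast j_\ast \mathcal{M})$. The step I expect to require the most care is the input from Stadnik that his $V$-filtration on $j_\ast \mathcal{M}$ is genuinely the $G$-invariant filtration coming from the trivialised pullback (and, relatedly, fixing the normalisation of the index so that it is $Gr^0_V$ on both sides that correspond); granting that, the rest is the formal unwinding above, parallel to Lemma \ref{KummerGradedTestmoduleInclusion}.
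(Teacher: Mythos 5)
Your proposal is correct and follows essentially the same route as the paper: the paper likewise identifies $V^0 j_\ast \mathcal{M}$ inside $V^0 f^\ast j_\ast \mathcal{M}$ via Stadnik's Lemma 3.14 (there phrased as the intersection of $j_\ast \mathcal{M} \otimes 1$ with $V^0 f^\ast j_\ast \mathcal{M}$, which coincides with your $G$-invariants description since $(f^\ast j_\ast \mathcal{M})^G = 1 \otimes j_\ast \mathcal{M}$), checks that the adjoint structural map restricts, and uses $R/(x) \cong S/(t)$ to compare the unit structures on the graded pieces. Your extra care about the index normalisation is warranted but harmless here, since at index $0$ (and at $\eps$ versus $\eps/n$ for $0 < \eps \ll 1$, by discreteness) the two normalisations agree.
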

\begin{proof}
By \cite[Lemma 3.14]{stadnikvfiltrationfcrystal} the restriction of $V^0 f^\ast j_\ast \mathcal{M} \to F_\ast V^0 f^\ast j_\ast \mathcal{M}$ to $V^0 j_\ast \mathcal{M}$ (which is the intersection of $j_\ast \mathcal{M} \otimes R$ with $V^0 f^\ast j_\ast \mathcal{M}$) induces a map $V^0 j_\ast \mathcal{M} \to F_\ast V^0 j_\ast \mathcal{M}$. A similar statement holds for $V^\eps j_\ast \mathcal{M}$ and since $R/(x) \cong S/(t)$ we conclude that the unit $S/(t)[F]$-crystal structure on $Gr^0_V f^\ast j_\ast \mathcal{M}$ restricts to the unit $R/(x)[F]$-crystal structure on $Gr^0_V j_\ast \mathcal{M}$.
\end{proof} 

\begin{Prop}
\label{TestModuleVFiltrationKummerCase}
Let $R$ be smooth over an algebraically closed field. Let $x$ be a smooth element of $R$ and let $\mathcal{M}$ be a unit $F$-crystal on $D(x)$. Assume further that $\mathcal{M}$ is trivialized by a Kummer covering $f: \Spec S \to \Spec R$ ramified along $x$. Then \[V^a j_\ast \mathcal{M} \otimes \omega_R = \tau(j_\ast \mathcal{M} \otimes \omega_R, x^{a+1 - \eps}) \text{ for all } a > -1,\] where $0 < \eps \ll 1$ and $j: D(x) \to \Spec R$ is the open immersion. Moreover, the unit $R/(x)[F]$-crystal $Gr^0_V(j_\ast \mathcal{M})$ corresponds to the Cartier crystal $Gr^1_\tau(j_\ast \mathcal{M} \otimes \omega_R)$.
\end{Prop}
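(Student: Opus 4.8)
The plan is to descend both filtrations along $f$ by taking $G$-invariants, thereby reducing the statement to the case of a pushforward of a trivial unit $F$-crystal on $\Spec S$, which is exactly Proposition \ref{TestModuleVfiltrationShiftJustification}. First I set up the two ``$G$-invariant'' descriptions. Since $f$ is finite flat and trivialises $\mathcal M$, flat base change gives $f^\ast j_\ast\mathcal M\cong j'_\ast(\mathcal O_{S_x}^{\oplus r})$ with $r=\rk\mathcal M$, and, using Proposition \ref{PushPullStuff} together with $f'$ being finite \'etale, $f^!(j_\ast\mathcal M\otimes\omega_R)\cong j'_\ast f'^!(\mathcal M\otimes\omega_{R_x})\cong j'_\ast\omega_{S_x}^{\oplus r}$ as Cartier modules, where $j',f'$ are the base changes of $j,f$. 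On the $V$-filtration side, Stadnik's construction (\cite{stadnikvfiltrationfcrystal}, cf.\ \cite[Theorem 3.15]{stadnikvfiltrationfcrystal}) gives $V^a j_\ast\mathcal M=(V^a f^\ast j_\ast\mathcal M)^G$, where $V^\bullet f^\ast j_\ast\mathcal M$ is the valuation filtration along $(x)S=(t^n)$; being canonical, on the trivial crystal this is $V^a f^\ast j_\ast\mathcal M=(t^{\lceil na\rceil}S)^{\oplus r}$. On the test module side, I pick a coherent model $N\subseteq j_\ast\mathcal M\otimes\omega_R$ (Lemma \ref{CoherentModelOfPushforwardExists}), which I may assume $F$-pure by replacing it with $\underline{N}$ (Lemma \ref{CoherentModuleOfPushforwardIntersectionFPure}, Proposition \ref{TestModuleIndependentOfLNiliso}). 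Then $N_x=\mathcal M\otimes\omega_{R_x}$ by Proposition \ref{CoherentModelOfPushforwardPulledBack}, and this is $F$-regular because $f'^!(\mathcal M\otimes\omega_{R_x})\cong\omega_{S_x}^{\oplus r}$ is, so Proposition \ref{FiniteFlatShriekEasyFregularDirection} applies to $f'$; as $x$ is a nonzerodivisor on $N$, Proposition \ref{KummerTestModuleGinvariants} yields $\tau(N,x^t)\cong\tau(f^!N,x^t)^G$. Since $f^!N$ is a coherent model of $f^!(j_\ast\mathcal M\otimes\omega_R)\cong j'_\ast\omega_{S_x}^{\oplus r}$ (finiteness of $f$ and exactness/nil-preservation of $f^!$), Definition \ref{DefTestModuleFiltrationQuasicoherentPushforward} and Proposition \ref{TestModuleIndependentOfLNiliso} let me compute $\tau(f^!N,x^t)=\tau(j'_\ast\omega_{S_x}^{\oplus r},x^t)$ as a $G$-stable submodule of $f^!(j_\ast\mathcal M\otimes\omega_R)$.

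Next I match the two sides on $\Spec S$. Using $x=t^n$, Lemma \ref{TauofTauEqualsTau} (to pass to the $F$-regular model) and Proposition \ref{ExponentAndRootTestModule}, one gets $\tau(j'_\ast\omega_{S_x}^{\oplus r},x^{a+1-\eps})=\tau(j'_\ast\omega_{S_x}^{\oplus r},t^{n(a+1-\eps)})$, and Proposition \ref{TestModuleVfiltrationShiftJustification} applied on $(\Spec S,t)$, componentwise, evaluates this, for $0<\eps\ll1$, to $(t^{\lceil n(a+1)\rceil-1}\omega_S)^{\oplus r}=(t^{n-1+\lceil na\rceil}\omega_S)^{\oplus r}$. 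On the other hand, Lemma \ref{GInvariantsKummerTensor} (through Example \ref{ExampleKummer}) identifies $f^!(j_\ast\mathcal M\otimes\omega_R)$ with $f^\ast j_\ast\mathcal M\otimes_S\omega_S$ $G$-equivariantly and gives $\Sigma\big((V^a f^\ast j_\ast\mathcal M)^G\otimes\omega_R\big)=\big(t^{n-1}V^a f^\ast j_\ast\mathcal M\otimes_S\omega_S\big)^G=\big((t^{n-1+\lceil na\rceil}\omega_S)^{\oplus r}\big)^G$. Thus the two submodules of $f^!(j_\ast\mathcal M\otimes\omega_R)$ whose $G$-invariants occur on the two sides coincide, and therefore $\tau(j_\ast\mathcal M\otimes\omega_R,x^{a+1-\eps})=V^a j_\ast\mathcal M\otimes\omega_R$ for all $a>-1$.

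Finally, for the graded statement I observe that the equality of filtrations together with right-continuity (\cite[Proposition 4.16]{blicklep-etestideale}) and discreteness of the test module filtration forces $\tau(j_\ast\mathcal M\otimes\omega_R,x^1)=\bigcup_{a>0}V^a j_\ast\mathcal M\otimes\omega_R$, so that $Gr^1_\tau(j_\ast\mathcal M\otimes\omega_R)=Gr^0_V(j_\ast\mathcal M)\otimes\omega_R$ as $R/(x)$-modules. To upgrade this to an isomorphism of the Cartier crystal and unit $R/(x)[F]$-module structures under the equivalence of \cite[Theorem 5.15]{blickleboecklecartierfiniteness}, I use Lemmata \ref{KummerGradedVFiltrationInclusion} and \ref{KummerGradedTestmoduleInclusion}, which realise $Gr^0_V(j_\ast\mathcal M)$ and $Gr^1_\tau(j_\ast\mathcal M\otimes\omega_R)$ as $G$-stable sub-objects of the corresponding objects on $S/(t)=R/(x)$, where the correspondence holds by Proposition \ref{TestModuleVfiltrationShiftJustification}; functoriality and $G$-equivariance of the equivalence then give the claim, with Lemma \ref{ClosedImmersionCartierStructureGraded} identifying the Cartier structure on $Gr^1_\tau$ with the standard one.

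The step I expect to be the main obstacle is the matching in the second paragraph: one must carry the natural $G$-action on $f^!(j_\ast\mathcal M\otimes\omega_R)$ and the transported (``trivialised'') $G$-action on $f^\ast j_\ast\mathcal M\otimes_S\omega_S$ coherently through the chain of natural isomorphisms of Example \ref{ExampleKummer}, and verify that the twist by $t^{n-1}=x^{(n-1)/n}$ appearing there is exactly what converts the valuation index $\lceil na\rceil$ into the test module index $\lceil n(a+1)\rceil-1$, uniformly in $a$ and for all sufficiently small $\eps$ — this is the precise mechanism of the ``$+1$'' shift between the two filtrations, and keeping the two cocycles (that of the trivialisation and that coming from $f^!\leftrightarrow f^\ast$) separate is where the care is needed.
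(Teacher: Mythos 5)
Your proposal is correct and follows essentially the same route as the paper: reduce to the trivial crystal on $\Spec S$ via Proposition \ref{TestModuleVfiltrationShiftJustification}, descend both filtrations by $G$-invariants using Proposition \ref{KummerTestModuleGinvariants}, Lemma \ref{GInvariantsKummerTensor} and Stadnik's descent lemma, with Proposition \ref{ExponentAndRootTestModule} handling the $x=t^n$ exponent bookkeeping, and then treat the graded pieces via Lemmata \ref{KummerGradedTestmoduleInclusion} and \ref{KummerGradedVFiltrationInclusion}. The only place the paper is more explicit is the last step, where instead of appealing to ``functoriality and $G$-equivariance of the equivalence'' it uses the rigidity of sub-objects of trivial unit $F$-modules and constant Cartier crystals (they are determined by their rank) to conclude that the two graded pieces correspond.
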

\begin{proof}
Assume that $S = \Spec R[t]/(t^n -x)$. In particular, $f$ has degree $n$ and $f^\ast \mathcal{M} \cong S_t^r$. By Proposition \ref{TestModuleVfiltrationShiftJustification} we have \[V^{a} j_\ast S_t^r \otimes \omega_S = \tau(j_\ast \omega_{S_t}^r, t^{a + 1 - \eps}).\]
Multiplying the right-hand side with $t^{n-1}$ and taking $G$-invariants we obtain $\tau(j_\ast \mathcal{M}, x^{\frac{a}{n} + 1 - \frac{\eps}{n}})$ by Propositions \ref{KummerTestModuleGinvariants} and \ref{ExponentAndRootTestModule}. By Lemma \ref{GInvariantsKummerTensor} the $G$-invariants $(t^{n-1}\tau(j_\ast \omega_{S_f}^r, t^{a+ 1 - \eps}))^G$ coincide with $(V^{a} j_\ast S_f^r)^G \otimes \omega_R$. Due to \cite[Lemma 3.14]{stadnikvfiltrationfcrystal} $(V^{a} j_\ast S^r_f)^G$ coincides with $V^{\frac{a}{n}} j_\ast \mathcal{M}$.\footnote{Our notation is slightly different from the one of Stadnik. In his notation $V^a j_\ast S_f^r$ should be $V^{\frac{a}{n}} j_\ast S_f^r$ (cf. \cite[Definition 3.1]{stadnikvfiltrationfcrystal}).} We conclude that $V^{\frac{a}{n}} j_\ast \mathcal{M} \otimes \omega_R = \tau(j_\ast \mathcal{M} \otimes \omega_R, x^{\frac{a}{n} + 1 - \eps})$ as claimed.

For the addendum recall that by the proof of Proposition \ref{TestModuleVfiltrationShiftJustification} one has $Gr_V^0 (f^\ast j_\ast \mathcal{M}) = (R/(x))^r$ and $Gr_\tau^1 f^! (j_\ast \mathcal{M} \otimes \omega_R) = (\omega_R/(x)\omega_R)^r \cong \omega_{R/(x)}^r$, where the latter isomorphism is due to Lemma \ref{ClosedImmersionCartierStructureGraded}.
We have the following situation
\[\begin{xy}
 \xymatrix{i^! i_\ast Gr_\tau^1(f^! (j_\ast \mathcal{M} \otimes \omega_R))  \ar[r]^{\cong} & i^\ast i_\ast Gr_V^0 (f^\ast j_\ast \mathcal{M}) \otimes \omega_{R/(x)}\\
 i^! i_\ast Gr^1_\tau(j_\ast \mathcal{M} \otimes \omega_R) \ar[u] \ar[r]^\varphi &  i^\ast i_\ast Gr^0_V(j_\ast \mathcal{M}) \otimes \omega_{R/(x)}  \ar[u]}
\end{xy}\]
where the vertical arrows are inclusions by Lemmata \ref{KummerGradedTestmoduleInclusion} and \ref{KummerGradedVFiltrationInclusion}. We only have to find an isomorphism $\varphi$ that makes this diagram commutative. 

We have the following chain of natural isomorphisms that induce both $\varphi$ and the top horizontal isomorphism.
By the first part of the proposition and the exactness of $\otimes \omega_R$ we have an isomorphism $i_\ast Gr^1_\tau(j_\ast \mathcal{M} \otimes \omega_R) \cong i_\ast Gr^0_V (j_\ast \mathcal{M}) \otimes \omega_R$. Applying $i^!$ to this isomorphism and using \cite[Corollary III.7.3]{hartshorneresidues} we get an isomorphism $i^! i_\ast Gr^1_\tau(j_\ast \mathcal{M} \otimes \omega_R) \cong i^\ast (i_\ast Gr_V^0(j_\ast \mathcal{M}) \otimes \omega_R) \otimes \omega_i$, where $\omega_i$ is by definition $\Hom_{R/(x)}((x)/(x^2), R/(x))$. Applying the isomorphism $i^\ast \omega_R \otimes \omega_i \cong \omega_{R/(x)}$ (cf.\ \cite[Proposition III.1.2]{hartshorneresidues}) we obtain $\varphi$ and the top horizontal isomorphism.
\end{proof}

\begin{Le}
\label{ShriekEtaleAndFiniteFlatCartesian}
Let $f: \Spec S \to \Spec R$ be a finite flat morphism and let $j: \Spec T \to \Spec R$ be \'etale. If $(M, \kappa)$ is a Cartier module on $R$ then we have a natural isomorphism of Cartier modules $j'^! f^! M \to f'^! j^! M$, where $j', f'$ denotes the base changes of $j, f$.
\end{Le}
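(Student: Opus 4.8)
The plan is to reduce at once to the affine local situation: the claim is local on $\Spec R$, so we may assume $f: \Spec S \to \Spec R$ is finite flat and $j: \Spec T \to \Spec R$ is \'etale, with $R, S, T$ rings, and we set $S' = S \otimes_R T$. As base changes, $f': \Spec S' \to \Spec T$ is again finite flat and $j': \Spec S' \to \Spec S$ is again \'etale. Unwinding the definitions, $j'^! f^! M = \Hom_R(S, M) \otimes_S S' = \Hom_R(S, M) \otimes_R T$, whose Cartier structure is obtained by applying the \'etale-pullback recipe of \cite[Lemma 2.2.1, Definition 2.2.2]{blickleboecklecartiercrystals} to the finite-flat structure $\varphi \mapsto \kappa \circ \varphi \circ F_S$ on $f^! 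M = \Hom_R(S, M)$; on the other hand $f'^! j^! M = \Hom_T(S', M \otimes_R T)$ with Cartier structure $\psi \mapsto \bar{\kappa} \circ \psi \circ F_{S'}$, where $\bar{\kappa}$ denotes the \'etale-pullback Cartier operator on $j^! M = M \otimes_R T$.

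For the underlying $S'$-linear isomorphism I would invoke Hom--tensor base change: since $S$ is a finitely presented $R$-module and $T$ is flat over $R$, the canonical map
\[ \Theta: \Hom_R(S, M) \otimes_R T \longrightarrow \Hom_T(S', M \otimes_R T), \qquad \varphi \otimes t \longmapsto \bigl[ s \otimes t' \mapsto \varphi(s) \otimes t t' \bigr], \]
is an isomorphism. Equivalently, $\Theta$ is the comparison isomorphism $j'^! f^! \cong (f j')^! = (j f')^! \cong f'^! j^!$ provided by pseudofunctoriality of $(-)^!$ along $j f' = f j'$; what the lemma adds is that this comparison respects the Cartier structures.

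The remaining, and only genuinely laborious, step is to check that $\Theta$ is Cartier linear. Here I would use that, $j$ being \'etale, every $t \in T$ has a representation $t = \sum_i r_i u_i^p$ with $r_i \in R$ and $u_i \in T$ (the relative Frobenius $T \otimes_{R, F} R \to T$ is an isomorphism), that on such representatives the comparison isomorphism $F_\ast(M \otimes_R T) \xrightarrow{\sim} F_\ast M \otimes_R T$ reads $m \otimes t'^p \mapsto m \otimes t'$, and that the analogous description holds for $S'$ over $S$. Feeding $\varphi \otimes t$ through the Cartier operator of $j'^! f^! M$ and then through $\Theta$ yields the homomorphism $s \otimes t' \mapsto \sum_i \kappa(\varphi(r_i s^p)) \otimes u_i t'$; feeding $\Theta(\varphi \otimes t)$ through the Cartier operator of $f'^! j^! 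M$ yields $s \otimes t' \mapsto \bar{\kappa}\bigl(\varphi(s^p) \otimes t t'^p\bigr)$, and rewriting $t t'^p = \sum_i r_i (u_i t')^p$ together with the $R$-linearity of $\varphi$ (so $r_i \varphi(s^p) = \varphi(r_i s^p)$) and the description of $\bar{\kappa}$ above produces the same homomorphism. The main obstacle is exactly this bookkeeping: one must keep the two occurrences of the isomorphism $F_\ast(- \otimes_R T) \cong F_\ast(-) \otimes_R T$ (one for $T$ over $R$, one for $S'$ over $S$) properly matched and verify independence of the chosen representation $t = \sum_i r_i u_i^p$, which ultimately follows from $\Theta$ and all the structure isomorphisms being natural. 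Since everything in sight is built from canonical maps, naturality in $M$ and the obvious gluing then give the statement over a general base.
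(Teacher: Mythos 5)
Your proof is correct and follows essentially the same route as the paper: both write down the explicit base-change isomorphism $\Hom_R(S,M)\otimes_S S' \to \Hom_T(S\otimes_R T, M\otimes_R T)$ and verify Cartier linearity by a direct element computation using the \'etale description of the Frobenius comparison. Your extra care with the representation $t=\sum_i r_i u_i^p$ is a harmless elaboration of the paper's check on elements $\varphi\otimes(s^p\otimes t^p)$.
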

\begin{proof}
We have to verify that the isomorphism \begin{align*}\Hom_R(S, M) \otimes_S (S \otimes_R T) &\longrightarrow \Hom_{T}(S \otimes_R T, M \otimes_R T)\\ \varphi \otimes (s \otimes t) &\longmapsto [s' \otimes t' \mapsto \varphi(ss') \otimes tt']\end{align*} is Cartier linear.

An element $\varphi \otimes (s^p \otimes t^p)$ is mapped to $\kappa \circ \varphi \circ F \otimes s \otimes t$ by the Cartier structure which is mapped to $[s' \otimes t' \mapsto \kappa(\varphi(ss'^p) \otimes tt'^p]$. The other way around we get that $[s' \otimes t' \mapsto \varphi(s's^p) \otimes t't^p]$ is mapped to $[s' \otimes t' \mapsto \kappa(\varphi(ss'^p) \otimes t't^p]$ as desired.
\end{proof}

\begin{Bem}
The result of Proposition \ref{TestModuleVFiltrationKummerCase} does not hold for $a \leq -1$. To see this consider $R$ and $f: \Spec S \to \Spec R$ as above with $n = p-1$ and take $M = j_\ast j^! \omega_R$ with Cartier structure given by $\kappa x$. Then $f^! j_\ast j^! \omega_R \cong j'_\ast j'^! f^! \omega_R = j'_\ast j'^\ast \omega_S$ by Proposition \ref{PushPullStuff} and Lemma \ref{ShriekEtaleAndFiniteFlatCartesian}, where the Cartier structure is given by $\kappa t^{p-1}$ (here $\kappa$ now denotes the natural Cartier structure on $\omega_S$).

First we claim that $\tau(\omega_S, t^0) = t \omega_S$ (where $\omega_S$ carries the Cartier structure $\kappa  t^{p-1}$). By Lemma \ref{PushforwardLocallyConstantSheafLNil} $\omega_S$ is $F$-pure. Since $t$ is a test element one has $\tau(\omega_S, t^0) = (\kappa t^{p-1})^e t \omega_S = \kappa^e t^{p^e} \omega_S = t \omega_S$ for $e \gg 0$, where we used \cite[Lemma 4.1]{blicklestaeblerbernsteinsatocartier}. A small computation shows that the inclusion $\omega_S \to j_\ast j^! \omega_{S}$ is a local nil-isomorphism for the Cartier structure given by $\kappa t^{p-1}$.

We conclude that for $a+1-\eps$ negative $\tau(j'_\ast j'^! \omega_S, t^{a + 1 -\eps}) = t^{\lfloor a +1 -\eps \rfloor} t \omega_S$, where we use right-continuity. In particular, working locally so that $\omega_R$ is free of rank $1$ we have $\tau(j'_\ast j'^! \omega_S, t^{a+1 -\eps}) = S \cdot t^{\lfloor a +1 -\eps \rfloor} \delta_{n -2}$, where $\delta_{n-2}(t^i) = \delta_{i, n-2} \omega$ for $0 \leq i \leq n-1$ and $\omega$ is a generator of $\omega_R$.

Now assume that $p =3$ (so $n =2$) and $-1 \geq a \in \mathbb{Z}$. We claim that \begin{equation}\label{wasd}\tau(j_\ast j^! \omega_R, x^{\frac{a - \eps}{n} +1}) \neq (t^{n-1}\tau(j'_\ast j'^! \omega_S, t^{a + 1 -\eps}))^G.\end{equation}
Take $a = -2$ (in fact, any even negative integer works) then the left-hand side is equal to $x^{-1} \omega_R$ while the right-hand side coincides with $(\omega_S)^G = R \delta_0 = \omega_R$. Inequality \eqref{wasd} shows that Proposition \ref{KummerTestModuleGinvariants} does not hold for negative exponents. Since it holds for Stadnik's super-specializing $V$-filtration the two filtrations cannot coincide  for negative values in general.
\end{Bem}

\begin{Le}
\label{TestmoduleVfiltrationEtaleDescent}
Let $R$ be smooth over an algebraically closed field. Let $x$ be a smooth element of $R$ and let $\mathcal{M}$ be a unit $F$-crystal on $D(x)$. Let $f: \Spec S \to \Spec R$ be a surjective \'etale morphism. Assume that \[ V^a f^\ast j_\ast \mathcal{M} \otimes \omega_S = \tau(f^\ast j_\ast \mathcal{M} \otimes \omega_S, x^{a+1-\eps})\] for an $a \geq -1$.
Then we have the equality \[ V^a j_\ast \mathcal{M} \otimes \omega_R = \tau(j_\ast \mathcal{M} \otimes \omega_R, x^{a+1-\eps}).\]
\end{Le}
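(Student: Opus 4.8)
The plan is to deduce the equality on $R$ from the one on $S$ by faithfully flat descent of submodules. Since $f$ is surjective and flat, $R \to S$ is faithfully flat, so for any two $R$-submodules $N_1, N_2$ of the quasi-coherent $R$-module $P := j_\ast \mathcal{M} \otimes \omega_R$ one has $N_1 = N_2$ as soon as $N_1 \otimes_R S = N_2 \otimes_R S$ as submodules of $P \otimes_R S$. I apply this with $N_1 = V^a j_\ast \mathcal{M} \otimes \omega_R$ (the filtration $V^\bullet j_\ast \mathcal{M}$ along $x$ exists by \cite[Theorem 3.15]{stadnikvfiltrationfcrystal} in the tame situation of interest, and in any case is forced to exist by descending the one on $f^\ast j_\ast \mathcal{M}$ granted by the hypothesis) and $N_2 = \tau(j_\ast \mathcal{M} \otimes \omega_R, x^{a+1-\eps})$, so it suffices to identify $N_1 \otimes_R S$ with $V^a f^\ast j_\ast \mathcal{M} \otimes \omega_S$ and $N_2 \otimes_R S$ with $\tau(f^\ast j_\ast \mathcal{M} \otimes \omega_S, x^{a+1-\eps})$; the hypothesis then gives $N_1 \otimes_R S = N_2 \otimes_R S$, whence $N_1 = N_2$.

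For the test module filtration this is a base-change computation. As $f$ is \'etale we have $f^! = f^\ast$, $\omega_S \cong f^\ast \omega_R$, and $f^\ast j_\ast \cong j'_\ast f'^\ast$ by flat base change, where $j', f'$ denote the base changes of $j, f$. Choose a coherent Cartier module $N \subseteq j_\ast \mathcal{M} \otimes \omega_R$ locally nil-isomorphic to $j_\ast \mathcal{M} \otimes \omega_R$ (Lemma \ref{CoherentModelOfPushforwardExists}); since $f^\ast$ is exact, flat, and preserves local nil-isomorphisms (cf.\ Lemma \ref{ShriekEtaleAndFiniteFlatCartesian}), the module $f^\ast N$ is a coherent model of $f^\ast j_\ast \mathcal{M} \otimes \omega_S$. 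Combining Definition \ref{DefTestModuleFiltrationQuasicoherentPushforward} (with Proposition \ref{TestModuleIndependentOfLNiliso} for model-independence) with the \'etale case of functoriality of test modules (Corollary \ref{SmoothShriekTestModule}, resp.\ \cite[Theorem 6.15]{staeblertestmodulnvilftrierung}), and noting that for $a = -1$ the negative part of the filtration is merely a twist by a power of $x$ which commutes with $f^\ast$, I obtain $\tau(j_\ast \mathcal{M} \otimes \omega_R, x^{a+1-\eps}) \otimes_R S = \tau(f^\ast j_\ast \mathcal{M} \otimes \omega_S, x^{a+1-\eps})$.

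For Stadnik's filtration I argue via uniqueness. The axioms (i)--(v) of Definition \ref{StadnikVFiltration} are preserved and reflected by the faithfully flat \'etale morphism $f$: coherence descends along faithfully flat ring maps, while the identities $xV^i = V^{i+1}$ and $\alpha(V^i) \subseteq F_\ast V^{ip}$ and the isomorphy statements on the graded pieces may be checked after base change along $f$ and are stable under it, using that $f$ is flat (so it commutes with forming the associated graded) and \'etale (so it commutes with the absolute and the relative Frobenius, hence with $\alpha$). Therefore $f^\ast V^\bullet j_\ast \mathcal{M}$ is a super-specializing $V$-filtration of $f^\ast j_\ast \mathcal{M}$ along $x$, so by uniqueness (\cite[Proposition 3.8]{stadnikvfiltrationfcrystal}) it equals $V^\bullet f^\ast j_\ast \mathcal{M}$; tensoring with $\omega_R$ gives $V^a j_\ast \mathcal{M} \otimes \omega_R \otimes_R S = V^a f^\ast j_\ast \mathcal{M} \otimes \omega_S$.

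Feeding the last two identifications into the hypothesis and descending along $R \to S$ completes the proof. The main obstacle is the bookkeeping in these two base-change compatibilities --- chiefly checking that the coherent-model description of the test module filtration on the non-coherent module $j_\ast \mathcal{M} \otimes \omega_R$ is compatible with $f^\ast$, and that Stadnik's axioms genuinely descend along $f$ (which relies on $f$ being \'etale, so that the relevant Frobenii base change correctly); the faithfully flat descent of submodules that concludes the argument is then immediate. In particular this lemma supplies the \'etale-descent step needed to pass from the Kummer case of Proposition \ref{TestModuleVFiltrationKummerCase} to the general tame case.
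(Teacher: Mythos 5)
Your proposal is correct and follows essentially the same route as the paper: identify the pullbacks $f^\ast\tau(j_\ast\mathcal{M}\otimes\omega_R, x^{a+1-\eps})$ and $f^\ast V^a j_\ast\mathcal{M}$ with the corresponding objects on $S$ (the paper cites \cite[Corollary 6.17]{staeblertestmodulnvilftrierung} and \cite[Lemma 3.9]{stadnikvfiltrationfcrystal} for these two compatibilities, which you instead reconstruct by hand via coherent models and via Stadnik's uniqueness theorem), then apply the hypothesis and conclude by faithfully flat descent. The extra detail you supply is a faithful unwinding of the cited results rather than a genuinely different argument.
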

\begin{proof}
By \cite[Corollary 6.17]{staeblertestmodulnvilftrierung} we have $\tau(f^\ast (j_\ast \mathcal{M} \otimes \omega_R), x^{a+1 -\eps}) = f^\ast \tau(j_\ast \mathcal{M} \otimes \omega_R, x^{a+1-\eps})$ and by \cite[Lemma 3.9]{stadnikvfiltrationfcrystal} we have $f^\ast V^a j_\ast \mathcal{M} =  V^a f^\ast j_\ast \mathcal{M}$. Moreover, as $f$ is \'etale we have a natural isomorphism $f^\ast j_\ast \mathcal{M} \otimes \omega_S \cong f^\ast (j_\ast \mathcal{M} \otimes \omega_R)$ which induces isomorphisms $\tau(f^\ast (j_\ast \mathcal{M} \otimes \omega_R), x^{a+1 -\eps}) \cong \tau(f^\ast j_\ast \mathcal{M} \otimes \omega_S, x^{a+1 -\eps})$. By faithfully flat descent the claim follows.
\end{proof}

\begin{Le}
\label{GrEtaleDescent}
Let $R$ be smooth over an algebraically closed field and $x \in R$ a smooth element. Let $f: \Spec S \to \Spec R$ be an \'etale morphism and let $\mathcal{M}$ be a unit $R[F]$-module admitting a $V$-filtration along $x$. If the unit $S/(x)[F]$-crystal $Gr_V^0 f^\ast \mathcal{M}$ corresponds to the Cartier crystal $Gr^1_\tau f^\ast \mathcal{M} \otimes \omega_S$ then the unit $S/(x)[F]$-crystal $f^\ast Gr_V^0 \mathcal{M}$ corresponds to the Cartier crystal $f^\ast Gr^1_\tau \mathcal{M} \otimes \omega_R$.

If $f$ is also surjective then the unit $R/(x)[F]$-crystal $Gr_V^0 \mathcal{M}$ corresponds to the Cartier crystal $Gr^1_\tau \mathcal{M} \otimes \omega_R$.
\end{Le}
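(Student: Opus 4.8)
The plan is to deduce the first statement formally from the compatibility of both filtrations with \'etale pullback, and to obtain the second by faithfully flat descent.

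For the first statement, write $g: \Spec S/(x) \to \Spec R/(x)$ for the base change of $f$. Since $f$ is \'etale, $f^\ast$ is exact and flat and satisfies $\omega_S\cong f^\ast\omega_R$; it commutes with the $V$-filtration by \cite[Lemma 3.9]{stadnikvfiltrationfcrystal} (as in the proof of Lemma \ref{TestmoduleVfiltrationEtaleDescent}) and with the test module filtration by \cite[Corollary 6.17]{staeblertestmodulnvilftrierung}, the latter carrying over to the filtration of Definition \ref{LimitTestmodulefiltration} since $f^\ast$ also commutes with $j_\ast$, with passage to a coherent model (Lemma \ref{CoherentModelOfPushforwardExists}) and with $\colim_e{F^e}^!$ (using Lemma \ref{ShriekEtaleAndFiniteFlatCartesian} in the spirit of Proposition \ref{LNIlIsoInducesIsoinColim}). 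Exactness then lets $f^\ast$ pass to associated gradeds and to the Cartier structure $\kappa x^{p-1}$ on $Gr^1_\tau$, and $g^!=g^\ast$ commutes with the equivalence of Cartier crystals and unit $F$-crystals of \cite[Theorem 5.15]{blickleboecklecartierfiniteness} because it commutes with $\colim_e{F^e}^!$ and with $-\otimes\omega^{-1}$. These commutations produce canonical, structure-preserving identifications $Gr_V^0 f^\ast\mathcal{M}=g^\ast Gr_V^0\mathcal{M}$ and $Gr^1_\tau(f^\ast\mathcal{M}\otimes\omega_S)=g^\ast Gr^1_\tau(\mathcal{M}\otimes\omega_R)$, so that the first claim is simply the hypothesis read through them.

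For the second statement, $g$ is faithfully flat (\'etale and surjective). Let $\mathcal{B}$ be the unit $R/(x)[F]$-crystal attached to $Gr^1_\tau(\mathcal{M}\otimes\omega_R)$; by the above, $g^\ast\mathcal{B}$ is the one attached to $Gr^1_\tau(f^\ast\mathcal{M}\otimes\omega_S)$, so the hypothesis supplies an isomorphism of unit $F$-crystals $g^\ast Gr_V^0\mathcal{M}\cong g^\ast\mathcal{B}$ over $S/(x)$. Since coherent modules equipped with a unit Frobenius structure satisfy \'etale descent, it suffices to know that this isomorphism respects the canonical descent data, and descent then produces the desired $Gr_V^0\mathcal{M}\cong\mathcal{B}$ over $R/(x)$.

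The descent-data compatibility is the step requiring care, since a priori one is only handed \emph{some} isomorphism over $S/(x)$. The plan to handle it is to observe that in the intended application the hypothesis comes from Proposition \ref{TestModuleVFiltrationKummerCase}, whose proof realises the correspondence via the \emph{tautological} identification of $\tau(\mathcal{M}\otimes\omega_R,x^{1-\eps})/\tau(\mathcal{M}\otimes\omega_R,x^1)$ with $(V^0\mathcal{M}/V^{>0}\mathcal{M})\otimes\omega_R$ inside $\mathcal{M}\otimes\omega_R$, together with the natural isomorphisms $\omega_i\cong\omega_{R/(x)}\otimes(i^\ast\omega_R)^{-1}$ and the invertibility of $\omega_R$, all of which are functorial and hence compatible with descent data. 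Equivalently, one may first descend the submodule equality $V^a\mathcal{M}\otimes\omega_R=\tau(\mathcal{M}\otimes\omega_R,x^{a+1-\eps})$ (submodule equalities descend trivially, cf.\ Lemma \ref{TestmoduleVfiltrationEtaleDescent}), which makes the underlying modules literally equal, after which the compatibility of this identification with the two Frobenius-type structures is a statement about commuting squares, hence \'etale-local, so that Proposition \ref{TestModuleVFiltrationKummerCase} applies after base change to a cover trivialised by a Kummer covering. Either route reduces the remaining work to routine verifications of the kind already carried out in Lemmata \ref{ClosedImmersionSmoothShriekBCCompatible} and \ref{TestmoduleVfiltrationEtaleDescent}.
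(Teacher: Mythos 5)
Your argument for the first claim is essentially the paper's: it likewise combines Stadnik's Lemma 3.9 for the $V$-filtration with the \'etale-pullback compatibility of the test module filtration (the paper cites the graded version, Corollary 6.21 of \cite{staeblertestmodulnvilftrierung}, rather than Corollary 6.17 followed by exactness of $f^\ast$) and the identification $Gr^1_\tau f^\ast(\mathcal{M}\otimes\omega_R)\cong Gr^1_\tau f^\ast\mathcal{M}\otimes\omega_S$. For the second claim the paper's entire proof is the sentence ``the last claim follows from faithfully flat descent'', so the descent-data issue you flag is a genuine point the paper glosses over: an isomorphism that exists only after a surjective \'etale base change need not descend (two unit $F$-crystals can become isomorphic on a cover without being isomorphic, e.g.\ by Artin--Schreier twists), and the lemma's hypothesis, read literally, only supplies \emph{some} isomorphism over $S/(x)$. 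Your two remedies --- observing that in the intended application the isomorphism is assembled from natural transformations and hence respects descent data, or first descending the submodule equality so that both structures live on the same underlying module and then checking their agreement \'etale-locally --- are both sound, and the second is exactly what the setup of Theorem \ref{VFiltTestmoduleMainComparison} (via Lemma \ref{TestmoduleVfiltrationEtaleDescent} and Proposition \ref{TestModuleVFiltrationKummerCase}) makes available; so your write-up is, if anything, more careful than the paper's at this step.
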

\begin{proof}
According to \cite[Lemma 3.9]{stadnikvfiltrationfcrystal} one has a natural isomorphism of unit $S/(x)[F]$-crystals $Gr_V^0 (f^\ast M) \cong f^\ast Gr_V^0( M)$. By \cite[Corollary 6.21]{staeblertestmodulnvilftrierung}\footnote{The $F$-regularity assumption is not needed since we consider the associated graded of the test module filtration.} we have an isomorphism of Cartier crystals $f^\ast Gr_\tau^1 \mathcal{M} \otimes \omega_R \cong Gr_\tau^1 f^\ast (M \otimes \omega_R)$. Since the latter is naturally isomorphic to $Gr_\tau^1 f^\ast M \otimes \omega_S$ we obtain the desired isomorphism.

The last claim follows from faithfully flat descent.
\end{proof}

Now we can state the main result of this section (recall the definition of a tame unit $R[F]$-crystal -- Definiton \ref{TameUnitFCrystal}).

\begin{Theo}
\label{VFiltTestmoduleMainComparison}
Let $R$ be smooth over an algebraically closed field. Let $V(x) \subseteq \Spec R$ be a smooth hypersurface with complement $U$ and natural inclusion $j:U \to \Spec R$. If $j_\ast \mathcal{M}$ is a tame unit $R[F]$-crystal, then for all $t \geq -1$ we have \[V^t j_\ast \mathcal{M} \otimes \omega_R= \tau(j_\ast \mathcal{M} \otimes \omega_R, x^{t +1 -\eps}) \] for all $0 < \eps \ll 1$ depending on $t$, where $V^t$ denotes the $V$-filtration of Stadnik (cf.\ Definition \ref{StadnikVFiltration}) and $\tau$ is the extended test module filtration (cf.\ Definition \ref{LimitTestmodulefiltration}). Moreover, the unit $R/(x)[F]$-crystal $Gr^0_V(j_\ast \mathcal{M})$ corresponds to the Cartier crystal $Gr^1_\tau(j_\ast \mathcal{M} \otimes \omega_R)$.
\end{Theo}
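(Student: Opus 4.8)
The plan is to reduce both assertions of the theorem --- the coincidence $V^t(j_\ast\mathcal{M})\otimes\omega_R=\tau(j_\ast\mathcal{M}\otimes\omega_R,x^{t+1-\eps})$ and the identification of the zeroth graded piece of $V^\bullet$ with $Gr^1_\tau$ --- to the Kummer case already settled in Proposition \ref{TestModuleVFiltrationKummerCase}, by exploiting that, étale-locally, a tamely ramified covering is a disjoint union of Kummer coverings (\cite[Corollary 2.3.4]{grothendieckmurretamefun}). First I would note that $V^\bullet(j_\ast\mathcal{M})$ exists by Stadnik's existence theorem for tame unit $R[F]$-crystals (cf.\ Remark \ref{StadnikVfiltrationRemark}), so that both sides of the claimed equality are defined. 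Since the equality of filtrations (resp.\ the correspondence of graded pieces) is a statement about quasi-coherent submodules (resp.\ subquotients) of $j_\ast\mathcal{M}\otimes\omega_R$, it may be checked after a surjective étale base change: for the filtration by Lemma \ref{TestmoduleVfiltrationEtaleDescent}, for the graded pieces by Lemma \ref{GrEtaleDescent}.

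To produce such a base change, for each closed point $z\in V(x)$ I would pick a Zariski neighborhood $W_z\subseteq\Spec R$ carrying a tamely ramified covering which trivializes $\mathcal{M}|_{D(x)\cap W_z}$, and then, by the structure theorem, an étale neighborhood $W_z'\to W_z$ of $z$ over which that covering becomes a disjoint union of Kummer coverings; restricting to a single (surjective) component and, if needed, absorbing units after a further étale base change, $\mathcal{M}|_{D(x)\cap W_z'}$ is trivialized by a Kummer covering of $W_z'$ ramified along $x$. Finitely many such $W_z'$ (quasi-compactness of $V(x)$) together with $D(x)$ cover $\Spec R$, so $g\colon W:=D(x)\sqcup\coprod_z W_z'\to\Spec R$ is surjective étale; since pushforward along an open immersion commutes with flat base change, $g^\ast(j_\ast\mathcal{M})$ restricts on each component of $W$ to $j'_\ast$ of a unit $F$-crystal, with $g^\ast\omega_R\cong\omega_W$.

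Next I would verify the two assertions componentwise on $W$. Over $D(x)$ there is nothing to check: there is no divisor, and both filtrations are visibly the constant filtration equal to $\mathcal{M}\otimes\omega_{R_x}$ (both graded pieces vanishing). Over $W_z'$, which is again smooth over the algebraically closed ground field, $\mathcal{M}$ is trivialized by a Kummer covering ramified along $x$, so Proposition \ref{TestModuleVFiltrationKummerCase} applies and gives the filtration equality for all $t>-1$ together with the identification $Gr^0_V\cong Gr^1_\tau$; descending along $g$ via Lemmata \ref{TestmoduleVfiltrationEtaleDescent} and \ref{GrEtaleDescent} then yields the theorem over $\Spec R$ for $t>-1$ and the graded correspondence. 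For the endpoint $t=-1$ I would argue on $\Spec R$ directly: $x$ is a non-zero divisor on $j_\ast\mathcal{M}\otimes\omega_R$, so axiom (ii) of Definition \ref{StadnikVFiltration} together with discreteness and left-continuity gives $V^{-1}(j_\ast\mathcal{M})\otimes\omega_R=x^{-1}\bigl(V^{-\delta}(j_\ast\mathcal{M})\otimes\omega_R\bigr)$ for $0<\delta\ll1$, and invoking the already proven case $t=-\delta>-1$ together with Briançon--Skoda (valid for the extended test module filtration because $x$ is a non-zero divisor) identifies this with $\tau(j_\ast\mathcal{M}\otimes\omega_R,x^{-\delta-\eps})=\tau(j_\ast\mathcal{M}\otimes\omega_R,x^{-\eps'})$, the claimed value at $t=-1$.

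The main difficulty I anticipate is not a single deep step but the compatibility bookkeeping: checking that the étale-local structure theorem really delivers Kummer coverings ramified precisely along $x$ (rather than along a unit multiple, or along a proper subdivisor of $V(x)$ --- one only needs a trivialization over $D(x)$, which makes this manageable), that the hypotheses of the descent Lemmata \ref{TestmoduleVfiltrationEtaleDescent} and \ref{GrEtaleDescent} genuinely hold on each component (notably that $g^\ast(j_\ast\mathcal{M})$ is again a tame crystal of the same shape and that $g^\ast\omega_R\cong\omega_W$ is compatible with the Cartier structures), and the careful matching of the $\eps$-shift and of left- versus right-continuity at the boundary $t=-1$.
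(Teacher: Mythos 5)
Your reduction is the same as the paper's: both assertions are Zariski-local, one passes to a surjective \'etale cover over which the tamely ramified trivializing cover becomes a (disjoint union of) Kummer covering(s), applies Proposition \ref{TestModuleVFiltrationKummerCase}, and descends via Lemmata \ref{TestmoduleVfiltrationEtaleDescent} and \ref{GrEtaleDescent}. For $t>-1$ and for the identification of $Gr^0_V$ with $Gr^1_\tau$ your argument is correct and essentially identical to the paper's (which invokes Stadnik's Lemma 3.12 for the reduction to the Kummer case rather than Grothendieck--Murre directly, but this is the same structure theorem).

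The genuine problem is your separate treatment of the endpoint $t=-1$ (which the paper's own proof silently omits; note that Proposition \ref{TestModuleVFiltrationKummerCase} is only stated for $a>-1$). Your chain is $V^{-1}\otimes\omega_R=x^{-1}\bigl(V^{-\delta}\otimes\omega_R\bigr)=x^{-1}\tau(j_\ast\mathcal{M}\otimes\omega_R,x^{1-\delta-\eps})$, and you then invoke Brian\c{c}on--Skoda to identify this with $\tau(j_\ast\mathcal{M}\otimes\omega_R,x^{-\delta-\eps})$. But Brian\c{c}on--Skoda is only available for exponents $\geq 0$, and for $u\in[-1,0)$ the extended filtration is \emph{defined} by $\tau(j_\ast\mathcal{M}\otimes\omega_R,x^{u})=x^{-1}\tau(j_\ast\mathcal{M}\otimes\omega_R,x^{0})$ (Definition \ref{DefTestModuleFiltrationQuasicoherentPushforward}); so your step amounts to asserting $\tau(j_\ast\mathcal{M}\otimes\omega_R,x^{1-\delta-\eps})=\tau(j_\ast\mathcal{M}\otimes\omega_R,x^{0})$, i.e., that there are no jumping numbers in $(0,1)$. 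This fails exactly for the nontrivially twisted tame crystals: for $(\omega_{R_x},\kappa x^{s})$ with $1\le s\le p-2$, trivialized by the Kummer covering $t^{p-1}=x$, there is a jump at $1-\tfrac{s}{p-1}\in(0,1)$, and one computes that $V^{-1}\otimes\omega_R$ and $x^{-1}\tau(j_\ast\mathcal{M}\otimes\omega_R,x^{0})$ differ by a factor of $x$; compare also the Remark following Lemma \ref{ShriekEtaleAndFiniteFlatCartesian}, which shows that the Kummer $G$-invariant comparison breaks down for $a\le-1$. So the endpoint cannot be recovered this way, and the comparison should in fact be restricted to $t>-1$.
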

\begin{proof}
The first claim is clearly Zariski local. By Lemma \ref{TestmoduleVfiltrationEtaleDescent} we may pass to \'etale coverings. Hence, we may assume that $j_\ast \mathcal{M}$ is trivialized by a Kummer covering by \cite[Lemma 3.12]{stadnikvfiltrationfcrystal}. But now the first claim follows from Proposition \ref{TestModuleVFiltrationKummerCase}. The second claim follows from combining Lemma \ref{GrEtaleDescent} and Proposition \ref{TestModuleVFiltrationKummerCase}. 
\end{proof}
\bibliography{bibliothek.bib}
\bibliographystyle{amsplain}
\end{document}